\newcommand\ignorethis[1]{}
\DeclareFontFamily{OML}{rsfs}{\skewchar\font'177}
\DeclareFontShape{OML}{rsfs}{m}{n}{ <5> <6> rsfs5 <7> <8> <9> rsfs7
  <10> <10.95> <12> <14.4> <17.28> <20.74> <24.88> rsfs10 }{}
\DeclareMathAlphabet{\mathfs}{OML}{rsfs}{m}{n}
\newtheorem{thm}{Theorem}[section]
\newtheorem{theorem}[thm]{Theorem}
\newtheorem{lemma}[thm]{Lemma}
\newtheorem{prop}[thm]{Proposition}
\newtheorem{proposition}[thm]{Proposition}
\newtheorem{definition}[thm]{Definition}
\newtheorem{cor}[thm]{Corollary}
\newtheorem{corollary}[thm]{Corollary}
\newtheorem*{theorem*}{Theorem}
\newtheorem*{maintheorem}{Main Theorem}
\newtheorem*{maintheoremrevisited}{Main Theorem Revisited}
\newtheorem*{example*}{Example}
\newtheorem{ourtheorem}{Theorem}
\newtheorem{conjecture}{Conjecture}
\newtheorem{question}{Question}
\theoremstyle{remark}
\newtheorem{remark}[thm]{Remark}
\newtheorem{remarks}[thm]{Remarks}
\numberwithin{equation}{section}
\newenvironment{myitemize}
{ \begin{itemize}
    \setlength{\itemsep}{2pt}
    \setlength{\parskip}{0pt}
    \setlength{\parsep}{0pt}     }
{ \end{itemize}    }
\renewcommand\top{{\operatorname{top}}}
\newcommand\lambdamax{\lambda_{\max}}
\renewcommand{\epsilon}{\varepsilon}
\def\wt{\widetilde}
\def\text#1{\textrm{#1}}
\def\emptyset{\varnothing}
\def\eps{\epsilon}
\def\vf{\varphi}
\def\Si{\Sigma}
\def \R{\mathbb R}
\def\RR{\mathbb R}
\def \N{{\mathbb N}}
\def \NN{{\mathbb N}}
\def \Z{\mathbb Z}
\def\ov{\overline}
\def\un{\underline}
\def\cU{\mathcal U}
\def\wh{\widehat}
\def\({\biggl(}
\def\){\biggr)}
\def\<{\mathbf\langle}
\def\>{\mathbf\rangle}
\def\lambdamin{\lambda_{\min}}
\def\geo{{\textrm{geo}}}
\def\cO{\mathcal O}
\def\ti{\pitchfork}
\def\wh{\widehat}
\DeclareMathOperator\diam{diam}
\DeclareMathOperator\Diff{Diff}
\DeclareMathOperator\dist{dist}
\DeclareMathOperator\Lip{Lip}
\DeclareMathOperator\Span{span}
\DeclareMathOperator\NUH{NUH}
\DeclareMathOperator\supp{supp}
\DeclareMathOperator\HPM{\mathbb{P}_{\mathrm h}}
\DeclareMathOperator\HPO{\operatorname{Per}_h}
\DeclareMathOperator\HC{HC}
\DeclareMathOperator\Prob{\mathbb P}
\DeclareMathOperator\Proberg{\Prob_{\mathrm{e}}}
\newcommand\MME{{\mathfs M}}
\newcommand\HrobR{h^*_{\Diff^r}}
\newcommand\HrobInf{h^*_{\Diff^\infty}}
\newcommand{\dbar}{d\hspace*{-0.10em}\bar{}\hspace*{0.14em}}
\newcommand\folL{\mathfs L}
\def\hsim{\overset{h}{\sim}}
\newcommand\choice[1]{\left(\begin{matrix} #1 \end{matrix}\right)}
\newcommand\Card{\operatorname{Card}}
\newcommand\diffsym{\operatorname{\triangle}}
\newcommand\Per{\operatorname{Per}}
\title{Measures of maximal entropy for surface diffeomorphisms}
\author{J\'er\^{o}me Buzzi, Sylvain Crovisier\thanks{S.C. was partially supported by the ERC project 692925 \emph{NUHGD}.}, and Omri Sarig\thanks{O.S. was partially supported by ISF grants 199/14 and 1149/18.}}
\date{\today}
\renewcommand{\l@section}{\@dottedtocline{2}{3.8em}{3.2em}}
\renewcommand{\l@subsection}{\@dottedtocline{3}{3.8em}{3.2em}}
\newcommand{\subsectionruninhead}{\@startsection{subsection}{2}{0mm}{-\baselineskip}{-0mm}{\bf\large}}
\newcommand{\subsubsectionruninhead}{\@startsection{subsubsection}{3}{0mm}{-\baselineskip}{-0mm}{\bf\normalsize}}
\begin{document}

\maketitle

\begin{abstract}
We show that $C^\infty$ surface diffeomorphisms with positive topological entropy have at most finitely many ergodic measures of maximal entropy in general, and at most one in the topologically transitive case.  This answers  a question of Newhouse, who proved that such measures always exist.
To do this we generalize  Smale's spectral decomposition theorem to non-uniformly hyperbolic surface diffeomorphisms, we introduce homoclinic classes of measures, and we study their properties
using codings  by irreducible countable state Markov shifts.
\end{abstract}

\tableofcontents

\section{Introduction}

\subsection{Measures of maximal entropy}
A famous theorem of Newhouse says that $C^\infty$ diffeomorphisms on compact manifolds without boundary  have ergodic measures of maximal entropy \cite{Newhouse-Entropy}. He asked if the number of these measures is finite when  the manifold is  two-dimensional and the diffeomorphism has positive topological entropy \cite[Problem 2]{Newhouse1990}.

In this paper,  we answer this question positively. A by-product of our analysis is a Spectral Decomposition Theorem similar to Smale's result for Axiom A diffeomorphisms \cite{Smale-Diff-Dynam-Syst},  but for general surface diffeomorphisms with positive topological entropy.
Some of our results apply to $C^r$ diffeomorphisms with $r>1$ and to more general equilibrium measures.

Let $M$ be a {\em closed surface}:  a compact two-dimen\-sio\-nal $C^\infty$ Riemannian manifold without boundary.  Denote its  distance function by $d(\cdot,\cdot)$.
Let $f\colon M\to M$ be a diffeomorphism.
A compact invariant set $K$ is called {\em transitive}, if some point $x\in K$
has a dense forward and backward orbits $\{f^n(x):n\geq 0\}$ and ${\{f^{-n}(x):n\geq 0\}}$ in $K$.
We say that $f$ is {\em topologically transitive} if $M$ is transitive. $f$ is \emph{topologically mixing}
if for any non-empty open sets $U,V$, we have $f^n(U)\cap V\neq \emptyset$ for all large $n$.

Recall that the topological entropy $h_{\top}(f)$ is related to the metric entropies $h(f,\nu)$ of $f$-invariant Borel probability measures $\nu$ by the
variational principle, see \cite{Goodman} and  \cite[Chapter 8]{Walters-Book}:
$$
 h_\top(f)= \sup_{\nu\in\mathbb P(f)} h(f,\nu)=\sup_{\nu\in\Proberg(f)} h(f,\nu),
 $$
where $\mathbb P(f)$ is the set of $f$-invariant Borel probability measures on $M$ and
$\Proberg(f)$ the set of ergodic $\mu\in \mathbb P(f)$. If $K$ is an invariant compact set, $h_\top(f,K)$ denotes the topological entropy of $f|_K$.
\begin{definition}
A {\em measure of maximal entropy (m.m.e.)} is  a measure $\mu$ in $\mathbb P(f)$ such that $h(f,\mu)=h_\top(f)$.
\end{definition}
\noindent
It is well-known that almost every ergodic component of a m.m.e. is an ergodic m.m.e.
Ergodic m.m.e.'s are important to  classification problems~\cite{Adler-Weiss-PNAS,Boyle-Buzzi}
and to the asymptotic analysis of periodic orbits~\cite{Bowen-Closed-Geodesics,Burguet-equidistribution}.

We have already mentioned Newhouse's Theorem on the existence of a m.m.e. for $C^\infty$ diffeomorphisms.
In this paper,  we show:

\begin{maintheorem}
Let $f$ be a $C^\infty$ diffeomorphism on a closed surface, and suppose $h_{\top}(f)>0$. \label{Page-Main-Theorem}
Then:\hspace{-2cm}\mbox{}
\begin{myitemize}
\item[--] The number of ergodic measures of maximal entropy of $f$ is finite.
\item[--] When $f$ is topologically transitive, it has a unique measure of maximal entropy.
\item[--] When $f$ is topologically mixing, its unique m.m.e. is isomorphic to a Bernoulli scheme.
\end{myitemize}
\end{maintheorem}
The theorem would be false without the assumptions on the entropy or the dimension (think of the identity map and of the identity$\times$Anosov).

\medskip

This result
extends to diffeomorphisms $f$ defined on a possibly non-compact surface $M$, possibly with boundary but that have a \emph{global compact attractor}, i.e., an invariant
compact subset $\Lambda$ such that (i) $f|_\Lambda$ is transitive; (ii)
$d(f^n(x),\Lambda)\to 0$ as $n\to +\infty$ for any $x\in M$; (iii) $\Lambda\subset U$ with $U\subset M$ a boundaryless surface.
This is for instance the case for a positive Lebesgue measure set of non-hyperbolic parameters of the H\'enon maps~\cite{BenedicksCarleson1}.
The following consequence has been previously obtained for good parameters of H\'enon maps by Berger~\cite{Berger}:

\begin{corollary}\label{c.attractor}
Let $f$ be a $C^\infty$ diffeomorphism of a {two-dimensional} manifold
having a global compact attractor and positive entropy. Then $f$ has a unique measure of maximal entropy.
\end{corollary}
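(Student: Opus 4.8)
The plan is to reduce the statement to the Main Theorem by passing to a boundaryless compact surface on which $f$ (or a suitable restriction) is topologically transitive. Let $\Lambda$ be the global compact attractor, with neighborhood $U\subset M$ a boundaryless surface as in condition (iii). First I would shrink $U$: since $\Lambda$ is compact and $d(f^n(x),\Lambda)\to0$ for every $x\in M$, a standard attractor argument produces an open set $V$ with $\Lambda\subset V\subset\overline V\subset U$, $\overline V$ compact, and $f(\overline V)\subset V$. Then $M_0:=\bigcap_{n\ge0}f^n(V)$ is a compact $f$-invariant set with $\Lambda\subset M_0\subset U$; moreover every $x\in M$ eventually enters $V$, so every orbit accumulates on $M_0$, hence every $f$-invariant Borel probability measure on $M$ is carried by $M_0$. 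Consequently $h_\top(f)=h_\top(f,M_0)$ and the ergodic m.m.e.'s of $f$ coincide with those of $f|_{M_0}$.

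Next I would embed this into the compact boundaryless setting. The set $V$ sits inside the boundaryless surface $U$; choosing $V$ with piecewise-smooth boundary inside $U$ and doubling, or more simply taking a connected compact boundaryless surface $N$ with $\overline V\subset N\subset U$ (possible because $U$ is a surface without boundary and $\overline V$ is compact — e.g. thicken $\overline V$ slightly and cap off), we may regard $f$ as defined on a neighborhood of $\overline V$ in $N$. The dynamics we care about is entirely supported on $M_0\subset V\subset N$, so we are in the situation of a $C^\infty$ map whose nonwandering behavior lives on a compact invariant subset of a closed surface $N$ with $h_\top>0$. A cleaner route, avoiding any surgery: observe that all invariant measures live on $M_0$, so it suffices to apply the coding/homoclinic-class machinery of the paper to $f|_{M_0}$ directly, since that machinery only uses a $C^\infty$ structure on a boundaryless surface neighborhood of the invariant set — which is exactly hypothesis (iii). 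Thus the conclusions ``finitely many ergodic m.m.e.'s'' and ``transitive $\Rightarrow$ unique m.m.e.'' apply verbatim to $f|_{M_0}$.

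Finally I would use transitivity. By hypothesis (i), $f|_\Lambda$ is transitive; I claim $f|_{M_0}$ is transitive as well, indeed $M_0=\Lambda$. Since $\Lambda$ is a transitive attractor and $f(\overline V)\subset V$ with $\Lambda\subset V$, the attracted set $M_0=\bigcap_{n\ge 0}f^n(V)$ equals $\Lambda$ (any point of $M_0$ has its orbit in $V$ and accumulates on $\Lambda$ both forward and backward; transitivity of $\Lambda$ together with the attractor property forces $M_0\subset\Lambda$, and $\Lambda\subset M_0$ is immediate from invariance and $\Lambda\subset V$). Hence $f|_{M_0}=f|_\Lambda$ is topologically transitive with $h_\top(f|_\Lambda)=h_\top(f)>0$, and the transitive case of the Main Theorem gives a unique measure of maximal entropy. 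The main obstacle is the bookkeeping in the first two paragraphs: verifying that one may legitimately isolate the dynamics on $M_0$ inside a boundaryless compact surface without disturbing the $C^\infty$ structure, and checking that $M_0=\Lambda$ so that transitivity is inherited; once that is in place the result is a direct citation of the Main Theorem.
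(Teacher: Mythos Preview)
Your approach is essentially the paper's: reduce to a closed surface and exploit the transitivity of $\Lambda$. Two points need tightening.

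For the closed-surface reduction, the paper makes this explicit via a cited extension result: one finds an open $U\supset\Lambda$ with $f(\overline U)\subset U$, identifies $U$ with an open subset of a closed surface $\widetilde M$, and extends $f$ to $\widetilde f\in\Diff^\infty(\widetilde M)$ so that $\widetilde M\setminus U$ is a finite union of discs on each of which the non-wandering set of $\widetilde f$ is a single repelling periodic point. In particular, the m.m.e.'s of $f$ and $\widetilde f$ coincide. Your ``capping off'' suggestion is exactly this, but it needs the citation (or a construction) to be a proof.

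The more substantive gap: after this extension, $\widetilde f$ is \emph{not} topologically transitive on $\widetilde M$ (repellers have been added), so the ``transitive case of the Main Theorem'' does not literally apply. The paper instead invokes the underlying ingredients directly on the transitive subset $\Lambda$: any ergodic m.m.e.\ of $\widetilde f$ has positive entropy, is hyperbolic, and is carried by $\Lambda$; Theorems~\ref{t.katok} and~\ref{thmTransitivity} (both measures are thick since $\widetilde f$ is $C^\infty$) show that any two such measures are homoclinically related, and Corollary~\ref{c-local-uniqueness} then gives uniqueness. Finally, your detour through $M_0=\Lambda$ is unnecessary---Poincar\'e recurrence together with condition~(ii) already forces every invariant probability measure onto $\Lambda$---and your backward-accumulation argument for $M_0\subset\Lambda$ is not complete as stated.
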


\subsection{Homoclinic classes and spectral decomposition}

A general question in dynamics is to find a decomposition into invariant elementary pieces. A famous example is Smale's ``spectral decomposition" for Axiom A diffeomorphisms \cite{Smale-Diff-Dynam-Syst}.
We discuss here generalizations of Smale's  spectral decomposition to general $C^\infty$ surface diffeomorphisms with positive entropy.

We first recall some definitions from \cite{Newhouse-Homoclinic}. Let  $f$ be a $C^r$ diffeomor\-phism on a closed manifold (of any dimension).
A hyperbolic periodic orbit of saddle type is a set $\mathcal O=\{f^i(x): i=0,\ldots,p-1\}$ such that $p\geq 1$, $f^p(x)=x$, and  $x$ has a positive Lyapunov exponent, a negative Lyapunov exponent and no zero Lyapunov exponents. Let
$$
\HPO(f):=\{\mathcal O: \mathcal O\text{ is a hyperbolic periodic orbit of saddle type} \}.
$$
For  $y\in \mathcal O$, let $W^u(y):=\{z: d(f^{-n}(z),f^{-n}(y))\xrightarrow[n\to\infty]{}0\}$ and $W^s(y):=\{z: d(f^{n}(z),f^{n}(y))\xrightarrow[n\to\infty]{}0\}$. Set $W^u(\mathcal O)=\bigcup_{y\in\mathcal O}W^u(y)$ and  $W^s(\mathcal O)=\bigcup_{y\in\mathcal O}W^s(y)$. These are $C^r$sub-manifolds.
Given two $\mathcal O,\mathcal O'\in\HPO(f)$, let $
W^u(\mathcal O)\pitchfork W^s(\mathcal O')
$
denote the collection of transverse intersection points of $W^u(\mathcal O)$ and $W^s(\mathcal O')$.
Two $\mathcal O_1,\mathcal O_2\in\HPO(f)$ are called  {\em homoclinically related} if
$W^u(\mathcal O_i)\pitchfork W^s(\mathcal O_j)\neq \emptyset \text{ for }i\neq j$.
We then write $\mathcal O_1\hsim \mathcal O_2$.

\begin{definition}
The {\em homoclinic class} of $\mathcal O$ is the set
$$
\HC(\mathcal O):=\ov{\{\mathcal O'\in\HPO(f):\mathcal O'\hsim\mathcal O\}}.$$
As in~\cite{Abdenur-Crovisier}, the integer $\gcd(\{\operatorname{Card}(\cO'):\cO'\in\HPO(f),\cO'\hsim\cO\})$
is called \emph{period} of the homoclinic class.
\end{definition}
Each homoclinic class is a transitive invariant compact set (\cite{Newhouse-Homoclinic}, see also section \ref{Sec.homoclinic}).

In the particular case when the non-wandering set $\Omega(f)$ is uniformly hyperbolic and contains a dense set of periodic points ($f$ is ``Axiom A"), Smale's spectral decomposition
theorem \cite{Smale-Diff-Dynam-Syst} asserts  that there are only finitely many different homoclinic classes, that these classes are pairwise disjoint, and that $\Omega(f)$
is the union of the sinks, the sources and the homoclinic classes.
Further properties were obtained in \cite{Bowen-Periodic-Points} and \cite{Bowen-MP-Axiom-A}.

Without the Axiom A assumption, it is not necessarily true that the homoclinic classes are finite in number, or pairwise disjoint, or that their union covers $\Omega(f)$.
But for $C^\infty$ diffeomorphisms on closed surfaces, ``everything works modulo a set negligible with respect to ergodic measures of positive entropy:"

\begin{ourtheorem}[Spectral decomposition]\label{mainthm-Spectral}
Let $f$ be a $C^\infty$ diffeomorphism on a closed surface
and consider a maximal family $\{\cO_i\}$ of non-homoclinically related hyperbolic periodic orbits.  Then: \begin{description}
  \item[\quad \it (1)  \sc Covering:] $\mu(\cup_i \HC(\cO_i))=1$ for any $\mu\in \mathbb P_e(f)$ with $h(f,\mu)>0$.
  \item[\quad \it (2) \sc Disjointness:] $h_\top(f,\HC(\cO_i)\cap \HC(\cO_j))=0$ for any $i\neq j$.
  \item[\quad \it (3)  \sc Period:] If $\ell_i$ is the period of the homoclinic class of $\cO_i$,
there is a compact set $A_i$ such that
\begin{myitemize}
\item[--] $\HC(\cO_i)=A_i\cup f(A_i)\cup\dots\cup f^{\ell_i-1}A_i$ and $f^{\ell_i}A_i=A_i$,
\item[--] $f^{\ell_i}:A_i\to A_i$ is topologically mixing,
\item[--] $f^j(A_i)\cap A_i$ has empty {relative} interior in $\HC(\cO_i)$ and zero topological entropy when $0<j<\ell_i$.
\end{myitemize}
 \item[\quad \it (4) \sc Finiteness:] For any $\chi>0$, the set of $\HC(\cO_i)$ such that $h_\top(f,\HC(\cO_i))\geq \chi$ is finite.
\item[\quad \it (5) \sc Uniqueness:] If $f$ is topologically transitive, at most one $\HC(\cO_i)$ has positive topological entropy. If $f$ is topologically mixing, then this $\HC(\cO_i)$ has period $\ell_i=1$  .
\end{description}
\end{ourtheorem}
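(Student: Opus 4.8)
The plan is to reduce every assertion to the symbolic machinery built beforehand: each homoclinic class $\HC(\cO_i)$ of positive entropy is coded by an irreducible countable-state Markov shift $\Sigma_i$ through a finite-to-one H\"older factor map $\pi_i\colon\Sigma_i\to M$, and the coding realizes a dictionary between these shifts and the classes. I will use the following facts. By Ruelle's inequality applied to $f$ and $f^{-1}$, on a surface every ergodic $\mu$ with $h(f,\mu)>0$ is hyperbolic; by Katok's theory it is approximated weakly and in entropy by horseshoes whose periodic orbits are homoclinically related to one another and to a saddle orbit $\cO(\mu)$, with $\supp\mu\subseteq\HC(\cO(\mu))$. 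Such a $\mu$ lifts to exactly one $\Sigma_i$, and does so precisely when it is homoclinically related to $\cO_i$ (the Pesin manifolds of $\mu$ meet $W^s(\cO_i)$ and $W^u(\cO_i)$ transversally); moreover $\mu(\HC(\cO))=1$ already forces $\mu$ to be homoclinically related to $\cO$, via a Pesin-chart crossing argument using $\overline{W^s(\cO)}\supseteq\HC(\cO)\subseteq\overline{W^u(\cO)}$. The homoclinic relation is transitive through measures: if $\mu$ is homoclinically related to $\cO$ and to $\cO'$, then $\cO\hsim\cO'$. The Perron--Frobenius period of $\Sigma_i$ equals the gcd-period $\ell_i$. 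And --- the only use of the $C^\infty$ hypothesis --- $\nu\mapsto h(f,\nu)$ is upper semicontinuous on $\mathbb P(f)$ (Newhouse).

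Three of the five assertions follow quickly. \emph{Covering}: if $h(f,\mu)>0$ then $\supp\mu\subseteq\HC(\cO(\mu))$, and maximality gives $\cO(\mu)\hsim\cO_i$ for some $i$, so $\mu(\HC(\cO_i))=1$. \emph{Disjointness}: if $h_\top(f,\HC(\cO_i)\cap\HC(\cO_j))>0$ with $i\neq j$, the variational principle on this compact invariant set yields an ergodic $\mu$ carried by it with positive entropy; then $\mu$ is homoclinically related to both $\cO_i$ and $\cO_j$, so $\cO_i\hsim\cO_j$, contradicting that the family is pairwise non-homoclinically related. \emph{Period}: transport to $M$ the cyclic decomposition $\Sigma_i=\bigsqcup_{k=0}^{\ell_i-1}\Sigma_i^{(k)}$, $\sigma(\Sigma_i^{(k)})=\Sigma_i^{(k+1\bmod\ell_i)}$, with $\sigma^{\ell_i}$ topologically mixing on each piece, by setting $A_i:=\overline{\pi_i(\Sigma_i^{(0)})}$. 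Then $f^{\ell_i}A_i=A_i$, the $f^j(A_i)$ cover $\HC(\cO_i)$ by density of $\pi_i$, and $f^{\ell_i}|_{A_i}$ is topologically mixing, transferring the mixing of $\sigma^{\ell_i}$. Since the coding comes from a countable Markov partition whose members carry a well-defined phase in $\Z/\ell_i\Z$, for $0<j<\ell_i$ the overlap $f^j(A_i)\cap A_i$ is contained in the $\pi_i$-image of partition boundaries together with the non-injectivity locus; that set carries no measure of positive entropy (such a measure would lift to $\Sigma_i$ with an ambiguous phase), so the overlap has zero topological entropy, and it has empty relative interior in $\HC(\cO_i)$ as it misses the image of every ``magic-word'' cylinder.

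The main obstacle is \emph{Finiteness}. Because the coding preserves entropy, it suffices to show that for each $\chi>0$ only finitely many $\Sigma_i$ have Gurevich entropy $\geq\chi$, and this follows from a tightness estimate: for every $\chi>0$ there is a \emph{finite} collection $\mathcal R_\chi$ of Markov rectangles such that every ergodic $\mu$ with $h(f,\mu)\geq\chi$ is coded using only rectangles of $\mathcal R_\chi$. Indeed, were infinitely many $\Sigma_i$ of Gurevich entropy $\geq\chi$, each would carry an ergodic measure of entropy $\geq\chi/2$ coded within $\mathcal R_{\chi/2}$; but the subshift spanned by the finitely many rectangles of $\mathcal R_{\chi/2}$ has only finitely many maximal irreducible components, periodic orbits lying in one such component are homoclinically related, so the $\cO_i$ would fall into finitely many $\hsim$-classes --- a contradiction. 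To establish the tightness estimate one argues by contradiction and weak-$*$ compactness: a family of entropy-$\geq\chi$ measures charging arbitrarily high-index rectangles would have a limit of entropy still $\geq\chi$ by Newhouse's upper semicontinuity, whereas the portion of mass escaping to the ``boundary at infinity'' of the symbolic model can be shown to carry entropy strictly below $\chi$ --- an entropy-at-infinity bound resting on Yomdin-type volume estimates valid only for $C^\infty$ maps in dimension two. I expect controlling this escaping entropy to be the technical heart of the whole argument: both the $C^\infty$ regularity (upper semicontinuity fails in finite smoothness) and the two-dimensionality are used there in an essential way.

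Finally, \emph{Uniqueness}. Newhouse's existence theorem gives an ergodic measure of maximal entropy, of entropy $h_\top(f)>0$; by \emph{Covering} it is carried by some $\HC(\cO_{i_0})$, which thus has positive topological entropy. Assume $f$ topologically transitive and suppose another class $\HC(\cO_j)$, $j\neq i_0$, also had positive entropy. Each contains a horseshoe homoclinically related to its periodic orbit (Katok on a positive-entropy measure it carries), and by the inclination lemma $\overline{W^u(\cO_{i_0})}\supseteq\HC(\cO_{i_0})\subseteq\overline{W^s(\cO_{i_0})}$, and likewise for $\cO_j$. Transitivity makes $f$ chain-transitive, so periodic pseudo-orbits pass through both horseshoes; shadowing against these uniformly hyperbolic sets with local product structure yields genuine heteroclinic orbits, and the inclination lemma promotes the intersections to transverse ones in both directions, giving $\cO_{i_0}\hsim\cO_j$ and contradicting maximality. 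Hence exactly one class has positive entropy. In the topologically mixing case one further rules out $\ell_{i_0}>1$: by \emph{Period} the sets $A_{i_0},f(A_{i_0}),\dots$ are cyclically permuted with pairwise intersections of empty relative interior, so by Baire each has nonempty relative interior $W$, and then $W,f(W),\dots,f^{\ell_{i_0}-1}(W)$ are pairwise disjoint relatively open sets with $f^{\ell_{i_0}}(W)\subseteq W$ --- a structure incompatible with the topological mixing of $f$ (immediate when $\HC(\cO_{i_0})=M$; in general a little more care with the coding is needed).
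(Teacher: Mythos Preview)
Your proposal has two substantive gaps, and both stem from the same missing idea: the paper's dynamical Sard lemma (Theorem~\ref{t.DynSard}, Proposition~\ref{prop-common-Q}).

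\medskip
\textbf{The implication $\mu(\HC(\cO))=1\Rightarrow\mu\hsim\cO$.}
You treat this as a routine ``Pesin-chart crossing argument'' using density of $W^s(\cO)$ and $W^u(\cO)$ in $\HC(\cO)$. Density does yield \emph{topological} intersections $W^u_{\mathrm{loc}}(x)\cap W^s(\cO)\neq\emptyset$ for $\mu$-typical~$x$, but the homoclinic relation requires \emph{transverse} intersections, and nothing in a crossing argument supplies transversality. The paper establishes this implication (Proposition~\ref{p.measure-related}, Corollary~\ref{c.measure-related}) only after developing the Sard machinery: $\HC(\cO)$ is covered by $su$-quadrilaterals, and Proposition~\ref{prop-common-Q} uses a Sard-type dimension estimate on the locus of tangencies to force some intersection to be transverse. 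This is not a technicality: homoclinic classes can contain homoclinic tangencies. The same gap recurs in your arguments for Disjointness, Period (zero entropy of overlaps), and Uniqueness.

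\medskip
\textbf{Finiteness.}
Your ``tightness estimate'' --- a finite set $\mathcal R_\chi$ of Markov rectangles capturing every measure of entropy $\geq\chi$ --- is false. In the Sarig coding the rectangle containing a point is determined by the Pesin tempering function~$q_\varepsilon$, which measures the \emph{quality} of hyperbolicity (angle, and speed of convergence in Oseledets), not entropy. Measures of entropy $\geq\chi$ may charge regions with arbitrarily poor $q_\varepsilon$, hence arbitrarily small rectangles. Your compactness argument collapses here, and the follow-up (``finitely many irreducible components, hence finitely many $\hsim$-classes'') never gets started. The paper's proof of Finiteness (Theorem~\ref{thmEntropyDecay}) is entirely different and not symbolic: one passes to a weak-$*$ limit $\nu$ of the $\mu_n$, uses Newhouse--Yomdin to retain positive entropy, covers the positive-entropy part of $\nu$ by $su$-quadrilaterals, and then shows via a Bowen-ball counting argument that some iterate of two distinct classes must both straddle the same quadrilateral --- at which point Proposition~\ref{prop-common-Q} (Sard again) forces a homoclinic relation.

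\medskip
\textbf{Uniqueness.}
Your shadowing sketch does not work either. The shadowing lemma for a horseshoe only shadows pseudo-orbits that remain in a small neighborhood of \emph{that} horseshoe; a chain going from one horseshoe to another leaves both neighborhoods and is not covered. And the inclination lemma does not ``promote'' intersections to transverse ones --- it needs a transverse intersection as input. The paper's argument (Theorem~\ref{thmTransitivity}) again proceeds via $su$-quadrilaterals and Sard.

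\medskip
In short: the symbolic dictionary you invoke is one of the two pillars of the paper, but you have left out the other --- the Sard-type transversality mechanism --- and tried to replace the geometric Finiteness argument by a symbolic tightness claim that does not hold.
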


We turn to the dynamics inside a homoclinic class. To begin with we recall some definitions and properties.
An ergodic invariant probability measure $\mu$ is {\em hyperbolic of saddle type},  if it has one positive Lyapunov exponent, one negative Lyapunov exponent, and no zero Lyapunov exponent. In dimension two, any $\mu\in\Proberg(f)$ with  positive entropy is hyperbolic of saddle type by Ruelle's inequality \cite{Ruelle-Entropy-Inequality}, and we denote its two exponents by $-\lambda^s(\mu)<0<\lambda^u(\mu)$. Let $\HPM(f)$ be the set of hyperbolic measures $\mu\in \Proberg(f)$ of saddle type.
By Pesin's Stable Manifold Theorem if $\mu\in \HPM(f)$, then $\mu$-almost every $x$ has stable and unstable manifolds  $W^s(x),W^u(x)$, see section \ref{s.classes-measure}.

The notion of homoclinic relation extends to measures, see the precise definition in Section~\ref{s.def-symbolic}.
In particular, for $\cO\in \HPO(f)$ and $\mu\in \HPM(f)$, we write $\cO\hsim \mu$ if
$W^u(x)\pitchfork W^s(\mathcal O)\neq \emptyset$ and $W^s(x)\pitchfork W^u(\mathcal O)\neq \emptyset$
for $\mu$-almost every $x$.
\medskip

Following the approach developed in~\cite{Sarig-JAMS}, we will  code parts of the dynamics
by \emph{countable state Markov shifts} $\sigma\colon \Sigma\to \Sigma$ (see Section~\ref{s.def-symbolic} for the definition).
The main novelty here is that,
by restricting to a homoclinic class we can obtain a shift which is \emph{irreducible}, i.e., which is topologically transitive. Irreducibility is important, because by the work of Gurevich,  irreducible countable state Markov shifts with finite Gurevich entropy can have at most one m.m.e \cite{Gurevich-Measures-Of-Maximal-Entropy}.

\medskip

The dynamics on each homoclinic class can be described both from the measured and symbolic viewpoints
as follow:

\begin{ourtheorem}\label{mainthm-class}
Let $f$ be a $C^\infty$ diffeomorphism on a closed surface and $\HC(\cO)$ a
homoclinic class with positive topological entropy. Then:
\begin{enumerate}[(1)]
\item $\HC(\cO)$ supports a unique $\mu\in \mathbb P(f)$
with entropy equal to $h_\top(f,\HC(\cO))$. The support of $\mu$ coincides with $\HC(\cO)$. The measure-preserving map $(f,\mu)$ is
isomorphic to the product of a Bernoulli scheme and of the cyclic permutation
of order $\ell:=\gcd(\{\operatorname{Card}(\cO'):\cO'\hsim\cO\})$.
If $\HC(\cO)$ is contained in a topologically mixing compact invariant set, then $\ell=1$.
\item Any $\nu\in \mathbb P_e(f|_{\HC(\cO)})$
with $h(f,\nu)>0$
is homoclinically related to $\cO$.
\item For any $\chi>0$, there exist an \emph{irreducible} locally compact {countable state} Markov shift $(\Sigma,\sigma)$ and a H\"older-continuous map
$\pi\colon \Sigma\to \HC(\cO)$ such that $\pi\circ \sigma=f\circ \pi$ and
\begin{myitemize}
\item[--] $\pi\colon \Sigma^\#\to \HC(\cO)$ is finite-to-one,
\item[--] {$\pi(\Sigma^\#)=\HC(\cO)$ mod $\nu$ for each $\nu\in \mathbb P_e(f)$}
such that $h(f,\nu)>\chi$.
\end{myitemize}
\end{enumerate}
\end{ourtheorem}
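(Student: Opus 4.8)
\emph{Proof strategy.} The plan is to reduce all three assertions to the theory of irreducible countable state Markov shifts, the new ingredient being the extraction, from Sarig's symbolic dynamics, of the \emph{single} irreducible component that codes $\HC(\cO)$. First I would fix $0<\chi<h_\top(f,\HC(\cO))$ and invoke the coding theorem for $C^\infty$ surface diffeomorphisms in the spirit of~\cite{Sarig-JAMS}: a locally compact countable state Markov shift $(\hat\Sigma,\sigma)$ and a H\"older map $\hat\pi\colon\hat\Sigma\to M$ with $\hat\pi\circ\sigma=f\circ\hat\pi$, finite-to-one on the set $\hat\Sigma^\#$ of doubly recurrent points, and with $\hat\pi(\hat\Sigma^\#)$ of full measure for every ergodic $\nu$ with $h(f,\nu)>\chi$. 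This shift is in general far from irreducible; I would decompose it into its irreducible components $\Sigma_\alpha$ (maximal sets of mutually communicating states) and try to read off $\HC(\cO)$ from them.

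The technical core is then the dictionary ``two periodic points of $\hat\Sigma$ are homoclinically related downstairs $\Longleftrightarrow$ they lie in the same irreducible component upstairs.'' In one direction, two periodic points in the same $\Sigma_\alpha$ are joined by admissible words in both time directions; projecting these paths and applying the inclination lemma produces a transverse heteroclinic loop between the corresponding periodic orbits, hence a homoclinic relation. In the other direction, a transverse heteroclinic loop between two homoclinically related periodic orbits produces a horseshoe containing both orbits, and refining the Markov partition along it places the two periodic points in one component. From this dictionary one deduces that every irreducible component carrying an ergodic measure of entropy $>\chi$ is attached to exactly one homoclinic class (of periodic orbits and of measures), all of whose elements are homoclinically related to a fixed periodic orbit. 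Since $h_\top(f,\HC(\cO))>\chi$, the class $\HC(\cO)$ carries ergodic measures of entropy $>\chi$, all homoclinically related to $\cO$; the countably many components that see such measures should then be amalgamated into a single irreducible, locally compact countable Markov shift $(\Sigma,\sigma)$ such that $\pi:=\hat\pi|_\Sigma$ is still H\"older with $\pi\circ\sigma=f\circ\pi$, $\pi|_{\Sigma^\#}$ is finite-to-one, $\pi(\Sigma)\subseteq\HC(\cO)$ with every orbit $\cO'\hsim\cO$ coded inside $\Sigma$ (so $\pi(\Sigma)$ is dense in $\HC(\cO)$), and $\pi(\Sigma^\#)=\HC(\cO)$ mod $\nu$ for every $\nu\in\mathbb P_e(f)$ with $h(f,\nu)>\chi$. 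This is statement~(3). Statement~(2) then follows: if $\nu\in\mathbb P_e(f|_{\HC(\cO)})$ has $h(f,\nu)>0$, then $h(f,\nu)>\chi$ for $\chi$ small enough, so $\nu$ lifts to the irreducible shift $\Sigma$ attached to $\cO$ and is therefore homoclinically related to $\cO$ (a direct alternative uses Pesin theory: the stable and unstable manifolds $W^s(\cO)$, $W^u(\cO)$ are dense in $\HC(\cO)$, hence cross transversally the Pesin manifolds of $\nu$-generic points). The period $\ell$ enters as the period of the graph of $\Sigma$: $\sigma^\ell$ splits $\Sigma$ into $\ell$ topologically mixing irreducible shifts, and a cycle-length count identifies $\ell$ with $\gcd(\{\Card(\cO'):\cO'\hsim\cO\})$.

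For statement~(1), the covering and finite-to-one properties give that the Gurevich entropy of $\Sigma$ equals $h_\top(f,\HC(\cO))<\infty$, so by Gurevich~\cite{Gurevich-Measures-Of-Maximal-Entropy} the shift $(\Sigma,\sigma)$ has at most one measure of maximal entropy; its existence (positive recurrence of $\Sigma$) I would obtain either from the positive-recurrence criteria for countable Markov shifts or from the existence of a measure of maximal entropy for the $C^\infty$ system $f|_{\HC(\cO)}$ (Newhouse~\cite{Newhouse-Entropy}) together with the lift to $\Sigma$. Pushing this measure down by $\pi$ preserves entropy (being finite-to-one on $\Sigma^\#$) and yields a measure $\mu\in\mathbb P(f)$ with $h(f,\mu)=h_\top(f,\HC(\cO))$; uniqueness downstairs follows since a second such measure would again lift to $\Sigma$, contradicting Gurevich. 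The m.m.e.\ of an irreducible countable Markov shift charges every cylinder, so $\pi(\Sigma)$ dense in $\HC(\cO)$ forces $\supp\mu=\HC(\cO)$. Finally, the m.m.e.\ of a topologically mixing countable Markov shift is measure-theoretically Bernoulli (Ornstein theory); applying this to each of the $\ell$ mixing factors of $\sigma^\ell$ and using that $\pi$ is a finite extension, one gets that $(f,\mu)$ is isomorphic to the product of a Bernoulli scheme with the cyclic permutation of order $\ell$. If $\HC(\cO)$ is contained in a topologically mixing compact invariant set then $\ell=1$, since $\ell>1$ would force that set to decompose cyclically.

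I expect the main obstacle to be the dictionary and the amalgamation in the second paragraph: a priori a single homoclinic class may be spread across infinitely many irreducible components of a Sarig coding, and one must show that, after a suitable refinement, it is captured by one irreducible component that moreover projects onto the \emph{entire} class, not merely onto a subset of full measure for large-entropy measures. Pinning down the exact correspondence between the homoclinic relation and communication in the shift, and then gluing the relevant components while keeping local compactness and the finite-to-one property, is where the two-dimensionality (uniform index of saddle orbits, planarity of the invariant curves) and the non-uniform hyperbolicity of positive-entropy measures must be used in an essential way.
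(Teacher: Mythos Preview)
Your overall architecture for parts~(1) and~(3) is close to the paper's, but there is a genuine circularity in your treatment of part~(2), and the missing ingredient is precisely the one you flag as ``the main obstacle'' without resolving.

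The coding you extract from~\cite{Sarig-JAMS} has the property that an ergodic $\chi$-hyperbolic measure $\nu$ lifts to the irreducible component $\Sigma$ attached to $\cO$ \emph{if and only if} $\nu\hsim\cO$ (this is property~(C2) in the paper's Theorem~\ref{Theorem-Symbolic-Dynamics-C^r}). So when you write ``$\nu$ lifts to the irreducible shift $\Sigma$ attached to $\cO$ and is therefore homoclinically related to $\cO$,'' you have the implication backwards: you need $\nu\hsim\cO$ to know $\nu$ lifts, not the other way around. Your alternative---that $W^s(\cO),W^u(\cO)$ are dense in $\HC(\cO)$ and ``hence cross transversally'' the Pesin manifolds of $\nu$-typical points---is exactly the gap: density gives topological intersections, not transverse ones. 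A priori every intersection of $W^s(\cO)$ with the unstable lamination of $\nu$ could be a tangency.

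The paper closes this gap with a completely different mechanism that uses two-dimensionality in an essential way. It builds \emph{$su$-quadrilaterals} (open discs bounded by stable and unstable arcs of periodic orbits, Proposition~\ref{p.rectangles}) and proves a \emph{dynamical Sard lemma} (Theorem~\ref{t.DynSard}): for a $C^r$ lamination with Lipschitz holonomies, the transverse Hausdorff dimension of the set of leaves tangent to a given $C^r$ curve is at most $1/r$. Since $f$ is $C^\infty$ and any non-atomic hyperbolic measure on $\HC(\cO)$ is homoclinically related to a horseshoe of positive transverse dimension, the tangency locus cannot absorb the whole lamination, so some intersection must be transverse (Proposition~\ref{prop-common-Q}, Theorem~\ref{thmTransitivity}, Proposition~\ref{p.measure-related}). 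This is how part~(2) is actually proved, and it is logically \emph{prior} to the full strength of part~(3): the statement $\pi(\Sigma^\#)=\HC(\cO)$ mod $\nu$ for all $\nu$ with $h(f,\nu)>\chi$ (not just those with $\nu\hsim\cO$) is deduced from~(2), not the other way around.

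A secondary issue: your ``amalgamation'' of countably many irreducible components into one irreducible shift is either unnecessary or ill-defined. If you already know all the relevant measures are homoclinically related to $\cO$, the paper's argument (Lemmas~\ref{Lemma-K_n}--\ref{l.lift-periodic}) shows they all lift to a \emph{single} existing component, with no gluing required. If you do not know this, then distinct components have no admissible paths between them and cannot be merged into an irreducible subshift of $\hat\Sigma$.
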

\noindent
Here and throughout,  $\Sigma^\#$ is the set of sequences in $\Sigma$
where some symbol is repeated infinitely many times in the future, and some (possibly different) symbol is repeated infinitely many times in the past.
The inclusion $\pi(\Sigma)\subset \HC(\cO)$ may be strict, see  Remark~\ref{r.potential} below.

\subsection{Finite regularity and equilibrium states}\label{intro-finite-regularity}
Our methods give information on equilibrium measures other than the measure of maximal entropy, and  under weaker regularity assumptions.

We let $f:M\to M$ be a $C^r$ diffeomorphism with $1<r\leq\infty$, i.e., $f$ is invertible and, together with its inverse, it is
continuously differentiable  $\lfloor r\rfloor $ times, and if $r\not\in\N\cup\{\infty\}$ then its $\lfloor r\rfloor$-th derivative is H\"older continuous with H\"older exponent $r-\lfloor r\rfloor$.

Suppose $\phi:M\to\R\cup\{-\infty\}$ is a Borel function such that $\sup\phi<\infty$.  An $f$-invariant probability measure $\mu$ is called an {\em equilibrium measure} for the potential $\phi$, if
$h(f,\mu)+\int\phi d\mu=P_{\top}(\phi)$, where
$$
P_{\top}(\phi):=\sup_{\nu\in\Proberg(f)} \{h(f,\nu)+\int\phi d\nu\}.
$$ For example, equilibrium measures of $\phi\equiv 0$ are measures of maximal entropy.

{The \emph{admissible potentials} are functions $\phi:M\to\RR\cup\{-\infty\}$
which are sums of functions of the following types:}
\begin{myitemize}
\item[--] H\"older-continuous functions.
\item[--] \emph{Geometric potentials}: $\phi_\geo^u(x):=-\beta\log\|Df|_{E^u(x)}(x)\|$ or $\phi_\geo^s(x):=-\beta\log\|Df^{-1}|_{E^s(x)}(x)\|$
where $\beta$ is a real number and $T_x M=E^u(x)\oplus E^s(x)$ is the Oseledets decomposition at $x$, with the convention that
 $\phi_\geo^t(x):=-\infty$ ($t=u,s$) if $E^t$  is not well-defined at $x$. The functions $\phi_\geo^u,\phi_\geo^s$ are measurable upper-bounded but not always continuous.
\end{myitemize}
Notice that $\phi$ is admissible for $f$ if and only if it is admissible for $f^{-1}$.

\medskip

Given $n\in\Z$, let $\|Df^n\|:=\max_{x\in M}\|Df^n|_{T_x M}\|$, $\lambda^u(f):=\lim_{n\to\infty}\frac{1}{n}\log \|Df^n\|$ and $\lambda^s(f):=\lim_{n\to\infty}\frac{1}{n}\log \|Df^{-n}\|$. Our statements involve the number
$\lambdamin(f):=\min\{\lambda^s(f),\lambda^u(f)\}$.
For each ergodic hyperbolic invariant measure $\mu$, let
\begin{equation}\label{delta-u/delta-s}
\delta^u(\mu):=h(f,\mu)/\lambda^u(\mu) \text{ and } \delta^s(\mu):=h(f,\mu)/\lambda^s(\mu)
\end{equation}
which are called \emph{stable and unstable dimensions} of $\mu$ (see~\cite{Ledrappier-Young-II}).

\medskip
The following result  extends the  Main Theorem to $C^r$ diffeomorphisms and other equilibrium measures:

\begin{maintheoremrevisited}\label{MainTheoremRevisited}
Let $f$ be a $C^r$ diffeomorphism with $r>1$ on a closed surface $M$ and let $\phi:M\to\R\cup\{-\infty\}$ be an admissible potential. Then:
 \begin{enumerate}[(1)]
\item {For any $\chi> \lambdamin(f)/r$}
  there are at most finitely many ergodic equilibrium measures for $\phi$ with entropy strictly bigger than $\chi$;
\item Each compact invariant transitive subset of $M$ carries at most one ergodic hyperbolic equilibrium measure $\mu$ for $\phi$ with
$\delta^u(\mu)>1/r$, and at most one ergodic hyperbolic equilibrium measure $\mu$ for $\phi$ with
$\delta^s(\mu)>1/r$.
 \end{enumerate}
\end{maintheoremrevisited}

In particular, this theorem applies under a small potential condition:
\begin{corollary}\label{Cor-finite-r}
Let $f$ be a $C^r$ diffeomorphism with $r>1$ on a closed surface and let $\phi$ be an admissible potential. {Assume also that
\begin{equation}\label{small-potential}
{P_{\top}(\phi)>\sup\phi+\frac{\lambdamin(f)}r}.
\end{equation}
Then  $\phi$  has at most finitely many ergodic equilibrium measures, and if $f$ is topologically transitive, then it has at most one.}
\end{corollary}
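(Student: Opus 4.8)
The plan is to derive Corollary~\ref{Cor-finite-r} directly from the Main Theorem Revisited, the key observation being that the small potential condition~\eqref{small-potential} forces every ergodic equilibrium measure for $\phi$ to have entropy strictly above the threshold $\lambdamin(f)/r$. First I would record that \eqref{small-potential} makes $P_{\top}(\phi)$ finite (an admissible potential has $\sup\phi<\infty$, and $\sup\phi=-\infty$ would give $P_{\top}(\phi)=-\infty$, contradicting \eqref{small-potential}) and gives $P_{\top}(\phi)>\sup\phi$. If $\mu\in\mathbb P_e(f)$ is an equilibrium measure for $\phi$, then $\int\phi\,d\mu$ is finite (otherwise $h(f,\mu)+\int\phi\,d\mu=-\infty\neq P_{\top}(\phi)$) and
\[
h(f,\mu)=P_{\top}(\phi)-\int\phi\,d\mu\ \geq\ P_{\top}(\phi)-\sup\phi\ >\ \lambdamin(f)/r\ \geq\ 0 .
\]
In particular $h(f,\mu)>0$, so by Ruelle's inequality $\mu$ is hyperbolic of saddle type, with exponents $-\lambda^s(\mu)<0<\lambda^u(\mu)$.

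For the finiteness statement I would then fix $\chi$ with $\lambdamin(f)/r<\chi<P_{\top}(\phi)-\sup\phi$ (possible by \eqref{small-potential}) and invoke part~(1) of the Main Theorem Revisited: there are only finitely many ergodic equilibrium measures with entropy $>\chi$. By the displayed inequality every ergodic equilibrium measure satisfies $h(f,\mu)\geq P_{\top}(\phi)-\sup\phi>\chi$, so these are \emph{all} of them.

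For the transitive case I would use that $M$ is itself a compact invariant transitive subset of $M$, so part~(2) of the Main Theorem Revisited applies to it. The essential point is that the threshold is $\lambdamin(f)/r=\min\{\lambda^s(f),\lambda^u(f)\}/r$ and that $\lambda^u(\mu)\leq\lambda^u(f)$ and $\lambda^s(\mu)\leq\lambda^s(f)$ for every ergodic hyperbolic $\mu$ (a Lyapunov exponent of $Df^{\pm n}$ is bounded by its global exponential growth rate). Hence, if $\lambda^u(f)\leq\lambda^s(f)$, so $\lambdamin(f)=\lambda^u(f)$, then for every ergodic equilibrium measure $\mu$ one has $h(f,\mu)>\lambda^u(f)/r\geq\lambda^u(\mu)/r$, i.e.\ $\delta^u(\mu)>1/r$; while if $\lambda^s(f)<\lambda^u(f)$ the same computation gives $\delta^s(\mu)>1/r$ for every ergodic equilibrium measure. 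Either way, part~(2) applied with the transitive set $K=M$ yields at most one such measure, hence (by ergodic decomposition, since $\nu\mapsto h(f,\nu)+\int\phi\,d\nu$ is affine) at most one equilibrium measure in total.

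I do not expect a genuine obstacle here, since this is a substitution into the Main Theorem Revisited; the one point requiring care is the last paragraph's case split — one must control $\delta^u$ or $\delta^s$ according to which of $\lambda^u(f),\lambda^s(f)$ realizes $\lambdamin(f)$, because \eqref{small-potential} only compares $P_{\top}(\phi)-\sup\phi$ to the \emph{smaller} of the two exponent bounds (bounding against $\lambdamax(f)$ would not be enough to reach the hypothesis of part~(2)).
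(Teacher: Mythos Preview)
Your proposal is correct and follows essentially the same route as the paper: derive $h(f,\mu)>\lambdamin(f)/r$ from \eqref{small-potential}, then apply both parts of the Main Theorem Revisited. The paper handles your case split by assuming without loss of generality that $\lambdamin(f)=\lambda^s(f)$ (replacing $f$ by $f^{-1}$ if necessary), but this is the same argument.
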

The role of~\eqref{small-potential} is to guarantee that every equilibrium measure has entropy larger than {$\lambdamin(f)/r$.} Notice that this condition
holds whenever
 $\sup\phi-\inf\phi<h_{\top}(f)-\frac{\lambda_{\min}(f)}{r}$.

\begin{corollary}\label{Cor-finite-eq-C-infty}
Let $f$ be a topologically transitive $C^\infty$ diffeomorphism on a closed surface. Then any admissible potential $\phi$  has at most one ergodic equilibrium measure with positive entropy. If $f$ is topologically mixing, then this measure, if it exists, is Bernoulli.
\end{corollary}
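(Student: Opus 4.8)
The plan is to split into two independent assertions and handle them with, respectively, the Main Theorem Revisited and the symbolic model of Theorem~\ref{mainthm-class}. First note that, $f$ being $C^\infty$, we take $r=\infty$, so the threshold $\lambdamin(f)/r$ vanishes (here $\lambdamin(f)<\infty$ because $M$ is compact).

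\textbf{Uniqueness.} Let $\mu$ be any ergodic equilibrium measure for the admissible potential $\phi$ with $h(f,\mu)>0$. By Ruelle's inequality on a surface, $\mu$ is hyperbolic of saddle type and $0<\lambda^u(\mu)<\infty$, so $\delta^u(\mu)=h(f,\mu)/\lambda^u(\mu)>0=1/r$. Since $f$ is topologically transitive, $M$ is itself a compact invariant transitive subset of $M$, and I would simply apply part~(2) of the Main Theorem Revisited to $M$: it yields at most one ergodic hyperbolic equilibrium measure with $\delta^u>1/r$, hence at most one ergodic equilibrium measure of positive entropy. This is the first claim, and it needs nothing beyond the Main Theorem Revisited.

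\textbf{Bernoullicity in the mixing case.} Now suppose $f$ is topologically mixing and let $\mu$ be an ergodic equilibrium measure with $h(f,\mu)>0$ (if none exists there is nothing to prove). I would fix a maximal family $\{\cO_i\}$ of non-homoclinically related hyperbolic periodic orbits. By the Covering part of the Spectral Decomposition Theorem (Theorem~\ref{mainthm-Spectral}), $\mu(\bigcup_i\HC(\cO_i))=1$, and by the Disjointness part each $\HC(\cO_i)\cap\HC(\cO_j)$ ($i\neq j$) has zero topological entropy, hence zero $\mu$-measure, since a positive-entropy ergodic measure cannot be carried by a compact invariant set of zero topological entropy. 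As $\mu$ is ergodic it is therefore carried by a single $\HC(\cO)$, with $h_\top(f,\HC(\cO))\geq h(f,\mu)>0$; since $\HC(\cO)\subset M$ and $M$ is topologically mixing, Theorem~\ref{mainthm-class}(1) forces the period $\ell$ of $\HC(\cO)$ to be $1$. Next I would fix $\chi\in(0,h(f,\mu))$ and take the irreducible locally compact countable Markov shift $(\Sigma,\sigma)$ and the H\"older factor map $\pi\colon\Sigma\to\HC(\cO)$ of Theorem~\ref{mainthm-class}(3), for which $\pi|_{\Sigma^\#}$ is finite-to-one and $\pi(\Sigma^\#)=\HC(\cO)$ mod $\mu$. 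Using the standard lifting procedure for such codings (a Borel section of $\pi$ over $\pi(\Sigma^\#)$, followed by the invariance argument of~\cite{Sarig-JAMS}), $\mu$ lifts to an ergodic $\sigma$-invariant probability $\hat\mu$ with $\pi_*\hat\mu=\mu$ and $h(\sigma,\hat\mu)=h(f,\mu)$, so that $(f,\mu)$ is a measure-theoretic factor of $(\sigma,\hat\mu)$. The lifted potential $\hat\phi:=\phi\circ\pi$ is H\"older on $\Sigma$ (the H\"older summands pull back to H\"older functions since $\pi$ is H\"older; the geometric summands pull back to H\"older functions because the Oseledets directions $E^u,E^s$ along a coded orbit depend H\"older-continuously on the symbolic sequence), and, identifying the Gurevich pressure of $\hat\phi$ with $P_\top(\phi)$, one checks that $\hat\mu$ is an equilibrium measure for $\hat\phi$ on $\Sigma$. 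Since $\ell=1$ the shift $\Sigma$ may be taken topologically mixing (its period matches that of $\HC(\cO)$); then the thermodynamic formalism of topologically mixing countable Markov shifts together with Ornstein theory gives that $\hat\mu$ is isomorphic to a Bernoulli scheme, and hence its factor $\mu$ is Bernoulli as well, factors of Bernoulli schemes being Bernoulli.

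\textbf{Main obstacle.} The arithmetic reductions above are essentially free once the Main Theorem Revisited and Theorem~\ref{mainthm-class} are available; the real content lies entirely in the two quoted inputs. The delicate point is that an admissible potential — whose geometric summands are only upper semicontinuous on $M$ — must be shown to lift to a genuinely H\"older potential on $\Sigma$ whose symbolic pressure equals $P_\top(\phi)$, so that $\hat\mu$ is recognized as an equilibrium measure upstairs; controlling the geometric potentials through the coding is where the work sits, together with the fact that equilibrium measures of H\"older potentials on topologically mixing countable Markov shifts are Bernoulli.
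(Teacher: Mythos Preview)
Your uniqueness argument is correct and is exactly the paper's route: with $r=\infty$ the threshold $1/r$ vanishes, every positive-entropy ergodic equilibrium has $\delta^u(\mu)>0$, and part~(2) of the Main Theorem Revisited applied to the transitive set $\Lambda=M$ gives uniqueness.

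Your Bernoullicity argument, however, has a genuine gap. You invoke Theorem~\ref{mainthm-class}(3) to get an irreducible Markov shift $\Sigma$ coding $\HC(\cO)$ and then assert that ``since $\ell=1$ the shift $\Sigma$ may be taken topologically mixing (its period matches that of $\HC(\cO)$).'' But Theorem~\ref{mainthm-class}(3) gives no control on the period of $\Sigma$: it only says $\Sigma$ is irreducible. From the coding properties one can show that $\ell$ divides the period of $\Sigma$ (periodic points of $\Sigma$ project to periodic orbits homoclinically related to $\cO$), but the reverse divisibility requires an injectivity property (the paper's (C10)) that this coding does not in general enjoy. So you cannot conclude $\Sigma$ is mixing, and your Ornstein-theory step does not go through as written.

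The paper avoids this by citing Corollary~\ref{c-local-uniqueness} directly: any ergodic hyperbolic equilibrium measure $\mu$ for an admissible potential is isomorphic to the product of a Bernoulli scheme with a cyclic permutation of order $\gcd\{\Card(\cO'):\cO'\hsim\mu\}$. The point of that corollary is precisely that the period identification is done \emph{on the manifold side} (Section~\ref{equilibrium1}), not by matching the period of $\Sigma$. Once you have Bernoulli$\,\times\,$cyclic of order $\ell$, Theorem~\ref{mainthm-Spectral}(5) (or Theorem~\ref{mainthm-class}(1)) gives $\ell=1$ from topological mixing, and you are done. Your rederivation via lifting $\mu$ and $\phi$ to $\Sigma$ is unnecessary, and your ``main obstacle'' (H\"older continuity of $\hat\phi$, equality of pressures) is handled transparently in the paper; the real obstacle you missed is the period of $\Sigma$.
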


By the Pesin formula \cite{Ledrappier-Strelcyn}, SRB measures $\mu$ satisfy $\lambda^u(\mu)=h(f,\mu)>0$; in particular $\delta^u(\mu)=1$ and they are equilibrium measures of the geometric potential $\phi:=-\log\|Df|_{E^u}\|$.
The second version of the Main Theorem thus implies the following result of~\cite{Rodriguez-Hertz-Squared-Tahzibi-Ures-CMP} (see~\cite{Ledrappier-Mesures-de-Sinai} for the Bernoulli property).

\begin{corollary}[Hertz-Hertz-Tahzibi-Ures's theorem revisited]\label{c.HHTU}
Let $f$ be a $C^r$ diffeomorphism ($r>1$) on a closed surface.
Then each transitive invariant compact set $\Lambda$ supports at most one SRB measure.
When such a measure $\mu$ exists, its support coincides with a homoclinic class $\HC(\cO)$
satisfying $\mu\hsim \cO$. Moreover, if $f|\Lambda$ is topologically mixing, the unique SRB measure, if it exists, is Bernoulli.
\end{corollary}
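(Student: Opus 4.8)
The plan is to reduce the corollary to part~(2) of the Main Theorem Revisited for the admissible geometric potential $\phi:=\phi_\geo^u=-\log\|Df|_{E^u}\|$, and then to read off the structural refinements from the theory of homoclinic classes of measures. \emph{SRB measures are ergodic geometric equilibrium states.} As recalled just above the statement, an SRB measure $\mu$ satisfies $h(f,\mu)=\lambda^u(\mu)>0$ by the Pesin formula, so it is an equilibrium measure of $\phi$ and $\delta^u(\mu)=h(f,\mu)/\lambda^u(\mu)=1>1/r$ (this is where $r>1$ enters). I reduce to the ergodic case: almost every ergodic component of an SRB measure is again SRB (absolute continuity of the conditionals along unstable plaques passes to the components), still has a positive unstable exponent and hence positive entropy, and is therefore hyperbolic of saddle type by Ruelle's inequality in dimension two; so each component lies in $\HPM(f)$, is an equilibrium measure of $\phi$, and satisfies $\delta^u>1/r$.

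\emph{Uniqueness.} Let $\Lambda$ be a transitive invariant compact set; then $f|_\Lambda$ is a compact transitive system, so part~(2) of the Main Theorem Revisited provides at most one ergodic hyperbolic equilibrium measure of $\phi$ carried by $\Lambda$ with $\delta^u>1/r$. By the previous paragraph, $\Lambda$ then carries at most one ergodic SRB measure $\nu$, and any SRB measure on $\Lambda$ has all of its ergodic components equal to $\nu$, hence equals $\nu$. Thus $\Lambda$ supports at most one SRB measure, and it is ergodic.

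\emph{Identifying the support.} Assume such a $\nu$ exists. Since $\nu\in\HPM(f)$ has positive entropy, it is homoclinically related to some $\cO\in\HPO(f)$ by the theory of homoclinic classes of measures; moreover periodic orbits homoclinically related to $\cO$ accumulate (Katok-type closing) on $\nu$-typical points, so $\supp\nu\subseteq\HC(\cO)$. For the reverse inclusion, the conditionals of the SRB measure $\nu$ along local unstable manifolds have strictly positive continuous density, so a standard argument gives $W^u_{\mathrm{loc}}(x)\subseteq\supp\nu$ for $\nu$-a.e. $x$; as $\supp\nu$ is closed and $f$-invariant, $\overline{W^u(x)}\subseteq\supp\nu$. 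Any periodic orbit $\cO'$ homoclinically related to $\cO$ is homoclinically related to $\nu$, hence $W^u(x)\pitchfork W^s(\cO')\neq\emptyset$ for $\nu$-a.e. $x$; the inclination lemma then yields $W^u(\cO')\subseteq\overline{W^u(x)}\subseteq\supp\nu$, so $\cO'\subseteq\supp\nu$. Passing to the closure over all such $\cO'$ gives $\HC(\cO)\subseteq\supp\nu$, whence $\supp\nu=\HC(\cO)$ with $\nu\hsim\cO$.

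\emph{Bernoulli property.} If $f|_\Lambda$ is topologically mixing, then $\HC(\cO)=\supp\nu\subseteq\Lambda$ lies in a topologically mixing compact invariant set, so its period is $\ell=1$ (Theorem~\ref{mainthm-class}(1)). Lifting $\nu$ through the irreducible, locally compact countable Markov shift coding $\HC(\cO)$ as in Theorem~\ref{mainthm-class}(3) --- whose construction carries over to $C^r$ for measures with $\delta^u>1/r$ --- produces an equilibrium state of a H\"older-continuous potential on an irreducible Markov shift that is aperiodic because $\ell=1$; by the thermodynamic formalism of countable Markov shifts together with Ornstein theory (the argument proving the Bernoulli clause of the Main Theorem) this lift is Bernoulli, hence so is $\nu$. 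The main obstacle is the support identification in the third step, which requires marrying the strict positivity of the SRB density on unstable plaques with the inclination lemma, together with checking that the $C^r$ symbolic machinery and the Bernoulli argument really operate at the threshold $\delta^u>1/r$ rather than only for large-entropy measures; everything else is bookkeeping around the $C^r$ Main Theorem.
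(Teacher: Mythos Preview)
Your uniqueness argument matches the paper's: SRB measures are ergodic hyperbolic equilibrium states for the geometric potential with $\delta^u=1>1/r$, so part~(2) of the Main Theorem Revisited gives at most one on any transitive compact set.

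For the \emph{support identification} you take a genuinely different route. The paper simply invokes Corollary~\ref{c-local-uniqueness}, valid for any $C^r$ ($r>1$) ergodic hyperbolic equilibrium measure of an admissible potential: it says directly that $\supp\mu=\HC(\mu)$, the argument going through the irreducible Markov coding (Theorem~\ref{Theorem-Symbolic-Dynamics-C^r}) and the fact that an equilibrium measure for a H\"older potential on an irreducible countable Markov shift has full support. Your argument instead exploits a feature specific to SRB measures --- strict positivity of the conditional density on unstable plaques --- to get $W^u_{\mathrm{loc}}(x)\subset\supp\nu$, and then the inclination lemma to absorb every $\cO'\hsim\cO$. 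This is correct and more geometric, but it only works for SRB; the paper's route is no harder and covers all admissible equilibrium measures at once.

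Where you do have a gap is in the \emph{Bernoulli} step. You cite Theorem~\ref{mainthm-class}(1) and~(3), but that theorem is stated only for $C^\infty$ diffeomorphisms; your parenthetical ``whose construction carries over to $C^r$ for measures with $\delta^u>1/r$'' is exactly the point that needs a reference, not an aside. The paper handles this entirely within the $C^r$ results already proved: since $\mu$ is $u$-thick, Theorem~\ref{t-spectral-revised1}(5) (applied to $f^{-1}$, for which $\mu$ is $s$-thick) gives period $\ell=1$ when $\HC(\cO)$ sits in a topologically mixing set, and Corollary~\ref{c-local-uniqueness} --- again valid for $C^r$, $r>1$ --- then says $\mu$ is the product of a Bernoulli scheme with a cyclic permutation of order $\ell=1$, i.e., Bernoulli. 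So replace your citations of Theorem~\ref{mainthm-class} by Theorem~\ref{t-spectral-revised1}(5) and Corollary~\ref{c-local-uniqueness}, and the argument closes without any appeal to $C^\infty$ statements.
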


We also have $C^r$ versions of the spectral decomposition theorem and of Theorem~\ref{mainthm-class}.
They are stated later in Section~\ref{s.spectral-decomposition}.

\subsection{Borel classification}

By Theorem~\ref{mainthm-class}, for every homoclinic class $H$, for every  $\chi>0$, there is a finite-to-one continuous coding $\pi_\chi:\Sigma_\chi^\#\to H$ such that $\Sigma_\chi$ is an irreducible countable state Markov shift and $\pi_\chi(\Sigma_\chi^\#)$  carries all ergodic measures on $H$  with entropy bigger than $\chi$.
Using  Hochman's Borel generator theorem \cite{Hochman1} (see \cite{Boyle-Buzzi}),  we can replace the family $\{\pi_\chi:\chi>0\}$  by a  single Borel conjugacy between the diffeomorphism and a Markov shift, after discarding from each system an invariant  Borel set which has measure zero for all ergodic measures with positive entropy (see \S \ref{s-Borel}).

Such conjugacies have been built for the natural extensions of interval maps, assuming a finite critical set \cite{Hofbauer1981} or  $C^\infty$ smoothness \cite{BuzziSIM} (this point of view was formalized in  \cite{Newhouse-On-Hofbauer}).
We show (see \S\ref{s.def-symbolic} for the period of a Markov shift):

\begin{ourtheorem}\label{thmBorelConj}
Let $f$ be a $C^\infty$ diffeomorphism on a closed surface. For any hyperbolic periodic orbit~$\cO$,
 $f:\HC(\cO)\to \HC(\cO)$  is Borel conjugate modulo \emph{zero entropy measures} to an irreducible countable state Markov shift with period equal to the period of the homoclinic class of $\cO$:
 $$
    \ell := \gcd(\{\operatorname{Card}(\cO'):\cO'\in\HPO(f),\; \cO'\hsim\cO\}).
 $$
\end{ourtheorem}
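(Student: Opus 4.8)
\emph{Proof strategy.} If $h_\top(f,\HC(\cO))=0$ there is nothing to prove: $\HC(\cO)$ then carries only zero-entropy invariant measures, and the cyclic permutation on $\ell:=\gcd(\{\Card(\cO'):\cO'\in\HPO(f),\ \cO'\hsim\cO\})$ points is an irreducible countable state Markov shift of period $\ell$, so $f|_{\HC(\cO)}$ is conjugate to it modulo zero-entropy measures in the vacuous sense (both systems are discarded entirely). So assume $h_\top(f,\HC(\cO))>0$, and let $\mathcal M$ be the set of ergodic $f$-invariant probability measures carried by $\HC(\cO)$ with positive entropy. We want an invariant Borel set $Y\subset\HC(\cO)$ such that $\HC(\cO)\setminus Y$ carries only zero-entropy measures, together with a Borel conjugacy between $f|_Y$ and an irreducible countable state Markov shift of period $\ell$, each side taken modulo invariant Borel sets carrying only zero-entropy measures.

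The plan is to run the Boyle--Buzzi argument for the almost-Borel structure of surface diffeomorphisms, fed with the \emph{irreducible} codings from Theorem~\ref{mainthm-class} rather than the non-irreducible ones available in general. Applying Theorem~\ref{mainthm-class}(3) with $\chi=\chi_n:=1/n$ produces, for each $n\geq 1$, an irreducible locally compact countable state Markov shift $(\Sigma_n,\sigma_n)$ and a Hölder-continuous equivariant $\pi_n\colon\Sigma_n\to\HC(\cO)$ that is finite-to-one on $\Sigma_n^\#$ and satisfies $\pi_n(\Sigma_n^\#)=\HC(\cO)$ mod $\nu$ for every $\nu\in\mathbb P_e(f)$ with $h(f,\nu)>\chi_n$. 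Since $f$ is $C^1$ on a compact manifold, $h_\top(f)<\infty$, so $f|_{\HC(\cO)}$ is a Borel system of finite entropy; and as $\chi_n\downarrow 0$ the sets $\pi_n(\Sigma_n^\#)$ cover $\HC(\cO)$ modulo every $\nu\in\mathcal M$, so the Markov codings $\pi_n$ become arbitrarily fine. Hochman's Borel generator theorem \cite{Hochman1}, in the form used in \cite{Boyle-Buzzi}, then upgrades this family to a single Borel conjugacy between $f|_Y$ (for $Y$ as above) and a countable state Markov shift $(\Sigma_\infty,\sigma_\infty)$, modulo invariant Borel sets carrying only zero-entropy measures; the Markov structure on $\Sigma_\infty$ is assembled from the compatible structures of the $\Sigma_n$.

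It remains to make $\Sigma_\infty$ irreducible of period $\ell$. Deleting the transient part of $\Sigma_\infty$ — a countable union of wandering sets, hence null for every invariant measure — leaves a disjoint union of irreducible Markov shifts. By Theorem~\ref{mainthm-class}(2) every $\nu\in\mathcal M$ is homoclinically related to $\cO$, hence any two members of $\mathcal M$ are homoclinically related to each other; since each $\Sigma_n$ is itself irreducible and the $\pi_n$ are mutually compatible, the symbolic lifts of all $\nu\in\mathcal M$ lie in a single irreducible component of $\Sigma_\infty$, and the remaining components carry no positive-entropy measure and may be discarded. This gives irreducibility. For the period, use Theorem~\ref{mainthm-Spectral}(3): the decomposition $\HC(\cO)=A\cup fA\cup\dots\cup f^{\ell-1}A$ with $f^{\ell}|_A$ topologically mixing and $f^jA\cap A$ of zero entropy for $0<j<\ell$ transfers under the coding to the decomposition of $\Sigma_\infty$ into its period classes, with $\sigma_\infty^{\ell}$ mixing on each and $\ell$ maximal with this property, so the period of $\Sigma_\infty$ is $\ell$. (Alternatively, every periodic orbit $\cO'\hsim\cO$ has nonzero Lyapunov exponents, hence is coded by $\Sigma_n$ for all large $n$, so is the image of a periodic cycle of $\Sigma_\infty$; since $\pi_\infty$ is finite-to-one on $\Sigma_\infty^\#$, a $\gcd$ computation over cycle lengths again gives period $\gcd\{\Card(\cO'):\cO'\hsim\cO\}=\ell$.)

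The main obstacle is the second step: converting the countable family of finite-to-one irreducible codings $\{\pi_n\}$ into a \emph{single} Borel conjugacy with a countable state Markov shift, \emph{uniformly} modulo zero-entropy measures. This is exactly what Hochman's Borel generator theorem provides, and the delicate points are (i) controlling all the exceptional sets so that their union stays null for every $\nu\in\mathcal M$ while every zero-entropy measure is swept into the discarded part, and (ii) ensuring the target genuinely is conjugate to a countable state Markov shift, which relies on the Markov codings already produced and not on Hochman's theorem in isolation. By contrast, once irreducibility is established the period identification is essentially bookkeeping, using the finite-to-one property on $\Sigma^\#$ together with the spectral decomposition of Theorem~\ref{mainthm-Spectral}.
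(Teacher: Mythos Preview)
Your proposal has a genuine gap at its central step. You write that Hochman's generator theorem ``upgrades this family [of codings $\pi_n$] to a single Borel conjugacy between $f|_Y$ and a countable state Markov shift $(\Sigma_\infty,\sigma_\infty)$,'' with the Markov structure ``assembled from the compatible structures of the $\Sigma_n$.'' But Hochman's theorem does no such thing, and the $\Sigma_n$ are not mutually compatible in any stated sense: each is built from a different $\chi$-dependent global coding $\widehat\Sigma_\chi$, with different alphabets and no canonical relation among them. There is no mechanism in your argument that produces $\Sigma_\infty$, and once that object is undefined, your subsequent claims about its irreducibility and period have nothing to stand on.

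The paper's argument is structurally different and avoids this problem entirely. It does not try to glue the $\pi_n$. Instead it first replaces the topological homoclinic class $\HC(\cO)$ by the measurable set $H_\cO$ (which differs from $\HC(\cO)$ only on a set carrying zero-entropy measures, by Corollary~\ref{c.measure-related}), and then proves that $(f,H_\cO)$ is almost Borel conjugate to \emph{any} irreducible Markov shift $\Sigma$ having the correct invariants: period $\ell(\cO)$, Gurevi\v{c} entropy $h(\cO)$, and $m(\cO)\in\{0,1\}$ measures of maximal entropy. The conjugacy is obtained via Hochman's Cantor--Bernstein argument from two almost-Borel embeddings: $H_\cO^0\hookrightarrow\Sigma$ follows because $\Sigma$ is $(h(\cO),\ell(\cO))$-universal and every ergodic measure on $H_\cO$ has $\ell(\cO)$ as a period (Proposition~\ref{p.period}); $\Sigma^0\hookrightarrow H_\cO$ follows because $H_\cO$ contains horseshoes of period $\ell(\cO)$ and entropy arbitrarily close to $h(\cO)$ (Katok's theorem plus Proposition~\ref{prop-approx-HC}), making $H_\cO$ itself $(h(\cO),\ell(\cO))$-universal. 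The top-entropy measures on each side are matched separately via the Bernoulli-times-cyclic structure from Corollary~\ref{c-local-uniqueness}. In particular, the period is not read off any coding: it is \emph{imposed} on $\Sigma$ at the outset and shown to be consistent with the period structure of $H_\cO$.

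So the role of the codings $\pi_\chi$ in the paper is indirect (they feed into the uniqueness and Bernoulli properties of Corollary~\ref{c-local-uniqueness}), and the Borel conjugacy itself comes from universality, not from splicing codings together.
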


Using the Spectral Decomposition Theorem, this yields an alternate proof of the classification theorem from \cite{Boyle-Buzzi} in the $C^\infty$ smooth setting. More importantly, this shows that the complete invariant of \cite{Boyle-Buzzi} is determined by the topological entropies and the periods of the homoclinic classes, see Corollary~\ref{cor-Cinf-Borel}. When there is a mixing m.m.e., a more powerful version of Hochman's generator theorem \cite{Hochman2} provides Borel conjugacy after discarding only periodic orbits:

\begin{corollary}\label{Corollary-Classification-mixing}
Let $f$ be a $C^\infty$ diffeomorphism on a closed surface with positive topological entropy. If $f$ is topologically mixing,  then it is Borel conjugate modulo \emph{periodic orbits} to a mixing Markov shift with equal entropy. Such diffeomorphisms are classified
up to Borel conjugacy modulo periodic orbits  by their topological entropy.
\end{corollary}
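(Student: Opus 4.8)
The plan is to combine the Spectral Decomposition Theorem, Theorem~\ref{mainthm-class}, and the refined Borel generator theorem of~\cite{Hochman2}, running the argument of Theorem~\ref{thmBorelConj} with the extra input that topological mixing forces the measure of maximal entropy to be mixing --- which is exactly what permits replacing ``modulo zero entropy measures'' by ``modulo periodic orbits.'' First I would reduce to a single homoclinic class. Fix a maximal family $\{\cO_i\}$ of non-homoclinically related hyperbolic periodic orbits. By the Main Theorem $f$ has a unique m.m.e.\ $\mu$, with $h(f,\mu)=h_\top(f)>0$, and $\mu$ is Bernoulli, hence ergodic and mixing. By the Covering and Disjointness parts of the Spectral Decomposition Theorem $\mu$ charges $\bigcup_i\HC(\cO_i)$ while all pairwise intersections have zero topological entropy, so ergodicity forces $\mu$ to give full mass to a single $H:=\HC(\cO_{i_0})$, whence $h_\top(f,H)=h_\top(f)$. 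The Uniqueness part then gives that $H$ is the only $\HC(\cO_i)$ of positive topological entropy and, since $f$ is topologically mixing, that $H$ has period $\ell=1$; by the Period part $f|_H$ is topologically mixing. The same Covering/Disjointness argument shows that every ergodic $\nu$ on $M$ with $h(f,\nu)>0$ is carried by $H$: it charges some $\HC(\cO_i)$, and $i\neq i_0$ would force $h(f,\nu)\le h_\top(f,\HC(\cO_i))=0$. Finally Theorem~\ref{mainthm-class}(1) identifies $\mu$ as the unique m.m.e.\ of $f|_H$, with $\supp\mu=H$.

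Next I would apply the Borel machinery to $(M,f)$ itself. Theorem~\ref{mainthm-class}(3) supplies, for every $\chi>0$, a finite-to-one H\"older coding $\pi_\chi\colon\Sigma_\chi^\#\to H$ by an irreducible locally compact countable state Markov shift of period $\ell=1$ (hence mixing) whose image carries every ergodic $\nu$ with $h(f,\nu)>\chi$ --- by the previous paragraph all such $\nu$ live on $H$ --- and whose Gurevich entropy is $h_\top(f,H)=h_\top(f)$. Since $(M,f)$ carries a mixing measure of maximal entropy, the strengthened generator theorem of~\cite{Hochman2} converts this exhausting family of codings into a \emph{single} Borel conjugacy, defined on the complements of the sets of periodic points, between $f$ and a mixing countable state Markov shift $X$; since Borel conjugacy modulo periodic orbits preserves the supremum of the entropies of the non-atomic ergodic invariant measures, the Gurevich entropy of $X$ equals $h_\top(f)$. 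This proves the first assertion.

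For the classification: that supremum equals $h_\top(f)$ for the diffeomorphism (by the variational principle, periodic-orbit measures being atomic of zero entropy) and the Gurevich entropy for a mixing Markov shift, so $h_\top$ is invariant under Borel conjugacy modulo periodic orbits, and it remains to see it is complete. Given two such diffeomorphisms $f,g$ with $h_\top(f)=h_\top(g)=:h$, the first part writes each as Borel conjugate modulo periodic orbits to a mixing countable state Markov shift of Gurevich entropy $h$, and by the Borel classification of mixing Markov shifts (again~\cite{Hochman2}) any two such shifts are themselves Borel conjugate modulo periodic orbits; composing the conjugacies finishes the proof.

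I expect the substantive difficulty to be precisely the step requiring~\cite{Hochman2} rather than~\cite{Hochman1}: upgrading from ``modulo zero entropy measures'' to ``modulo periodic orbits.'' One must see that the \emph{whole} aperiodic part of $(M,f)$ --- not only the positive-entropy dynamics on $H$ encoded by the $\pi_\chi$, but also all the zero-entropy, non-periodic behaviour inside and outside $H$ that those codings do not capture --- can be realized Borel-isomorphically within the zero-entropy part of a mixing Markov shift. This is the content of Hochman's refined generator theorem, whose hypotheses are available precisely because the Main Theorem produces a mixing m.m.e.; the care lies in verifying those hypotheses for $(M,f)$, namely that the positive-entropy measures are exhausted by the subsystems $\pi_\chi(\Sigma_\chi^\#)$ and that nothing pathological hides in the zero-entropy part.
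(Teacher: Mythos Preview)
Your reduction to a single homoclinic class $H=\HC(\cO_{i_0})$ with period $1$ and full entropy is correct and matches the paper, as does your treatment of the classification claim via the invariance of the supremum of non-atomic entropies and Hochman's classification of mixing Markov shifts. The issue is the middle step, where you write that ``the strengthened generator theorem of~\cite{Hochman2} converts this exhausting family of codings into a single Borel conjugacy, defined on the complements of the sets of periodic points.'' Hochman's theorem does not do this: it is an \emph{embedding} theorem (every free Borel $\mathbb Z$-action embeds into any mixing SFT), not a conjugacy theorem, and it says nothing about the codings $\pi_\chi$. Your last paragraph only addresses one direction --- absorbing the aperiodic zero-entropy part of $(M,f)$ into a Markov shift --- and omits the symmetric problem of absorbing the aperiodic zero-entropy part of the Markov shift back into $M$.

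The paper fills this gap concretely. One first takes the Borel conjugacy \emph{modulo zero entropy} between $\HC(\cO)$ and a mixing Markov shift $\Sigma$ given by Theorem~\ref{thmBorelConj} (with $\ell(\cO)=1$), and then upgrades it by a Cantor--Bernstein argument: one constructs mutual Borel embeddings $\Sigma\setminus\Per(\Sigma)\hookrightarrow\HC(\cO)$ and $\HC(\cO)\setminus\Per(f)\hookrightarrow\Sigma$. The nontrivial direction is the first, and the mechanism is a Hilbert-hotel trick: by Katok's theorem (Theorem~\ref{t.katok}) the mixing m.m.e.\ yields a \emph{topologically mixing} horseshoe $K\subset\HC(\cO)$, inside which one finds countably many disjoint Borel copies of a fixed mixing SFT $Y$ with $0<h_\top(Y)<h_\top(K)$; shifting indices frees one copy $Y'\times\{0\}$, and Hochman's embedding theorem then places the leftover aperiodic zero-entropy part of $\Sigma$ there. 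The same construction, run in reverse (no Katok needed on the symbolic side), gives the other embedding. Finally, since $M\setminus\HC(\cO)$ carries only zero-entropy measures, the identical Hilbert-hotel\,/\,Hochman step extends the conjugacy from $\HC(\cO)$ to all of $M$. What your sketch is missing is precisely this two-sided embedding structure together with the use of a mixing horseshoe inside $M$ as the receptacle; without it one has no way to realize the Markov shift's aperiodic zero-entropy orbits inside the surface system.
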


\subsection{Dependence of the simplex of m.m.e.'s on the diffeomorphism}

Let $N_{\max}(f)$ be the number of ergodic m.m.e.'s of a diffeomorphism $f$. It is equal to the dimension of the simplex $\MME(f)$ of (possibly non ergodic) measures maximizing the entropy. The following upper semicontinuity property holds in the $C^\infty$ setting.

\begin{ourtheorem}\label{theorem-N-usc}
Let $M$ be a closed surface.
The function $N_{\max}:\Diff^\infty(M)\to\NN\cup\{\infty\}$ satisfies:
\begin{equation}\label{e.semi-con}
     \text{ if }h_\top(f)>0,\quad \limsup_{g\stackrel{C^\infty}\to f} N_{\max}(g)\leq N_{\max}(f).
\end{equation}
More precisely,
let $k$ be an integer, $f_n\in\Diff^\infty(M)$, and $\Sigma_n$ be some $k$-face of the simplex $\MME(f_n)$.
If $f_n\to f$ in $\Diff^\infty(M)$ and $\Sigma_n\to\Sigma$ in the Hausdorff topology,
then $\Sigma\subset\MME(f)$ and $\Sigma$ is a $k$-dimensional simplex.
\end{ourtheorem}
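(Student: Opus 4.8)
The plan is to derive the semicontinuity from the finiteness and coding results (Theorem~\ref{mainthm-class} and the Spectral Decomposition Theorem) together with the upper semicontinuity of the entropy map in the $C^\infty$ topology. The first reduction is to the ``more precise'' statement: since any $k$ ergodic m.m.e.'s of $g$ span a $k-1$ face of $\MME(g)$, a bound on the dimension of limit faces gives \eqref{e.semi-con}. So fix $f_n\to f$ in $\Diff^\infty(M)$, $k$-faces $\Sigma_n\subset\MME(f_n)$ with $\Sigma_n\to\Sigma$ (Hausdorff), and let $h_*:=h_\top(f)>0$. I would first record two soft facts: (a) by Newhouse \cite{Newhouse-Entropy} and upper semicontinuity of $g\mapsto h_\top(g)$ on $\Diff^\infty(M)$ (Yomdin/Newhouse), $h_\top(f_n)\to h_\top(f)=h_*$; (b) by Yomdin theory the entropy function $(g,\nu)\mapsto h(g,\nu)$ is upper semicontinuous on $\Diff^\infty(M)\times\mathbb P(\cdot)$ in the sense that if $g_n\to g$ and $\nu_n\to\nu$ weak-$*$ with $\nu_n\in\mathbb P(g_n)$, then $\limsup h(g_n,\nu_n)\le h(g,\nu)$. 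Consequently every weak-$*$ limit of elements of $\Sigma_n$ lies in $\mathbb P(f)$ and has entropy $\ge\limsup h_\top(f_n)=h_*$, hence is an m.m.e. of $f$; thus $\Sigma\subset\MME(f)$, and it remains only to show $\dim\Sigma=k$, i.e. that $\Sigma$ does not collapse.

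The heart of the matter is to prevent collapse, and here I would use the homoclinic classes of $f$ as ``labels.'' By the Spectral Decomposition Theorem applied to $f$, choose a maximal family $\{\cO_1,\dots,\cO_m\}$ of non-homoclinically related hyperbolic periodic orbits of $f$ carrying all the entropy-$h_*$ ergodic measures (finiteness item~(4) applied with $\chi=h_*>0$ gives $m<\infty$), and let $H_i=\HC(\cO_i)$; by Theorem~\ref{mainthm-class}(1) each $H_i$ supports at most one m.m.e., call it $\mu_i$ when it exists, so $N_{\max}(f)=\#\{i:\mu_i \text{ exists}\}\le m$, and $\MME(f)$ is the simplex on $\{\mu_i\}$. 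The key persistence mechanism: each $\cO_i$ is a transverse homoclinic orbit configuration, hence by the implicit function theorem it has a continuation $\cO_i(g)$ for $g$ near $f$, and the finitely many transverse heteroclinic/homoclinic intersections witnessing $\cO_i\not\hsim\cO_j$ also persist; so for $n$ large the $\cO_i(f_n)$ are again pairwise non-homoclinically related hyperbolic periodic orbits of $f_n$, and one can complete them to a maximal family for $f_n$. Now I claim each ergodic m.m.e. of $f_n$ whose entropy is near $h_*$ lies in exactly one $\HC_{f_n}(\cO_i(f_n))$: indeed an ergodic measure $\nu$ of $f_n$ that is NOT homoclinically related to any $\cO_i(f_n)$ contributes a new class $\HC_{f_n}(\cO_{m+1})$, and if this happened along a subsequence with $h(f_n,\nu_n)\to h_*$ then a weak-$*$ limit of the $\nu_n$ would be an m.m.e. of $f$ supported off $\bigcup_i H_i$, contradicting covering item~(1) of the Spectral Decomposition Theorem for $f$ (using $h_*>0$). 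Hence, after passing to a subsequence and relabeling, there is a partition of the ergodic m.m.e.'s of $f_n$ among the classes $\HC_{f_n}(\cO_i(f_n))$, $i=1,\dots,m$; by Theorem~\ref{mainthm-class}(1) each such class carries at most one, so $N_{\max}(f_n)\le m$ for large $n$ — already giving $\limsup N_{\max}(f_n)\le m$, though not yet the sharp bound $N_{\max}(f)$.

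To get the sharp bound and the statement about $k$-faces, I would argue that a class $\HC_{f_n}(\cO_i(f_n))$ can carry an ergodic m.m.e. for infinitely many $n$ only if $\HC_f(\cO_i)$ carries one. Suppose $\nu_n\in\MME(f_n)$ is ergodic and supported on $\HC_{f_n}(\cO_i(f_n))$ for all $n$ in an infinite set; pick a weak-$*$ limit $\nu$. Then $\nu\in\MME(f)$ by the soft argument above, and $\nu$ is supported on $\bigcap$ of the Hausdorff-limit of the supports, which is contained in $\limsup \overline{W^u(\cO_i(f_n))}\cap\cdots$; the point is that $\nu$ is then homoclinically related to $\cO_i$ — this uses that the stable/unstable manifolds of $\cO_i(f_n)$ vary continuously (on compact pieces, in $C^1$) with $n$, so transverse intersections with $W^{s/u}$ of a Pesin block of $\nu$ survive in the limit; invoke Theorem~\ref{mainthm-class}(2) in the form: an ergodic positive-entropy measure on $H_i$ is homoclinically related to $\cO_i$, hence equals $\mu_i$. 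Therefore the index set of classes that carry an m.m.e. for $f_n$ is, along the subsequence, contained in the index set for $f$, giving $N_{\max}(f_n)\le N_{\max}(f)$ and \eqref{e.semi-con}. Finally, for the face statement: write $\Sigma_n=\operatorname{conv}\{\nu_n^0,\dots,\nu_n^k\}$ with $\nu_n^j$ distinct ergodic m.m.e.'s; by the above, after relabeling, $\nu_n^j$ lives on $\HC_{f_n}(\cO_{i(j)}(f_n))$ with $j\mapsto i(j)$ \emph{injective} (distinct ergodic m.m.e.'s lie in distinct classes for fixed $n$), and $\nu_n^j\to\mu_{i(j)}$; since $i$ is injective the $k+1$ limits are distinct ergodic m.m.e.'s of $f$, so $\Sigma=\operatorname{conv}\{\mu_{i(0)},\dots,\mu_{i(k)}\}$ is a genuine $k$-dimensional simplex in $\MME(f)$.

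The main obstacle is the persistence/limit argument for homoclinic relations of \emph{measures} across the parameter $n$: showing that an ergodic m.m.e. of $f_n$ trapped in the continuation of a class $H_i$ must converge to the $f$-m.m.e. of $H_i$ rather than escape or merge. This needs that the coding $\pi_\chi$ and the associated Pesin structures, or at least the transverse intersection data defining $\cO'\hsim\cO_i$ for periodic $\cO'$ of uniformly bounded period and uniformly bounded hyperbolicity, vary continuously in $g$; it is here that $C^\infty$ smoothness (uniform bounds from Yomdin theory, hence no entropy escaping to ``near-zero-Lyapunov'' measures because $\chi=h_*>0$ is bounded away from $\lambdamin/r=0$) is essential. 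Everything else is a formal consequence of Theorems~\ref{mainthm-Spectral} and \ref{mainthm-class}.
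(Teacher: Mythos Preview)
Your overall strategy---label ergodic m.m.e.'s of $f_n$ by continuations of homoclinic classes of $f$ and show the labeling is injective---is close in spirit to the paper's, but there are two genuine gaps that make the argument fail as written.

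First, the sentence ``the finitely many transverse heteroclinic/homoclinic intersections witnessing $\cO_i\not\hsim\cO_j$ also persist; so for $n$ large the $\cO_i(f_n)$ are again pairwise non-homoclinically related'' is wrong: non-relation is the \emph{absence} of transverse intersections, and absence is not robust. Homoclinic classes can (and do) merge under perturbation---the paper itself notes this in the remarks after the theorem. Second, and more seriously, your key step (``a weak-$*$ limit of the $\nu_n$ would be an m.m.e.\ of $f$ supported off $\bigcup_i H_i$'') fails because the limit $\nu$ need not be ergodic; its ergodic components \emph{are} carried by the $H_i$, so there is no contradiction. The same issue recurs in your final paragraph: you assert the limits $\mu_{i(j)}$ are ergodic, which is not justified. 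Your proposed fix (``transverse intersections with $W^{s/u}$ of a Pesin block of $\nu$ survive in the limit'') does not address this, since Pesin blocks of $f_n$ do not vary continuously with $n$ in any way that would make this rigorous.

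The paper avoids all of this by arguing by contradiction at the level of the (possibly non-ergodic) limits $\mu^i$: if $\Sigma$ collapses, two limits $\mu^0,\mu^1$ share a common ergodic component $\nu$, and then a substantial proposition (Proposition~\ref{prop-fn}) shows that any ergodic m.m.e.\ of $f_n$ weak-$*$ close to either $\mu^0$ or $\mu^1$ must be homoclinically related to the continuation $\cO_{f_n}$ of a fixed $\cO\hsim\nu$. This forces $\mu_n^0\hsim\mu_n^1$ for $f_n$, hence $\mu_n^0=\mu_n^1$ by Corollary~\ref{c-local-uniqueness}, a contradiction. The proof of Proposition~\ref{prop-fn} is the real work: it replays the entropy-counting argument of Theorem~\ref{thmEntropyDecay} using the persistence of $su$-quadrilaterals (which \emph{is} robust, being built from finitely many transverse intersections) and the robust tail entropy bound $h^*_{\Diff^\infty}(f)=0$. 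Your sketch does not supply this mechanism.
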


\begin{remark}
Upper semicontinuity of $N_{\max}$ can fail at diffeomorphisms with zero entropy:
consider for instance a sequence of rational rotations of the circle converging to an irrational one.
\end{remark}

\begin{remark}
In the $C^r$ topology, for finite $r$, our techniques only yield that the number of ergodic m.m.e.'s is locally bounded at $f\in\Diff^r(M)$ with $h_\top(f)>\lambdamin(f)/r$. We do not know if their number is actually upper semicontinuous under this hypothesis.
\end{remark}

\begin{remark}
Lower semicontinuity should not hold in general:  {e.g.,
different homoclinic classes of maximal entropy {may sometimes merge through} an arbitrarily small perturbation,
see~\cite{Diaz-Santoro} for an example in dimension $3$.}
These examples also indicate that the limit of ergodic m.m.e.'s does not have to be ergodic. More generally, the limiting simplex $\Sigma$ in the above statement can be strictly included in a $k$-face of $\MME(f)$.
\end{remark}

\subsection{Measured homoclinic classes in arbitrary dimension}
Using the recent generalization by Ben Ovadia~\cite{Ben-Ovadia} of~\cite{Sarig-JAMS}, our construction of a coding by irreducible Markov shifts (Theorem~\ref{Theorem-Symbolic-Dynamics-C^r} below) has a straightforward generalization to any dimension, with the same proof. We point out that this direct generalization is restricted to \emph{measured homoclinic classes,} i.e., classes of homoclinically related hyperbolic ergodic measures. Thus these classes appear as natural pieces of the dynamics whereas the classical topological homoclinic classes seem more difficult to analyze.

In particular, the following consequence of the coding (see Corollary~\ref{c-local-uniqueness})
holds in any dimension:
\emph{Let $f$ be a $C^r$ diffeomorphism, $r>1$, of a closed manifold, $\phi$ an admissible potential,
and $\cO$ a hyperbolic periodic orbit. Then there is at most one ergodic, hyperbolic, equilibrium measure for $\phi$ homoclinically related to $\cO$.
Moreover, when such an equilibrium measure exists,
its support coincides with $\HC(\cO)$,
and it is isomorphic to the product of a Bernoulli scheme with the cyclic permutation of order
$\gcd\{\operatorname{Card}(\cO') : \cO'\hsim \cO\}$.}

\subsection{Flows}
Combining with \cite{Lima-Sarig,Ledrappier-Lima-Sarig},
some of our results should extend to non-singular flows on three-dimensional manifolds,
and in particular to the geodesic flows of positive topological entropy on surface.
(An easier case for this generalization are flows admitting a global compact transverse section.)

\subsection{Conjectures in finite smoothness}\label{ss.comment}

Our techniques yield rather complete qualitative results assuming $C^\infty$ smoothness and positive entropy.
When we only assume finite smoothness (i.e., $C^r$ smoothness with $1<r<\infty$), then we usually need to restrict to entropy larger than $\lambdamin(f)/r$ (though the higher value of $\lambdamax(f)/r$ is sometimes needed for the exploitation of topological transitivity). Let us review these results and discuss their possible optimality. Throughout this section $f$ is a $C^r$ diffeomorphism of a closed surface.

\paragraph{a. {Infinite number of homoclinic classes}.}

By Theorem~\ref{thmEntropyDecay}, if $\{\mu_i\}_{i\in I}$ is a family of ergodic hyperbolic measures such that $\mu_i\hsim\mu_j\implies i=j$, then
 $$
   \text{for any }\chi>\frac{\lambdamin(f)}r \quad \text{ the set }\{i\in I: h(f,\mu_i)\geq \chi\} \text{ is finite} .
 $$
A standard construction produces infinitely many disjoint horseshoes
with topological entropy bounded away from zero, by a $C^r$-perturbation near a homoclinic orbit
(see for instance~\cite{misiurewicz,newhouse1978,newhouse-downarowicz,BuzziNoMax}).
In a forthcoming work \cite{Buzzi-Crovisier-Sarig-forthcoming} we use this technique to show that this value of $\lambdamin(f)/r$ is the best possible
for the finiteness property in the following sense.

\begin{theorem*}
For any $1<r<\infty$,
there is a $C^r$ surface diffeomorphism $f$ with infinitely many pairwise disjoint homoclinic classes $\HC(\cO_n)$, each supporting
a measure $\mu_n$ with $h(f,\mu_n)=h_\top(f,\HC(\cO_n))$ and such that
 $$
   \liminf_{n\to\infty} h_\top(f,\HC(\cO_n)) \geq\frac{\lambdamin(f)}r.
$$
\end{theorem*}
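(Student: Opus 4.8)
The plan is to adapt the classical mechanism by which, near a transverse homoclinic orbit of a hyperbolic saddle, $C^r$ perturbations produce many disjoint horseshoes (see \cite{misiurewicz,newhouse1978,newhouse-downarowicz,BuzziNoMax}), but to tune the scales and the $C^r$-size of the perturbations so that the entropies of the horseshoes approach the critical value $\lambdamin(f)/r$ rather than merely staying bounded away from zero. I would start from a $C^\infty$ diffeomorphism $f_0$ of a closed surface with a hyperbolic fixed saddle $p$ of eigenvalues $0<\sigma<1<\lambda$, a transverse homoclinic orbit of $p$, and no other nontrivial recurrence, all derivatives of $f_0$ being chosen so that $\|Df_0\|\le\lambda$ and $\|Df_0^{-1}\|\le 1/\sigma$ everywhere and $\log(1/\sigma)<\log\lambda$; since $p$ realizes these bounds, $\lambda^u(f_0)=\log\lambda$ and $\lambda^s(f_0)=\log(1/\sigma)$, so $\lambdamin(f_0)=\log(1/\sigma)$ is carried by the stable direction (the opposite configuration is obtained by passing to $f^{-1}$, under which everything is symmetric). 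I would then choose excursion lengths $T_n\nearrow\infty$, pairwise disjoint open sets $U_n$ shrinking towards the homoclinic orbit, and diffeomorphisms $g_n$ supported in $U_n$, and set $f:=f_0\circ\prod_n g_n$; each $g_n$ is chosen so that the return map of $f$ along the $n$-th piece of the homoclinic orbit, of return time $\asymp T_n$, is a horseshoe map with $k_n$ legs.

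Each $g_n$ is built from ``folds'' of spatial scale $\rho_n\to 0$; since $r>1$, keeping $\sup_n\|g_n\|_{C^r}<\infty$ forces the amplitude of a fold at scale $\rho$ to be $\asymp\rho^{\,r}$, so that $g_n\to\mathrm{id}$ in the $C^1$ topology. A single fold applied to an incoming strip which, after $T_n$ iterates near $p$, has become long and exponentially thin (thickness $\asymp\sigma^{\,T_n}$) already multiplies the number of legs substantially; by inserting folds repeatedly along the excursion, at the successively smaller scales dictated by the contraction near $p$, one aims to reach a leg-count that is \emph{extremal} for the Yomdin-type inequality underlying Theorem~\ref{thmEntropyDecay}, namely $\log k_n\approx T_n\,\lambda^s(f)/r$, so that $\tfrac1{T_n}\log k_n\to\lambda^s(f)/r$. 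The subtle point is that the folds also perturb the global exponent $\lambda^s(f)$, which can exceed $\log(1/\sigma)$ by an amount governed by how often and at what scale an orbit can traverse a fold; one must therefore choose the number and the scales of the folds per excursion self-consistently, so that the value of $\lambda^s(f)$ actually produced is the value for which the leg-counts are extremal, while keeping $\lambda^u(f)$ larger so that $\lambdamin(f)=\lambda^s(f)$.

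Since the $U_n$ are disjoint and the $g_n$ are $C^1$-small with summable sizes, $f=f_0\circ\prod_n g_n$ is a well-defined $C^r$ diffeomorphism, equal to $f_0$ off $\bigcup_n U_n$. The maximal invariant set $H_n$ of the $n$-th horseshoe map $F_n=f^{\,T_n}$ is a locally maximal hyperbolic basic set, conjugate via $F_n$ to the full shift on $k_n$ symbols; hence $f|_{H_n}$ carries an ergodic (Bernoulli) measure $\mu_n$ of maximal entropy with $h(f,\mu_n)=h_\top(f,H_n)=\tfrac1{T_n}\log k_n$. Fixing a periodic orbit $\cO_n\subset H_n$, and using that off $\bigcup_n U_n$ the dynamics is that of $f_0$, which has no recurrence beyond $p$ and its homoclinic horseshoe, one arranges --- by placing each horseshoe in its own region and decoupling it from the others and from $p$ --- that $\HC(\cO_n)=H_n$ and that the classes $\HC(\cO_n)$ are pairwise disjoint and pairwise not homoclinically related. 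Then $f$ has infinitely many pairwise disjoint homoclinic classes $\HC(\cO_n)=H_n$, each carrying a measure of maximal entropy of the class, with $h_\top(f,\HC(\cO_n))=\tfrac1{T_n}\log k_n\to\lambdamin(f)/r$; in particular $\liminf_n h_\top(f,\HC(\cO_n))\ge\lambdamin(f)/r$ (and, by Theorem~\ref{thmEntropyDecay}, the limit equals $\lambdamin(f)/r$).

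I expect the main obstacle to be the construction of the $g_n$: one must simultaneously produce $k_n\approx e^{T_n\lambda^s(f)/r}$ legs with a perturbation of uniformly bounded $C^r$ norm --- that is, genuinely saturate the Yomdin/entropy-at-infinity inequality in the stable direction --- and control precisely the effect of the perturbations on the global exponent $\lambda^s(f)$, so that the entropy created reaches $\lambdamin(f)/r$ exactly rather than falling short of it by a fixed amount. A secondary but still delicate point is the homoclinic-class bookkeeping of the last step: since every stable and unstable manifold in play descends from $W^s(p)$ or $W^u(p)$ and these do intersect, one must arrange the transverse intersections so that none of them ever links two distinct horseshoes, so that the classes $\HC(\cO_n)=H_n$ really are disjoint and locally maximal.
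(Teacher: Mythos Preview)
The paper does not actually contain a proof of this theorem. It is stated in Section~\ref{ss.comment} as a result to appear in a forthcoming work \cite{Buzzi-Crovisier-Sarig-forthcoming}; the only indication of method is the sentence preceding it: ``A standard construction produces infinitely many disjoint horseshoes with topological entropy bounded away from zero, by a $C^r$-perturbation near a homoclinic orbit (see for instance~\cite{misiurewicz,newhouse1978,newhouse-downarowicz,BuzziNoMax}). In a forthcoming work \cite{Buzzi-Crovisier-Sarig-forthcoming} we use this technique to show that this value of $\lambdamin(f)/r$ is the best possible\ldots'' So there is no proof here to compare your proposal against.

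That said, your strategy is consistent with this hint: you start from a transverse homoclinic orbit and insert $C^r$-small folds to create disjoint horseshoes whose entropies approach $\lambdamin(f)/r$. You have correctly identified the two genuine difficulties, and you flag them honestly rather than paper over them. The first --- saturating the Yomdin bound, i.e.\ arranging $\tfrac{1}{T_n}\log k_n\to\lambda^s(f)/r$ while keeping the $C^r$ norm bounded and simultaneously controlling the feedback of the folds on $\lambda^s(f)$ itself --- is the heart of the matter and is not resolved in your sketch; your proposal is a plausible plan, not a proof. The second --- ensuring the horseshoes are genuinely pairwise non-homoclinically related despite all being built from pieces of $W^{s/u}(p)$ --- is also real: as you note, a careless placement would produce heteroclinic connections, and one typically needs either to isolate each horseshoe in an invariant region or to break the connection to $p$ after the fact. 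Both issues are exactly the kind of thing one expects the cited forthcoming paper to address in detail.
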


We believe the threshold $\lambdamin(f)/r$ is also sharp for the finiteness of the number of ergodic m.m.e. proved in the Main Theorem Revisited:
\smallskip

\begin{conjecture}
For any $\varepsilon>0$, any $1<r<\infty$,
there is a $C^r$ surface diffeomorphism $f$ with $h_\top(f)>\lambdamin(f)/r-\eps$ and infinitely many ergodic m.m.e.'s.
\end{conjecture}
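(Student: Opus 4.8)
The plan to prove this conjecture is to construct, on a fixed closed surface $M$ (for instance $M=\T^2$), a $C^r$ diffeomorphism $f$ carrying a countable family of pairwise disjoint horseshoes $\Lambda_n$ ($n\geq 1$) with three properties: \emph{(a)} $h_\top(f,\Lambda_n)=h_*$ for all $n$, for a common value $h_*>0$; \emph{(b)} $h_\top(f)=h_*$; and \emph{(c)} $h_*>\lambdamin(f)/r-\eps$. Granting such an $f$, each $\Lambda_n$, being a transitive subshift of finite type, carries an ergodic measure $\mu_n$ with $h(f,\mu_n)=h_\top(f,\Lambda_n)=h_*=h_\top(f)$; the $\mu_n$ have pairwise disjoint supports, hence are pairwise distinct ergodic m.m.e.'s, and $h_\top(f)=h_*>0$, which is exactly the assertion. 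The point to emphasize is that, compared with the forthcoming construction behind the Theorem displayed above, which only yields $\liminf_n h_\top(f,\HC(\cO_n))\geq\lambdamin(f)/r$ (with no control of whether the $\mu_n$ are \emph{globally} maximizing), the two new demands are that \emph{all} the horseshoes carry exactly the same entropy and that this entropy be globally maximal; note also that $h_*\leq\lambdamin(f)/r$ is then forced by part (1) of the Main Theorem Revisited, so the construction must be essentially tight.

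\emph{Step 1 (local models).} Following the classical constructions of Misiurewicz and Newhouse and the references cited above, one produces the horseshoes one at a time inside disjoint coordinate boxes $B_n\subset M$ of size $\delta_n\to 0$ accumulating on a single point $q$. Inside $B_n$ the map is arranged so that the first-return map $f^{k_n}\colon B_n\to B_n$ realizes a Smale horseshoe with $N_n$ full strips; taking $N_n=N^{m_n}$ and $k_n=k\,m_n$ with $\tfrac{\log N}{k}=h_*$, every $f^{k_n}|_{\Lambda_n}$ is conjugate to the full shift on $N^{m_n}$ symbols, so $h_\top(f,\Lambda_n)=\tfrac1{k_n}\log N_n=h_*$ for all $n$, which gives \emph{(a)}. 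The transition dynamics carrying orbits from $B_n$ toward $q$ is taken gradient-like (wandering), with $q$ and finitely many auxiliary hyperbolic periodic points as the only recurrence outside $\bigcup_n\Lambda_n$; a standard covering argument then yields $h_\top(f)=\sup_n h_\top(f,\Lambda_n)=h_*$, i.e.\ \emph{(b)}. One must rule out entropy leaking into the accumulation at $q$: this is where the tail-entropy bound for $C^r$ surface maps (no more than $\lambdamax(f)/r$ at small scales) enters.

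\emph{Step 2 (the quantitative heart, and the main obstacle).} The delicate and currently unfinished point is to keep the construction $C^r$-admissible simultaneously at \emph{all} scales $\delta_n$ while pushing $h_*$ above $\lambdamin(f)/r-\eps$. Folding $B_n$ so that its image wraps $N_n$ times using $k_n$ steps, each with $C^r$-norm bounded independently of $n$, is constrained by a Yoccoz-type inequality: a $C^r$ function on an interval of length $\delta$ with $C^r$-norm $\leq C$ and derivative $\leq\Lambda$ on its monotone laps performs at most of order $C^{1/r}\Lambda^{(r-1)/r}$ laps of full height. Iterating this estimate over the $k_n$ return steps reproves $h_\top(f,\Lambda_n)\leq\lambdamin(f)/r+o(1)$ and, read in reverse, prescribes exactly how aggressively one may fold; arranging the folds so as to nearly saturate the bound, consistently for every $n$, and gluing the infinitely many local models into a single globally $C^r$ map with uniformly bounded norm---controlling especially the derivatives in the thin necks joining the $B_n$---is the main technical obstacle. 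A secondary obstacle is that $\lambdamin(f)$ is the \emph{global} maximal expansion/contraction rate, so it sees the connecting dynamics, not only the interiors of the $B_n$; condition \emph{(c)} forces $h_*$ close to $\lambdamin(f)/r$, so the gluing must be carried out without inflating $\lambdamin(f)$.

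\emph{Scope.} The diffeomorphism so built is not topologically transitive; upgrading to the transitive case---which by the Main Theorem Revisited would mean going below $\lambdamax(f)/r$---appears substantially harder and is not attempted here. Since $h_*>0$ already gives $h_\top(f)>0$, the only quantity to optimize is the ratio $h_*/\lambdamin(f)$, and it is precisely the need for a near-optimal $C^r$ fold at every scale at once, together with the global entropy and $C^r$-norm bookkeeping, that keeps the statement a conjecture rather than a theorem.
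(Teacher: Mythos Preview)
The paper does not prove this statement: it is explicitly recorded as an open conjecture (Conjecture~1 in Section~\ref{ss.comment}), with no proof or sketch offered. There is therefore nothing in the paper to compare your proposal against.

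That said, your proposal is not a proof either, and you say so yourself. What you have written is a plausible strategic outline---disjoint horseshoes of equal entropy $h_*$ accumulating on a point, with wandering dynamics outside---together with an honest identification of the two genuine obstacles: (i) arranging the folds so that $h_*$ nearly saturates $\lambdamin(f)/r$ \emph{uniformly in $n$} while keeping a global $C^r$ bound, and (ii) controlling $\lambdamin(f)$ through the gluing regions so that the connecting dynamics does not inflate it. These are exactly the difficulties that separate this conjecture from the Theorem (stated just above it in the paper) which only asserts $\liminf_n h_\top(f,\HC(\cO_n))\geq\lambdamin(f)/r$ without forcing the $\mu_n$ to be global maximizers. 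Your observation that $h_*\leq\lambdamin(f)/r$ is forced by the Main Theorem Revisited, so the construction must be tight, is correct and is precisely why the conjecture is delicate.

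In short: your write-up is a reasonable research plan, correctly flagged as incomplete, for a statement the paper leaves open. It should not be presented as a proof, and you have not presented it as one.
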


\paragraph{b. Uniqueness in the transitive case.}
When $f$ is topologically transitive, the Main Theorem Revisited asserts the uniqueness of the m.m.e. when $h_\top(f)>\lambdamax(f)/r$ (it shows the existence of at most two ergodic m.m.e.'s when $h_\top(f)>\lambdamin(f)/r$).
But in fact we know of no example showing that positive topological entropy is not enough to ensure uniqueness in the transitive case:

\begin{question}
Given $1\leq r<\infty$, does there exist a transitive $C^r$-smooth surface diffeomorphism with positive topological entropy that has multiple m.m.e.'s?
\end{question}

Likewise, Corollary~\ref{c.measure-related} says that two homoclinic classes $\HC(\cO),\HC(\cO')$ with $\cO'\not\hsim\cO$ can intersect only in a set with zero topological entropy as soon as $h_\top(f,\HC(\cO))>\lambdamax(f)/r$. But we do not know if this entropy condition is necessary and much less if it is sharp:

\begin{question}
Given $1\leq r<\infty$, does there exist a $C^r$-smooth surface diffeomorphism
admitting two homoclinic classes $\HC(\cO),\HC(\cO')$
such that $h_\top(f,\HC(\cO)\cap H(\cO'))>0$ but $\cO'\not\hsim\cO$?
\end{question}

\paragraph{c. Nonexistence of m.m.e.}
Our results address the finiteness of the number of ergodic m.m.e.'s, but not their existence.
Examples of $C^r$ surface diffeomorphisms $f$ without m.m.e. are constructed in \cite{BuzziNoMax}. However their topological entropy is smaller than (but arbitrarily close to) $\lambdamin(f)/r$.
It has been asked \cite{BuzziNoMax} whether this is optimal as it is for interval maps (see \cite{BuzziRuette2006,BurguetExistence2014}):

\begin{conjecture}
For  any $1<r<\infty$, any $C^r$ surface diffeomorphism $f$ with topological entropy larger than $\lambdamin(f)/r$ has at least one measure maximizing the entropy.
\end{conjecture}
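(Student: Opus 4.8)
The natural route is the classical one: choose $\mu_n\in\Proberg(f)$ with $h(f,\mu_n)\to h_\top(f)$ and try to extract from the sequence a measure realizing the supremum. The whole difficulty is that for $1<r<\infty$ the entropy map $\nu\mapsto h(f,\nu)$ is only upper semicontinuous \emph{up to a defect} governed by the tail entropy, and that mass and entropy may escape along $(\mu_n)$ towards the locus where hyperbolicity degenerates. The plan is to show that \emph{every} such escape carries entropy at most $\lambdamin(f)/r$, so that under the hypothesis $h_\top(f)>\lambdamin(f)/r$ a definite amount of entropy must survive in any weak-$*$ limit, and then to extract a genuine maximizing measure from the surviving part using the symbolic machinery of the paper.

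\emph{Step 1 (reduction to the hyperbolic, symbolically coded region).} Fix $\chi$ with $\lambdamin(f)/r<\chi<h_\top(f)$. By the $C^r$ versions of the Spectral Decomposition Theorem and of Theorem~\ref{mainthm-class}(3), there are finitely many homoclinic classes $\HC(\cO_i)$, $i=1,\dots,N$, with $h_\top(f,\HC(\cO_i))\ge\chi$, every ergodic measure of entropy $>\chi$ is carried by their union, and each such class carries an irreducible locally compact countable Markov shift $(\Sigma_i,\sigma)$ with a finite-to-one H\"older coding $\pi_i\colon\Sigma_i^\#\to\HC(\cO_i)$ lifting all ergodic measures of entropy $>\chi$ supported there. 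Thus, for $n$ large, $\mu_n$ lifts to a shift-invariant $\hat\mu_n$ on one of the $\Sigma_i$ with $h(\sigma,\hat\mu_n)=h(f,\mu_n)$; passing to a subsequence, all $\hat\mu_n$ live on a single $\Sigma:=\Sigma_i$, whose Gurevich entropy equals $\sup\{h(f,\nu):\nu\in\Proberg(f),\,h(f,\nu)>\chi\}=h_\top(f)$. The problem becomes: \emph{an irreducible countable Markov shift of finite Gurevich entropy, arising as such a coding with $h_G(\Sigma)>\lambdamin(f)/r$, and carrying invariant measures of entropy converging to $h_G(\Sigma)$, possesses a measure of maximal entropy.}

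\emph{Step 2 (ruling out escape of entropy).} By Gurevich's theorem \cite{Gurevich-Measures-Of-Maximal-Entropy} a m.m.e.\ on $\Sigma$ exists once $\Sigma$ is positive recurrent, and by an entropy-at-infinity criterion it suffices to prove that the \emph{entropy at infinity} of $\Sigma$ — the supremum of $\limsup_n h(\sigma,\hat\mu_n)$ over sequences $\hat\mu_n$ eventually leaving every finite subgraph of $\Sigma$ — is $\le\lambdamin(f)/r<h_G(\Sigma)$. This is where the geometry of the coding enters: escaping sequences of shift measures project to $f$-invariant measures on $M$ concentrating on the locus where the adapted Pesin charts underlying the Sarig--Ben~Ovadia construction shrink to zero size, i.e.\ where the finite-time hyperbolicity estimates degenerate. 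On that locus one must show, via Buzzi--Yomdin volume bounds for $C^r$ maps combined with a Ruelle-type inequality evaluated at the relevant shrinking scale, that any invariant measure carried there has entropy at most $\lambdamin(f)/r$; letting the scale tend to $0$ along the escaping sequence bounds the entropy at infinity. Steps 1 and 2 together yield $\hat\mu\in\Prob(\sigma)$ with $h(\sigma,\hat\mu)=h_G(\Sigma)=h_\top(f)$, and $(\pi_i)_*\hat\mu$ is the desired m.m.e.; in the topologically transitive case only one $\Sigma_i$ can attain $h_G=h_\top(f)$, recovering uniqueness as well.

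\emph{Main obstacle.} The crux is Step 2: rigorously establishing that \emph{entropy escaping to infinity in the coding is always $\le\lambdamin(f)/r$}. This demands quantitative, uniform-along-an-escaping-sequence complexity estimates for $f$ near the degeneration locus of the adapted charts — sharper than the almost-everywhere statements used elsewhere in the paper — and it is precisely the quantity that the examples of \cite{Buzzi-Crovisier-Sarig-forthcoming} suggest cannot be improved below $\lambdamin(f)/r$. A secondary difficulty is matching this escaping-mass analysis with the specific combinatorics of the coding so that the ``entropy at infinity $<$ Gurevich entropy $\Rightarrow$ positive recurrence'' implication, known for finite-entropy irreducible countable Markov shifts, applies here; and one must check that, although the surviving entropy is a priori spread over the finitely many shifts $\Sigma_1,\dots,\Sigma_N$, it concentrates on a single irreducible one — which it does because $h_\top(f)$ is attained as a strict maximum over the finitely many candidate Gurevich entropies $h_\top(f,\HC(\cO_i))$.
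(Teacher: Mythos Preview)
The statement you are attempting to prove is listed in the paper as a \emph{Conjecture} (Section~\ref{ss.comment}, paragraph~(c)), not as a theorem; the paper offers no proof, so there is nothing to compare your proposal against. What you have written is a plausible strategy together with an honest identification of the obstacle, not a proof.

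Your Step~1 is essentially sound: the $C^r$ spectral decomposition (Theorem~\ref{t-spectral-revised1}) and the coding (Theorem~\ref{Theorem-Symbolic-Dynamics-C^r}) do reduce the question to the existence of an m.m.e.\ on a single irreducible countable Markov shift $\Sigma$ of Gurevich entropy $h_\top(f)$. The implication ``entropy at infinity $<$ Gurevich entropy $\Rightarrow$ positive recurrence $\Rightarrow$ existence of an m.m.e.'' is indeed known for irreducible countable Markov shifts, so the architecture of your argument is correct.

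The genuine gap is exactly what you flag as the ``Main obstacle'': you have not proved that the entropy at infinity of the coding is bounded by $\lambdamin(f)/r$. Your sketch (``Buzzi--Yomdin volume bounds combined with a Ruelle-type inequality evaluated at the relevant shrinking scale'') is a description of the \emph{kind} of estimate one would want, not an argument. The difficulty is that escaping mass in $\Sigma$ corresponds to orbits spending long stretches with poor Pesin-chart parameters, and bounding the entropy carried by such orbits uniformly---not just $\mu$-a.e.\ for a fixed $\mu$---is precisely the missing ingredient that keeps this a conjecture. The examples announced in \cite{Buzzi-Crovisier-Sarig-forthcoming} show that the threshold $\lambdamin(f)/r$ is sharp for related finiteness questions, which suggests the bound you need is delicate and cannot be obtained by soft arguments. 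Until that estimate is established, your proposal remains a program, not a proof.
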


There are two possible scenarios leading to the non-existence of any m.m.e.
In the examples built in~\cite{BuzziNoMax}
the lack of m.m.e.'s occurs despite the existence of a homoclinic class with maximal entropy (see \cite{Buzzi-Crovisier-Sarig-forthcoming} for details):
\smallskip

\noindent
\begin{theorem*}{For any $1<r<\infty$,
there is a $C^r$ surface diffeomorphism $f$ with a homoclinic class $\HC(\cO)$
such that
 \begin{enumerate}[(1)]
 \item $h_\top(f,\HC(\cO))=h_{\top}(f)=\frac{\lambdamin(f)}r,$
 \item $h(f,\mu)<h_{\top}(f)$, for any $\mu\in\Proberg(f)$.
\end{enumerate}}
\end{theorem*}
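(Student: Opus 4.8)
\medskip\noindent\textbf{Proof proposal.}
This is a construction --- carried out in \cite{Buzzi-Crovisier-Sarig-forthcoming} --- and the plan is to refine the ``no m.m.e.'' examples of \cite{BuzziNoMax} so that, in addition, all the positive-entropy dynamics lies in one homoclinic class. I would build $f$ so that, away from a small neighbourhood $N$ of a neutral fixed point $p$, the map is gradient-like, hence carries no positive-entropy invariant measure, while inside $N$ there is a sequence of linear horseshoes $\Lambda_n\subset B_n$, with boxes $B_n$ shrinking to $p$, each $\Lambda_n$ having $\kappa_n$ legs, first-return time $t_n$, and unstable and stable multipliers $e^{u_n}$, $e^{-u_n}$ per return. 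I would spread the expansion and contraction essentially uniformly in time over a return, with $u_n/t_n\nearrow\Lambda$ for a fixed $\Lambda>0$, and keep the gradient-like part free of exponential derivative growth; then $\lambda^u(f)=\lambda^s(f)=\Lambda$, so $\lambdamin(f)=\Lambda$. I would also fix a hyperbolic fixed saddle with orbit $\cO$, and route the two branches of $W^u(\cO)$ and $W^s(\cO)$ so that they cross each $B_n$ Markovianly; since the homoclinic relation is transitive, all periodic orbits of all the $\Lambda_n$ become homoclinically related to $\cO$, so $\Lambda_n\subset\HC(\cO)$ for every $n$.

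The quantitative heart is a sharp $C^r$ ``folding'' estimate: a diffeomorphism of bounded $C^r$ norm cannot, along an orbit segment of length $t$ that expands a strip by the factor $e^{u}$, refold that strip into more than $e^{u/r+o(u)}$ substrips, and conversely this rate is attainable. Choosing $\log\kappa_n$ as large as this permits --- $\log\kappa_n=u_n/r+o(u_n)$, but always with $\log\kappa_n/t_n<\Lambda/r$ --- yields
$$
   h_\top(f,\Lambda_n)=\frac{\log\kappa_n}{t_n}\ \nearrow\ \frac{\Lambda}{r}=\frac{\lambdamin(f)}{r},
$$
strictly for each $n$. The boxes $B_n$ are dynamically separated, so every ergodic $\nu$ with $h(f,\nu)>0$ is carried by the $f$-orbit of a single $\Lambda_n$; hence $h(f,\nu)\le h_\top(f,\Lambda_n)<\lambdamin(f)/r$, while $h_\top(f)=\sup_n h_\top(f,\Lambda_n)=\lambdamin(f)/r>0$. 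This is property (2). And since $\Lambda_n\subset\HC(\cO)$,
$$
   h_\top(f,\HC(\cO))\ge\sup_n h_\top(f,\Lambda_n)=\frac{\lambdamin(f)}{r}=h_\top(f)\ge h_\top(f,\HC(\cO)),
$$
so the three quantities coincide, which is property (1).

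The main obstacle is the sharp $C^r$ estimate calibrated exactly to the exponent $1/r$: one must simultaneously (a) realize horseshoes $\Lambda_n$ with $h_\top(f,\Lambda_n)$ arbitrarily close to $\lambdamin(f)/r$ under a \emph{uniform global} $C^r$ bound, and (b) prove the matching upper bound --- no invariant measure, hence no hypothetical m.m.e., exceeds $\lambdamin(f)/r$ --- which here reduces to showing that every positive-entropy ergodic measure is carried by one $\Lambda_n$-orbit. The secondary technical points are making the Markovian crossings of $W^u(\cO),W^s(\cO)$ through the shrinking boxes $B_n$ compatible with the accumulation on $p$, keeping the gluings and the neutral point from disturbing $\lambda^u(f)=\lambda^s(f)=\Lambda$, and checking that the complement of $\bigcup_n B_n$ supports no positive-entropy measure.
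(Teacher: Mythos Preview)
The paper does not actually prove this theorem: it is stated in Section~\ref{ss.comment} as an announcement, with the remark ``see \cite{Buzzi-Crovisier-Sarig-forthcoming} for details'' and the comment that the examples are those ``built in~\cite{BuzziNoMax}''. There is therefore no proof in the paper to compare your proposal against.

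Your sketch is consistent with what the paper says --- you correctly identify that the construction refines the \cite{BuzziNoMax} examples and that the details are deferred to \cite{Buzzi-Crovisier-Sarig-forthcoming}. Whether the specific mechanism you describe (a sequence of horseshoes $\Lambda_n$ accumulating on a neutral fixed point, all homoclinically linked to a single saddle $\cO$, with the sharp $C^r$ folding bound calibrating $h_\top(f,\Lambda_n)\nearrow\lambdamin(f)/r$) matches the actual construction in the forthcoming paper cannot be verified from the present text; the paper gives no further details.
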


But we expect that the examples conjectured in paragraph~\ref{ss.comment}.(a) could be modified so that the supremum of the topological entropies of the homoclinic classes is not achieved, providing a different mechanism for nonexistence:
\smallskip

\noindent
\begin{conjecture}
For any $\varepsilon>0$, any $1<r<\infty$,
there is a $C^r$ surface diffeomorphism $f$ such that
 \begin{enumerate}[(1)]
 \item $h_{\top}(f)>(1-\eps)\frac{\lambdamin(f)}r>0,$
 \item $h_{\top}(f,\HC(\cO))<h_{\top}(f)$, for any $\cO\in\HPO(f)$.
\end{enumerate}
\end{conjecture}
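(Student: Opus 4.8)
The plan is to build $f$ by inserting, inside a sequence of pairwise disjoint open regions $U_1,U_2,\dots$, a sequence of horseshoes $\Lambda_n$ whose topological entropies $h_n:=h_\top(f,\Lambda_n)$ \emph{strictly increase} to a limit $h_*$ that is \emph{not attained}, while keeping $(1-\eps)\lambdamin(f)/r<h_*\le\lambdamin(f)/r$ and while guaranteeing that no invariant set of $f$ carries entropy exceeding $h_*$. This is the ``non-merging, supremum-not-attained'' variant of the construction conjectured in \S\ref{ss.comment}.(a): one starts from the building block of \cite{Buzzi-Crovisier-Sarig-forthcoming} --- which, for the proven statement there, produces infinitely many pairwise disjoint homoclinic classes $\HC(\cO_n)$, each carrying an ergodic measure realizing $h_\top(f,\HC(\cO_n))$ and with $\liminf_n h_\top(f,\HC(\cO_n))\ge\lambdamin(f)/r$ --- and one refines the control on the individual entropies.

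\emph{First}, I would fix a $C^\infty$ surface diffeomorphism $f_0$ that is uniformly hyperbolic on its chain recurrent set, has small hyperbolicity constants, and admits a periodic saddle through whose invariant manifolds one can create homoclinic tangencies in disjoint neighborhoods $U_n$. Then, using the Misiurewicz--Newhouse technique near a homoclinic tangency in the sharp $C^r$ form of \cite{BuzziNoMax} (see also \cite{misiurewicz,newhouse1978,newhouse-downarowicz}), I would perform in each $U_n$ a $C^r$ perturbation supported in $U_n$ that inserts a thin horseshoe $\Lambda_n$ with a prescribed number of legs and prescribed contraction/expansion rates. The $C^r$ obstruction (bounded distortion, Yomdin-type local volume growth) forces the entropy realizable this way to be at most roughly $\lambdamin/r$, and conversely it can be pushed arbitrarily close to this value. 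Choosing the supports and $C^r$-sizes of the perturbations to decay fast, the limit $f$ is $C^r$ and $\lambdamin(f)$ differs from $\lambdamin(f_0)$ by as little as desired; after rescaling $f_0$ one arranges $h_*>(1-\eps)\lambdamin(f)/r>0$.

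\emph{Next}, I would tune the data so that $h_n=h_\top(f,\Lambda_n)$ is strictly increasing with $\sup_n h_n=h_*$ not attained, e.g.\ $h_n=h_*(1-2^{-n})$, each $\Lambda_n$ being a genuine horseshoe so $h(f,\mu_n)=h_n$ is attained by an ergodic $\mu_n$ on $\Lambda_n$. Since the perturbations have disjoint supports and do not interact, $\Lambda_n\subset\HC(\cO_n)$ for suitable $\cO_n\in\HPO(f)$, the $\cO_n$ are pairwise non-homoclinically related, and $\HC(\cO_n)\cap\HC(\cO_m)$ has zero topological entropy for $n\ne m$ (consistent with Theorem~\ref{mainthm-Spectral}); moreover the building block gives $h_\top(f,\HC(\cO_n))=h_n$. \emph{Finally}, to obtain property~(1) and property~(2), I would prove $h_\top(f)=h_*$: by the variational principle and Ruelle's inequality it is enough to bound $h(f,\mu)\le h_*$ for every $\mu\in\Proberg(f)$ of positive entropy; such $\mu$ is hyperbolic of saddle type, and by the Covering statement of Theorem~\ref{mainthm-Spectral} it lives on some $\HC(\cO)$. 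One then checks that every positive-entropy homoclinic class of $f$ coincides, modulo a zero-entropy set, with one of the $\HC(\cO_n)$: outside $\bigcup_n U_n$ the map $f$ agrees with $f_0$ and has entropy $<h_*$, so any recurrent positive-entropy behaviour is confined to some $U_n$ and captured by $\Lambda_n$. Hence $\sup_{\cO}h_\top(f,\HC(\cO))=\sup_n h_n=h_*=h_\top(f)$ is not attained.

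\emph{The hard part} is the last step --- and specifically ruling out that the infinitely many inserted perturbations create, in the limit, a new source of entropy $\ge h_*$ not carried by any single $\Lambda_n$. For finite $r$ the topological entropy is \emph{not} upper semicontinuous, so one cannot merely invoke $\limsup_n h_\top(f_n)\le\max(h_\top(f_0),\sup_n h_n)$; instead one needs a rigid geometric layout --- e.g.\ the $U_n$ shrinking geometrically toward a single hyperbolic periodic sink of $f_0$, the accumulation locus being uniformly contracting with zero entropy --- together with a direct estimate on \emph{all} invariant measures rather than a semicontinuity argument. A secondary difficulty is to keep each $h_n$ genuinely attained on $\HC(\cO_n)$ while still strictly increasing, so that no measure of maximal entropy is accidentally produced; this is exactly what the building block of \cite{Buzzi-Crovisier-Sarig-forthcoming} is designed to supply, and since that input is not yet available, the statement is recorded here only as a conjecture.
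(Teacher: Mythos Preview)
The statement you are addressing is a \emph{Conjecture} in the paper, not a theorem: the paper offers no proof and explicitly leaves it open. So there is no ``paper's own proof'' to compare against. Your sketch is, in fact, a faithful expansion of the one-sentence heuristic the paper gives just before stating the conjecture (``we expect that the examples conjectured in paragraph~(a) could be modified so that the supremum of the topological entropies of the homoclinic classes is not achieved''). You correctly identify the intended mechanism --- disjoint horseshoes with strictly increasing entropies whose supremum is not attained --- and you correctly flag the genuine obstacle: for finite $r$, entropy is not upper semicontinuous, so one must rule out by hand any limiting source of entropy $\ge h_*$ not carried by a single $\Lambda_n$. You also honestly conclude that the construction depends on a forthcoming building block and hence remains conjectural.

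One small correction: when you invoke the Covering statement of Theorem~\ref{mainthm-Spectral} to place every positive-entropy ergodic measure on some $\HC(\cO)$, note that Theorem~\ref{mainthm-Spectral} as stated is for $C^\infty$ diffeomorphisms. In the $C^r$ setting of the conjecture one should instead appeal directly to Katok's horseshoe theorem (Corollary~\ref{Cor-HC}), which already gives $\mu\hsim\cO$ for some $\cO\in\HPO(f)$ whenever $\mu$ is ergodic hyperbolic --- no $C^\infty$ assumption is needed for that step. The $C^r$ spectral decomposition (Theorems~\ref{t-spectral-revised1} and~\ref{t-spectral-revised2}) only applies above the threshold $\lambdamin(f)/r$ or under a Kupka-Smale hypothesis, which is precisely the regime the conjecture is designed to probe.
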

\medbreak

\subsection{Previous finiteness results}\label{s.history}

Various classes of dynamical systems have finitely many ergodic m.m.e.'s.
This was  first shown for uniformly hyperbolic systems: subshifts of finite type \cite{Parry-Entropy},  hyperbolic toral automorphisms in dimension two \cite{Adler-Weiss-PNAS}, Anosov diffeomorphisms \cite{Sinai-Gibbs},  Axiom~A diffeomorphisms \cite{Bowen-Periodic-Points}, or flows \cite{Bowen-1974, Parry-Pollicott-Asterisque}.

This was
generalized to nonuniformly expanding maps: topologically transitive countable state Markov shifts \cite{Gurevich-Measures-Of-Maximal-Entropy},  piecewise-monotonic interval maps \cite{Hofbauer1981}, smooth interval maps \cite{BuzziSIM}, skew-products of those \cite{Buzzi-2005},  surface maps with singularities \cite{Lima-Matheus-2018},  maps satisfying suitable expansivity and specification properties (see, e.g.  \cite{Bowen-Unique-1975,Climenhaga-Thompson-2014}), and related symbolic systems \cite{Buzzi-2005,Thomsen-2005,Buzzi-2010,Climenhaga-Thompson-2012,Pavlov-2016}. Nonuniformly hyperbolic invertible dynamics have been considered: piecewise affine surface homeomorphisms \cite{BuzziPWAH}, various derived from Anosov \cite{Newhouse-LSY-1983,Buzzi-Fisher-Sambarino-Vasquez-2012,Ures-2012,Buzzi-Fisher-2013,Climenhaga-Fisher-Thompson-2014} or  partially hyperbolic systems \cite{Hertz-Hertz-Tahzibi-Ures-2012}. The uniqueness of the m.m.e. has also been established for the H\'enon map \cite{Berger}, rational maps of the Riemann sphere \cite{Mane-MME-1983} and endomorphisms of the complex projective plane \cite{Briend-Duval-2001}.

Further generalizations include nonsingular flows \cite{Lima-Sarig} with positive topological entropy  or assuming some type of specification properties \cite{Climenhaga-Thompson-2016,Pavlov-NUspecification-2017}. Flows of special interest have also been analyzed such as the Teichm\"uller flow \cite{Bufetov-Gurevic-2011}, or the geodesic flow on compact surfaces with nonpositive curvature \cite{Burns-Climenhaga-Fisher-Thompson-2017}.

\smallbreak

Let us note that these results rely on two main approaches.
The first one is to observe that the non-uniform hyperbolicity and the homoclinic relations
provide specification properties (going back to \cite{Bowen-Unique-1975}) which may allow to work directly at the level of the manifold (this is the case for instance in the recent works~\cite{Climenhaga-Thompson-2016,Climenhaga-Thompson-2014,Burns-Climenhaga-Fisher-Thompson-2017}).

We will use the maybe more common strategy of building a semi-conjugacy with a Markov shift. Such a symbolic system decomposes into countably many transitive sets (``irreducible components"). Gurevi\v{c} proved uniqueness \cite{Gurevich-Measures-Of-Maximal-Entropy} for these components, reducing the problem to that of counting the large entropy irreducible components.

\smallbreak

Directly relevant to this work,  $C^r$ surface diffeomorphisms with $r>1$ have been shown in \cite{Sarig-JAMS} to have at most  {\em countably} many ergodic m.m.e.'s with positive entropy.

\subsection{Discussion of the techniques}

The proof of the finiteness of the number of ergodic m.m.e.'s in the Main Theorem has three parts:
\begin{enumerate}[(1)]
\item A homoclinic relation between hyperbolic ergodic measures is introduced and a related measurable partition of the manifold is built:
to each ergodic hyperbolic measure $\mu$ is associated a measurable invariant set $H_\mu$
satisfying $\mu(H_\mu)=1$ and
\begin{itemize}
\item[--] $H_\mu=H_\nu$ when $\mu$ is homoclinically related to $\nu$,
\item[--] $H_\mu\cap H_\nu=\emptyset$ when $\mu$ and $\nu$ are not homoclinically related.
\end{itemize}
\item We show that if $(\mu_i)_{i\geq 1}$ is a sequence of hyperbolic ergodic measures such that $H_{\mu_i}\neq H_{\mu_j}$ for $i\neq j$, then $h(f,\mu_i)\xrightarrow[i\to\infty]{}0$. Consequently,
if $h_{top}(f)>0$ then
at most finitely many sets $H_\mu$ can support a m.m.e.
\item We code every set  $H_\mu$  in a finite-to-one way by an irreducible countable state Markov shift. Since irreducible Markov shifts can carry at most one m.m.e. \cite{Gurevich-Measures-Of-Maximal-Entropy}, $H_\mu$ can carry at most one m.m.e.
 \end{enumerate}
(1), (2) and (3) imply that the number of m.m.e.'s is finite.
The other properties (uniqueness of the m.m.e. in the transitive case
and strong mixing in topological mixing case) are obtained in a subsequent step:
\begin{enumerate}[(4)]
\item Any two ergodic measures with ``large entropy" which are carried by the same transitive set are homoclinically related.
``Large entropy" means entropy bigger than $\frac{\lambdamin(f)}r$ for $C^r$ maps and positive entropy for $C^\infty$ maps.
\end{enumerate}

The first step is based on Pesin theory and applies to any $C^r$-diffeomorphism ($r>1$) in any dimension (see Section~\ref{Sec.homoclinic}).

The second and the fourth steps are specific to surface diffeomorphisms. Indeed in this case any ergodic measure with positive entropy is hyperbolic {thanks to Ruelle-Margulis' inequality}.
One may expect that a lower bound on the entropy gives a control of the geometry of the stable and unstable manifolds of points
in a set with positive measure: this would clearly imply that among a large set of ergodic measures $\{\mu_i\}$ with entropy
uniformly bounded away from zero, two of them have to be homoclinically related.
We were not able to follow that approach.

Instead we first use dimension $2$ and especially that we can bound topological disks ($su$-qua\-dri\-la\-te\-rals) by a two stable and two unstable segments. We then use upper-semicontinuity estimates for the entropy map given by Yomdin-Newhouse theory \cite{Yomdin-Volume-Growth-Entropy,Newhouse-Entropy}
({this uses large entropy}), to obtain topological intersections between stable and unstable manifolds of two different measures
$\mu_i,\mu_j$ that are weak-$*$ close (see Section~ \ref{sec-finite}).

A ``dynamical" version of Sard's lemma allows us then to conclude that some of these topological intersections are transverse
(again this {uses large entropy}).
The use of the classical Sard's Lemma already appeared in the analysis of SRB measures in \cite{Rodriguez-Hertz-Squared-Tahzibi-Ures-CMP}.
But because SRB measures have absolutely continuous conditional measures and m.m.e do not, we need a different, non-standard, version of Sard's Lemma, which we develop in  Section~\ref{Sec.Sard}.
\medskip

After the two first steps, we are reduced to showing the uniqueness of the m.m.e. among a set of hyperbolic measures that are homoclinically related.
We use the semiconjugacy with a Markov shift provided by \cite{Sarig-JAMS}. As discussed in Section~\ref{s.history}, it suffices to choose this Markov shift irreducible. However, since the coding in \cite{Sarig-JAMS} is highly non-canonical and has  lots of redundancies,
it could easily happen that a topologically transitive smooth system is coded by a non-transitive Markov shift with many irreducible components.
We show in Section~\ref{Section-Symbolic} that for each set $H_\mu$, there exists an irreducible component of the coding in \cite{Sarig-JAMS}
which codes the {entire} class $H_\mu$, completing the third step.
\medskip

The fourth step again uses that stable and unstable manifolds are one-dimensional.
Given two hyperbolic measures  carried by a common transitive set, we get intersections of their stable and unstable foliations through the use of  $su$-quadrilaterals. If both measures have, e.g., an unstable foliation with large  transverse dimension, then Sard's lemma shows that any stable leaf of one measure must  transversally intersect some leaf of the unstable foliation of the other measure, leading to a homoclinic relation between the measures.
\medskip

\subsubsection*{Outline of the paper}

Section~\ref{Sec.homoclinic} introduces homoclinic classes for measures (generalizing the classical definition for periodic orbits)
and establishes some general properties.
In the case of surfaces, we build $su$-quadrilaterals as mentioned above.

Section~\ref{Section-Symbolic}
codes homoclinic classes of measures by transitive Markov shifts
by selecting a transitive component of the  global symbolic extension built in \cite{Sarig-JAMS}.
Other codings with injectivity properties are obtained together with some properties of equilibrium measures.

Section~\ref{Sec.Sard} establishes a version of Sard Lemma adapted to dynamical foliations and gives
a criterion for a curve to have a transverse intersection with the stable manifold of a periodic orbit inside a horseshoe
whose dimension is large enough.

Using Yomdin-Newhouse theory and $su$-quadrilaterals, Section \ref{sec-finite} proves that measures with large entropy
cannot accumulate without being homoclinically related.

Using Sard Lemma and $su$-quadrilaterals, Section \ref{s.spectral-decomposition} deduces homoclinic relations from topological transitivity and establishes the Spectral Decomposition Theorem.

Finally, Section~\ref{s-proof-main-theorems} concludes by deducing the main results, announced in the introduction, from the results proved so far.

An appendix establishes the required Lipschitz regularity of
the dynamical foliations for horseshoes on surfaces (such results are folklore but we could not find a reference for the precise statements we need).

\subsubsection*{Acknowledgements}
We thank David Burguet for discussions on tail entropy estimates.
We are grateful to Sheldon Newhouse for his comments on a first version of this text.

\section{Homoclinic classes and horseshoes}\label{Sec.homoclinic}
Throughout this section, unless stated otherwise,  $M$ is a closed manifold of any dimension, and  $f:M\to M$ is a $C^1$ diffeomorphism.
The orbit of a point $p$ is denoted by $\cO(p)$.
The {\em transverse intersection} of $C^1$-submanifolds $U,V\subset M$ is
\begin{equation}\label{equTrans}
   U\pitchfork V:=\{x\in U\cap V: T_x V+ T_x U=T_xM\}.
\end{equation}

{A diffeomorphism $f$ is said to be \emph{Kupka-Smale} if all its periodic orbits are hyperbolic and if all homoclinic and heteroclinic intersections of their invariant manifolds are transverse.}

\subsection{Hyperbolic sets}\label{ss.hyperbolic}

A {\em hyperbolic set} for $f$ is a compact $f$-invariant set $\Lambda\subset M$ with a direct sum decomposition $T_x M= E^s(x)\oplus E^u(x)$ for all $x\in \Lambda$ such that for some $C>0$ and $0<\lambda<1$, for all $x\in \Lambda$, $n\geq 0$, $v^s\in E^s(x)$ and $v^u\in E^u(x)$, we have $\|Df^n_x v^s\|\leq C\lambda^n \|v^s\|$, and  $\|Df^{-n}_x v^u\|\leq C\lambda^n \|v^u\|$.

By \cite{Hirsch-Pugh-Stable-Manifold-Thm}, if $\Lambda$ is hyperbolic, then the following sets are injectively immersed sub-manifolds for every $x\in\Lambda$:
\begin{align*}
W^s(x)&:=\{y\in M: \dist(f^n(y),f^n(x))\xrightarrow[n\to+\infty]{}0\},\\
W^u(x)&:=\{y\in M: \dist(f^{-n}(y),f^{-n}(x))\xrightarrow[n\to+\infty]{}0\}.
\end{align*}
We have $T_x W^u(x)=E^s(x)$ and $T_x W^s(x)=E^u(x)$,  and the convergence in the definition of $W^{s/u}$ is uniformly exponential, see \cite{Shub-Book}.
(The definition of $W^{s/u}(x)$ for non-uniformly hyperbolic orbits, such as typical points of hyperbolic measures, is different, see section \ref{s.hyp-measures}.)

It is also shown in \cite{Hirsch-Pugh-Stable-Manifold-Thm} that, {for $\eps>0$ small enough,}
\begin{align*}
W^s_\epsilon(x)&:=\{y\in M: d(f^{k}(x),f^{k}(y))< \epsilon\text{ for all }k\geq 0\},\\
W^u_\epsilon(x)&:=\{y\in M: d(f^{-k}(x),f^{-k}(y))< \epsilon\text{ for all }k\geq 0\},
\end{align*}
 are uniform open neighborhoods of $x$ in $W^s(x)$ and $W^u(x)$. The subsets $W^s_\epsilon(x), W^u_\epsilon(x)$ are called the  {\em local stable} and {\em unstable manifolds} of $x$ (of size $\epsilon$).

{ A hyperbolic set $\Lambda$ is called {\em locally maximal}, if it has an open neighborhood $V$ such that $\Lambda=\bigcap_{n\in\Z}f^n(\ov{V})$.
 It is known that a hyperbolic set is locally maximal iff it has the following property, called {\em local product structure}: There exist $\epsilon,\delta>0$ such that for every $x,y\in\Lambda$, $W^u_\epsilon(x)\cap W^s_\epsilon(y)$ consists of at most one point, and in case $d(x,y)<\delta$ exactly one point; this point belongs to $\Lambda$;  and the intersection of $W^u_\epsilon(x), W^s_\epsilon(y)$ there is transverse. See \cite{Shub-Book}. }

A {\em basic set} $\Lambda$ for $f$ is an $f$-invariant {compact} set
which is transitive, hyperbolic, and locally maximal. A totally disconnected and infinite basic set is called a
 \emph{horseshoe}.

\subsection{Homoclinic classes of hyperbolic periodic orbits}\label{ss.classes-topo}
{We review some facts and definitions from \cite{Newhouse-Homoclinic}.}
Recall that $\HPO(f)$ is the collection of hyperbolic periodic orbits of saddle type.
\begin{definition}[Smale's order]\label{Def-H-Eq-Newhouse}
Let
 $\mathcal O_1, \mathcal O_2\in\HPO(f)$. We say that $\cO_1$ precedes $\cO_2$ in the \emph{Smale preorder} if $W^u(\mathcal O_1)\pitchfork W^s(\mathcal O_2)\neq \emptyset$. We then write $\cO_1\preceq\cO_2$.
\end{definition}
The condition $W^u(\mathcal O_1)\cap W^s(\mathcal O_2)\neq \emptyset$ implies the existence of orbits which are asymptotic to $\cO_1$ in the past and asymptotic to $\cO_2$ in the future (``$\cO_1$ can come before $\cO_2$"). The transversality of the intersection is used to show that $\preceq$ is transitive, see \cite{Newhouse-Homoclinic}. Since $\preceq$ is obviously reflexive on $\HPO(f)$, the following is an equivalence relation on $\HPO(f)$:

\begin{definition}[Homoclinic equivalence relation]
We say that $\cO_1$ and $\cO_2$ are {\em homoclinically related} if   $\cO_1\preceq\cO_2$ and $\cO_2\preceq{\cO_1}$. We then write $\mathcal O_1\hsim \mathcal O_2$.
\end{definition}

\begin{definition}
The \emph{homoclinic class of a hyperbolic periodic orbit $\mathcal O$} is
$$
\HC(\mathcal O):=\ov{\{x\in \mathcal O': \mathcal O'\in\HPO(f),\  \mathcal O'\hsim\mathcal O\}}\; {\overset{!}{=}}\; \ov{W^u(\mathcal O)\pitchfork W^s(\mathcal O)}.
$$
{The class $\HC(\mathcal O)$ is called \emph{trivial} when it coincides with $\cO$.}
\end{definition}
\noindent
See \cite{Newhouse-Homoclinic} for the proof of $\overset{!}{=}$.
The compact set $\HC(\cO)$ is transitive \cite{Newhouse-Homoclinic}.
In general, different homoclinic classes may have non-empty intersection.

\begin{definition}
The \emph{period of the homoclinic class of $\mathcal O$} is
$$
\ell(\mathcal O):=\gcd(\{\operatorname{Card}(\cO'):\cO'\in\HPO(f),\cO'\hsim\cO\}).
$$
\end{definition}
\cite{Abdenur-Crovisier} shows that:
\begin{proposition}\label{p-hc-cyclic}
Let $p$ be a {hyperbolic} periodic point of saddle type.
If $\ell$ is the period of the homoclinic class of $\cO(p)$
and if $A:=\overline{W^u(p)\pitchfork W^s(p)}$,
then $\HC(\cO(p))=\bigcup_{k=0}^{\ell-1} f^k(A)$. Moreover:
\begin{myitemize}
\item[(1)] $f^\ell(A)=A$ and the restriction of $f^\ell$ to $A$ is topologically mixing.
\item[(2)] For each $i\in \mathbb{Z}$, either $f^i(A)=A$, or $f^i(A)\cap A$ has empty relative interior in $\HC(\cO(p))$.
Hence there is a divisor $L$ of $\ell$ such that $f^L(A)=A$ and $\operatorname{int}_{\HC(\cO(p))}(A\cap f^jA)=\emptyset$ for any $0<j<L$.
\item[(3)] For any $p$ and $q=f^k(p)$ in $\cO$, $W^s(p)\pitchfork W^u(q)\ne\emptyset \iff k\in \ell\mathbb{Z}$.
\end{myitemize}
\end{proposition}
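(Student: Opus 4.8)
The plan is to reduce everything to the structure of $W^u(p)\pitchfork W^s(p)$ and a careful bookkeeping of iterates. Fix a hyperbolic saddle periodic point $p$ of period $\pi=\Card(\cO(p))$, and set $A:=\overline{W^u(p)\pitchfork W^s(p)}$. The starting observation is the ``$\overset{!}{=}$'' from the definition of the homoclinic class: $\HC(\cO(p))=\overline{W^u(\cO(p))\pitchfork W^s(\cO(p))}$, and since $W^u(\cO(p))=\bigcup_{k}W^u(f^k(p))=\bigcup_k f^k(W^u(p))$ and similarly for $W^s$, a transverse intersection point of $W^u(f^i(p))$ with $W^s(f^j(p))$ is the image under $f^i$ of a transverse intersection of $W^u(p)$ with $W^s(f^{j-i}(p))$. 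So $\HC(\cO(p))=\bigcup_{k\in\Z} f^k\big(\overline{W^u(p)\pitchfork W^s(\cO(p))}\big)$, and the first task is to show that only the indices $k$ in a single congruence class $\bmod\,\ell$ actually contribute something with nonempty relative interior, i.e. to prove (3), which then feeds back into the decomposition. First I would prove the inclusion $\HC(\cO(p))=\bigcup_{k=0}^{\ell-1}f^k(A)$: for this, show that $W^u(p)\pitchfork W^s(f^j(p))\neq\emptyset$ forces $\cO(f^j(p))=\cO(p)$ (trivially true here) \emph{and} that $f^j(p)$ lies in the same ``$A$-class,'' using the $\lambda$-lemma (inclination lemma) to convert one transverse homoclinic intersection into a rich family and to show $W^u(p)\pitchfork W^s(p)$ accumulates on $\cO(p)$; combined with the $\lambda$-lemma this gives that $W^u(p)\pitchfork W^s(p)$ is dense in its own closure in the relevant sense and that $f^\pi(A)=A$.

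The heart of the matter is the mixing statement (1) together with the period characterization (3). Here is the mechanism I would use. Since $p$ is a transverse homoclinic point's base, by the Birkhoff--Smale theorem there is a horseshoe $K\ni p$ with $K\subset \HC(\cO(p))$; transverse homoclinic points of $p$ are dense in $A$ by the $\lambda$-lemma. The set $A$ is forward invariant under $f^\ell$ in the sense that $f^\ell(A)=A$: one inclusion is definitional bookkeeping, the other uses that $f^\ell(W^u(p)\pitchfork W^s(p))=W^u(f^\ell p)\pitchfork W^s(f^\ell p)$ and $f^\ell(p)$ is homoclinically related to $p$ \emph{through} transverse intersections realized inside $A$. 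For topological mixing of $f^\ell|_A$: take two nonempty relatively open sets $U,V\subset A$; each contains a transverse homoclinic point, hence (by the $\lambda$-lemma) a piece of unstable manifold of $p$ accumulated on by a piece of stable manifold of $p$; iterating forward, $f^{n\ell}(U)$ contains pieces of $W^u(p)$ that $C^1$-converge to $W^u_{\mathrm{loc}}(p)$, and these must transversally cross the stable piece inside $V$ once $n\ell$ is large; the gcd definition of $\ell$ is exactly what guarantees there is no finer periodicity obstruction, i.e. that the return times to a small neighborhood of $p$ along transverse homoclinic orbits generate $\ell\Z$ and not a proper subgroup. This is the step I expect to be the main obstacle: showing that the set of ``transition times'' is exactly $\ell\Z$ and that mixing (not merely a cyclic permutation of mixing pieces) holds on $A$ — it requires combining the gcd characterization of $\ell$ with the $\lambda$-lemma and a compactness/density argument, and it is where~\cite{Abdenur-Crovisier} does the real work.

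For (2), once $f^\ell(A)=A$ and $f^\ell|_A$ is topologically mixing, consider any $i\in\Z$. If $f^i(A)\cap A$ has nonempty relative interior in $\HC(\cO(p))$, then $f^i(A)$ and $A$ share a nonempty relatively open subset of the transitive compact set $\HC(\cO(p))$; but $A$ and all its translates are closed with $\HC(\cO(p))=\bigcup_{k=0}^{\ell-1}f^k A$, so a relatively open overlap forces $f^i(A)=f^{i'}(A)$ for the appropriate representative, hence $f^{i}(A)=A$ whenever $i$ belongs to the subgroup $L\Z$ with $L\mid\ell$ realized by the actual self-overlaps; here $L$ is defined as the generator of $\{i\in\Z: f^i(A)=A\}$, which is a subgroup of $\Z$ containing $\ell\Z$, hence of the form $L\Z$ with $L\mid\ell$. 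The statement $\operatorname{int}_{\HC(\cO(p))}(A\cap f^j A)=\emptyset$ for $0<j<L$ is then immediate from the definition of $L$ as that generator. Finally (3) follows by unwinding: $W^s(p)\pitchfork W^u(q)\neq\emptyset$ with $q=f^k(p)$ means, after applying $f^{-k}$, that $W^s(f^{-k}p)\pitchfork W^u(p)\neq\emptyset$, i.e. $f^{-k}p$ is homoclinically related to $p$ via transverse intersections — equivalently $f^{-k}(p)$ and $p$ lie in the same piece $A$, which by the decomposition and the fact that consecutive pieces meet only in interior-free sets happens exactly when $k\in\ell\Z$; conversely if $k\in\ell\Z$ then $f^k(A)=A$ gives the required transverse intersection by the mixing (hence nonempty) property of $f^\ell|_A$. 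Throughout, the only external inputs are the $\lambda$-lemma, the Birkhoff--Smale horseshoe theorem, and the gcd bookkeeping of~\cite{Abdenur-Crovisier}; I would cite the latter for (1)--(3) and only sketch the mixing argument.
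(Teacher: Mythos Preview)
The paper does not prove this proposition at all: it is stated as a result of \cite{Abdenur-Crovisier}, with the line ``\cite{Abdenur-Crovisier} shows that:'' preceding the statement and no proof following it. Your plan ends in the same place --- you say you ``would cite the latter for (1)--(3) and only sketch the mixing argument'' --- so there is no real divergence in approach; you have simply written out more of the mechanism than the paper does.

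Your sketch is broadly sound as an outline (the $\lambda$-lemma plus gcd bookkeeping is indeed how \cite{Abdenur-Crovisier} proceeds), but one point is slightly garbled. In (3) you argue that $W^s(p)\pitchfork W^u(f^k p)\ne\emptyset$ implies $k\in\ell\Z$ by saying the two points ``lie in the same piece $A$'' and invoking (2). But (2) only gives that distinct pieces have empty \emph{relative interior} in their intersection, not that they are disjoint, so membership of $p$ and $f^k p$ in the same $A$ is not the right formulation. The correct argument runs through the additive structure: the set $S=\{k\in\Z: W^s(p)\pitchfork W^u(f^k p)\ne\emptyset\}$ is a subsemigroup of $\Z$ closed under negation (both via the $\lambda$-lemma), hence a subgroup $d\Z$; since $S$ contains the period of every $\cO'\hsim\cO(p)$ (again by the $\lambda$-lemma applied to the heteroclinic intersections), $d$ divides $\ell$; and since $\pi\in S$ and more generally every element of $S$ is realized by a periodic orbit whose period lies in $S$, one gets $\ell\mid d$, so $d=\ell$. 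This is what actually pins down $\ell$ rather than merely some $L\mid\ell$, and it is also what drives (1): your mixing paragraph correctly identifies that ``the set of transition times is exactly $\ell\Z$'' is the crux, but the justification you give there is circular without first establishing (3) independently as above.
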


A basic set is contained in the homoclinic class of any of its periodic orbits {\cite{Newhouse-Homoclinic}.}
Conversely, a homoclinic class is approximated by horseshoes; The next result is classical and the proof is identical
to Smale's horseshoe theorem~\cite[Theorem 6.5.5]{Katok-Hasselblatt-Book}:

\begin{proposition}\label{prop-approx-HC}
For every $\cO\in \HPO(f)$, there exists an increasing sequence of horseshoes $\Lambda_n\subset \HC(\cO)$ such that
$\HC(\cO)=\ov{\bigcup_{n\geq1}\Lambda_n}$,  and so that  for every $\cO'\in\HPO(f)$, if $\cO'\hsim\cO$ then $\cO'\subset \Lambda_n$ for some $\Lambda_n$.
\end{proposition}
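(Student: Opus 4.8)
The plan is to adapt the classical proof of Smale's horseshoe theorem, in the form given in \cite[Theorem 6.5.5]{Katok-Hasselblatt-Book}, and to organize the construction so that the resulting horseshoes are nested and jointly exhaust the homoclinic points. Fix $p\in\cO$ with $f^{p_0}(p)=p$, and recall from the excerpt that $\HC(\cO)=\overline{W^u(\cO)\pitchfork W^s(\cO)}$, so it suffices to produce horseshoes $\Lambda_n\subset\HC(\cO)$ whose union is dense in $W^u(\cO)\pitchfork W^s(\cO)$ and which eventually contain any prescribed orbit $\cO'\hsim\cO$. The basic mechanism is: given finitely many transverse homoclinic points $z_1,\dots,z_k\in W^u(\cO)\pitchfork W^s(\cO)$, each $z_i$ lies on a transverse homoclinic orbit relative to some periodic point $p_i\in\cO$; along such an orbit the $\lambda$-lemma (inclination lemma) forces the local unstable manifold of $p_i$ to accumulate on $W^u(p_i)$ and the local stable manifold to accumulate on $W^s(p_i)$, so after applying a large iterate $f^N$ one finds a ``Smale horseshoe configuration'' near the orbit segments of $z_1,\dots,z_k$ together with the periodic orbit $\cO$. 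The maximal invariant set in a suitable neighborhood of (the orbit segments of) $z_1,\dots,z_k\cup\cO$ is then, by the standard cone-field argument, a hyperbolic locally maximal set; being obtained by this construction it is transitive (the periodic orbit $\cO$ is in every piece, which glues everything together) and totally disconnected and infinite, hence a horseshoe, and it clearly contains $z_1,\dots,z_k$ and $\cO$.

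Concretely, I would proceed as follows. First, since $W^u(\cO)\pitchfork W^s(\cO)$ is a subset of a separable manifold, it contains a countable dense subset $\{z_m:m\geq1\}$. Second, enumerate (or use that $\HPO(f)$ is countable — or at least the subcollection homoclinically related to $\cO$ can be handled one orbit at a time via a diagonal argument) a sequence of periodic orbits $\cO'_m\hsim\cO$ whose union, together with $\{z_m\}$, we want to capture; more precisely we only need that every $\cO'\hsim\cO$ eventually appears, which by countability of $\HPO(f)$ can be arranged. Third, for each $n$ build $\Lambda_n$ as the horseshoe produced by the horseshoe construction applied to the finite data $\{z_1,\dots,z_n\}\cup\cO'_1\cup\dots\cup\cO'_n\cup\cO$: one takes the orbit segments joining each $z_i$ (resp. each point of each $\cO'_i$) to the periodic orbit $\cO$, thickens them slightly, and takes the maximal invariant set, checking hyperbolicity via the standard invariant cone families and local maximality by construction. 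Fourth, arrange nesting: either take $\Lambda_n':=\Lambda_1\cup\dots\cup\Lambda_n$ after noting that a finite union of horseshoes, all containing $\cO$ and all homoclinically related (which they are, since each contains $\cO$ and hyperbolic periodic orbits in the same transitive hyperbolic set are homoclinically related), is again contained in a single horseshoe — or, more cleanly, observe that the neighborhoods used in step three can be chosen increasing in $n$, so that $\Lambda_n\subset\Lambda_{n+1}$ directly. Finally, $\overline{\bigcup_n\Lambda_n}\supset\overline{\{z_m\}}=\overline{W^u(\cO)\pitchfork W^s(\cO)}=\HC(\cO)$, and the reverse inclusion $\Lambda_n\subset\HC(\cO)$ holds because each $\Lambda_n$ is a basic set containing $\cO$, hence is contained in $\HC(\cO)$ by the fact (quoted just above the proposition in the excerpt) that a basic set lies in the homoclinic class of any of its periodic orbits; therefore $\HC(\cO)=\overline{\bigcup_n\Lambda_n}$, and by construction any $\cO'\hsim\cO$ lies in $\Lambda_n$ for $n$ large.

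The main obstacle is the verification that the maximal invariant set in the thickened orbit-segment neighborhood is genuinely hyperbolic and transitive — i.e., that the Smale horseshoe construction really goes through for an arbitrary finite collection of transverse homoclinic orbits accumulating on a single hyperbolic periodic orbit, not just for a single homoclinic point. This requires a careful choice of adapted metric and cone fields that work simultaneously near $\cO$ and along all the finitely many homoclinic excursions, and an application of the $\lambda$-lemma to guarantee that after enough iterates the pieces overlap in the ``Markov'' fashion needed for both hyperbolicity and transitivity; transitivity is what forces us to route every piece through the common periodic orbit $\cO$. All of this is classical and, as the paper says, ``the proof is identical to Smale's horseshoe theorem,'' so I would cite \cite[Theorem 6.5.5]{Katok-Hasselblatt-Book} for the single-homoclinic-orbit case, remark that finitely many such configurations sharing the periodic orbit $\cO$ are handled in exactly the same way (taking a neighborhood that is a union of the individual neighborhoods), and keep the explicit cone-field computations to a minimum. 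A second, minor technical point to be careful about is ensuring the $\Lambda_n$ are infinite and totally disconnected: infiniteness is automatic once one homoclinic orbit is present (it carries infinitely many points, whose closure with $\cO$ is infinite), and total disconnectedness follows, as usual, from making the thickened neighborhoods thin enough that the local product structure exhibits $\Lambda_n$ locally as a product of two Cantor sets.
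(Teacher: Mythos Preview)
Your proposal is correct and follows exactly the approach the paper indicates: the paper gives no proof of its own for this proposition, merely stating that ``the proof is identical to Smale's horseshoe theorem~\cite[Theorem 6.5.5]{Katok-Hasselblatt-Book}'', which is precisely the construction you outline (enumerate a countable dense set of transverse homoclinic points and the countably many $\cO'\hsim\cO$, and for each $n$ run the multi-branch horseshoe construction around $\cO$ together with the first $n$ pieces of data).

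One remark worth adding: later in the paper (Lemma~\ref{Lemma-K_n}) the authors actually carry out the closely related construction of a basic set containing any finite collection of homoclinically related periodic orbits, but there they use the shadowing lemma and expansivity rather than explicit cone fields --- one writes down the finite set $L_m$ of orbit segments through the heteroclinic connections, defines $K$ as the set of points $\delta$-shadowed by $\epsilon$-pseudo-orbits in $L_m^\Z$, and reads off hyperbolicity, local maximality and transitivity directly. That argument is slightly cleaner than the cone-field verification you sketch and would serve equally well as the engine inside your inductive step, but the two are interchangeable and yield the same conclusion.
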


\subsection{Hyperbolic measures}\label{s.hyp-measures}
We refer to \cite[Chapter S]{Katok-Hasselblatt-Book}, \cite{Barreira-Pesin-Non-Uniform-Hyperbolicity-Book} for the theory of non-uniform hyperbolicity.
We summarize the facts that we will use.

\paragraph{\sc Lyapunov exponents}
Suppose $\mu$ is an invariant probability measure.  By the Oseledets theorem, for $\mu$-almost every $x$, the limit $\chi(x,{v}):=\lim\limits_{n\to\infty}\frac 1n\log\|Df^n_x {v}\|$ exists for all non-zero $v\in T_x M$, and takes just finitely many values as $v$ ranges over $T_x M\setminus\{0\}$. These values are  called the {\em Lyapunov exponents} of $x$. We denote them by
{$\lambda_1(f,x)< \dots<\lambda_{m(x)}(f,x)$.  }
 {When $\mu$ is ergodic, the Lyapunov exponents are constant $\mu$-almost everywhere
 and are denoted by $\lambda_1(f,\mu)<\dots<\lambda_{m(\mu)}(f,\mu)$.}

The measure $\mu$ is \emph{hyperbolic} if  $\lambda_i(f,x)\neq 0$ for every $i$ for $\mu$-a.e. $x$.
In this case $T_xM=E^s(x)\oplus E^u(x)$ {where $E^s(x),E^u(x)$ are linear spaces such that $\lim_{n\to\infty} \frac 1n \log\|Df^n_x v\|<0$ on $E^s(x)\setminus\{0\}$, and
$\lim_{n\to\infty} \frac 1n \log\|Df^{-n}_x v\|<0$ on $E^u(x)\setminus\{0\}$.  }

We say that a hyperbolic measure has \emph{saddle type} if $\lambda_1(f,x) <0<\lambda_{m(x)}(f,x)$ for $\mu$-almost every $x\in M$
(equivalently, both $\dim E^{s}(x)>0$, and $\dim E^{u}(x)>0$ at $\mu$-almost every point $x$).

By Ruelle's inequality \cite{Ruelle-Entropy-Inequality}, if $\dim(M)=2$ then every ergodic invariant measure with positive entropy is hyperbolic of saddle type. We denote the negative Lyapunov exponent by  $-\lambda^s(f,\mu)$, and the positive Lyapunov exponent by $\lambda^u(f,\mu)$.

\paragraph{\sc Invariant manifolds / Pesin blocks.} Suppose $f$ is a $C^r$ diffeomorphism $f$ with $r>1$.
Pesin's theory
asserts that there exist a family of compact sets
(called \emph{Pesin blocks})
$(K_n)_{n\in \mathbb{N}}$ and for each $n$
two families of embedded $C^r$-discs
$(W^s_{loc}(x))_{x\in K_n}$ and $(W^u_{loc}(x))_{x\in K_n}$
(called \emph{local stable} and \emph{unstable manifolds}) such that:
\begin{enumerate}[\quad a.]
\item\label{a}
For each $n$, $f(K_n) \cup K_n\cup f^{-1}(K_n)\subset K_{n+1}$.
Hence the measurable set $Y:=\cup K_n$ is invariant.

\item\label{b} For each $n$, there exists a continuous splitting
$TM|_{K_n}=\mathcal{E}^s\oplus \mathcal{E}^u$ and
for each hyperbolic measure $\mu$ and $\mu$-almost every point $x\in K_n$,
we have $E^s(x)=\mathcal{E}^s(x)$ and $E^u(x)=\mathcal{E}^u(x)$.

\item\label{c} For each $n$, and $x\in K_n$, the discs $W^s_{loc}(x)$, $W^u_{loc}(x)$
contain $x$ and are tangent to $\mathcal{E}^s(x)$ and $\mathcal{E}^u(x)$ respectively.
They vary continuously for the $C^r$-topology with $x$:
They are the images of $C^r$-embeddings $\varphi^s_x\colon \mathcal{E}^s(x)\to M$ and
$\varphi^u_x\colon \mathcal{E}^u(x)\to M$ which vary continuously in the compact-open topology.

\item\label{d} Let $Y^\#$ be the set of points having infinitely many forward and backward
iterates in one $K_n$.

Then, for every point $x\in Y^\#$ the following sets, called  the \emph{stable} and \emph{unstable manifolds of $x$},
 $$\begin{aligned}
    &W^s(x) :=\{y\in M:\limsup_{k\to\infty} \frac1k\log d(f^ky,f^kx)<0\},\\
    &W^u(x) :=\{y\in M:\limsup_{k\to\infty} \frac1k\log d(f^{-k}y,f^{-k}x)<0\},
 \end{aligned}$$
 are injectively immersed $C^r$-submanifolds.

 Moreover for any $n_k\to +\infty$ satisfying $f^{n_k}(x)\in K_n$,
 one has $W^s(x)=\cup_k f^{-n_k}(W^s_{loc}(f^{n_k}(x))$.

 Similarly, for any $n_k\to +\infty$ satisfying $f^{-n_k}(x)\in K_n$,
 one has $W^u(x)=\cup_k f^{n_k}(W^u_{loc}(f^{-n_k}(x))$.

 \item\label{e} Let $Y'$ be the set of points $y$ admitting sequences of forward iterates $f^{n_k}(y)$
and backward iterates $f^{-m_k}(y)$ in the same Pesin block $K_n$ which both converge to $y$.
Then the following property holds:

\begin{lemma}[Inclination Lemma]\label{l.density-submanifolds}
For any $y\in Y'$, any disc $D\subset W^u(y)$, and  any embedded disc $\Delta\subset M$ having a transverse intersection point with $W^s(y)$, there exist discs $D_k\subset \Delta$ and times $n_k\to +\infty$
such that $f^{n_k}(D_k)\to D$ in the $C^1$ topology.

A similar property holds for backward iterates $f^{-n_k}(D_k)$ of discs transverse to $W^u(y)$.
\end{lemma}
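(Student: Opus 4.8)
\medskip
\noindent\textit{Proof strategy.}
It is enough to treat the statement for forward iterates: applying it to $f^{-1}$ yields the backward version, since $Y'$ (like $Y$ and $Y^\#$) is invariant under time reversal, while $W^u(y)$ and $W^s(y)$ are interchanged. So fix $y\in Y'$, a disc $D\subset W^u(y)$, and $z\in\Delta\pitchfork W^s(y)$. Let $n$ be a Pesin block index witnessing $y\in Y'$: there are $a_k\uparrow+\infty$ with $f^{a_k}(y)\in K_n$, $f^{a_k}(y)\to y$, and $b_j\uparrow+\infty$ with $f^{-b_j}(y)\in K_n$, $f^{-b_j}(y)\to y$; since $K_n$ is closed, $y\in K_n$. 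Choose $\delta=\delta_n>0$ so small that over $K_n$ the discs $W^s_\delta(\cdot)\supset W^s_{loc}(\cdot)$ and $W^u_\delta(\cdot)\supset W^u_{loc}(\cdot)$ are defined, vary continuously in the $C^1$ topology, admit the usual characterization $W^s_\delta(x)=\{w:d(f^jw,f^jx)<\delta\ \forall j\ge 0\}$, and carry the uniform hyperbolicity estimates of the block. Shrinking $\Delta$, assume it is a small embedded disc meeting $W^s(y)$ only at $z$, transversally.

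The key step, and the only delicate one, is the following claim: \emph{$f^{a_k}(\Delta)$ contains an embedded disc $\widehat D_k\ni f^{a_k}(z)$ whose $C^1$-distance to $W^u_\delta(f^{a_k}(y))$ tends to $0$ as $k\to\infty$.} Since $z\in W^s(y)$ one has $d(f^m(z),f^m(y))\to 0$ exponentially; hence $f^{a_k}(z)\to y$ and, for all large $k$, $f^{a_k}(z)\in W^s_\delta(f^{a_k}(y))$, whence ($f^{a_k}$ being a diffeomorphism) $f^{a_k}(\Delta)$ meets $W^s_\delta(f^{a_k}(y))$ transversally there. Fix once and for all a large $k_0$. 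Then $f^{a_{k_0}}(\Delta)$, still transverse to $W^s_\delta(f^{a_{k_0}}(y))$ at $f^{a_{k_0}}(z)$, is an admissible unstable graph of some fixed finite quality in the Lyapunov chart of $f^{a_{k_0}}(y)$. I then iterate it along the orbit segment from $f^{a_{k_0}}(y)$ to $f^{a_k}(y)$ --- a segment whose \emph{two endpoints lie in $K_n$}. The graph-transform estimates of Pesin theory, carried out in Lyapunov-adapted charts (whose radii are bounded below by some $\rho_n>0$ on $K_n$), show that along such a segment the unstable direction is net-expanded by a factor $\ge c_n\mu_n^{\,a_k-a_{k_0}}$ with $\mu_n>1$, while the slope over $\mathcal E^u$ is contracted by a factor $\le C_n\lambda_n^{\,a_k-a_{k_0}}$ with $\lambda_n<1$. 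Letting $k\to\infty$, the image saturates the chart in the unstable direction and becomes $C^1$-flat over $W^u_\delta(f^{a_k}(y))$; this is the claim.

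Granting the claim, I pass from local to global. By property~(\ref{d}) applied to $(b_j)$, $W^u(y)=\bigcup_j f^{b_j}\big(W^u_{loc}(f^{-b_j}(y))\big)$, an increasing exhaustion (forward iteration enlarges local unstable manifolds). As $D$ is compact there is $j_0$ with $D\subset f^{b_{j_0}}\big(W^u_{loc}(f^{-b_{j_0}}(y))\big)$; enlarging $j_0$ and using $f^{-b_j}(y)\to y$ together with continuity on $K_n$, I may also assume $W^u_\delta(f^{-b_{j_0}}(y))$ is $C^1$-close to $W^u_\delta(y)$. Put $w:=f^{-b_{j_0}}(y)\in K_n$. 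Then for $k$ large the disc $\widehat D_k\subset f^{a_k}(\Delta)$ of the claim is $C^1$-close to $W^u_\delta(f^{a_k}(y))$, hence --- since $f^{a_k}(y)\to y$ and $W^u_\delta(y)$ is $C^1$-close to $W^u_\delta(w)$ --- contains a sub-disc $\widehat D_k'$ that is $C^1$-close to $W^u_{loc}(w)$. Applying the \emph{fixed} diffeomorphism $f^{b_{j_0}}$, the disc $f^{a_k+b_{j_0}}(\Delta)\supset f^{b_{j_0}}(\widehat D_k')$ is $C^1$-close to $f^{b_{j_0}}\big(W^u_{loc}(w)\big)\supset D$, so it contains a sub-disc $E_k$ with $E_k\to D$ in $C^1$. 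Taking $n_k:=a_k+b_{j_0}$ and $D_k:=f^{-n_k}(E_k)\subset\Delta$ finishes the argument.

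The time-reversal reduction and the bookkeeping of the last paragraph are routine. The substance of the lemma --- and the step I expect to cost the most --- is the claim: one must make the classical inclination ($\lambda$-)lemma \emph{uniform along orbit segments whose endpoints lie in a common Pesin block}, even though intermediate iterates may have arbitrarily small Lyapunov-chart radii and severe metric distortion. This forces one to work in the Lyapunov-adapted metric throughout and to translate back to the ambient Riemannian metric only at the two endpoints, which is exactly why it is essential that both $f^{a_{k_0}}(y)$ and $f^{a_k}(y)$ belong to $K_n$. Such estimates are by now standard in non-uniform hyperbolicity theory (see, e.g., \cite{Katok-Hasselblatt-Book,Barreira-Pesin-Non-Uniform-Hyperbolicity-Book}), but carrying them out carefully is the real content.
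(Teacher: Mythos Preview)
Your proposal is correct and follows essentially the same approach as the paper: the graph-transform argument along orbit segments whose endpoints return to the Pesin block $K_n$, invoking the standard nonuniform hyperbolicity references (Katok--Hasselblatt, Barreira--Pesin) for the technical estimates. The paper's proof is in fact just a three-sentence sketch pointing to \cite[Section~S.4]{Katok-Hasselblatt-Book} for the notion of admissible manifold and the key step that $f^{n_k}(\Delta)$ contains such a manifold once $f^{n_k}(y)\in K_n$ is close to $y$; your write-up expands this, and in particular makes explicit the local-to-global passage from $W^u_{loc}$ to an arbitrary compact disc $D\subset W^u(y)$ via the backward returns $f^{-b_j}(y)\to y$ in $K_n$, a step the paper leaves implicit.
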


\item\label{f} The measurable invariant sets $Y'\subset Y^\#\subset Y$ have full measure for any hyperbolic measure $\mu$.
Moreover, each uniformly hyperbolic set is contained in one of the $K_n$'s.
\end{enumerate}
\medskip

\noindent
The sets $K_n$ are defined as in~\cite[Theorem 1.3.1]{Pesin-Izvestia-1976} or~\cite{KatokIHES}.\footnote{To be precise, one considers a function $N\colon \mathbb{N}\to \mathbb{N}$ that grows fast enough
and one defines $K_n:=\Lambda_{1/N(n), N(n)}$ where $\Lambda_{\chi,\ell}$ is the set defined in~\cite[section 2]{KatokIHES}.}
Property~(\ref{a}) is immediate.
Properties~(\ref{b}) and~(\ref{f}) follow from the Oseledets theorem.
Property~(\ref{c}) is the Pesin stable manifold theorem~\cite[Theorem 2.2.1]{Pesin-Izvestia-1976}.

The global stable manifold (property~(\ref{d})) follows from the properties of the local stable manifold obtained in~\cite{Pesin-Izvestia-1976} (see also~\cite{Katok-Hasselblatt-Book} and~\cite{Barreira-Pesin-Non-Uniform-Hyperbolicity-Book}). The inclusion $\bigcup_k f^{-n_k}W^s_{loc}(f^{n_k}(x))\subseteq W^s(x)$ is immediate.
For the other inclusion, one first chooses $y\in W^s(x)$ and $\chi>0$ such that
$\limsup_{n\to\infty} \frac1n\log d(f^ny,f^nx)<-\chi$.
Then if $n$ is large enough, the point $x$ has arbitrarily large iterates in $K_n$, and
there exists $C>0$ such that for any $z\in K_n$,
 $$
   W^s_{loc}(z)\supset \{\zeta\in M, \; \forall k\geq 0, d(f^k(\zeta),f^k(z))\leq C\exp(-k\chi)\}.
 $$
One deduces that $f^{n_k}(y)$ belongs to $W^s_{loc}(f^{n_k}(x))$ for $n_k$
large enough such that $f^{n_k}(x)\in K_n$. Hence $W^s(x)$ is contained in the union $\bigcup_k f^{-n_k}(W^s_{loc}(f^{n_k}(x))$.

The inclination lemma (property~(\ref{e})) is proved using a graph transform argument.
An embedded disc is an admissible manifold {(see \cite[Section S.4]{Katok-Hasselblatt-Book})} if it belongs to
a small $C^1$ neighborhood of $W^u_{loc}(y)$. For sufficiently small neighborhoods, \cite{Katok-Hasselblatt-Book}
 proves that for any
$n$ large enough, if $f^{n_k}(y_k)$ belongs to $K_n$ and is close enough to $y$, then
the image $f^{n_k}(\Delta)$ contains an admissible manifold. Pulling it back to $\Delta$ gives $D_k$.

\paragraph{\sc $\chi$-hyperbolicity.} Let us fix $\chi>0$.
An ergodic probability measure is \emph{$\chi$-hyperbolic} if it is hyperbolic of saddle type and all its Lyapunov exponents belong to $\mathbb{R}\setminus [-\chi,\chi]$.
A compact invariant set $K$ is called  \emph{$\chi$-hyperbolic}, if it is hyperbolic and if all the ergodic probability measures are $\chi$-hyperbolic.
\medskip

We use the following simple characterization of $\chi$-hyperbolicity:

\begin{proposition}\label{p.chihypset}
An invariant hyperbolic compact set $K$ with hyperbolic splitting $T_KM=E^s\oplus E^u$ is $\chi$-hyperbolic if and only if it admits numbers $C>0$ and $\kappa>\chi$ such that, for all $v^s\in E^s$ and $v^u\in E^u$,
 \begin{equation}\label{eq-chi-uniform}
     \forall n\geq0\quad
       \|Df^n.v^s\|\leq C e^{-\kappa n}\|v^s\| \text{ and }
       \|Df^{-n}.v^u\|\leq C e^{-\kappa n}\|v^u\|.
 \end{equation}
\end{proposition}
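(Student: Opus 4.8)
\emph{Proof plan.} I would prove the two implications separately, and I record at the outset the (implicit) hypothesis that $E^s$ and $E^u$ are both non-trivial: otherwise $K$ carries no ergodic measure of saddle type and the equivalence is vacuous. \textbf{The direction ``$\Leftarrow$''} is elementary. Assume \eqref{eq-chi-uniform} holds for some $C>0$, $\kappa>\chi$, and let $\mu$ be ergodic on $K$. For $\mu$-a.e.\ $x$ and $0\neq v\in E^s(x)$ one has $\tfrac1n\log\|Df^n_x v\|\leq \tfrac{\log C}{n}-\kappa\to -\kappa$, so every exponent of $\mu$ along $E^s$ is $\leq-\kappa$; applying the second bound at the point $f^n(x)$ to the vector $Df^n_x v\in E^u(f^n(x))$ gives $\|v\|\leq Ce^{-\kappa n}\|Df^n_x v\|$, hence $\|Df^n_x v\|\geq C^{-1}e^{\kappa n}\|v\|$ and every exponent along $E^u$ is $\geq\kappa$. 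Thus $\mu$ is $\chi$-hyperbolic of saddle type, and since $\mu$ was arbitrary, $K$ is $\chi$-hyperbolic.

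\textbf{Reduction for ``$\Rightarrow$''.} Consider the sub-multiplicative cocycles $\phi_n(x):=\log\|Df^n|_{E^s(x)}\|$ and $\psi_n(x):=\log\|Df^{-n}|_{E^u(x)}\|$, which are continuous on the compact set $K$ because $E^s,E^u$ are continuous sub-bundles. Then $a_n:=\max_{x\in K}\phi_n(x)$ and $b_n:=\max_{x\in K}\psi_n(x)$ are sub-additive, so $\alpha^s:=\lim_n a_n/n=\inf_n a_n/n$ and $\alpha^u:=\lim_n b_n/n$ exist. It suffices to prove $\alpha^s<-\chi$ and $\alpha^u<-\chi$: choosing $\kappa$ with $\chi<\kappa<\min(-\alpha^s,-\alpha^u)$, one gets $\|Df^n|_{E^s}\|\leq e^{-\kappa n}$ and $\|Df^{-n}|_{E^u}\|\leq e^{-\kappa n}$ for all $n\geq N$, and then enlarging the constant to $C:=\max\{1,\ \|Df^n\|e^{\kappa n},\ \|Df^{-n}\|e^{\kappa n}\ :\ 0\leq n<N\}$ yields \eqref{eq-chi-uniform}.

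\textbf{The core estimate.} For an invariant probability measure $\mu$ put $\lambda^s(\mu):=\inf_n\tfrac1n\int\phi_n\,d\mu$. By Kingman's sub-additive ergodic theorem, for ergodic $\mu$ this equals $\lim_n\tfrac1n\phi_n(x)$ $\mu$-a.e., i.e.\ the largest Lyapunov exponent of $\mu$ along $E^s$; via the ergodic decomposition $\mu\mapsto\lambda^s(\mu)$ is affine, and as an infimum of continuous affine functions it is upper semicontinuous on the compact space of invariant measures. Integrating $\phi_n\leq a_n$ gives $\lambda^s(\mu)\leq\alpha^s$ for every $\mu$. For the reverse inequality, pick $x_n\in K$ with $\phi_n(x_n)=a_n$, set $\mu_n:=\tfrac1n\sum_{j=0}^{n-1}\delta_{f^j(x_n)}$, and take a weak-$*$ limit point $\mu$. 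Decomposing an orbit segment of length $n$ into blocks of length $m$ and averaging over the $m$ possible starting offsets (so that the sparse sums become an honest Birkhoff average), sub-additivity yields $\phi_n(x_n)\leq \mathrm{const}(m)+\tfrac nm\int\phi_m\,d\mu_n$; dividing by $n$ and passing to the limit along the convergent subsequence gives $\alpha^s\leq\tfrac1m\int\phi_m\,d\mu$ for every $m$, hence $\alpha^s\leq\lambda^s(\mu)$. Therefore $\alpha^s=\sup_\mu\lambda^s(\mu)$, and by affineness plus upper semicontinuity this supremum is attained at an ergodic $\mu^*$. Since $K$ is $\chi$-hyperbolic, $\mu^*$ is $\chi$-hyperbolic, so its top exponent along $E^s$ is $<-\chi$; that is, $\alpha^s=\lambda^s(\mu^*)<-\chi$. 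The identical argument for $f^{-1}$ and $E^u$ gives $\alpha^u<-\chi$, which finishes the proof.

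\textbf{Expected main obstacle.} Everything except one step is routine: sub-additivity and Fekete's lemma, the upper semicontinuity and affineness of $\mu\mapsto\lambda^s(\mu)$, the attainment of the maximum at an ergodic measure, and absorbing the finitely many small exponents $n<N$ into the constant $C$. The genuinely delicate point is the inequality $\alpha^s\leq\sup_\mu\lambda^s(\mu)$ — a ``variational principle from below'' for a sub-additive cocycle over a compact system — where one must carefully bound the boundary terms of the block decomposition and organize the offset-averaging so that the $O(m)$ errors vanish after dividing by $n\to\infty$.
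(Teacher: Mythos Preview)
Your proof is correct, but it takes a genuinely different route from the paper's. The paper bypasses the subadditive machinery entirely by lifting to the unit sphere bundle $T^sK:=\{v\in E^s:\|v\|=1\}$ with the normalized map $F(v)=Df(v)/\|Df(v)\|$ and the \emph{additive} observable $\varphi(v)=\log\|Df(v)\|$; since $\sum_{j=0}^{n-1}\varphi(F^jv)=\log\|Df^n v\|$, one only needs ordinary Birkhoff averages and a weak-$*$ limit of empirical measures on $T^sK$ to produce an invariant measure whose projection to $K$ has top stable exponent $\geq -\chi$, contradicting $\chi$-hyperbolicity. Your approach instead stays on $K$, works with the genuinely subadditive cocycle $\phi_n=\log\|Df^n|_{E^s}\|$, and proves the identity $\alpha^s=\sup_\mu\lambda^s(\mu)$ via the block-decomposition/offset-averaging argument (what you correctly flag as the delicate step). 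Both are standard and valid; the paper's bundle trick is shorter and sidesteps the bookkeeping you worry about, while your argument is more self-contained in that it does not introduce an auxiliary space and makes explicit the upper semicontinuity and the attainment of the supremum at an ergodic measure---a point the paper handles implicitly (their limit measure need not be ergodic, and one still has to argue, via the ergodic decomposition, that $\int\lambda^s(\mu_\omega)\,dP(\omega)\geq -\chi$ together with $\lambda^s(\mu_\omega)<-\chi$ $P$-a.e.\ is impossible).
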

\begin{proof}
Obviously, \eqref{eq-chi-uniform} implies $\chi$-hyperbolicity.

For the converse, we assume that \eqref{eq-chi-uniform} does not hold. Then  for every $\kappa>\chi$, there exist arbitrarily large integers and unit vectors $v$ such that
either $v\in E^s$ and $\|Df^n.v\|\geq e^{-\kappa n}\|v\|$ or $v\in E^u$ and $\|Df^{-n}.v\|\geq e^{-\kappa n}\|v\|$.
Assume without loss of generality that the first case holds.

Let $T^s K:=\{v\in TM: x\in K, v\in E^s(x), \|v\|=1\}$ and let $F: T^s K\to T^s K$,  $F(v)=Df(v)/\|Df(v)\|$. This is a homeomorphism, because  $x\mapsto E^s(x)$ is continuous and $f$ is a diffeomorphism.  Let $\varphi:=\log\|Df(v)\|$, then $\vf: T^s K\to\R$ is continuous.

By our assumptions, there are $v_k\in T^s K$ and $n_k\to\infty$  such that $\|Df^{n_k}.v\|\geq e^{-\kappa n_k}\|v\|$ for all $k$.  Every accumulation point of $\tfrac1n\sum_{j=0}^{n_k-1}\delta_{F^j(v_k)}$  is an $F$-invariant probability measure $\widehat \mu$
on $T^s K$  such that $\int_{T^s K} \varphi(v) d\widehat\mu(v)\geq -\chi$. A calculation shows that $\sum_{j=0}^{n-1}\vf\circ F^j=\log\|Df^n\|$, whence
\begin{equation}\label{e.hyp-fails}
\int_{T^s K} \log\|Df^n(v)\|  d\widehat\mu(v)\geq -n\chi\text{ for all }n.
\end{equation}
The measure $\widehat \mu$ projects to an invariant probability measure $\mu$ on $K$,
which by \eqref{e.hyp-fails} must satisfy $\frac{1}{n}\int_K \log\|Df^n|_{E^s(x)}\|d\mu\geq-\chi$ for all $n$. A standard argument now shows that
the largest negative Lyapunov exponent of $\mu$  is larger or equal to $-\chi$.
Consequently, $K$ is not $\chi$-hyperbolic.
\end{proof}

 \subsection{Homoclinic classes of hyperbolic measures}\label{s.classes-measure}
Suppose $M$ is a closed manifold of any dimension and let  $f\in\Diff^r(M)$,  $r>1$. We extend the definition of Smale's pre-order $\preceq$ and the homoclinic relation $\hsim$ to the set of  ergodic hyperbolic  measures of saddle type $\HPM(f)$:
\begin{definition}\label{defHomoclinicMeasures}
For $\mu_1,\mu_2\in\HPM(f)$, we write $\mu_1\preceq \mu_2$ iff there are measurable sets $A_1,A_2\subset M$ with $\mu_i(A_i)>0$ such that for all $(x_1,x_2)\in A_1\times A_2$, the manifolds
$W^u(x_1)$ and $W^s(x_2)$ have a point of transverse intersection.

\end{definition}
\begin{definition}\label{defi-h-rel-meas}
$\mu_1,\mu_2$ are \emph{homoclinically related} if $\mu_1\preceq\mu_2$ and $\mu_2\preceq\mu_1$.
We write $\mu_1\hsim\mu_2$. The set of measures homoclinically related to a measure $\mu$ is called
the \emph{measured homoclinic class of $\mu$}.
\end{definition}

Every hyperbolic periodic orbit of saddle type $\cO$ carries a unique hyperbolic invariant probability measure $\mu_\cO$.
Smale's order and the  homoclinic relation for hyperbolic measures are compatible with Smale's order and the  homoclinic relation for hyperbolic periodic orbits in the following sense:
 $$
\bigl(\cO\preceq\cO'
\quad\Longleftrightarrow\quad \mu_{\cO}\preceq\mu_{\cO'}\bigr)\quad\text{ and }\quad    \bigl(\cO\hsim \cO' \quad \Longleftrightarrow \quad \mu_\cO\hsim\mu_{\cO'}\bigr).
 $$

\begin{proposition}\label{Prop-Equiv-Rel} When $f\in \Diff^r(M)$, $r>1$,
Smale's pre-order {on measures} is reflexive and transitive. Consequently, the homoclinic relation is an equivalence relation {for ergodic hyperbolic measures of saddle type}.
\end{proposition}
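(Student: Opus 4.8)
The plan is to reduce the statement to showing that $\preceq$ is reflexive and transitive on $\HPM(f)$: the relation $\hsim$ is symmetric by construction, it then inherits reflexivity, and transitivity of $\hsim$ follows from transitivity of $\preceq$ (if $\mu_1\hsim\mu_2\hsim\mu_3$ then $\mu_1\preceq\mu_2\preceq\mu_3$ and $\mu_3\preceq\mu_2\preceq\mu_1$). Reflexivity of $\preceq$ is the easy part: given $\mu\in\HPM(f)$ I would pick a Pesin block $K_n$ with $\mu(K_n)>0$; by properties~(\ref{b})--(\ref{c}) the splitting $\mathcal E^s\oplus\mathcal E^u$ and the local manifolds vary continuously over the compact set $K_n$ and the latter have uniform size, so there is $\delta>0$ with $W^u_{loc}(x)\pitchfork W^s_{loc}(x')\neq\emptyset$ whenever $x,x'\in K_n$ and $d(x,x')<\delta$. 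Covering $K_n$ by finitely many balls of radius $\delta/2$, one set $A:=K_n\cap B(p,\delta/2)$ has $\mu(A)>0$, and since $W^u_{loc}(x)\subseteq W^u(x)$ and $W^s_{loc}(x')\subseteq W^s(x')$, the pair $(A,A)$ witnesses $\mu\preceq\mu$.

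The core of the argument will be a reformulation of $\preceq$ in purely measure-theoretic terms. Set $R:=\{(x,x')\in Y^\#\times Y^\#: W^u(x)\pitchfork W^s(x')\neq\emptyset\}$; I claim that for $\mu,\nu\in\HPM(f)$ one has $\mu\preceq\nu$ if and only if $(\mu\times\nu)(R)>0$. The forward implication is immediate, since $\mu(Y^\#)=\nu(Y^\#)=1$. For the converse I would write $R=\bigcup_{a,b\geq0,\,n\geq1}R_{a,b,n}$, where $R_{a,b,n}$ is the set of pairs $(x,x')$ with $f^{-a}(x)\in K_n$, $f^{b}(x')\in K_n$ for which the discs $f^{a}(W^u_{loc}(f^{-a}x))\subseteq W^u(x)$ and $f^{-b}(W^s_{loc}(f^{b}x'))\subseteq W^s(x')$ meet transversally. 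Property~(\ref{d}) shows this is a genuine decomposition of $R$, while the $C^1$ dependence of the Pesin local manifolds on the base point inside $K_n$ (property~(\ref{c})) together with the $C^1$-openness of transverse intersection shows that $R_{a,b,n}$ is relatively open in the compact set $f^{a}(K_n)\times f^{-b}(K_n)$. Hence if $(\mu\times\nu)(R)>0$ then some $R_{a,b,n}$ has positive $\mu\times\nu$-measure; covering that relatively open set by the countably many basic boxes $(U\times V)\cap(f^{a}K_n\times f^{-b}K_n)$ it contains, at least one such box $A\times C$ has positive product measure, so $\mu(A)>0$, $\nu(C)>0$, and $A\times C\subseteq R$ witnesses $\mu\preceq\nu$.

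Granting this reformulation, transitivity is an ergodicity argument powered by the Inclination Lemma (Lemma~\ref{l.density-submanifolds}). Assume $(\mu_1\times\mu_2)(R)>0$ and $(\mu_2\times\mu_3)(R)>0$. Writing $R_x$ and $R^{x'}$ for the horizontal and vertical slices of $R$, one checks $R_{fx}=f(R_x)$ and $R^{fx'}=f(R^{x'})$, so the sets $\{x:\mu_2(R_x)>0\}$ and $\{x':\mu_2(R^{x'})>0\}$ are $f$-invariant; by Fubini and ergodicity of $\mu_1$ and $\mu_3$, for $\mu_1$-a.e.\ $x_1$ we have $\mu_2(R_{x_1})>0$ and for $\mu_3$-a.e.\ $x_3$ we have $\mu_2(R^{x_3})>0$. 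Fixing such $x_1,x_3\in Y'$, ergodicity of $\mu_2$ yields $k\geq0$ and $x_2\in R_{x_1}\cap f^{-k}(R^{x_3})\cap Y'$, so that $W^u(x_1)\pitchfork W^s(x_2)\neq\emptyset$ and $W^u(x_2)\pitchfork W^s(f^{-k}x_3)\neq\emptyset$ with $x_2\in Y'$. Applying the Inclination Lemma at $x_2$, with a small disc $D\subseteq W^u(x_2)$ transverse to $W^s(f^{-k}x_3)$ and a small disc $\Delta\subseteq W^u(x_1)$ transverse to $W^s(x_2)$, gives sub-discs $D_j\subseteq\Delta$ and times $n_j\to\infty$ with $f^{n_j}(D_j)\to D$ in $C^1$; openness of transversality then forces $f^{n_j}(D_j)\pitchfork W^s(f^{-k}x_3)\neq\emptyset$ for $j$ large, and since $f^{n_j}(D_j)\subseteq W^u(f^{n_j}x_1)$ we get $(f^{n_j}x_1,f^{-k}x_3)\in R$. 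Thus for $\mu_1$-a.e.\ $x_1$ and $\mu_3$-a.e.\ $x_3$ there are integers $p,q$ with $(f^{p}x_1,f^{q}x_3)\in R$; equivalently $\bigcup_{p,q\in\Z}(f^{-p}\times f^{-q})(R)$ has full $\mu_1\times\mu_3$-measure, and since each term of this countable union has the same $\mu_1\times\mu_3$-measure as $R$, we conclude $(\mu_1\times\mu_3)(R)>0$, i.e.\ $\mu_1\preceq\mu_3$. Hence $\preceq$ is a preorder and $\hsim$ an equivalence relation on $\HPM(f)$.

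I expect the reformulation — specifically its ``$\Leftarrow$'' direction — to be the main obstacle. The only available mechanism for producing new transverse intersections of invariant manifolds, the Inclination Lemma, inevitably displaces the base point along its orbit, so one cannot build the rectangle $A_1\times A_2$ of Definition~\ref{defHomoclinicMeasures} directly; it has to be recovered a posteriori from a set of transverse intersections of positive product measure, which is exactly the purpose of the decomposition of $R$ into pieces that are relatively open over iterated Pesin blocks. Getting that decomposition and its topological properties right — in particular the precise $C^1$-continuity statements for Pesin local manifolds and the compatibility of local manifolds attached to different Pesin blocks — is the delicate point.
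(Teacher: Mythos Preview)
Your argument is correct, and the essential mechanism --- ergodicity of $\mu_2$ to merge the two ``middle'' points, the Inclination Lemma at that point to propagate the transverse intersection, and the $C^1$-continuity of local Pesin manifolds on blocks to recover a box of positive product measure --- is exactly what the paper uses. The organization, however, is genuinely different. The paper works pointwise: it chooses specific points $x_i$ in Pesin blocks $X_i\subset Y'\cap\supp(\mu_i)$ whose every neighborhood has positive measure, arranges $x_2'=x_2$ by ergodicity of $\mu_2$, applies the Inclination Lemma once to produce a single transverse intersection between iterates of $W^u(x_1)$ and $W^s(x_3)$, and then thickens this at the very end by pulling back to Pesin blocks and using continuity there. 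You instead isolate the thickening step into a clean reformulation, $\mu\preceq\nu\Leftrightarrow(\mu\times\nu)(R)>0$, and then run an a.e.\ argument on pairs using ergodicity of all three measures. Your packaging makes the r\^ole of the ``box'' step explicit and reusable, at the cost of invoking ergodicity of $\mu_1$ and $\mu_3$ where the paper's choice of points in the support avoids it; the paper's version is a little more direct and avoids the countable decomposition of $R$.
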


\begin{proof}
Reflexivity is clear, we show transitivity: If $\mu_1,\mu_2,\mu_3\in\HPM(f)$ satisfy $\mu_1\preceq \mu_2$ and $\mu_2\preceq \mu_3$, then $\mu_1\preceq\mu_3$.
The following proof is based
on Newhouse's proof that  Smale's pre-order for hyperbolic periodic orbits is transitive \cite{Newhouse-Homoclinic}.

Let $X_i$ $(1\leq i\leq 3)$ be sets  such that (a) $\mu_i(X_i)>0$;
(b) $X_i$ is a Pesin block for $\mu_i$ in the sense of Section~\ref{s.hyp-measures};
(c) Every $y\in X_i$ satisfies the conclusion of the Inclination Lemma~\ref{l.density-submanifolds}; (d) Every neighborhood of every $x_i\in X_i$ has positive $\mu_i$ measure.

Since $\mu_1\preceq \mu_2$ and $\mu_2\preceq \mu_3$, there exist points $x_1,x_2,{x_2'},x_3$ having infinitely many backward iterates in $X_1,X_2,X_2,X_3$ respectively,
which satisfy the conclusion of Lemma~\ref{l.density-submanifolds}
and such that\begin{enumerate}[\quad $\circ$]
  \item $W^u(x_1)\pitchfork W^s(x_2)\neq\emptyset$,
  \item $W^u({x_2'})\pitchfork W^s(x_3)\neq\emptyset$.
\end{enumerate}
Since $\mu_2$ is ergodic, it is possible to arrange $x_2'=x_2$.

Pick a disc $D\subset W^u(x_2)$ such that $D\pitchfork W^s(x_3)\neq\emptyset$.
Then there are discs $D_k\subset W^u(x_1)$ and times $m_k\to +\infty$
such that $f^{m_k}(D_k)$ converges to $D$ in the $C^1$ topology.
This proves that $f^{m_k}(D_k)\pitchfork W^s({x_3})\neq\emptyset$ for $k$ large. We have thus proved that
$f^m(W^u(x_1))$ has a transverse intersection point with $W^s(x_3)$ for some arbitrarily large $m\geq 0$.

There exist $n_1$ and $n_3$ arbitrarily large such that
$f^{-n_1}(x_1)\in X_1$ and $f^{n_3}(x_3)\in X_3$.
One deduces that $f^{m+n_1}(W^u_{loc}(f^{-n_1}(x_1))$ has a transverse intersection point with
$f^{-n_3}(W^s_{loc}(f^{n_3}(x_3))$. Since the local stable and unstable manifolds vary continuously
on Pesin blocks, for any point $x$ in $X_1$ close to $f^{-n_1}(x_1)$ and any point $y$ in $X_3$ close to $f^{n_3}(x_3)$,
the manifolds $f^{m+n_1-n_3}(W^u(x))$ and $W^s(y)$ have a transverse intersection point.
The neighborhoods of $f^{-n_1}(x_1)$ in $X_1$ and of $f^{n_3}(x_3)$ in $X_3$ have positive measure for $\mu_1$
and $\mu_3$ respectively. This shows that $\mu_1\preceq \mu_3$.
\end{proof}

\paragraph{Notation.} For $\cO\in \HPO(f)$ and $\mu\in \HPM(f)$, one writes $\cO\hsim \mu$ when $\mu_\cO \hsim \mu$. This is easily seen to be
equivalent to requiring both $W^u(x)\pitchfork W^s(\cO)\neq\emptyset$ and $W^s(x)\pitchfork W^u(\cO)\neq\emptyset$
for $\mu$-almost every point $x$ as mentioned in the introduction.

One says that a transitive hyperbolic set $\Lambda$ is homoclinically related to $\mu$ and writes $\Lambda\hsim\mu$ if, for some $\nu\in\HPM(f|_\Lambda)$,
we have $\nu\hsim\mu$.
{For any two points $x,y\in \Lambda$, there exists $n\in \mathbb{Z}$ such that $W^u(f^n(x))\pitchfork W^s(y)\neq\emptyset$,
hence any two ergodic measures in $\Lambda$ are homoclinically related.}

\medskip

The following is a slight improvement of  Katok's Horseshoe Theorem \cite{KatokIHES}:

\begin{theorem}\label{t.katok}
For any $f\in \Diff^r(M)$, $r>1$,
any atomless $\mu\in \HPM(f)$, any weak-$*$ neighborhood $U$ of $\mu$ and any $\varepsilon>0$,
there exists a horseshoe $\Lambda$ such that
\begin{enumerate}[(1)]
\item $\HPM(f|_\Lambda)\subset U$,
\item $h_{\top}(f|_\Lambda)>h(f,\mu)-\varepsilon$,
\item if $\chi_1\leq \dots\leq \chi_d$ are the Lyapunov exponents of $\mu$,
counted with multiplicities, then the Lyapunov exponents of any $\nu\in \Proberg(f|_\Lambda)$
satisfy $|\chi_i(f,\nu)-\chi_i|<\varepsilon$,
\item $\Lambda\hsim \mu$,
\item if $(f,\mu)$ is mixing, then $\Lambda$ can be assumed to be topologically mixing.
\end{enumerate}
\end{theorem}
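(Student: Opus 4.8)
The plan is to upgrade Katok's classical horseshoe theorem, which already produces a horseshoe $\Lambda$ satisfying (1), (2) and (3), by arranging the two additional properties (4) and (5). I would start from Katok's construction: approximate $\mu$ by a measure supported on a long, almost-closed Pesin orbit segment, and use the graph transform / shadowing to produce a hyperbolic set $\Lambda_0$ coded by a full shift on finitely many symbols, with periodic orbits dense, $h_\top(f|_{\Lambda_0})>h(f,\mu)-\varepsilon$, and Lyapunov data of all its ergodic measures within $\varepsilon$ of that of $\mu$; the weak-$*$ closeness of $\HPM(f|_{\Lambda_0})$ to $\mu$ comes from the fact that the coding pieces shadow typical $\mu$-orbit segments. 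This handles (1)--(3). Since $\Lambda_0$ is transitive, hyperbolic and locally maximal (a horseshoe as soon as it is totally disconnected and infinite, which Katok's construction gives), all its ergodic measures are mutually homoclinically related; in particular the periodic measures in $\Lambda_0$ are homoclinically related to one another, so property (4) for $\Lambda$ reduces to relating \emph{one} periodic orbit $\cO\subset \Lambda_0$ to $\mu$.

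For property (4), I would argue as follows. Fix a Pesin block $K_n$ with $\mu(K_n)>0$ consisting of points satisfying the conclusion of the Inclination Lemma~\ref{l.density-submanifolds}, and choose a $\mu$-generic point $x$ returning to $K_n$ infinitely often in both time directions. In Katok's construction the horseshoe $\Lambda_0$ sits in a small neighborhood of a long orbit segment of such a generic point, and its local stable/unstable manifolds are $C^1$-close to the (local) Pesin stable/unstable manifolds of nearby points of $K_n$. Picking a periodic point $p\in\Lambda_0$ whose orbit segment shadows a piece of the orbit of $x$ passing through $K_n$, the manifold $W^u(p)$ contains a disc $C^1$-close to a disc in $W^u_{loc}(z)$ for some $z\in K_n$ near $x$, and similarly for $W^s(p)$. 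Using the Inclination Lemma at $x$ (together with the fact that $W^u(x)$ and $W^s(x)$ have transverse intersections with discs in $W^s(p)$, $W^u(p)$ respectively, because these discs are graphs over the appropriate Pesin directions), one obtains $W^u(x)\pitchfork W^s(\cO(p))\neq\emptyset$ and $W^s(x)\pitchfork W^u(\cO(p))\neq\emptyset$. Since the set of such good $x$ has full $\mu$-measure, this gives $\mu\hsim \mu_{\cO(p)}$, hence $\mu\hsim \Lambda_0$, which is property (4). Alternatively, and more cleanly, one can invoke the transitivity of Smale's pre-order on measures (Proposition~\ref{Prop-Equiv-Rel}) once a single transverse homoclinic link in each direction is established; the key point, as in the proof of that proposition, is that transverse intersections of global invariant manifolds can be pushed onto Pesin blocks where everything varies continuously, so that positive-measure sets of transverse intersections are produced.

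For property (5), suppose $(f,\mu)$ is mixing. Katok's horseshoe is built from a full shift on $N$ symbols corresponding to return words of a generic orbit, and a priori its suspension-free dynamics could have a nontrivial period (a common gcd of the lengths of the coding words). To kill this, I would use the mixing of $(f,\mu)$: mixing implies that, for $\mu$-generic $x$, the set of return times $n$ with $f^n(x)$ close to $x$ and in $K_n$ is \emph{not} contained in a proper subgroup of $\Z$; equivalently, one can select in the coding two loops at a base symbol whose lengths are coprime. Adjoining such loops to the alphabet (which only increases the entropy and keeps all Lyapunov exponents within $\varepsilon$, since the added orbit segments still shadow $\mu$-typical behavior) makes the resulting subshift of finite type — hence the horseshoe $\Lambda$ — topologically mixing. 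Properties (1)--(4) are preserved because we only enlarged the coding by orbit pieces shadowing $\mu$-generic segments.

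The main obstacle is property (5): extracting, from the mixing of the measure-theoretic system $(f,\mu)$, genuinely coprime return-loop lengths in the combinatorial coding and verifying that adjoining the corresponding orbit segments does not spoil the hyperbolicity estimates or the weak-$*$ and Lyapunov control. Establishing (4) is conceptually routine given the Inclination Lemma and Proposition~\ref{Prop-Equiv-Rel}, but one must be careful that the discs produced in $W^{u/s}$ of the periodic orbit really are admissible (graph-like over the correct Pesin subspaces) so that the transversality of the intersections with $W^{s/u}(x)$ is automatic; this is where the $C^1$-closeness of Katok's horseshoe manifolds to the Pesin manifolds is used.
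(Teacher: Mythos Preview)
Your proposal is correct and matches the paper's approach. The paper's own proof is entirely by citation: items (1)--(3) are attributed to \cite{Katok-Hasselblatt-Book} and \cite{ACW}; for item (4) the paper simply notes that Katok's construction produces $\Lambda$ containing a point $y$ whose local stable and unstable manifolds are $C^1$-close to those of a $\mu$-typical $x$, which is exactly the mechanism you describe; and item (5) is deferred to \cite{Buzzi-Lausanne} with the remark that total ergodicity (rather than full mixing) suffices, which is the underlying reason your coprime-return-times argument works.
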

\begin{proof}
The two first items are a folklore strengthening of \cite{KatokIHES} and
are proved in~\cite[Thm. S.5.9]{Katok-Hasselblatt-Book} for the case of a surface $M$.
The three first items in arbitrary dimension are proved in~\cite[Appendix]{ACW}).

Note that the proof in~\cite{{Katok-Hasselblatt-Book}} shows that
for $\mu$-almost every $x$, one can choose $\Lambda$ which contains a point $y$ whose local stable and unstable manifolds
are arbitrarily $C^1$-close to the local stable and unstable manifolds of $x$.
The fourth item follows.

The last item is proved for instance in \cite{Buzzi-Lausanne} - it is in fact sufficient to assume that $\mu$ is totally ergodic, i.e., ergodic with respect to all positive iterates.
\end{proof}

\begin{definition}
The \emph{topological homoclinic class}
of $\mu\in\HPM(f)$ is the set
$$\HC(\mu):=\ov{\bigcup\{\supp\nu:\nu\in\HPM(f)\;, \nu\hsim\mu\}}.$$
\end{definition}
In a horseshoe $\Lambda$, the periodic measures are dense in $\Proberg(f|_\Lambda)$ {\cite[Cor. 6.4.19]{Katok-Hasselblatt-Book}}, hence:
\begin{cor}\label{Cor-HC}
For any $f\in \Diff^r(M)$, $r>1$, and any $\mu\in \HPM(f)$, the set of measures supported by periodic orbits is dense in the set of hyperbolic ergodic measures homoclinically
related to $\mu$, endowed with the weak-$*$ topology. In particular, there exists $\cO\in\HPO(f)$ such that $\cO\hsim \mu$ and
$$\HC(\mu)=\ov{\bigcup\{\supp\nu:\nu\in\HPM(f)\;, \nu\hsim\cO\}}=\HC(\cO).$$
\end{cor}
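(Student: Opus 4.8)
The plan is to bootstrap from Katok's horseshoe theorem (Theorem~\ref{t.katok}): given $\mu$, or any measure homoclinically related to it, we approximate it by a horseshoe that is \emph{simultaneously} weak-$*$ close and homoclinically related, and then use that periodic measures are weak-$*$ dense among the ergodic measures of a horseshoe while, as recalled above, any two ergodic measures on a transitive hyperbolic set are homoclinically related.

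\emph{Density of the periodic measures.} Let $\nu\in\HPM(f)$ with $\nu\hsim\mu$ and let $U$ be a weak-$*$ neighborhood of $\nu$. If $\nu$ has an atom, it is carried by a single periodic orbit $\cO'\in\HPO(f)$ and $\mu_{\cO'}=\nu\hsim\mu$ already lies in $U$. If $\nu$ is atomless, apply Theorem~\ref{t.katok} to $\nu$ with the neighborhood $U$ and $\varepsilon$ small: we obtain a horseshoe $\Lambda$ with $\HPM(f|_\Lambda)\subset U$ and $\Lambda\hsim\nu$. By item~(3) of that theorem the Lyapunov exponents of every ergodic measure on $\Lambda$ are close to those of the saddle-type measure $\nu$, hence of both signs; so every periodic orbit $\cO'\subset\Lambda$ belongs to $\HPO(f)$ and $\mu_{\cO'}\in U$. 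Since $\Lambda$ is transitive and hyperbolic, $\mu_{\cO'}$ and the measure on $\Lambda$ witnessing $\Lambda\hsim\nu$ are homoclinically related, so $\mu_{\cO'}\hsim\nu\hsim\mu$; by transitivity of $\hsim$ (Proposition~\ref{Prop-Equiv-Rel}), $\cO'\hsim\mu$. As $\Lambda$ contains periodic orbits, this produces a periodic measure in $U$ homoclinically related to $\mu$, proving the density.

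\emph{Existence of $\cO$ and identification of the class.} If $\mu$ is atomic it is itself a periodic measure $\mu_\cO$; otherwise the previous paragraph applied with $\nu=\mu$ (using $\mu\hsim\mu$) yields $\cO\in\HPO(f)$ with $\cO\hsim\mu$. As $\hsim$ is an equivalence relation, $\{\nu\in\HPM(f):\nu\hsim\cO\}=\{\nu\in\HPM(f):\nu\hsim\mu\}$, so by the definition of the topological homoclinic class of a measure, $\HC(\mu)=\overline{\bigcup\{\supp\nu:\nu\in\HPM(f),\ \nu\hsim\cO\}}$. The inclusion $\HC(\cO)\subseteq\HC(\mu)$ is immediate since each $\cO'\hsim\cO$ gives $\mu_{\cO'}\hsim\cO$ with $\supp\mu_{\cO'}=\cO'$. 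For the reverse inclusion, take $\nu\in\HPM(f)$ with $\nu\hsim\cO$; by the density statement there are periodic measures $\mu_{\cO'_n}\to\nu$ in the weak-$*$ topology with $\cO'_n\hsim\cO$, hence $\cO'_n\subset\HC(\cO)$. If $V$ is open with $\nu(V)>0$ then $\mu_{\cO'_n}(V)>0$ for $n$ large, so $V\cap\cO'_n\neq\emptyset$; therefore $\supp\nu\subseteq\overline{\bigcup_n\cO'_n}\subseteq\HC(\cO)$. Taking the union over all such $\nu$ and closing gives $\HC(\mu)\subseteq\HC(\cO)$, hence equality.

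\emph{Main obstacle.} The genuine input is Theorem~\ref{t.katok} — the strengthening of Katok's horseshoe theorem producing horseshoes that are at once weak-$*$ close to a prescribed hyperbolic measure and homoclinically related to it — and it is already established. Granting it, the remaining work is bookkeeping: keeping every periodic orbit produced inside a single homoclinic class (transitivity of $\hsim$ together with the fact that ergodic measures on a transitive hyperbolic set are homoclinically related) and upgrading weak-$*$ density to the set equality $\HC(\mu)=\HC(\cO)$ through the standard weak-$*$ estimate $\liminf_n\mu_n(V)\geq\nu(V)$ for open $V$. Neither point should cause difficulty.
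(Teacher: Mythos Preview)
The proposal is correct and follows essentially the same approach as the paper: the paper simply remarks that periodic measures are dense in $\Proberg(f|_\Lambda)$ for a horseshoe $\Lambda$ and says ``hence,'' leaving the details implicit, while you have spelled out exactly those details---apply Theorem~\ref{t.katok} to any $\nu\hsim\mu$ to get a horseshoe $\Lambda\hsim\nu$ close to $\nu$, pick periodic orbits inside $\Lambda$, and use transitivity of $\hsim$ to keep everything in one class. Your verification that $\HC(\mu)=\HC(\cO)$ via weak-$*$ lower semicontinuity on open sets is a clean way to make the ``In particular'' explicit.
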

\noindent
So the definitions of topological homoclinic classes of orbits and measures are consistent.
\medskip

To each measured homoclinic class, we associate a subset of the manifold. Recall the set $Y'$ of regular points introduced in Section~\ref{s.hyp-measures}.

\begin{proposition}\label{prop-HcO}
Suppose $r>1$, $f\in \Diff^r(M)$,  and  $\cO\in\HPO(f)$. The set
 $$H_\cO:=\{x\in Y': W^u(x)\pitchfork W^s(\cO)\neq\emptyset\text{ and }W^s(x)\pitchfork W^u(\cO)\neq\emptyset\},$$ is invariant and  measurable.
 It contains every transitive uniformly hyperbolic set $\Lambda\hsim\cO$, and
\begin{equation}\label{eq-HO}
\forall \mu\in\Proberg(f), \; \mu(H_\cO)=1\iff(\mu\in\HPM(f)  \text{ and } \mu\hsim \cO).
\end{equation}
\end{proposition}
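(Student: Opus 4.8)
The plan is to establish the three assertions of Proposition~\ref{prop-HcO} separately: invariance and measurability of $H_\cO$, the inclusion of transitive hyperbolic sets related to $\cO$, and the characterization \eqref{eq-HO}.

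\textbf{Invariance and measurability.} For invariance, I would note that $W^u(f(x))=f(W^u(x))$ and $W^s(f(x))=f(W^s(x))$ as injectively immersed submanifolds (by property~(\ref{d}) of Section~\ref{s.hyp-measures}), that $f$ preserves $W^u(\cO),W^s(\cO)$, and that $f$, being a diffeomorphism, carries transverse intersection points to transverse intersection points; also $Y'$ is invariant. Hence $x\in H_\cO\iff f(x)\in H_\cO$. For measurability, the point is to express the conditions ``$W^u(x)\pitchfork W^s(\cO)\neq\emptyset$'' and ``$W^s(x)\pitchfork W^u(\cO)\neq\emptyset$'' in a countable/Borel way. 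Using property~(\ref{d}), write $W^u(x)=\bigcup_k f^{n_k}(W^u_{loc}(f^{-n_k}(x)))$ along times with $f^{-n_k}(x)\in K_n$; since the local unstable manifolds $W^u_{loc}(z)$ depend continuously on $z\in K_n$ in the $C^r$-topology (property~(\ref{c})) and $W^s(\cO)$ is a fixed $C^r$ submanifold, the existence of a transverse intersection of $f^m(W^u_{loc}(f^{-m}(x)))$ with a fixed compact piece of $W^s(\cO)$ is an open condition in $x$ on each $K_n$, and ranging over $m\in\N$ and over an exhaustion of $W^s(\cO)$ by compact pieces, $\{x\in Y': W^u(x)\pitchfork W^s(\cO)\neq\emptyset\}$ is a countable union of Borel sets, hence Borel. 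The same applies to the stable condition, and $H_\cO$ is the intersection of these two Borel sets with $Y'$.

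\textbf{Inclusion of transitive hyperbolic sets.} Let $\Lambda$ be a transitive uniformly hyperbolic set with $\Lambda\hsim\cO$. By property~(\ref{f}), $\Lambda\subset K_n$ for some $n$, and moreover every point of $\Lambda$ lies in $Y'$ (its orbit returns to $K_n$ infinitely often in both time directions; and one can also invoke that $\Lambda$ is locally maximal to see the recurrence needed for $Y'$, or simply note $\Lambda$ carries an ergodic measure of full support after passing to a transitive subsystem). Fix $x\in\Lambda$. Since $\Lambda\hsim\cO$ there is $\nu\in\HPM(f|_\Lambda)$ with $\nu\hsim\mu_\cO$; because any two ergodic measures on a transitive hyperbolic set are homoclinically related (as recalled just before Theorem~\ref{t.katok}), and periodic measures on $\Lambda$ are homoclinically related to each other, one gets $W^u(x)\pitchfork W^s(p)\neq\emptyset$ and $W^s(x)\pitchfork W^u(p)\neq\emptyset$ for a periodic point $p\in\Lambda$ with $\cO(p)\hsim\cO$. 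Now apply the Inclination Lemma~\ref{l.density-submanifolds} (valid since $x\in Y'$): pieces of $W^u(x)$ iterate to $C^1$-approximate a disc in $W^u(p)\subset W^u(\cO(p))$, which transversally meets $W^s(\cO)$ because $\cO(p)\hsim\cO$; this yields $W^u(x)\pitchfork W^s(\cO)\neq\emptyset$, and symmetrically $W^s(x)\pitchfork W^u(\cO)\neq\emptyset$. Hence $x\in H_\cO$.

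\textbf{The characterization \eqref{eq-HO}.} ($\Leftarrow$) If $\mu\in\HPM(f)$ and $\mu\hsim\cO$, then by the ``Notation'' paragraph after Proposition~\ref{Prop-Equiv-Rel} we have $W^u(x)\pitchfork W^s(\cO)\neq\emptyset$ and $W^s(x)\pitchfork W^u(\cO)\neq\emptyset$ for $\mu$-a.e.\ $x$; since also $\mu(Y')=1$ by property~(\ref{f}), we get $\mu(H_\cO)=1$. ($\Rightarrow$) Suppose $\mu\in\Proberg(f)$ with $\mu(H_\cO)=1$. Since $H_\cO\subset Y'$ and, for hyperbolic-of-saddle-type points of a measure, the stable/unstable manifolds used here are genuine immersed submanifolds with the transversality conditions witnessing nontrivial homoclinic relations, the condition $\mu(H_\cO)=1$ forces $\mu$ to be hyperbolic of saddle type: indeed a non-hyperbolic ergodic measure, or a hyperbolic one not of saddle type, has $\mu(Y')=0$ or its invariant manifolds cannot be defined as in property~(\ref{d}) (alternatively, the sets $Y,Y^\#,Y'$ are, by construction via Pesin blocks, null for non-hyperbolic measures — this is part of property~(\ref{f})). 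So $\mu\in\HPM(f)$. Then by definition $\mu(H_\cO)=1$ says precisely $W^u(x)\pitchfork W^s(\cO)\neq\emptyset$ and $W^s(x)\pitchfork W^u(\cO)\neq\emptyset$ for $\mu$-a.e.\ $x$, which is $\mu\hsim\cO$.

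The main obstacle I expect is the measurability argument: one must carefully turn the ``global'' intersection conditions on the immersed manifolds $W^u(x),W^s(x)$ into a countable boolean combination of conditions that are each continuous (or at least Borel) in $x$, using only the $C^r$-continuous dependence of \emph{local} invariant manifolds on the Pesin blocks $K_n$ together with the iterate representation in property~(\ref{d}). A secondary subtlety is the implication $\mu(H_\cO)=1\Rightarrow\mu\in\HPM(f)$, which relies on the fact that $Y'$ (and even $Y$) carries no ergodic measure that is not hyperbolic of saddle type — one should be explicit that this is built into the definition of the Pesin blocks recalled in Section~\ref{s.hyp-measures}.
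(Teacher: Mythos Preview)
Your proposal is correct and follows essentially the same approach as the paper: measurability is obtained by expressing the global transverse-intersection conditions as countable unions over iterates and Pesin blocks, using the $C^1$-continuous dependence of local invariant manifolds on each $K_n$; and the equivalence \eqref{eq-HO} follows directly from invariance, ergodicity, and the definition of the homoclinic relation, with $\mu(H_\cO)=1\Rightarrow\mu\in\HPM(f)$ coming from $H_\cO\subset Y'$ and the construction of the Pesin blocks.

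One remark: you actually go further than the paper, whose proof does not address the clause ``$H_\cO$ contains every transitive uniformly hyperbolic $\Lambda\hsim\cO$'' at all. Your parenthetical justification for $\Lambda\subset Y'$ is the only soft spot: not every point of a transitive hyperbolic compact set is forward and backward recurrent (think of a point on a homoclinic orbit inside a horseshoe), so $\Lambda\subset Y'$ need not hold literally. This is a wrinkle in the \emph{statement} rather than in your strategy; the inclusion should be read modulo a set carrying no invariant probability (which is all that is ever used downstream), and with that reading your Inclination-Lemma argument goes through.
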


\begin{proof}
The properties of Pesin blocks recalled in Section~\ref{s.hyp-measures} give,
for any $x\in H_\cO$, positive integers $m^+,m^-$ and a Pesin block $P_n$
such that
\begin{equation}\label{e.defHO}
f^{m^+}(x),f^{-m^-}(x)\in K_n,\quad
W^s_{loc}(f^{m^+}(x))\pitchfork W^u(\cO)\neq\emptyset
\text{ and } W^u_{loc}(f^{-m^-}(x))\pitchfork W^s(\cO)\neq\emptyset.
\end{equation}
For any $m^+,m^-,n$, the set of points $x$ satisfying~\eqref{e.defHO} is measurable
(since the local manifolds vary continuously for the $C^1$-topology on each Pesin block).
Hence $H_\cO$ is measurable.

By definition of the homoclinic relation of measures, if an hyperbolic ergodic measure $\mu$ satisfies $\mu\hsim \cO$,
then $\mu(H_\cO)$ is positive. Since $H_\cO$ is invariant $\mu(H_\cO)=1$.
Conversely if $\mu$ is ergodic and satisfies $\mu(H_\cO)=1$, it is hyperbolic of saddle type
(from the properties of the Pesin blocks) and $\mu\hsim \cO$ by definition of $H_\cO$.
\end{proof}

\begin{remark}
We note that if two sets $H_\cO$ and $H_{\cO'}$ intersect, then $\cO\hsim \cO'$ and the two sets coincide.
Indeed, if $x\in H_\cO\cap H_{\cO'}$, then the inclination lemma~\ref{l.density-submanifolds} implies that
the stable manifolds of $\cO$ and $\cO'$ contain discs that converge towards the stable manifold
of $x$ for the $C^1$-topology; the same hold for the unstable manifolds, implying the homoclinic relation between $\cO$ and $\cO'$.
\end{remark}
\medskip

Let us recall that $n\geq 1$ is a \emph{period of an invariant measure} $\mu\in \mathbb P(f)$
if there exists a measurable set $A$ such that $f^i(A)\cap A=\emptyset$ for $0<i<n$, $f^n(A)=A$
and $\mu(A\cup\dots\cup f^{n-1}(A))=1$. When it exists, the largest period is called \emph{the} period of $\mu$.

\begin{proposition}\label{p.period}
For any $f\in \Diff^r(M)$, $r>1$, and any $\cO\in\HPO(f)$
whose homoclinic class has period $\ell=\ell(\cO)$,
there exists a partition
$B_1,\dots,B_\ell$ of the set $H_\cO$ into $\ell$ measurable sets
that are cyclically permuted by the dynamics.
Hence $\ell$ is a period of any measure $\mu\in \HPM(f)$ that is homoclinically related to $\cO$.
Moreover, for any two points $x,y\in H_{\cO}$, we have
 \begin{equation}\label{eq-equiv-period}
 \left(\exists i,\; \{x,y\}\subset B_i\right)\;\Leftrightarrow\; W^s(x)\pitchfork W^u(y)\neq\emptyset
\;\Leftrightarrow\; W^u(x)\pitchfork W^s(y)\neq\emptyset.
 \end{equation}
\end{proposition}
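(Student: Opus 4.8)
The plan is to build the partition from a $\mathbb{Z}/\ell\mathbb{Z}$-valued invariant on $H_\cO$ recording to which cyclic class of $\cO$ a point is homoclinically related. Fix $p\in\cO$ and write $m:=\Card(\cO)$. First note $\cO\subset H_\cO$: each $q\in\cO$ lies in $Y'$ (periodic orbits sit inside Pesin blocks), and $q\in W^u(q)\pitchfork W^s(q)$ because $T_qM=E^s(q)\oplus E^u(q)$. For $x\in H_\cO$ set
$$
 I(x):=\{\,k\bmod m:\ W^u(f^k(p))\pitchfork W^s(x)\neq\emptyset\,\},\qquad
 J(x):=\{\,k\bmod m:\ W^u(x)\pitchfork W^s(f^k(p))\neq\emptyset\,\}.
$$
Since $W^{s/u}(\cO)=\bigsqcup_{j}W^{s/u}(f^j(p))$ is a disjoint union of finitely many submanifolds and $x\in H_\cO$, both $I(x)$ and $J(x)$ are nonempty. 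The core of the argument will be to show that $I(x)=J(x)$ and that this set is a single full coset of $\ell\mathbb{Z}/m\mathbb{Z}$; its residue mod $\ell$ defines $c(x)$, and I put $B_i:=\{x\in H_\cO:\ c(x)\equiv i-1\}$ for $i=1,\dots,\ell$. Equivariance is immediate from $W^u(x)\pitchfork W^s(f^k(p))\neq\emptyset\Leftrightarrow W^u(f(x))\pitchfork W^s(f^{k+1}(p))\neq\emptyset$, which gives $c(f(x))=c(x)+1$, hence $f(B_i)=B_{i+1\bmod\ell}$; since $c(f^j(p))=j\bmod\ell$ all $B_i$ are nonempty; and $B_i=\bigcup_{k\equiv i-1}\{x\in Y':W^u(f^k(p))\pitchfork W^s(x)\neq\emptyset\}$ is measurable exactly as in the proof of Proposition~\ref{prop-HcO} (describe $W^s(x)$ through $\bigcup_s f^{-s}(W^s_{loc}(f^s(x)))$ on Pesin blocks, exhaust the fixed manifolds $W^u(f^k(p))$ by compact sub-discs, and use that a transverse intersection of compact submanifolds is a $C^1$-open condition). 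The period statement then follows at once: if $\mu\in\HPM(f)$ and $\mu\hsim\cO$ then $\mu(H_\cO)=1$ by Proposition~\ref{prop-HcO}, and $A:=B_1$ satisfies $f^i(A)\cap A=\emptyset$ for $0<i<\ell$, $f^\ell(A)=A$ and $\mu(A\cup\dots\cup f^{\ell-1}(A))=1$.

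The engine for the rest is the following consequence of the inclination lemma (Lemma~\ref{l.density-submanifolds}) and the $f$-invariance of $W^{s}(\cO)$ and $W^u(\cO)$: \emph{for $x\in H_\cO$ and any embedded disc $D\subset W^u(x)$ there exist discs $E_k\subset W^u(\cO)$ with $E_k\to D$ in the $C^1$ topology; symmetrically, any embedded disc $D'\subset W^s(x)$ is a $C^1$-limit of discs $F_k\subset W^s(\cO)$.} Indeed, since $W^s(\cO)\pitchfork W^u(x)\neq\emptyset$ one picks a small disc $\Delta\subset W^s(\cO)$ transverse to $W^u(x)$; $\Delta$ lies in one component $W^s(f^{j_0}(p))$; the backward version of Lemma~\ref{l.density-submanifolds} at $x\in Y'$ yields $D_k\subset\Delta$ and $n_k\to\infty$ with $f^{-n_k}(D_k)\to D'$, and $f^{-n_k}(D_k)\subset W^s(f^{(j_0-n_k)\bmod m}(p))\subset W^s(\cO)$. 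The unstable statement is symmetric, using $W^u(\cO)\pitchfork W^s(x)\neq\emptyset$. Moreover each $E_k$ lies in a single $W^u(f^{c_k}(p))$, and by Proposition~\ref{p-hc-cyclic}(3), in its $f$-equivariant form $W^u(f^i(p))\pitchfork W^s(f^j(p))\neq\emptyset\Leftrightarrow i\equiv j\pmod\ell$, the residue $c_k\bmod\ell$ is forced by which of the $W^s(f^j(p))$ the disc $E_k$ meets transversally; likewise for $F_k\subset W^s(f^{d_k}(p))$.

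Now the combinatorics. Fix $x\in H_\cO$, $a\in J(x)$, $b\in I(x)$, and choose $D\subset W^u(x)$ through $x$ and through a point of $W^u(x)\pitchfork W^s(f^a(p))$, and $D'\subset W^s(x)$ through $x$ and through a point of $W^s(x)\pitchfork W^u(f^b(p))$; $D$ and $D'$ meet transversally at $x$. For large $k$ the approximating discs satisfy $E_k\pitchfork W^s(f^a(p))\neq\emptyset$, $F_k\pitchfork W^u(f^b(p))\neq\emptyset$ and $E_k\pitchfork F_k\neq\emptyset$ (near $x$), whence $c_k\equiv a$, $d_k\equiv b$ and $c_k\equiv d_k\pmod\ell$; so $a\equiv b\pmod\ell$ and $I(x)\cup J(x)$ lies in one coset. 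That this coset is full follows by routing through the \emph{periodic} point $f^b(p)$: from $W^u(f^{b+\ell}(p))\pitchfork W^s(f^b(p))\neq\emptyset$ (Proposition~\ref{p-hc-cyclic}(3)) and $W^u(f^b(p))\pitchfork W^s(x)\neq\emptyset$, the classical $\lambda$-lemma at $f^b(p)$ produces a sub-disc of $W^u(f^{b+\ell}(p))$ — the extra iterates are multiples of $m$ and are absorbed since $f^m$ fixes $\cO$ pointwise — transverse to $W^s(x)$, i.e. $b+\ell\in I(x)$; similarly for $J(x)$. This makes $c$ well defined and finishes the construction of the $B_i$. Turning to \eqref{eq-equiv-period}, one direction uses only the disc tool: if $W^s(x)\pitchfork W^u(y)\neq\emptyset$, take $D'\subset W^s(x)$ through this intersection point and through a point of $W^s(x)\pitchfork W^u(f^b(p))$ with $b\in I(x)$; the approximants $F_k\subset W^s(f^{d_k}(p))$ then meet both $W^u(y)$ and $W^u(f^b(p))$ transversally, so $d_k\in J(y)$ and $d_k\equiv b$, giving $c(y)\equiv b\equiv c(x)$, i.e. $c(x)=c(y)$ — and the implication starting from $W^u(x)\pitchfork W^s(y)\neq\emptyset$ is obtained symmetrically. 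It remains to prove the converse: if $c(x)=c(y)=r$, then (the cosets being full) $W^u(x)\pitchfork W^s(f^r(p))\neq\emptyset$ and $W^u(f^r(p))\pitchfork W^s(y)\neq\emptyset$, and one must deduce $W^u(x)\pitchfork W^s(y)\neq\emptyset$ (and, symmetrically, $W^u(y)\pitchfork W^s(x)\neq\emptyset$).

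This last step — transitivity of the homoclinic relation through a periodic orbit for two points of $Y'$ — is the main obstacle. For hyperbolic periodic orbits Smale's order is transitive by the $\lambda$-lemma, and for ergodic hyperbolic measures transitivity is Proposition~\ref{Prop-Equiv-Rel}, whose proof trades the spurious iterates produced by the inclination lemma against the positive measure of a Pesin block; for individual points the inclination lemma only gives a transverse intersection of $f^N(W^u(x))$ with $W^s(y)$ with $N$ out of one's control. I would handle it along the lines of the proof of Proposition~\ref{Prop-Equiv-Rel}, crucially using that the intermediate point $f^r(p)$ is periodic (so that on the $\cO$-side the iterates are multiples of $m$ and vanish) and that $x,y\in Y'$ are recurrent to a common Pesin block with continuously varying local invariant manifolds (so that $x,y$ themselves may be fed into the continuity step in place of the nearby Pesin-block points appearing there). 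Everything else — the cyclic permutation, nonemptiness, measurability, and the consequence for invariant measures — is then routine.
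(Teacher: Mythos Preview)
Your approach is the same as the paper's: partition $H_\cO$ according to which cyclic class of $\cO$ each point is homoclinically linked to. Everything up to the final transitivity step is correct and matches the paper's argument. The gap is in that last step, which you rightly flag as the main obstacle but only sketch. Your proposal (``along the lines of Proposition~\ref{Prop-Equiv-Rel}, feeding $x,y$ themselves into the continuity step'') does not close it: mimicking that proof, the $\lambda$-lemma at the periodic point $q=f^r(p)$ gives $f^N(W^u(x))\pitchfork W^s(y)\neq\emptyset$ for some large $N$, and substituting a backward return $f^{-n_1}(x)$ close to $x$ into the continuity step of Proposition~\ref{Prop-Equiv-Rel} yields $W^u(f^{N-n_1}(x))\pitchfork W^s(y)\neq\emptyset$, not $W^u(x)\pitchfork W^s(y)\neq\emptyset$ --- you do not control $N=n_1$.

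The paper's trick reverses the order of choices. First use $x\in Y'$ to fix backward return times $n_k\to\infty$ with $f^{-n_k}(x)$ close to $x$ in the same Pesin block, so that $W^u_{loc}(f^{-n_k}(x))$ is $C^1$-close to $W^u_{loc}(x)$ and hence still transverse to $W^s(q)$. The classical $\lambda$-lemma at the uniformly hyperbolic periodic point $q$ is uniform over such nearby transversals, so for $n_k$ large, $f^{n_k}\bigl(W^u_{loc}(f^{-n_k}(x))\bigr)$ --- which is contained in $W^u(x)$ --- carries a disc $C^1$-close to any prescribed compact piece of $W^u(f^{n_k}(q))$. One then checks $\ell\mid n_k$ (since $f^{-n_k}(x)\in B_{c(x)-n_k}$ by equivariance, yet the transversality with $W^s(q)$ forces $f^{-n_k}(x)\in B_{c(x)}$), so $f^{n_k}(q)$ stays in the same $A_i$ as $q$, and $W^u(x)$ accumulates on $W^u(q)$ itself. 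Since $W^u(q)\pitchfork W^s(y)\neq\emptyset$, this gives $W^u(x)\pitchfork W^s(y)\neq\emptyset$. The essential idea is to let the recurrence time dictate the number of iterates, so that the ``extra'' iterate lands back in $W^u(x)$ rather than in the unstable manifold of an uncontrolled shift of $x$.
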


\begin{proof}
Proposition~\ref{p-hc-cyclic}(3) {gives} a partition $A_1,\dots,A_\ell$ of $\cO$ into $\ell$
subsets such that for any $p,q\in \cO$, if $p,q$ belong to the same $A_i$ then
$W^s(p)\pitchfork W^u(q)$ and $W^u(p)\pitchfork W^s(q)$ are both non-empty, and if $p,q$ do not belong to the same $A_i$, then $W^s(p)\pitchfork W^u(q)$ and $W^u(p)\pitchfork W^s(q)$ are both empty.
Note that $A_i$ are  cyclically permuted by the dynamics.

By the definition of $H_\cO$,
for every point $x\in H_\cO$, there exists $p\in \cO$ such that $W^u(x)\pitchfork W^s(p)\neq\emptyset$.
The same property holds for any point $q$ in the set $A_i$ containing $p$,
since the stable manifold of $q$ accumulates on the stable manifold of $p$ (from the inclination lemma~\ref{l.density-submanifolds} applied to $f^{|\cO|}$ at $p$).

$W^u(x)$ does not intersect transversally the stable manifold of a point $q$ in another set $A_j$;
Otherwise,
since $x\in H_\cO\subset Y'$, there is $r\in \cO$ such that $W^u(r)\pitchfork W^s(x)\neq \emptyset$ and
the inclination lemma applied to $f$ at $x$ yields an iterate of $W^u(r)$ which intersects transversally the stable manifolds of points in $A_i$ and $A_j$,
a contradiction.

We have thus associated $\mu$-almost every point $x$ to a unique set $A_i$. The association  is equivariant
and induces a measurable partition into $\ell$ sets
cyclically permuted by the dynamics:
 $$
   B_i := \{x\in H_\cO: \forall p\in A_i,\; W^u(x)\pitchfork W^s(p)\ne\emptyset\}.
 $$

Let us consider any $x\in B_i$ and some point $p\in A_i$.
Arguing with $f^{-1}$ instead of $f$, one gets some set $A_j$ such that
for any $q\in \cO$, the set $W^s(x)\pitchfork W^u(q)$ is non empty if and only if $q$ belongs to $A_j$.
But the inclination lemma at $x$ implies that there exist some iterate $f^{n_k}(W^u(q))$ which
contain a disc close to the local unstable manifold of $x$. In particular $f^{n_k}(W^u(q))$ intersects transversally
$W^s(p)$, so that $f^{n_k}(q)\in A_i$. Note also that $f^{n_k}(W^u(q))$ intersects transversally
$W^s(x)$, which implies that $f^{n_k}(q)\in A_j$. Hence $A_i=A_j$.
As a consequence, the symmetric characterization holds:
 $$
   B_i= \{x\in H_\cO: \forall p\in A_i,\; W^s(x)\pitchfork W^u(p)\ne\emptyset\}.
 $$
The previous argument also shows that for any $x\in B_i$ and $q\in A_i$, the unstable manifold $W^u(q)$
accumulates on $W^u(x)$: this has been obtained for some iterates $f^{n_k}(W^u(q))$ with $f^{n_k}(q)\in A_i$,
but $f^{n_k}(W^u(q))$ and $W^u(q)$ have the same accumulation sets since both $f^{n_k}(q)$ and $q$ belong to $A_i$.

Conversely the unstable manifold $W^u(x)$ accumulates on $W^u(q)$:
indeed, since $x\in Y'$, one can consider a large backward iterate $f^{-n_k}(x)$ close to $x$ in the same Pesin block
(remember item (e) of section~\ref{s.hyp-measures}); as a consequence the local unstable manifold of $x$ and $f^{-n_k}(x)$ are close and intersect
transversally $W^s(q)$ at two points close {to each other}. The inclination lemma, then implies that $W^u(x)$ accumulates on the unstable
manifold $W^u(f^{n_k}(q))$, hence  {on} $W^u(q)$ (since $q$ and $f^{n_k}(q)$ {must} both belong to $A_i$).

If $y\in B_i$, $x\in H_\cO$ and $W^u(x)\pitchfork W^s(y)\neq\emptyset$, then $x\in B_i$:
indeed we deduce from the previous paragraphs that $W^s(p)$ accumulates on $W^s(y)$
for $p\in A_i$; then $W^s(p)$ intersects $W^u(x)$ transversally.

Conversely let us assume that $x,y\in B_i$ and let us fix $p\in A_i$.
We have shown previously that $W^u(x)$ accumulates on $W^u(p)$, hence $W^u(x)\pitchfork W^s(y)\neq\emptyset$.
This concludes the proof of the equivalence, {eq.~\eqref{eq-equiv-period}}.
\end{proof}

\subsection{Quadrilaterals associated to hyperbolic measures}\label{ss-quadrilaterals}

In this section, we assume that $M$ is a closed surface, and use two-dimensionality to associate to each hyperbolic measure topological discs with positive measure of the following type.
\medskip

\begin{definition}\label{def-quadrilateral}
Let $\cO$ be a hyperbolic periodic orbit contained in a horseshoe $\Lambda$.
An {\em $su$-quadrilateral associated to   $\cO$} is an open disc $Q\subset M$ whose boundary is a union of four compact curves
 $$
\partial Q=\partial^{u,1}Q\cup\partial^{s,1} Q\cup\partial^{u,2}Q\cup \partial^{s,2}Q,
 $$
satisfying $\partial^\sigma Q:=\partial^{\sigma,1}Q\cup\partial^{\sigma,2}Q\subset W^\sigma(\cO)$ for $\sigma=s,u$.
We call $\partial^{s,i}Q$, $i=1,2$, the {\em  stable sides} and $\partial^{u,i}Q$ the {\em unstable sides} of $Q$.
\end{definition}

One will get such sets by applying the following proposition.
A measure is {\em atomless} if $\mu(\{x\})=0$ for all $x\in M$.

\begin{prop}\label{p.rectangles}
Let $f$ be a $C^r$ diffeomorphism, $r>1$, of a closed surface and let $\mu$ be an atomless invariant probability measure which is hyperbolic of saddle type.
Then for every $0<t<1$ there are  $su$-quadrilaterals $Q_1,\dots,Q_N$ {associated to $\cO_1, \ldots,\cO_N\in\HPO(f)$}  such that $\diam(Q_i)<t$ and $\mu(\bigcup Q_i)>1-t$. {Given an ergodic decomposition $\mu=\int \mu_x\, d\mu$, one can ensure that for each $i$, $\mu\{x:\mu_x\hsim \cO_i\}\neq 0$.}
\end{prop}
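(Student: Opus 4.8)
The plan is to use Katok's Horseshoe Theorem (Theorem~\ref{t.katok}) applied to each ergodic component, together with the local product structure of horseshoes on a surface, to carve out $su$-quadrilaterals. First I would fix $0<t<1$ and consider the ergodic decomposition $\mu=\int\mu_x\,d\mu(x)$. Since $\mu$ is atomless, the set of $x$ for which $\mu_x$ has an atom is $\mu$-null (an atom of $\mu_x$ would contribute an atom to $\mu$ on its orbit), so $\mu$-a.e.\ component $\mu_x$ is an atomless hyperbolic measure of saddle type; also $h(f,\mu_x)\geq 0$, but we do not even need positive entropy here. By a standard measurable selection / exhaustion argument, pick finitely many ergodic components $\nu_1,\dots,\nu_m$ (values of $x\mapsto\mu_x$) such that the union of the sets $\{x:\mu_x=\nu_j\}$ — or rather $\{x:\mu_x\hsim\cO\}$ once the $\cO$'s are produced — has $\mu$-measure close to $1$; this is where the clause ``$\mu\{x:\mu_x\hsim\cO_i\}\neq0$'' gets arranged.

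Next, for each such atomless ergodic $\nu_j$, apply Theorem~\ref{t.katok} to obtain a horseshoe $\Lambda_j$ with $\Lambda_j\hsim\nu_j$, hence (by Corollary~\ref{Cor-HC}) a periodic orbit $\cO_j\subset\Lambda_j$ with $\cO_j\hsim\nu_j$; this gives $\mu\{x:\mu_x\hsim\cO_j\}>0$. On the horseshoe $\Lambda_j$ I would use the local product structure from Section~\ref{ss.hyperbolic}: cover $\Lambda_j$ by finitely many small ``rectangles'' $R=W^u_\epsilon(z)\cap W^s_\epsilon(z)$-type product neighborhoods of diameter $<t$, chosen so that $\nu_j$ (and therefore, for the $x$ with $\mu_x\hsim\cO_j$ lying $\nu_j$-generically inside $\Lambda_j$) gives the union of these pieces measure close to $1$. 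Inside such a product neighborhood, a point $p$ of $\Lambda_j$ has its local stable and unstable manifolds, which are pieces of $W^s(\cO_j)$ and $W^u(\cO_j)$ by density of $W^{s/u}(\cO_j)$ in $\Lambda_j$ (Proposition~\ref{p-hc-cyclic} / the fact that $\Lambda_j\subset\HC(\cO_j)$); taking two nearby stable leaves and two nearby unstable leaves through points of $\Lambda_j$ bounds an open topological disc $Q$ whose four boundary arcs lie in $W^s(\cO_j)\cup W^u(\cO_j)$ — an $su$-quadrilateral associated to $\cO_j$ of diameter $<t$.

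Finally I would check the measure estimate. Each point $x$ with $\mu_x\hsim\cO_j$ that lies in the support of $\nu_j$ restricted to $\Lambda_j$ — more carefully, one should note that $\supp\nu_j\subset\Lambda_j$ is transitive, so all such $x$ can be assumed (after shrinking the relevant sets to a set of full $\mu_x$-measure) to sit in one of the finitely many $su$-quadrilaterals $Q_i$ produced from $\Lambda_j$. Since the $Q_i$ are open and their union covers a set of full measure for each chosen component up to an arbitrarily small error, and the chosen components carry $\mu$-measure $>1-t$, we get $\mu(\bigcup_i Q_i)>1-t$ after absorbing the errors; relabel the finitely many $(Q_i,\cO_i)$ over all $j$ as $Q_1,\dots,Q_N$ associated to $\cO_1,\dots,\cO_N$. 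The main obstacle is the bookkeeping that relates the abstract ergodic components $\mu_x$ to concrete points lying inside the horseshoes $\Lambda_j$: one must pass from ``$\mu_x\hsim\cO_j$'' (a statement about invariant manifolds of $\mu_x$-generic points) to ``$\mu_x$-a.e.\ point lies in one of our $su$-quadrilaterals'', which is not literally true for $\mu_x$ itself but only after intersecting with $\Lambda_j$; the clean way around this is to not demand that the $Q_i$ capture $\mu_x$ but only that they capture $\mu$, using that $\nu_j$ sits on $\Lambda_j$ and that $\mu$ restricted to $\bigcup_j\{x:\mu_x\hsim\cO_j\}$ can be approximated in measure by the horseshoe measures $\nu_j$ — so the quadrilaterals only need to be positioned using $\Lambda_j$, and the $\mu$-measure bound follows from the weak-$*$ approximation plus openness of the $Q_i$. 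I expect getting this approximation argument airtight — in particular the interplay between the finitely many chosen components and the requirement that $\diam(Q_i)<t$ while still covering mass $>1-t$ — to be the one delicate point; everything else is routine Pesin/horseshoe geometry.
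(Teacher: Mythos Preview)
There is a genuine gap. You repeatedly assume that the ergodic component $\nu_j$ \emph{lives on} the Katok horseshoe $\Lambda_j$ (``$\supp\nu_j\subset\Lambda_j$'', ``$\nu_j$ sits on $\Lambda_j$''), but Theorem~\ref{t.katok} gives no such thing: it produces a horseshoe that is homoclinically related to $\nu_j$ and whose invariant measures approximate $\nu_j$ in entropy and exponents, yet typically $\nu_j(\Lambda_j)=0$. Hence $su$-quadrilaterals built from the local product structure of $\Lambda_j$ cover $\Lambda_j$, not any set of positive $\nu_j$-measure, and your weak-$*$ argument goes the wrong way: you would need open sets with large $\nu_j$-measure, not large measure for some approximating horseshoe measure.

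The paper's proof fixes exactly this: the quadrilaterals are first built from a \emph{Pesin block} $X$ with $\nu_j(X)>1-t/2$, using the local stable and unstable manifolds of points of $X$ itself (this is where the $\nu_j$-mass is). One obtains topological discs $D_i$ bounded by arcs of $W^{s/u}_{loc}(p_i^{\pm})$ for $p_i^{\pm}\in X$, with $\nu_j(\bigcup D_i)$ close to $1$. Only then does Katok's theorem enter, to supply a periodic orbit $\cO\hsim\nu_j$; the Inclination Lemma is used to $C^1$-approximate each boundary arc of $D_i$ by an arc of $W^{s}(\cO)$ or $W^{u}(\cO)$, turning $D_i$ into a genuine $su$-quadrilateral $Q_i$ associated to $\cO$. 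Since $\partial D_i$ has $\nu_j$-measure zero, $\nu_j(Q_i)$ is close to $\nu_j(D_i)$. The non-ergodic case is then handled by a compactness/covering argument on the space of ergodic components. In short: build the discs where the measure is (Pesin blocks), and use the horseshoe only to relabel their boundaries.
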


\begin{proof}
Let us  assume first that $\mu$ is ergodic and consider a Pesin block $X$
satisfying $\mu(X)>1-t/2$. {Without loss of generality, $X\subset Y'\cap\supp(\mu)$ where $Y'$ is the set of full measure in the Inclination Lemma (Lemma \ref{l.density-submanifolds}).

{Fix $\tau>0$ small to be determined later.} One can reduce the size of local stable and unstable manifolds
at the points of $X$ in order to satisfy the following property:
 There
 exists $\eps>0$ such that for every $\eps$-close $x,y\in X$,
 the manifolds $W^u_{loc}(x)$ and $W^s_{loc}(y)$ intersect at a unique point $[x,y]$, this intersection is transverse, and $d(x,[x,y])\leq \tau$, $d(y,[x,y])\leq \tau$.

Without loss of generality,    $X$ is the finite union of disjoint compact sets $X_1,\dots,X_\ell$
with {diameter less than $\epsilon$ and such that the $2\tau$-neighborhood of $X_i$ is contained in an open disc $U_i\subset M$}.

Fix a point $p_i$ in $X_i$.
By the compactness of $X_i$ and the continuity of the local unstable manifolds,
there exist two points $p^{s,+}_i,p^{s,-}_i \in X_i\cap W^s_{loc}(p_i)$
such that for any $x\in X_i$,  $[x,p_i]$
belongs to the subcurve  of  $W^s_{loc}(p_i)$ bounded by
$p_i^{s,-}$ and $p_i^{s,+}$.
Similarly there exists two points $p^{u,+}_i,p^{u,-}_i \in X_i$
such that for any $y\in X_i$,  $[p_i,y]$
belongs to the subcurve of  $W^u_{loc}(p_i)$ bounded by
$p_i^{u,-}$ and $p_i^{u,+}$.
See Figure \ref{figure-build-quad}.

Let $\gamma^{s,+}_i$ denote the piece of $W^s_{loc}(p_i^{u,+})$ bounded by $[p^{s,-}_i,p^{u,+}_i]$ and $[p^{s,+}_i,p^{u,+}_i]$, and let  $\gamma^{s,-}_i$ denote the piece of $W^s_{loc}(p_i^{u,-})$ bounded by $[p^{s,-}_i,p^{u,-}_i]$ and $[p^{s,+}_i,p^{u,-}_i]$. Similarly, let $\gamma^{u,+}_i$ denote the piece of $W^u_{loc}(p_i^{s,+})$ bounded by $[p^{s,+}_i,p^{u,-}_i]$ and $[p^{s,+}_i,p^{u,+}_i]$, and let  $\gamma^{u,-}_i$ denote the piece of
 $W^u_{loc}(p_i^{s,-})$ bounded by $[p^{s,-}_i,p^{u,-}_i]$ and $[p^{s,-}_i,p^{u,+}_i]$.

Two {pieces} of local stable manifolds $W^s_{loc}(x), W^s_{loc}(y)$, $x,y\in X_i$, are either disjoint, or the  union { of their closures}  is a $C^1$-curve;
the same holds for local unstable manifolds. Consequently, the four curves
{ $\gamma^{s,+}_i, \gamma^{u,-}_i, \gamma^{s,-}_i, \gamma^{u,+}_i$ form a closed simple curve $\gamma_i$.

By construction, $\gamma_i$  is inside the $2\tau$-neighborhood of $p_i$,  therefore inside $U_i$. By Jordan's Theorem (in the disc $U_i$), $\gamma_i$ bounds an open disc $D_i\subset U_i$ which contains $X_i$ in its ``interior", formally defined to be  the connected piece of $U_i\setminus\gamma_i$ which does not contain $\partial U_i$ in its closure. }

{By choice of $\epsilon$,  $\gamma_i$ has diameter less than $4\tau$. For every closed smooth surface, there exists $\tau_0$ such that every simple closed curve $\gamma$ with diameter less than $\tau_0$ bounds a topological disc with diameter less than $\min\{t,\frac 12\diam(M)\}$. Choosing $\tau:=\frac 14 \tau_0$, we obtain that $\diam(D_i)<t$.   }

Since $\mu$ is an invariant atomless measure, every local stable or unstable manifold has zero measure {(because its forward or backward images have diameters tending to zero)}.} So the boundary of $D_i$ has zero measure with respect to $\mu$.

Theorem~\ref{t.katok} gives a hyperbolic periodic orbit $\cO$ inside a horseshoe homoclinically related to $\mu$.
Since $p_i^{u,\pm}, p_i^{s,\pm}\in X_i\subset X\subset Y$, we have by the  Inclination Lemma~\ref{l.density-submanifolds} that
the four curves $W^s_{loc}(p^{s,+}_i)$, $W^u_{loc}(p_i^{u,+})$, $W^s_{loc}(p_i^{s,-})$, $W^u_{loc}(p_i^{u,-})$
can be $C^1$-approximated by curves {contained} in the manifolds $W^s(\cO)$ and $W^u(\cO)$. These curves bound
a $su$-quadrilateral $Q_i$ {associated to a horseshoe homoclinically related to $\cO$, whence to $\mu$.} By construction, its diameter is  smaller than $t$. Since the boundary of $D_i$
has zero $\mu$-measure, the symmetric difference between $D_i$ and $Q_i$ has arbitrarily small $\mu$-measure.
One deduces that the union of the $D_i$ has measure close to the measure of $X$,
hence larger than $1-t$.

\begin{figure}
\begin{center}
\begin{tikzpicture}[scale=2]
% center p_i
\draw (0,0) node {$\bullet$}; \draw (0.12,-0.12) node {$p_i$};
% invariant manifolds of p_i
\draw [very thick] plot[domain=-1.5:1.5] ({0.1*sin(deg(2*\x))},\x) node[above] {$W^u_{loc}(p_i)$};
\draw [very thick] plot[domain=-1.5:1.5] (\x,{0.1*sin(deg(2*\x))}) node[right] {$W^s_{loc}(p_i)$};

% p^s_+ and its unstable manifold
\draw  plot[domain=-1.21:1.29] ({0.1*sin(deg(2*\x))+1},\x);
\draw (1.02,{0.1*sin(deg(2*1))}) node{$\bullet$} node[above right] {$p_i^{s,+}$};
% p^s_- and its stable manifold
\draw  plot[domain=-1.38:1.11] ({0.1*sin(deg(2*\x))-1.1},\x);
\draw (-1.12,{0.1*sin(deg(-2*1.1))}) node{$\bullet$} node[above left] {$p_i^{s,-}$};
% p^u_+ and its stable manifold
\draw  plot[domain=-1.02:1.05] (\x,{0.1*sin(deg(2*\x))+1.2}) node[above right] {$[p_i^{s,+},p_i^{u,+}]$};
\draw (-1.02,{0.1*sin(deg(-2*1.02))+1.2}) node [above left] {$[p_i^{s,-},p_i^{u,+}]$};
\draw (0.07,{0.1*sin(deg(2*0.07))+1.2}) node {$\bullet$} node[above right] {$p_i^{u,+}$};
% p^u_- and its stable manifold
\draw  plot[domain=-1.14:0.93] (\x,{0.1*sin(deg(2*\x))-1.3}) node[below right] {$[p_i^{s,+},p_i^{u,-}]$};
\draw (-0.05,{0.1*sin(deg(-2*0.05))-1.3}) node {$\bullet$} node[below right] {$p_i^{u,-}$};
\draw (-1.14,{0.1*sin(deg(-2*1.14))-1.3}) node [below left] {$[p_i^{s,-},p_i^{u,-}]$};

% gamma^s_+
\draw [very thin] plot[domain=-1.7:1.8]  ({0.1*sin(deg(2*\x))+0.8},\x) node[above] {$W^u(\cO)$};
\draw [very thick] plot[domain=-0.9:1.0]  ({0.1*sin(deg(2*\x))+0.8},\x);
% gamma^u_+
\draw [very thin] plot[domain=1.7:-2.1] (\x,{0.1*sin(deg(2*\x))+0.9}) node[left] {$W^s(\cO)$};
\draw [very thick] plot[domain=-0.8:0.9] (\x,{0.1*sin(deg(2*\x))+0.9});
% gamma^s_-
\draw  [very thin] plot[domain=-1.7:1.75] ({0.1*sin(deg(2*\x))-0.9},\x) node[above] {$W^u(\cO)$};
\draw  [very thick] plot[domain=-1.1:0.8] ({0.1*sin(deg(2*\x))-0.9},\x);
% gamma^u_-
\draw [very thin] plot[domain=1.7:-2.1] (\x,{0.1*sin(deg(2*\x))-1.0}) node[left] {$W^s(\cO)$};
\draw [very thick] plot[domain=-0.97:0.71] (\x,{0.1*sin(deg(2*\x))-1.0});

% legende Q_i
\draw [very thick,->,->=latex] (-2.2,0.5) to [out=0,in=180] (-1.05,-0.5);
\draw (-2.2,0.5) node[left] {$\partial Q_i$};
% legende D_i
\draw [->,->=latex] (2.2,1.3) to [out=180,in=0] (1.1,1.1);
\draw (2.2,1.3) node[right] {${\partial D_i=\gamma^{s,+}\cup\gamma^{u,-}\cup\gamma^{s,-}\cup\gamma^{u,+}}$};

% legende S_i
%\draw [->,->=latex,dashed] (1.8,-0.7) to [out=180,in=0] (0.6,-0.3);
%\draw (1.8,-0.7) node[right] {$S_i$};

\end{tikzpicture}
\caption{
The construction of the $su$-quadrilateral $Q_i$. All lines are stable or unstable manifolds (the nearly horizontal ones being the stable ones). }\label{figure-build-quad}
\end{center}
\end{figure}
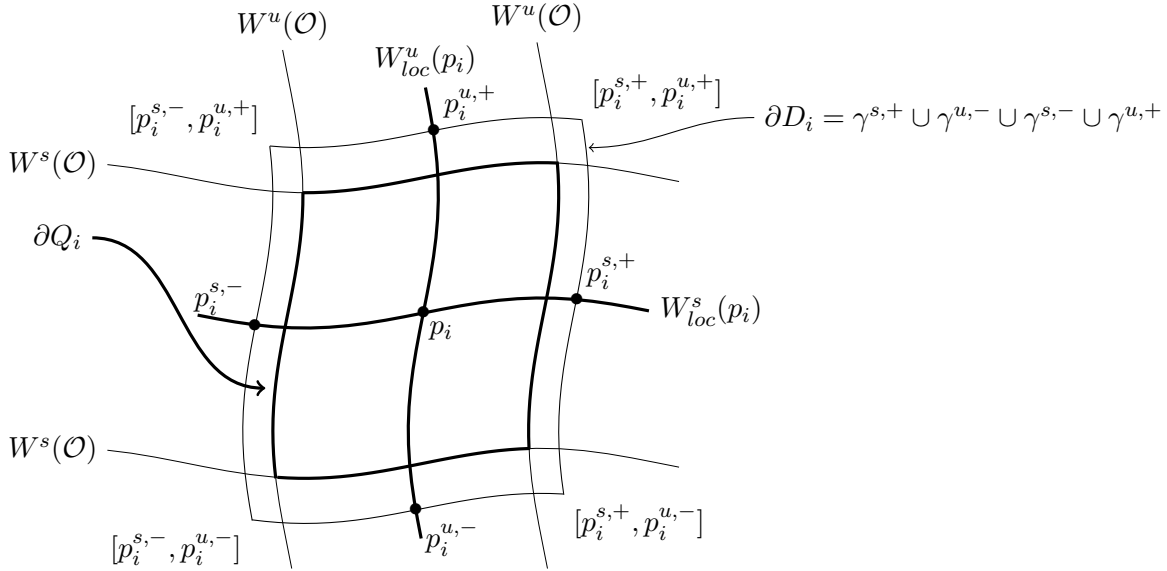

\bigskip

Let us now consider the  case of non-ergodic $\mu$ which are hyperbolic of saddle type.
Let $\Proberg(f)$ denote the collection of
ergodic $f$-invariant measures, and let $\Prob(\Proberg(f))$ denote the collection of Borel probability measures on $\Proberg(f)$.
The ergodic decomposition of $\mu$ gives $\lambda\in\Prob(\Proberg(f))$ such that
 $$
   \mu = \int_{\Proberg(f)} \nu\, d\lambda(\nu).
 $$
Since $\mu$ is hyperbolic of saddle of type, $\lambda$-a.e. $\nu$ belongs to $\HPM(f)$.

Any ergodic hyperbolic  measure is either {atomless} or supported on a hyperbolic periodic orbit.
Since there are at most countably many hyperbolic periodic orbits, and since $\mu$ {is atomless},
$\lambda$-almost every measure $\nu$ {is atomless}.
By the regularity of $\lambda$, there is a compact set $K\subset \Proberg(f)$ with $\lambda(K)>1-t/2$
such that  every measure in $K$ {is atomless}.

By the first part of the proof, for each $\nu\in K$ there is a finite collection $Q_1^\nu,\dots,Q_{N(\nu)}^\nu$ of $su$-quadrilaterals with diameter smaller than $t$ and
satisfying $\nu(\bigcup Q_i^{\nu})>1-t/2$. {Since $\nu(\partial Q_i^\nu)=0$, }
 each $\nu\in K$ has a weak-$*$ neighborhood $\mathcal V(\nu)$ such that
 $$
   \nu'\in \mathcal V(\nu) \implies \nu'\left(\bigcup_{i=1}^{N(\nu)} Q^\nu_i\right)>1-t/2.
 $$
Since  $K$ is compact, it has a finite sub-cover: $K\subset \bigcup_{j=1}^M \mathcal V(\nu_j)$. The collection
 $
    \{Q^{\nu_j}_i:j=1,\ldots,M; \;  1\leq i\leq N(\nu_j)\}
 $
covers a set with $\mu$-measure larger than $(1-\tfrac{t}{2})^2$, and  $\lambda\{\nu:\nu(Q_i^{\nu_j})>0\}\neq 0$ for all $i,j$.
\end{proof}

\section{Symbolic dynamics of homoclinic classes}\label{Section-Symbolic}

The goal of this section is to prove the following theorem. As in the introduction, $\Sigma^\#$ denotes the regular part of a
countable state Markov shift $\sigma\colon \Sigma\to \Sigma$ (see Section~\ref{s.def-symbolic}).

\begin{theorem}\label{Theorem-Symbolic-Dynamics-C^r}
Let $f$ be a $C^r$ diffeomorphism, $r>1$, on a closed surface $M$. Let $\mu$ be an ergodic hyperbolic measure for $f$. For every $\chi>0$ there are  a {locally compact} countable state Markov shift $\Sigma$ and {a H\"older-continuous map} $\pi:\Sigma\to M$ such that  {${\pi}\circ\sigma=f\circ{\pi}$ and}:
\begin{enumerate}[(a)]
\item[\rm (C0)]
$\Sigma$ is irreducible.

\item[\rm (C1)]
$\pi:\Sigma^\#\to M$ is finite-to-one; more precisely if $x_i=a$ for infinitely many $i<0$ and $x_i=b$ for infinitely many $i>0$, then $\#\{\un{y}\in\Sigma^\#: \pi(\un{y})=\pi(\un{x})\}$ is bounded by a constant $C=C(a,b)$.

\item[\rm (C2)]
{\rm (a)} $\nu[\pi(\Sigma^\#)]=1$ for every $\chi$-hyperbolic $\nu\in\HPM(f)$ such that $\nu\hsim\mu$. Moreover,  there is an ergodic measure $\bar{\nu}$ on $\Sigma$ such that $\pi_\ast(\bar{\nu})=\nu$.

{\rm (b)}  Conversely, for every ergodic $\bar{\nu}$ on $\Sigma$, the projection $\nu:=\pi_\ast(\bar\nu)$ is ergodic, hyperbolic, homoclinically related to $\mu$, and $h(f,\nu)=h(\sigma,\bar\nu)$.

\item[\rm (C3)]
For any $x\in\pi(\Sigma)$ there is a unique splitting $T_x M=E^s(x)\oplus E^u(x)$ such that
\begin{enumerate}[(i)]
\item $\limsup_{n\to\infty}\frac 1 n \log\|Df_x^n|_{E^s(x)}\|\leq -\frac \chi 2$;
\item $\limsup_{n\to\infty}\frac 1 n\log\|Df_x^{-n}|_{E^u(x)}\|\leq -\frac \chi 2$.
\end{enumerate}
Moreover the maps $\un{x}\mapsto E^{s/u}(\pi(\un{x}))$ are H\"older continuous on $\Sigma$.
\end{enumerate}
\end{theorem}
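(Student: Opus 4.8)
The plan is to build $(\Sigma,\pi)$ by extracting a single irreducible component from the (generally reducible) symbolic extension of \cite{Sarig-JAMS} and then identifying that component with the measured homoclinic class of $\mu$.

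\textbf{Steps 1--2: the ambient coding and the extraction.} Fix $\chi>0$ and apply the symbolic dynamics of \cite{Sarig-JAMS} in the $C^r$, scale-$\chi$ form (with the refinements handling \emph{all} $\chi$-hyperbolic measures, not only the large-entropy ones): one gets a locally compact countable state Markov shift $(\hat\Sigma,\hat\sigma)$ and a H\"older map $\hat\pi\colon\hat\Sigma\to M$ with $\hat\pi\circ\hat\sigma=f\circ\hat\pi$ such that $\hat\pi|_{\hat\Sigma^\#}$ is finite-to-one with the bound of (C1); every $\chi$-hyperbolic $\nu\in\HPM(f)$ satisfies $\nu(\hat\pi(\hat\Sigma^\#))=1$ and lifts to an ergodic $\bar\nu$ on $\hat\Sigma$ with $\bar\nu(\hat\Sigma^\#)=1$, $\hat\pi_*\bar\nu=\nu$, $h(f,\nu)=h(\hat\sigma,\bar\nu)$; every ergodic measure on $\hat\Sigma$ projects to an ergodic measure of equal entropy; and the dominated splitting of (C3) is defined and H\"older on all of $\hat\pi(\hat\Sigma)$. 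The only missing property is irreducibility. Now decompose $\hat\Sigma$ into its irreducible (topologically transitive) components together with a transient part; every ergodic measure lives on exactly one component. We may assume $\mu$ is $\chi$-hyperbolic (if no $\chi$-hyperbolic measure is homoclinically related to $\mu$ then all the $\chi$-hyperbolic clauses below are vacuous, and one takes for $\Sigma$ the coding of any $\chi$-hyperbolic periodic orbit of $\HC(\mu)$ when one exists — by Theorem~\ref{t.katok} there is one as soon as $\HC(\mu)$ carries a $\chi$-hyperbolic measure — or the empty shift otherwise). Then $\mu$ lifts to $\bar\mu$, carried by a unique component $\Sigma$; set $\pi:=\hat\pi|_\Sigma$. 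Local compactness of $\hat\Sigma$ means its graph is row- and column-finite, a property inherited by the subgraph of a communicating class, so $\Sigma$ is locally compact; it is irreducible by choice, giving (C0). Since $\Sigma\subset\hat\Sigma$ and $\Sigma^\#\subset\hat\Sigma^\#$, the map $\pi$ is H\"older, satisfies $\pi\circ\sigma=f\circ\pi$, and inherits (C1) and (C3) verbatim.

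\textbf{Step 3: the soft half of (C2).} For ergodic $\bar\nu$ on $\Sigma$ put $\nu:=\pi_*\bar\nu$. Ergodicity of $\nu$ is clear; hyperbolicity of saddle type follows because the $f$-invariant splitting of (C3) has uniform limsup bounds $-\chi/2$, so by uniqueness it coincides $\nu$-a.e.\ with the Oseledets splitting and forces the two exponents off $[-\chi/2,\chi/2]$; and $h(f,\nu)=h(\sigma,\bar\nu)$ is inherited from Step 1. To see $\nu\hsim\mu$, pick a $\bar\nu$-generic $\un y\in\Sigma^\#$ and a $\bar\mu$-generic $\un z\in\Sigma^\#$ (with $\pi(\un z),\pi(\un y)\in Y'$), and use irreducibility to build $\un x\in\Sigma$ whose left tail is the left tail of $\un z$ and whose right tail is the right tail of $\un y$. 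Since $\pi$ is H\"older and semiconjugating, $d(f^{-n}\pi(\un x),f^{-n}\pi(\un z))\to 0$ and $d(f^{n}\pi(\un x),f^{n}\pi(\un y))\to 0$ exponentially, so $\pi(\un x)\in W^u(\pi(\un z))\cap W^s(\pi(\un y))$; by (C3) the tangent lines there are $E^u(\pi(\un x))$ and $E^s(\pi(\un x))$, which are complementary, so the intersection is transverse. Running this over a positive-measure set of generic pairs yields $W^u\pitchfork W^s\neq\emptyset$ in both directions, i.e.\ $\nu\preceq\mu$ and $\mu\preceq\nu$, hence $\nu\hsim\mu$. In particular every periodic orbit coded by $\Sigma$ is homoclinically related to $\mu$, and to one another (Proposition~\ref{Prop-Equiv-Rel}). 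This is (C2)(b).

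\textbf{Step 4: the hard half, and the main obstacle.} We must show that \emph{every} $\chi$-hyperbolic $\nu\in\HPM(f)$ with $\nu\hsim\mu$ is carried by this same component $\Sigma$. By Step 1, $\nu$ lives on some component $\hat\Sigma_\nu$; by Corollary~\ref{Cor-HC} and Theorem~\ref{t.katok} there are $\chi$-hyperbolic periodic orbits $\cO_\mu\hsim\mu$ coded inside $\Sigma$ (Step 3) and $\cO_\nu\hsim\nu$ coded inside $\hat\Sigma_\nu$, with $\cO_\mu\hsim\cO_\nu$ by transitivity. The crux is the claim that \emph{if $\cO_1\hsim\cO_2$ are $\chi$-hyperbolic periodic orbits, then any coding of $\cO_1$ and any coding of $\cO_2$ lie in the same irreducible component of $\hat\Sigma$.} To prove it, take $q\in W^u(\cO_1)\pitchfork W^s(\cO_2)$; the orbit closure $K:=\cO_1\cup\cO_2\cup\overline{\cO(q)}$ is a compact invariant \emph{uniformly} hyperbolic set whose only ergodic measures are $\mu_{\cO_1},\mu_{\cO_2}$, which are $\chi$-hyperbolic, so by Proposition~\ref{p.chihypset} $K$ is $\chi$-hyperbolic, hence $K\subset\hat\pi(\hat\Sigma^\#)$ and $q=\hat\pi(\un x)$ for some $\un x\in\hat\Sigma^\#$. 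The key internal input from \cite{Sarig-JAMS} is \emph{locality of the coding}: because $f^{i}(q)\to\cO_1$ as $i\to-\infty$ and $f^i(q)\to\cO_2$ as $i\to+\infty$ while remaining uniformly hyperbolic, the symbols $x_i$ for $i\ll0$ agree with a periodic coding of $\cO_1$ and those for $i\gg0$ with a periodic coding of $\cO_2$, and every periodic coding of $\cO_1$ (resp.\ $\cO_2$) is realized this way for a suitable heteroclinic point. Thus $\un x$ is a finite graph-path from any prescribed coding of $\cO_1$ to any prescribed coding of $\cO_2$, and a heteroclinic point for $\cO_2\hsim\cO_1$ supplies the return path; hence all these codings communicate. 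Applying the claim along $\cO_\mu\hsim\cO_\nu$ and once more to periodic orbits coded in $\hat\Sigma_\nu$ forces $\hat\Sigma_\nu=\Sigma$, so $\nu(\pi(\Sigma^\#))=1$ and the lift $\bar\nu$ on $\Sigma$ exists by Step 1, completing (C2)(a). The \textbf{main obstacle} is precisely this claim: Steps 1--3 are either a transfer of properties manifestly inherited by a subshift or the now-standard ``$su$-transversality from a dominated splitting'' argument, whereas the claim cannot be seen from the external properties of the coding and requires the fine structure of Sarig's Markov partition near uniformly hyperbolic orbits — finiteness and near-periodicity of the admissible symbols along a heteroclinic connection — which is where the bulk of Section~\ref{Section-Symbolic} must go; a secondary subtlety is the ``all codings'' strengthening, needed so that the codings of one homoclinic class form a single communicating class rather than merely pairwise-overlapping families.
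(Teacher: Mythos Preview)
Your Steps 1--3 match the paper's route: start from the Sarig coding $\hat\pi:\hat\Sigma\to M$, pick the irreducible component $\Sigma$ carrying a lift of $\mu$, and inherit (C0), (C1), (C3) by restriction; your transversality argument for (C2)(b) is essentially the paper's Proposition~\ref{p-proj-measure}.

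The gap is in Step~4. Your key claim --- that \emph{every} periodic coding of $\cO_1$ and \emph{every} periodic coding of $\cO_2$ lie in the same irreducible component whenever $\cO_1\hsim\cO_2$ --- rests on a ``locality'' statement (the symbols $x_i$ along a heteroclinic orbit eventually agree with a \emph{prescribed} periodic coding of each endpoint) that is neither a stated property of \cite{Sarig-JAMS} nor easy to extract. In Sarig's construction the symbol at time $i$ is a double chart $\Psi^{p^u,p^s}_{x}$ whose size parameters $p^u,p^s$ depend on the entire two-sided orbit through the function $q_\varepsilon$; even though $f^i(q)$ converges to $\cO_1$, the values $p^u_i,p^s_i$ along $q$ need not stabilize to those of a given periodic coding of $\cO_1$, so you cannot conclude that $(x_i)_{i\ll0}$ is a tail of that coding. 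Your ``all codings'' strengthening is likewise unsupported, and in fact the paper never proves (and never needs) that different codings of the \emph{same} periodic orbit share a component.

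The paper circumvents this as follows. It first establishes an extra property ($\widehat{\mathrm C}9$) of $\hat\pi$ by opening the construction (Proposition~\ref{p.uniform}): every transitive $\chi$-hyperbolic compact set lifts to a transitive compact set in $\hat\Sigma$, hence into a single irreducible component. Then, for the periodic orbits: given the countable list $\cO_1,\cO_2,\dots$ of $\chi$-hyperbolic orbits related to $\mu$, it packs $\cO_1,\dots,\cO_n$ into one $\chi$-hyperbolic basic set $K_n$ (Lemma~\ref{Lemma-K_n}) and lifts $K_n$ to a transitive compact $X_n\subset\hat\Sigma$; each $X_n$ contains \emph{some} periodic lift of $\cO_1$, and since $\cO_1$ has only finitely many lifts in $\hat\Sigma^\#$, a pigeonhole argument pins infinitely many $X_{n_i}$ to one fixed lift and hence to a single component $\Sigma$ (Lemma~\ref{l.lift-periodic}). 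For a general $\chi$-hyperbolic $\nu\hsim\mu$, one lifts $\nu$ to $\bar\nu$ on $\hat\Sigma$, takes periodic approximations $\underline q^i$ of a $\bar\nu$-generic point, projects to $\chi$-hyperbolic periodic orbits related to $\mu$ (using (C6) and the openness property (C7)), lifts those to periodic $\underline p^i\in\Sigma$ by the previous step, and finally uses the locally finite Bowen property (C5)/(C8) to force $\underline p^i$ to converge in $\Sigma^\#$ to a lift of the original generic point. This last compactness step via (C5) is essential and absent from your outline.

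In short: you correctly located the obstacle, but the resolution you propose (tail-matching along heteroclinics) is not available; the paper's resolution is ``lift whole basic sets, then pigeonhole on the finitely many lifts of one anchor orbit, then pass to measures via the Bowen relation.''
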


\begin{remarks}\label{r.potential}
By (C3), if  $\phi:M\to\R$ is an admissible potential, then  $\phi\circ\pi:\Sigma\to\R$ is H\"older continuous.

The image $\pi(\Sigma)$ is contained in the topological homoclinic class $\HC(\mu)$ (this follows from item {(C2.b)}). For more information on the  measures $\nu\in\HPM(f)$ such that $\nu[\pi(\Sigma^\#)]=1$, see Section \ref{Section-Measure/Topological-HC}.

The inclusion $\pi(\Sigma)\subset \HC(\mu)$ may be strict: this is the case for instance when the homoclinic class $\HC(\mu)$
contains a homoclinic tangency: at such a  point property (C3) cannot be satisfied.
\end{remarks}

\medskip

It is worthwhile to compare Theorem \ref{Theorem-Symbolic-Dynamics-C^r} to the main result of  \cite{Sarig-JAMS}. There, the third author  built a coding $\wh \Sigma\to M$ which
captures all ergodic $\chi$-hyperbolic measures. But this coding is not necessarily irreducible.
The point of Theorem \ref{Theorem-Symbolic-Dynamics-C^r} is that we can obtain irreducibility by localizing to homoclinic classes.
For each class of homoclinically related measures,
we  select an \emph{irreducible} component $\Sigma$ of $\wh \Si$ which captures all the $\chi$-hyperbolic measures in the class.
\medskip

In Section~\ref{ss-lifting-hcm}, we state some further properties for future reference, including lifting transitive compact hyperbolic subsets, the Bowen property and almost everywhere injective coding with respect to any given ergodic measure.
In Section~\ref{equilibrium1}, we deduce the following properties of hyperbolic equilibrium measures.
\begin{corollary}\label{c-local-uniqueness}
Let $r>1$ and let $f$ be a $C^r$ diffeomorphism of a closed surface $M$. Suppose $\phi:M\to\RR\cup\{-\infty\}$ is an admissible potential,
and $\mu$ is an ergodic hyperbolic equilibrium measure for $\phi$. Then:
\begin{myitemize}
\item[--] Any ergodic, hyperbolic, equilibrium measure for $\phi$ which is homoclinically related to $\mu$ is equal to $\mu$.
\item[--] The support of $\mu$ is $\HC(\mu)$.
\item[--] The measure $\mu$ is isomorphic to the product of a Bernoulli scheme with a cyclic permutation of order
$\gcd\{\operatorname{Card}(\cO)\; :\; \cO\hsim \mu\}$.
\end{myitemize}
\end{corollary}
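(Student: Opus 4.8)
The plan is to lift $\mu$ and any competitor to the irreducible symbolic model of Theorem~\ref{Theorem-Symbolic-Dynamics-C^r} and to invoke the uniqueness theory for equilibrium measures of H\"older potentials on topologically transitive countable Markov shifts. Fix an ergodic hyperbolic equilibrium measure $\mu$ for $\phi$, and let $\nu$ be any ergodic hyperbolic equilibrium measure for $\phi$ with $\nu\hsim\mu$. Since the conclusions do not involve an auxiliary parameter, I would first choose $\chi>0$ smaller than the four exponents $\lambda^s(f,\mu),\lambda^u(f,\mu),\lambda^s(f,\nu),\lambda^u(f,\nu)$, so that $\mu$ and $\nu$ are both $\chi$-hyperbolic; Theorem~\ref{Theorem-Symbolic-Dynamics-C^r} then provides an irreducible, locally compact countable Markov shift $(\Sigma,\sigma)$ and a H\"older map $\pi\colon\Sigma\to M$ with $\pi\circ\sigma=f\circ\pi$ satisfying (C0)--(C3). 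By Remark~\ref{r.potential}, $\bar\phi:=\phi\circ\pi$ is H\"older continuous on $\Sigma$, and $\sup_\Sigma\bar\phi\le\sup_M\phi<\infty$.

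Next I would check that $\mu$ and $\nu$ lift to equilibrium measures on $(\Sigma,\sigma)$ for $\bar\phi$. By (C2)(a) choose ergodic $\bar\mu,\bar\nu$ on $\Sigma$ with $\pi_*\bar\mu=\mu$, $\pi_*\bar\nu=\nu$. For an arbitrary ergodic $\bar\lambda$ on $\Sigma$, (C2)(b) gives that $\pi_*\bar\lambda$ is ergodic with $h(\sigma,\bar\lambda)=h(f,\pi_*\bar\lambda)$, while trivially $\int\bar\phi\,d\bar\lambda=\int\phi\,d(\pi_*\bar\lambda)$; combining this with the variational principle on $M$ and passing to general invariant measures by ergodic decomposition yields
\[
  \sup_{\bar\lambda}\Bigl(h(\sigma,\bar\lambda)+\int\bar\phi\,d\bar\lambda\Bigr)\ \le\ P_{\top}(\phi)\ =\ h(f,\mu)+\int\phi\,d\mu\ =\ h(\sigma,\bar\mu)+\int\bar\phi\,d\bar\mu .
\]
Hence $\bar\mu$, and by the same computation $\bar\nu$, is an equilibrium measure for $\bar\phi$ on $\Sigma$, and the Gurevich pressure of $\bar\phi$ (which by Sarig's variational principle equals the above supremum) is finite and equal to $P_{\top}(\phi)$, finiteness coming from compactness of $M$ and $\sup\phi<\infty$.

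Now the central input: since $\Sigma$ is irreducible and $\bar\phi$ is H\"older with finite Gurevich pressure, the thermodynamic formalism of Sarig together with the Buzzi--Sarig uniqueness theorem for topologically transitive countable Markov shifts (of which the $\bar\phi\equiv0$ case is Gurevich's theorem~\cite{Gurevich-Measures-Of-Maximal-Entropy}) shows that $\bar\phi$ has at most one equilibrium measure. Therefore $\bar\mu=\bar\nu$, and so $\mu=\pi_*\bar\mu=\pi_*\bar\nu=\nu$, which is the first assertion. Moreover the unique equilibrium measure $\bar\mu$ is positive recurrent, hence is a Gibbs measure charging every cylinder, so it is fully supported on $\Sigma$; and it becomes Bernoulli after collapsing its period. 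Consequently $(f,\mu)$, isomorphic via $\pi$ to $(\sigma,\bar\mu)$, is isomorphic to the product of a Bernoulli scheme with the cyclic permutation of order $p:=\text{period of }\Sigma$. For the support: full support of $\bar\mu$ forces $\supp\mu=\overline{\pi(\Sigma)}$, which lies in $\HC(\mu)$ by Remark~\ref{r.potential}; for the reverse inclusion take $\cO\hsim\mu$ with $\HC(\cO)=\HC(\mu)$ (Corollary~\ref{Cor-HC}), write $\HC(\cO)=\overline{\bigcup_n\Lambda_n}$ with $\Lambda_n$ an increasing sequence of horseshoes that are eventually homoclinically related to $\mu$ (Proposition~\ref{prop-approx-HC}), and for each $n$ apply the lifting of transitive compact hyperbolic subsets recorded in Section~\ref{ss-lifting-hcm} to the model associated with a value of $\chi$ small enough that $\mu$ and $\Lambda_n$ are simultaneously $\chi$-hyperbolic; this gives $\Lambda_n\subset\supp\mu$ for all $n$, hence $\HC(\mu)\subset\supp\mu$. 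Finally, the period $p$ is identified with $\gcd\{\operatorname{Card}(\cO):\cO\hsim\mu\}$ by combining Proposition~\ref{p.period} (which exhibits this $\gcd$ as a period of $\mu$, hence of $\bar\mu$, hence a divisor of $p$) with the lifting of $\chi$-hyperbolic periodic orbits homoclinically related to $\mu$ and the auxiliary statements of Section~\ref{ss-lifting-hcm}.

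I expect the main obstacle to be organizational rather than analytic. The coding only ``sees'' $\chi$-hyperbolic data for a prescribed $\chi$, so comparing two equilibrium measures whose Lyapunov exponents may both be small forces one to choose $\chi$ depending on the pair and re-run the argument; and the two $\chi$-independent conclusions, namely $\supp\mu=\HC(\mu)$ and the equality of the symbolic period with the combinatorial $\gcd$, require reconciling information coming from the whole family of models $\{(\Sigma_\chi,\pi_\chi)\}_{\chi>0}$, which is precisely the purpose of the lifting statements of Section~\ref{ss-lifting-hcm} and of Proposition~\ref{p.period}. The only other point that needs care is the verification that the lifted measure is genuinely an equilibrium measure on $\Sigma$ --- i.e.\ the pressure identity above, together with finiteness of the Gurevich pressure, which is what makes the Buzzi--Sarig dichotomy applicable --- and this rests entirely on the finiteness of the fibres of $\pi$ over $\Sigma^\#$ in (C1) and on (C2).
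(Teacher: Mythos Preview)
Your strategy for uniqueness is exactly the paper's: lift $\mu$ and $\nu$ to the irreducible model from Theorem~\ref{Theorem-Symbolic-Dynamics-C^r}, verify that the lifts are equilibrium measures for the H\"older potential $\phi\circ\pi$, and invoke Buzzi--Sarig. Your support argument (varying $\chi$ and lifting the horseshoes $\Lambda_n$ via (C9)) is a valid alternative to the paper's, which instead keeps $\chi$ fixed and shows directly that the $\chi$-hyperbolic periodic orbits related to $\mu$ are dense in $\HC(\mu)$, by approximating an arbitrary $\cO'\hsim\mu$ by a periodic point spending most of its time near a fixed $\chi$-hyperbolic $\cO\hsim\mu$ (hence $\chi$-hyperbolic by~(C7)).

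There is, however, a real gap in the Bernoulli/period part. The coding $\pi$ of Theorem~\ref{Theorem-Symbolic-Dynamics-C^r} is only \emph{finite-to-one} on $\Sigma^\#$, not $\bar\mu$-a.e.\ injective, so you cannot assert that $(f,\mu)$ is ``isomorphic via $\pi$'' to $(\sigma,\bar\mu)$; you only get that $(f,\mu)$ is a finite-to-one equal-entropy factor. Ornstein theory still yields that $(f,\mu)$ is Bernoulli times a cyclic permutation of some order $p'$ dividing the period $p$ of $\Sigma$, and Proposition~\ref{p.period} gives $\gcd\mid p'$. But your route to the reverse divisibility via ``lifting $\chi$-hyperbolic periodic orbits'' does not close: a lift to $\Sigma$ of an $f$-periodic orbit of period $\tau$ may well have symbolic period $k\tau$ with $k>1$, so $p\mid k\tau$ does not give $p\mid\tau$; moreover not every $\cO'\hsim\mu$ is $\chi$-hyperbolic. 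The identification of the period of $\Sigma$ with the $\gcd$ that you seem to be alluding to is indeed proved later in the paper, but it requires the injectivity property~(C10), which is not part of Theorem~\ref{Theorem-Symbolic-Dynamics-C^r}.

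The paper sidesteps the symbolic period entirely. Writing $\ell$ for the (a priori unknown) order of the cyclic factor of $\mu$, it decomposes $\mu$ into its $\ell$ cyclic pieces $\mu_1,\ldots,\mu_\ell$, notes that each $\mu_i$ is an equilibrium for $f^{\ell}$ and the potential $\tfrac{1}{\ell}\sum_{j=0}^{\ell-1}\phi\circ f^j$, and applies the \emph{already proved} uniqueness (now for $f^{\ell}$) to see that the $\mu_i$ are pairwise non-homoclinically related. Then any $\cO'\hsim\mu$ splits into $f^{\ell}$-suborbits, each related to exactly one $\mu_i$, and by $f$-equivariance the count per $\mu_i$ is constant; hence $\ell$ divides the number of suborbits, which divides $|\cO'|$. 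This gives $\ell\mid\gcd$, and combined with Proposition~\ref{p.period} yields $\ell=\gcd$.
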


\subsection{Definitions}\label{s.def-symbolic}
We begin with some definitions from symbolic dynamics.
Let $\mathfs G$ denote a countable (possibly finite) directed graph with set of vertices $V$ and set of directed edges $E\subset V\times V$.
We write $v\to w$, when $(v,w)\in E$.  We will always assume that every vertex $v$ has an incoming edge $u\to v$ and an outgoing edge $v\to w$.

The {\em countable state Markov shift} {(or simply \emph{Markov shift} for short)} associated to $\mathfs G$ is the dynamical system
$\sigma:\Sigma\to\Sigma$ defined on the metric space $(\Sigma,d)$ where
$$
\Sigma=\Sigma(\mathfs G):=\{(v_i)_{i\in\mathbb Z}\in V^\Z: v_i\to v_{i+1}\text{ for all }i\},
$$
$d(\un{v},\un{w}):=\exp[-{\inf}\{|i|:v_i\neq w_i\}]$ and $\sigma$ is the {\em left shift map}
$$
\sigma[(v_i)_{i\in\Z}]:=(v_{i+1})_{i\in\Z}.
$$
The set $V$ is called the {\em alphabet} of the shift. Words $(v_1,\ldots,v_n)\in V^n$ such that $v_i\to v_{i+1}$ for all $i$ are called {\em admissible}. Admissible words are also called  {\em paths on $\mathfs G$}. Note that any subshift of finite type is topologically conjugate to a countable state Markov shift.

 It is easy to see that $(\Sigma,d)$ is locally compact iff every vertex has finite valency: the numbers of incoming and outgoing edges at a vertex are finite.
Unless the graph $\mathfs G$ is finite, $\Sigma$ is not compact. Gurevi\v{c} has extended the usual topological entropy to non compact Markov shift by  setting:
 \begin{equation}\label{eq-hG}
    h(\Sigma) := \sup_{\mu\in\Prob(\sigma,\Sigma)} h(f,\mu).
  \end{equation}

It is also not difficult to see that $\sigma:\Sigma\to\Sigma$ is topologically transitive iff  for any $v,w\in V$
\begin{equation}\label{e.transition}
\exists (v_i)_{\in \mathbb{Z}}\in \Sigma,\;n\geq 1,\quad  v_0=v \text{ and }v_n=w.
\end{equation}
When this happens we say that  $\Sigma$ is  \emph{transitive} (or \emph{irreducible}).
In this case, one defines the \emph{period} of $\Sigma$ as the greatest common divisor of the lengths of the loops in $\mathfs G$.

In the non-transitive case, one introduces the subsets $V'\subset V$ which satisfy~\eqref{e.transition} for any $v,w\in V'$
and which are maximal for the inclusion. They induce subshifts $\Sigma'\subset \Sigma$ that are called \emph{irreducible components}
of $\Sigma$.

\begin{definition}
The regular part of $\Sigma$ is the subset
$$
\Sigma^\#:=\{(v_i)\in \Sigma: \text{both } (v_i)_{i>0}, (v_i)_{i<0}\text{ contain constant subsequences}\}.
$$
\end{definition}
It has full measure with respect to every shift invariant measure on $\Sigma$.

\subsection{Global symbolic dynamics}\label{ss.global-symbolic}
We recall the results of~\cite{Sarig-JAMS}:

\begin{thm}[\cite{Sarig-JAMS}]\label{thm.sarig}
Let $r>1$ and let $f$ be a $C^{r}$ diffeomorphism on a closed surface $M$. For every
$\chi>0$, there are a locally compact countable state Markov shift $\wh \Sigma$ and a H\"older continuous
map ${\wh \pi}:{\wh \Sigma}\to M$  such that ${\wh \pi}\circ\sigma=f\circ{\wh \pi}$ and:
\begin{enumerate}[(a)]
\item[\rm (C1)]\label{symbolic2}  $\wh \pi:\wh \Sigma^\#\to M$ is finite-to-one; more precisely,
if $x_i=a$ for infinitely many $i<0$ and $x_i=b$ for infinitely many $i>0$, then $\#\{\un{y}\in\wh \Sigma^\#:\wh \pi(\un{y})=\wh \pi(\un{x})\}$ is bounded by a constant $C(a,b)$.
\item[\rm ($\rm \widehat{C}$2)]\label{symbolic3} $\nu({\wh \pi}({\wh \Sigma^\#}))=1$ for every $\chi$-hyperbolic measure $\nu\in \HPM(f)$.
Moreover, there exists an ergodic measure $\bar \nu$ on $\wh \Sigma$
such that $\wh \pi_*(\bar \nu)=\nu$. Conversely, if $\bar\nu$ is a $\sigma$-ergodic measure on $\Sigma$, then $\wh\pi_\ast\bar\nu$ is $f$-ergodic, hyperbolic, and $h(f,\nu)=h(\sigma,\bar\nu)$.
\item[\rm (C3)]\label{symbolic7} for any $x\in \wh \pi(\wh \Sigma)$, there is a splitting
$T_x M=E^s(x)\oplus E^u(x)$ where:
\begin{enumerate}[(i)]
\item $\limsup_{n\to +\infty} \frac 1 n \log \| Df_x^n|_{E^s(x)}\|\leq -\frac \chi 2$,
\item $\limsup_{n\to +\infty} \frac 1 n \log \| Df_x^{-n}|_{E^u(x)}\|\leq -\frac \chi 2$.
\end{enumerate}
Moreover, the maps ${\un x}\mapsto E^{s/u}(\wh \pi({\un x}))$ are H\"older continuous on $\wh \Sigma$.
\item[\rm (C4)]\label{symbolic8} For every $\un{x}\in\wh\Sigma$ there are two $C^1$ sub-manifolds $V^u(\un{x}),V^s(\un{x})$ passing through $x:=\wh{\pi}(\un{x})$ s.t.:
\begin{enumerate}[(i)]
\item $\forall y\in V^s(\un{x})$, $\forall n\geq0$, $d(f^{n}(y), f^{n}(x))\leq e^{-\frac{n\chi}{2}}$ and $T_x V^s(\un{x})=E^s(x)$,
\item $\forall y\in V^u(\un{x})$, $\forall n\geq0$, $d(f^{-n}(y), f^{-n}(x))\leq e^{-\frac{n\chi}{2}}$ and $T_x V^u(\un{x})=E^u(x)$.
\end{enumerate}
\end{enumerate}
\end{thm}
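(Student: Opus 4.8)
\emph{This is the main theorem of \cite{Sarig-JAMS}; the plan is to recall the construction.} It builds a \emph{countable Markov partition} of the $\chi$-hyperbolic part of $f$ out of overlapping Pesin--Lyapunov charts. First I would fix $\epsilon$ small relative to $\chi$ and attach to each $\chi$-hyperbolic point $x$ (one where the Oseledets splitting exists with all exponents outside $[-\chi,\chi]$) a smooth chart $\psi_x$ on a square of size $Q(x)$, with $Q(x)>0$ a tempered ``Pesin parameter'' chosen so that, read in these charts, $f$ and $f^{-1}$ become uniform small $C^{1+\beta}$ perturbations of hyperbolic linear maps. Discretizing $Q(x)$ along a geometric sequence $\{\epsilon e^{-k/3}:k\geq 0\}$, and recording separately a stable and an unstable parameter, yields a \emph{countable} alphabet $V$ of ``$\epsilon$-double charts'' $\psi_x^{p^s,p^u}$. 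I would then put an edge $\psi_x^{p^s,p^u}\to\psi_y^{q^s,q^u}$ when the charts overlap in the appropriate $C^1$ sense and the parameters obey a ``greedy'' recursion forcing them to vary like $e^{\pm\epsilon/3}$-slow geometric sequences; this defines the graph $\mathfs G$ and $\wh\Sigma=\Sigma(\mathfs G)$. The overlap condition bounds the ratio of consecutive parameters, and since the parameter set is discrete, each vertex has finitely many neighbors, so $\wh\Sigma$ is locally compact.

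Next I would construct $\wh\pi$ by shadowing: for a bi-infinite admissible path, the nested $f^{-n}$-pullbacks (from $\psi_{x_n}$) of admissible $u$-manifolds inside $\psi_{x_0}$ contract in the $C^1$ metric at rate $e^{-\chi/2}$, and symmetrically for $f^n$-pushforwards of admissible $s$-manifolds; their unique common point is $\wh\pi(\un{x})$, which satisfies $\wh\pi\circ\sigma=f\circ\wh\pi$ by construction, with Hölder continuity coming from the exponential contraction. Taking those limiting manifolds as $V^{s/u}(\un{x})$ gives (C4), differentiating at $x$ gives the splitting and the rate estimates of (C3)/(C4), and Hölder dependence of $E^{s/u}$ on $\un{x}$ is the Hölder continuity of the graph-transform fixed point. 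For coverage (C2), I would use Poincaré recurrence and Lusin's theorem to show that a $\nu$-typical point (for $\chi$-hyperbolic $\nu\in\HPM(f)$) returns infinitely often, forward and backward, to a set where $Q$ is bounded below, hence admits an itinerary through $\wh\Sigma^\#$ projecting onto it; the lift $\bar\nu$ is an ergodic component of a weak-$*$ limit of empirical distributions of such itineraries, made measurable via the finite fibers. The converse half of (C2) --- $\wh\pi_*\bar\nu$ ergodic, hyperbolic, entropy-preserving --- follows from (C3), (C1), and the fact that finite-to-one factor maps preserve entropy.

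The step I expect to dominate the work, and the main obstacle, is the finite-to-one property (C1): if $\un{x},\un{y}\in\wh\Sigma^\#$ shadow the same point, then $\un{y}$ must range over at most $C(a,b)$ sequences, $a$ and $b$ being the symbols recurring infinitely often in the past and future of $\un{x}$. The crux is that along a recurrent symbol the greedy recursion pins the parameters $p^s_i,p^u_i$ to the \emph{true} Pesin parameters of $\wh\pi(\sigma^i\un{x})$ up to a factor $e^{\pm\epsilon}$; this fixes the base point $x_i$ up to bounded distance, hence the double chart up to finitely many choices, and a pigeonhole count along the recurrence times of $a$ and $b$ then bounds the number of admissible $\un{y}$. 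Making this precise needs the delicate ``$\epsilon$-maximality'' built into the discretization and the bookkeeping of regular chains, which is where the bulk of \cite{Sarig-JAMS} goes.
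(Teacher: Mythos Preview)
Your outline is a faithful high-level sketch of the construction in \cite{Sarig-JAMS}, and the paper does not reprove this theorem: it simply cites the relevant statements there (Theorems~1.3--1.5 for (C1) and ($\widehat{\rm C}$2), Proposition~12.6 for (C3), Proposition~6.3 together with Definitions~10.3 and~11.4 for (C4)), noting that $\chi$-hyperbolicity suffices in place of the entropy hypothesis $h(f,\nu)>\chi$ used in the original statements.

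One point worth sharpening in your sketch, since the paper relies on it explicitly in \S3.3.1: the construction in \cite{Sarig-JAMS} proceeds in \emph{two} stages, not one. The alphabet of $\epsilon$-double charts you describe gives a first Markov shift $\Sigma$ and a \emph{locally finite covering} $\mathcal Z=\{Z(v)\}$ of $\NUH_\chi^\#$, not a partition. Only after a Bowen--Sinai type refinement of $\mathcal Z$ does one obtain the pairwise disjoint family $\mathcal R$ that serves as the alphabet of $\wh\Sigma$, with $\wh\pi$ defined via $\bigcap_n \overline{f^{-n}(R_n)}$. Your description of the finite-to-one mechanism (greedy recursion pinning the parameters, then pigeonhole along recurrence times) is correct for the first shift; the passage to $\wh\Sigma$ inherits it through the local finiteness of the refinement. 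Also, the lift $\bar\nu$ in ($\widehat{\rm C}$2) is obtained not via weak-$*$ limits of empirical measures but by averaging over the finitely many $\wh\pi$-preimages in $\wh\Sigma^\#$ and taking an ergodic component---the paper uses exactly this formula later (see the end of the proof of Proposition~3.9).
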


%\jb{Higher D: Properties \eqref{symbolic1}, \eqref{symbolic2} and \eqref{symbolic3}
% correspond to Theorems 0.1.1, 0.1.2, 0.1.3 of \cite{Ben-Ovadia}}
%\jb{Higher D: Property \eqref{symbolic7} is Propositions 2.1.4 and 1.3.20 of \cite{Ben-Ovadia}}
%\jb{Higher D: Property \eqref{symbolic8} is Proposition 2.1.4 of \cite{Ben-Ovadia}}
Properties (C1) and ($\rm \widehat {\text{C}}$2)
 correspond to Theorems 1.3, 1.4, 1.5 of \cite{Sarig-JAMS}. There the theorems are stated under the stronger condition that $h(f,\nu)>\chi$ but in fact $\chi$-hyperbolicity is all that is used. Property (C3) is Proposition 12.6 in \cite{Sarig-JAMS}. Property (C4) follows from Proposition 6.3 and Definitions 10.3 and 11.4 in \cite{Sarig-JAMS}.
\medskip

We need another property
constructed in \cite{Sarig-JAMS}, which was identified and brought to the fore in \cite{Boyle-Buzzi}:
\begin{enumerate}[{\it (f)}]
\item[(C5)] {\em  {\em Locally finite Bowen Property.\/} There is a symmetric binary relation $\sim$ on  the alphabet $V$ of {$\wh\Sigma$} satisfying}
 $$\begin{aligned}
  &\forall x,y\in\wh{\Sigma}^\#,\;\; \wh{\pi}(x)=\wh{\pi}(y) \iff (\forall n\in\Z\;\; x_n\sim y_n),\\
  & \forall b\in V,\  \#\{a\in V:a\sim b\}<\infty.
 \end{aligned}$$
\end{enumerate}
%\jb{Higher D: the locally finite Bowen property should be observed by \cite{Ben-Ovadia}}
The binary relation $\sim$ is the {\em affiliation relation} defined in \cite[\S 12.3]{Sarig-JAMS}. Since we do not need its precise definition,  we will not repeat it.

\begin{proposition}\label{p-proj-measure}
{Let $r>1$ and let $f$ be a $C^{r}$ diffeomorphism on a closed manifold $M$. Let $\widehat \Sigma$ be a locally compact countable state Markov shift and suppose ${\wh \pi}:{\wh \Sigma}\to M$ is a H\"older continuous map satisfying ${\wh \pi}\circ\sigma=f\circ{\wh \pi}$
and (C3) for some $\chi>0$. Then the following property holds:
\begin{enumerate}[{\it (f)}]
\item[\rm (C6)]
For any ergodic measure $\bar \nu$ on $\wh \Sigma$, $\nu:=\wh \pi_*(\bar \nu)$ is $\chi/3$-hyperbolic. Moreover, if $\bar\nu_1,\bar\nu_2$ are two such measures on a common irreducible component of $\wh\Sigma$, their projections $\nu_1,\nu_2$ are homoclinically related.
\end{enumerate}}
 \end{proposition}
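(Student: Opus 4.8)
The plan is to prove (C6) in two steps, following the structure of the statement. For the first step, $\chi/3$-hyperbolicity of $\nu := \wh\pi_*(\bar\nu)$, I would invoke property (C3), which holds at every point of $\wh\pi(\wh\Sigma)$ and in particular at $\nu$-a.e.\ point. Property (C3)(i) gives a bundle $E^s$ along which forward contraction holds at (asymptotic) rate at most $-\chi/2$, and (C3)(ii) gives a bundle $E^u$ with backward contraction at rate at most $-\chi/2$; since $T_xM = E^s(x)\oplus E^u(x)$ at $\nu$-a.e.\ $x$ and the maps $\un x\mapsto E^{s/u}(\wh\pi(\un x))$ are H\"older (hence measurable) on $\wh\Sigma$, we can apply the Oseledets theorem to $(\wh\Sigma,\sigma,\bar\nu)$ pushed to $M$: the Lyapunov exponents of $\nu$ along $E^s$ are $\le -\chi/2 < -\chi/3$ and along $E^u$ are $\ge \chi/2 > \chi/3$. (The passage from $\limsup\frac1n\log\|Df^n_x|_{E^s}\| \le -\chi/2$ to a Lyapunov exponent bound is the standard subadditive/Oseledets argument; the constant $\chi/3$ rather than $\chi/2$ leaves room for the fact that a priori (C3) only bounds norms of the differential restricted to the invariant subbundles, not true one-step estimates — one can also simply cite that $\nu$ is $\chi/2$-hyperbolic, which is stronger.) In particular $\nu\in\HPM(f)$, i.e.\ $\nu$ is ergodic (being a factor of the ergodic $\bar\nu$), hyperbolic, and of saddle type.

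For the second step — that if $\bar\nu_1,\bar\nu_2$ live on a common \emph{irreducible} component $\Sigma'$ of $\wh\Sigma$ then $\nu_1\hsim\nu_2$ — the idea is to exploit transitivity of $\Sigma'$ together with property (C4), which furnishes genuine local stable/unstable manifolds $V^s(\un x),V^u(\un x)$ through each $x=\wh\pi(\un x)$, tangent to $E^s(x),E^u(x)$ and with uniform exponential tracking. By Poincar\'e recurrence applied to $\bar\nu_i$, there is a symbol $a_i$ in the alphabet of $\Sigma'$ and a $\bar\nu_i$-positive-measure set $A_i$ of sequences $\un x\in\Sigma'$ with $x_0 = a_i$ and with $x_k = a_i$ for infinitely many $k>0$ and infinitely many $k<0$; after passing to such points we control $V^{s/u}(\un x)$ uniformly over $A_i$ (the local manifolds through sequences in a fixed cylinder $[a_i]$ have uniformly bounded geometry and are uniformly transverse). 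Because $\Sigma'$ is irreducible, there is an admissible path from $a_1$ to $a_2$ and another from $a_2$ to $a_1$; concatenating, one builds a point $\un z\in\Sigma'$ whose negative tail lies in the $A_1$-orbit and whose positive tail lies in the $A_2$-orbit, which gives a point $z = \wh\pi(\un z)$ with $W^u(z)$ accumulating (via backward iteration and property (C4)) on an unstable leaf of a $\nu_1$-generic point and $W^s(z)$ accumulating on a stable leaf of a $\nu_2$-generic point. Using the two-dimensionality of $M$, a one-unstable-dimensional leaf and a one-stable-dimensional leaf in general position intersect transversally, and the inclination lemma (Lemma~\ref{l.density-submanifolds}) propagates this to a transverse intersection $W^u(x_1)\pitchfork W^s(x_2)\ne\emptyset$ for $x_1$ in a positive-$\nu_1$-measure set and $x_2$ in a positive-$\nu_2$-measure set, i.e.\ $\nu_1\preceq\nu_2$. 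The symmetric argument (using the path from $a_2$ to $a_1$) gives $\nu_2\preceq\nu_1$, so $\nu_1\hsim\nu_2$ by Definition~\ref{defi-h-rel-meas}.

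The main obstacle is the second step: one must turn the purely symbolic fact ``both sequences visit cylinders that are connected by an admissible path'' into a genuine transverse heteroclinic intersection of Pesin manifolds on the surface. The delicate points are (a) that (C4) only gives local manifolds of a fixed exponential size $e^{-n\chi/2}$, so to produce a macroscopic transverse intersection one must iterate and appeal to the inclination lemma of Section~\ref{s.hyp-measures}, which requires the base points to lie in $Y'$ — this is where one uses that $\nu_i\in\HPM(f)$ (just established in step one) so that $\nu_i(Y')=1$; and (b) the uniform transversality of $V^s$ against $V^u$ across a cylinder, which follows from the H\"older continuity of $\un x\mapsto E^{s/u}(\wh\pi(\un x))$ in (C3) together with the bounded geometry of the local manifolds over a fixed symbol. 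I expect the write-up to mirror closely the transitivity argument in the proof of Proposition~\ref{Prop-Equiv-Rel}, with the admissible path in $\Sigma'$ playing the role there played by the chain of existing homoclinic intersections.
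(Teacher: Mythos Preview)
Your core strategy---splice sequences using an admissible path in the irreducible component, then read off a heteroclinic intersection via (C4)---matches the paper's, and the first step (hyperbolicity from (C3)) is handled the same way. But your execution of step two is more roundabout than necessary and contains a couple of unneeded detours.

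The paper's argument is shorter and more direct. Given $\un x\in A$ (with $x_0=v^A$) and $\un y\in B'=\sigma^{-k}(B)$ (with $y_k=v^B$), one defines $\un z$ by $z_i=x_i$ for $i\le 0$, $z_i=v^i$ for $1\le i<k$, $z_i=y_i$ for $i\ge k$. Then $d(\sigma^{-n}\un z,\sigma^{-n}\un x)\le e^{-n}$ and $d(\sigma^{n}\un z,\sigma^{n}\un y)\le e^{k-n}$, so by H\"older continuity of $\wh\pi$ the point $z:=\wh\pi(\un z)$ itself satisfies $z\in W^u(x)\cap W^s(y)$ (exponential convergence both ways). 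Transversality at $z$ is immediate from (C4) and (C3): $V^u(\un z)\subset W^u(x)$ and $V^s(\un z)\subset W^s(y)$ force $T_zW^u(x)\supset E^u(z)$, $T_zW^s(y)\supset E^s(z)$, and $E^u(z)\oplus E^s(z)=T_zM$. So for \emph{every} $x\in A''$, $y\in B''$ one has $W^u(x)\pitchfork W^s(y)\ne\emptyset$, giving $\nu_1\preceq\nu_2$ directly.

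Two points where your write-up drifts: (a) you describe $W^u(z)$ as ``accumulating on'' the unstable leaf of a $\nu_1$-generic point and then invoke the inclination lemma to propagate---but $z$ \emph{is already} the desired heteroclinic point, lying on $W^u(x)$ and $W^s(y)$; no inclination lemma is needed, and your worry about membership in $Y'$ disappears; (b) you appeal to ``the two-dimensionality of $M$'' and a general-position argument for transversality, but the proposition is stated for closed manifolds of arbitrary dimension, and transversality is not a genericity statement---it follows pointwise from the splitting in (C3). Dropping these detours gives exactly the paper's proof.
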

\begin{proof}
The first assertion follows from (C3). For the second assertion, suppose $\bar \nu_1,\bar \nu_2$ are supported inside the same irreducible component.
There exist two measurable sets $A, B\subset \wh \Sigma$ with $\bar \nu_1(A)>0$,
$\bar \nu_2(B)>0$ such that the elements $(v_i)_{i\in \mathbb Z}$ in $A$ have the same
symbol $v_0=v^A$,  the elements $(v_i)_{i\in \mathbb Z}$ in $B$ have the same
symbol $v_0=v^B$, and there exists a sequence of symbols
$v^A\to v^1\to\dots\to v^{k-1}\to v^B$. Let $B':=\sigma^{-k}(B)$, then $\bar{\nu}_2(B')>0$ and every $(w_i)_{i\in\mathbb Z}\in B'$ satisfies $w_k=v^B$.

By Pesin's Stable Manifold Theorem \cite{Barreira-Pesin-Non-Uniform-Hyperbolicity-Book}, there is a subset $A''\subset \wh{\pi}(A)$ with positive $\nu_1$-measure  such that every $x\in A''$ has a well-defined  unstable manifold $W^u(x)$,
 and there is a subset $B''\subset \wh{\pi}(B')$ with positive $\nu_2$-measure such that every $y\in B''$ has a well-defined  stable manifold $W^s(y)$. One can also assume that points in $A''\cup B''$ have an Oseledets splitting.
 This splitting must coincide with the splitting $E^s\oplus E^u$ given by condition (C3).

 We claim
 that for every $x\in A''$ and $y\in B''$,  $W^u(x)\pitchfork W^s(y)\neq\emptyset$. Indeed, write $x=\wh{\pi}(\un{x})$ with $x_0=v^A$ and $y=\wh{\pi}(\un{y})$ with $y_k=v^B$, and consider
 $
 z:=\wh\pi(\un{z})
 $ where $z_i=x_i$ for $i\leq 0$, $z_i=v_i$ for $1\leq i{<}k$, and $z_i=y_i$ for $i\geq k$. Then $\un{z}\in \wh{\Sigma}$, $d(\sigma^{-n}\un{z},\sigma^{-n}\un{x})\leq e^{-n}$, and
 $d(\sigma^{n}\un{z},\sigma^{n}\un{y})\leq e^{k-n}$. By the H\"older continuity of $\wh{\pi}$, $d(f^n(z),f^n(y)), d(f^{-n}(z),f^{-n}(x))\xrightarrow[n\to\infty]{}0$ exponentially fast, so
   $z\in W^u(x)\cap W^s(y)$.
The exponential convergence also implies  that $V^u(\un{z})\subset W^u(x)$, $V^s(\un{z})\subset W^s(y)$ where $V^{u/s}(\un{x})$ are as in
(C4). So $T_z W^{u/s}(z)=T_z V^{u/s}(\un{x})=E^{u/s}(z)$ with $E^{u/s}(z)$ as in (C3). Since $E^u(z)\cap E^s(z)=\{0\}$,
the manifolds $W^u(x), W^s(y)$ are transverse at $z$.
This proves $\nu_1\preceq\nu_2$.
By symmetry $\nu_2\preceq\nu_1$, whence  $\nu_1\hsim\nu_2$.
\end{proof}

As usual we endow the space of Borel probability measures on $\wh \Sigma$ with the weak-$*$ topology.

\begin{proposition}\label{p-contExp}
{Let $r>1$ and let $f$ be a $C^{r}$ diffeomorphism on a closed manifold $M$. Let $\widehat \Sigma$ be a locally compact countable state Markov shift and suppose ${\wh \pi}:{\wh \Sigma}\to M$ is a continuous map satisfying ${\wh \pi}\circ\sigma=f\circ{\wh \pi}$
and (C3). Then the following property hold.
\begin{enumerate}[{\it (f)}]
\item[\rm (C7)]
For any $\chi'>0$, the set of ergodic measures $\bar \nu$ whose projection
$\wh\pi_*(\bar \nu)$ is $\chi'$-hyperbolic is open
 for the {relative} weak-$*$ topology of $\Proberg(\wh\Sigma)$.
\end{enumerate}}
\end{proposition}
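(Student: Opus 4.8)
The plan is to rewrite the $\chi'$-hyperbolicity of $\nu:=\wh\pi_*(\bar\nu)$ as a condition on the integrals over $\wh\Sigma$ of two fixed \emph{bounded} H\"older functions, so that it becomes manifestly a weak-$*$-open condition. Using the H\"older bundles $\un x\mapsto E^{s/u}(\wh\pi(\un x))$ furnished by (C3), first I would introduce
\[
  \psi^s(\un x):=\log\bigl\|Df_{\wh\pi(\un x)}|_{E^s(\wh\pi(\un x))}\bigr\|,
  \qquad
  \psi^u(\un x):=\log\bigl\|Df^{-1}_{\wh\pi(\un x)}|_{E^u(\wh\pi(\un x))}\bigr\|.
\]
Since $M$ is compact and $f$ is a $C^1$ diffeomorphism, $\|Df^{\pm1}_y\|$ is bounded above on $M$, and $\|A|_L\|\ge\|A^{-1}\|^{-1}$ for any linear isomorphism $A$ and line $L$, so the restrictions of $Df^{\pm1}$ to lines are also bounded away from $0$; hence $\psi^s,\psi^u$ are \emph{bounded} on $\wh\Sigma$, and they are H\"older-continuous because $\wh\pi$, the bundles $E^{s/u}\circ\wh\pi$, and $y\mapsto Df^{\pm1}_y$ are continuous.

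Second, fix an ergodic $\bar\nu$ on $\wh\Sigma$ and set $\nu:=\wh\pi_*(\bar\nu)$. By (C6), $\nu$ is $\chi/3$-hyperbolic, hence hyperbolic of saddle type with a $Df$-invariant Oseledets splitting $E^s_\nu\oplus E^u_\nu$ defined $\nu$-a.e., with exponents $\lambda^u(\nu)\ge\chi/3>0$ along $E^u_\nu$ and $-\lambda^s(\nu)\le-\chi/3<0$ along $E^s_\nu$. I would check that the (C3)-splitting coincides $\nu$-a.e.\ with the Oseledets one: for $\nu$-a.e.\ $x$ and any $v\in E^s(x)\setminus\{0\}$, the bound (C3)(i) gives $\limsup_n\frac1n\log\|Df^n_x v\|\le-\chi/2<0$, which forces the $E^u_\nu(x)$-component of $v$ to vanish, i.e.\ $E^s(x)\subseteq E^s_\nu(x)$; the dual argument gives $E^u(x)\subseteq E^u_\nu(x)$; and all four lines are one-dimensional (if, say, $\dim E^u(x)=0$ then $E^s(x)=T_xM$ would contract a nonzero vector of $E^u_\nu(x)$, contradicting $\lambda^u(\nu)>0$), so these inclusions are equalities. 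Since $Df$ and $Df^{-1}$ restricted to the invariant line bundles multiply their norms along orbits, $\frac1n\log\|Df^n_x|_{E^s(x)}\|=\frac1n\sum_{j=0}^{n-1}\psi^s(\sigma^j\un x)$ and $\frac1n\log\|Df^{-n}_x|_{E^u(x)}\|=\frac1n\sum_{j=0}^{n-1}\psi^u(\sigma^{-j}\un x)$; letting $n\to\infty$ and combining Birkhoff's ergodic theorem (for $\sigma$ and for $\sigma^{-1}$) with Oseledets' theorem yields
\[
  \int_{\wh\Sigma}\psi^s\,d\bar\nu=-\lambda^s(\nu),
  \qquad
  \int_{\wh\Sigma}\psi^u\,d\bar\nu=-\lambda^u(\nu).
\]
As saddle type is automatic by (C6), $\nu$ is $\chi'$-hyperbolic iff $\lambda^u(\nu)>\chi'$ and $\lambda^s(\nu)>\chi'$, i.e.\ iff $\int\psi^u\,d\bar\nu<-\chi'$ and $\int\psi^s\,d\bar\nu<-\chi'$.

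Finally, because $\psi^s$ and $\psi^u$ are bounded and continuous on $\wh\Sigma$, the functionals $\bar\mu\mapsto\int\psi^{s}\,d\bar\mu$ and $\bar\mu\mapsto\int\psi^{u}\,d\bar\mu$ are continuous for the weak-$*$ topology on $\Prob(\sigma,\wh\Sigma)$, so $U:=\{\bar\mu\in\Prob(\sigma,\wh\Sigma):\int\psi^u\,d\bar\mu<-\chi'\text{ and }\int\psi^s\,d\bar\mu<-\chi'\}$ is open. By the previous step, the ergodic measures whose projection is $\chi'$-hyperbolic are exactly $\Proberg(\wh\Sigma)\cap U$, which is relatively open in $\Proberg(\wh\Sigma)$; this proves (C7). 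The only genuinely delicate point is the \emph{boundedness} of $\psi^{s/u}$ — this is what prevents escape of mass at infinity from spoiling weak-$*$ continuity on the non-compact shift — while the identification of the (C3)-splitting with the Oseledets splitting is routine but must be carried out separately for each ergodic measure.
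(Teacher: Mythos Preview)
Your argument is correct when $\dim M=2$, but the proposition is stated for a closed manifold of arbitrary dimension, and this matters. The identity
\[
\frac1n\log\|Df^n_x|_{E^s(x)}\|=\frac1n\sum_{j=0}^{n-1}\psi^s(\sigma^j\un x)
\]
uses that $E^s$ is a \emph{line} bundle: operator norms of compositions are only sub\-multiplicative in general. Your justification that ``all four lines are one-dimensional'' only excludes $\dim E^{s/u}=0$; the conclusion $\dim E^{s/u}=1$ tacitly uses $\dim M=2$. In higher dimension one only has $\int\psi^s\,d\bar\nu\ge\text{(top Lyapunov exponent along }E^s\text{)}$, so a $\chi'$-hyperbolic $\bar\nu$ need not satisfy $\int\psi^s\,d\bar\nu<-\chi'$; your open set $U$ may be strictly smaller than the set of $\chi'$-hyperbolic measures, and openness at the missing points is not established.

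The paper handles this by working with the sub\-additive family $\psi^s_N(\un x):=\frac1N\log\|Df^N_{\wh\pi(\un x)}|_{E^s}\|$ instead of a single $\psi^s$: given $\bar\nu_0$ with top stable exponent $<-\chi'$, one chooses $N$ so that the (continuous, bounded) function $\psi^s_N$ has integral $<-\chi'$ against $\bar\nu_0$; this persists for weak-$*$ nearby $\bar\nu$, and then sub\-additivity gives $\inf_n\int\psi^s_n\,d\bar\nu\le\int\psi^s_N\,d\bar\nu<-\chi'$, which \emph{is} the top stable exponent. On surfaces your route is valid and slightly cleaner (no iterate $N$, no sub\-additivity); to match the stated generality, replace $\psi^{s/u}$ by the families $\psi^{s/u}_N$ and argue as above.
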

\begin{proof}
Let us consider an ergodic measure $\bar \nu_0$ such that $\wh\pi_*(\bar \nu_0)$ is $\chi'$-hyperbolic:
there exists $\lambda_0<-\chi'$ such that all its Lyapunov exponents along $E^s$ are smaller than $\lambda_0$.
For any $\delta>0$, there exists an integer $N\geq 1$ such that the open set
$$A:=\{x\in \widehat \Sigma\; :\; \forall v\in E^s(\wh\pi(x)),\;\|Df^N(v)\|< \exp(-N\lambda_0)\|v\|\}$$
has measure for $\bar \nu_0$ larger than $1-\delta$.
Since $E^s$ is continuous over $\wh \Sigma$ by (C3), this is still the case for any ergodic measure
$\bar \nu$ close to $\bar \nu_0$.
Having chosen $\delta>0$ small enough, this implies that for such $\bar \nu$,
$$\frac 1 N \int \log \|Df^N|_{E^s}\|d\bar \nu<-\chi'.$$
A  sub-additivity argument shows that $\lim \frac{1}{n}\int\log\|Df^n|_{E^s}\|d\bar\nu=\inf \frac{1}{n}\int\log\|Df^n|_{E^s}\|d\bar\nu<-\chi'$. In particular all the Lyapunov exponents of $\bar \nu$ along $E^s$ are smaller than $-\chi'$.
Arguing in the same way for the unstable exponents, one concludes that $\bar \nu$ is $\chi'$-hyperbolic.
\end{proof}

Note that a set $S\subset\wh\Sigma$ is relatively compact if and only if $\{x_k:x\in S\}$ is finite for each $k\in\mathbb Z$.
If $S\subset \wh\Sigma$ is relatively compact in $\wh\Sigma$, then $\wh{\pi}(S)\subset M$ is relatively compact (but the converse is not necessarily true).

\begin{proposition}[Compactness]\label{p.properness}
{Let $f$ be a homeomorphism of a compact metric space $M$, let $\widehat \Sigma$ be a locally compact countable state Markov shift and let ${\wh \pi}:{\wh \Sigma}\to M$ be a continuous map satisfying ${\wh \pi}\circ\sigma=f\circ{\wh \pi}$
and (C5). Then the following property holds:
\begin{enumerate}[{\it (f)}]
\item[\rm (C8)]
Consider a sequence $\un{x}^1,\un{x}^2,\un{x}^3,\dots\in \wh \Sigma^\#$
which is relatively compact in $\wh\Sigma$.
Then, any sequence $\un{y}^1,\un{y}^2,\un{y}^3,\dots\in \wh \Sigma^\#$
such that $\wh \pi(\un{x}^i)=\wh \pi(\un{y}^i)$ for each $i\geq 1$
is also relatively compact.
\end{enumerate}}
\end{proposition}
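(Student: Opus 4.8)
The plan is to use directly the locally finite Bowen property (C5), together with the coordinatewise description of relatively compact subsets of a locally compact countable Markov shift recalled just above the statement: a subset $S\subset\wh\Sigma$ is relatively compact if and only if, for every coordinate $k\in\Z$, the set of $k$-th symbols $\{x_k:\un x\in S\}$ is a finite subset of the alphabet $V$. So the whole matter reduces to controlling, for each fixed $k$, the symbols appearing at position $k$ in the sequence $(\un y^i)$.

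First I would fix $k\in\Z$. By hypothesis $(\un x^i)_{i\geq 1}$ is relatively compact in $\wh\Sigma$, so $A_k:=\{x^i_k:i\geq 1\}$ is a finite subset of $V$. Since $\un x^i,\un y^i\in\wh\Sigma^\#$ and $\wh\pi(\un x^i)=\wh\pi(\un y^i)$ for each $i$, the first clause of (C5) gives $x^i_n\sim y^i_n$ for all $n\in\Z$ and all $i$; in particular $y^i_k\sim x^i_k\in A_k$ for every $i$, hence
$$
\{y^i_k:i\geq 1\}\ \subset\ \bigcup_{b\in A_k}\{a\in V:a\sim b\}.
$$
By the second clause of (C5) each set $\{a\in V:a\sim b\}$ is finite, and $A_k$ is finite, so the right-hand side is a finite union of finite sets, hence finite. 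Thus $\{y^i_k:i\geq 1\}$ is finite for every $k$, and the characterization of relative compactness shows that $(\un y^i)_{i\geq 1}$ is relatively compact, which is (C8).

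I do not expect a genuine obstacle here: the statement is an immediate consequence of the two clauses of (C5) and the coordinatewise criterion for relative compactness in $\wh\Sigma$. The only point worth stressing is that (C5) is applicable precisely because both $\un x^i$ and $\un y^i$ are assumed to lie in the regular part $\wh\Sigma^\#$, where the fibers of $\wh\pi$ are exactly described by the affiliation relation $\sim$; outside $\wh\Sigma^\#$ this description need not hold and the argument would break down.
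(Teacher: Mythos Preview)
Your proof is correct and follows essentially the same approach as the paper: both use the coordinatewise characterization of relative compactness together with the two clauses of (C5) to conclude that $\{y^i_k:i\geq 1\}$ is contained in the finite set $\bigcup_{b\in A_k}\{a:a\sim b\}$.
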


\begin{proof}
 Since  $(\underline x^i)_{i\geq1}$ is relatively compact, $\{x^i_k:i\geq 1\}$ is finite for all $k$. So $\mathcal A_k:=\{a:\exists i\geq1$ $a\sim x^i_k\}$  is finite for all $k$, where $\sim$ is the relation in (C5). Since $\wh\pi(\underline y^i)=\wh\pi(\underline x^i)$,
 $y^i_k\in \mathcal A_k$ for all $i,k$.  The finiteness of $\mathcal A_k$ implies that  $(\underline y^i)_{i\geq 1}$ is  relatively compact.
\end{proof}
\medskip

\subsection{Lifting transitive $\chi$-hyperbolic compact sets}\label{ss.additional}
In this section we prove the following:
\begin{proposition}\label{p.uniform}
In the setting of Theorem \ref{thm.sarig},
the coding $\widehat \pi\colon \widehat\Sigma\to M$ can be chosen to satisfy the following additional property.
\begin{enumerate}[{\it (f)}]
\item[\rm ($\widehat{\rm C}$9)]
For any transitive invariant $\chi$-hyperbolic compact set $K\subset M$,
there exists a \emph{transitive} invariant compact set $X\subset \wh \Sigma$
such that $\wh \pi(X)=K$.
\end{enumerate}
\end{proposition}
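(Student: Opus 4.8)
The plan is to exploit the fact that, in the construction of \cite{Sarig-JAMS}, the global Markov shift $\wh\Sigma$ is built from a countable family of ``Markov rectangles'' that refines the ``$\epsilon$-double charts'' or ``Pesin charts'' covering the $\chi$-hyperbolic part of $M$; these charts are constructed so as to cover an entire set of full measure for every $\chi$-hyperbolic measure, but in fact the construction also covers every $\chi$-hyperbolic \emph{compact} set uniformly. The first step is to recall from \cite{Sarig-JAMS} that there is a countable set of ``double Pesin charts'' $\{\psi_v\}_{v\in V}$ such that every point $x$ lying in a $\chi$-hyperbolic compact set $K$ is ``$\epsilon$-overlapping'' by some chart, and moreover (the shadowing lemma of \cite{Sarig-JAMS}) every $f$-orbit which is $\epsilon$-shadowed by an admissible chain $\un v\in\Sigma(\mathfs G)$ is coded, i.e.\ lies in $\wh\pi(\un v)$. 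Since $K$ is compact and invariant, for each $x\in K$ one can choose an admissible bi-infinite chain $\un v(x)$ whose charts all have centers within $\epsilon$ of the corresponding points of the orbit of $x$, with uniformly bounded chart parameters because $K$ is $\chi$-hyperbolic with uniform constants (Proposition~\ref{p.chihypset}). Restricting the alphabet $V$ to the finite-or-countable set $V_K$ of charts that actually occur in such chains produces a subshift $\Sigma_K\subset\wh\Sigma$ with $\wh\pi(\Sigma_K)\supset K$; one needs the reverse inclusion too, which follows because the charts in $V_K$ have centers within $\epsilon$ of $K$ and any coded orbit stays $\epsilon$-close to $K$, hence in $K$ provided $\epsilon$ is small relative to a local-product-structure/expansiveness constant of $K$ (here one uses local maximality of $K$; a general transitive $\chi$-hyperbolic compact set need not be locally maximal, so one first replaces $K$ by a locally maximal $\chi$-hyperbolic neighborhood $\hat K\supset K$ and arranges $\wh\pi(X)=\hat K$, then intersects — but since we want exactly $K$, the better route is to work with a fixed finite-valency refinement and take $X:=\overline{\bigcup\{\text{orbits of chains shadowing orbits in }K\}}$, whose image is automatically the closure of an $f$-invariant set containing a dense orbit mapping onto $K$).

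The second step is transitivity of the lift $X$. Pick $x_0\in K$ with dense forward and backward orbit in $K$, and a coding chain $\un v^0=\un v(x_0)$. Let $X:=\overline{\{\sigma^n\un v^0: n\in\Z\}}\subset\wh\Sigma$. Then $X$ is compact (it is a closed subset of the relatively compact set $\overline{\{\sigma^n\un v^0\}}$, which is relatively compact because only finitely many symbols appear in $\un v^0$, as $\un v^0$ shadows a relatively compact orbit — here one can invoke Proposition~\ref{p.properness}, or directly the uniform bound on chart parameters along orbits in the $\chi$-hyperbolic set $K$), it is $\sigma$-invariant, and it is transitive by construction since $\un v^0$ itself has dense orbit in $X$. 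It remains to check $\wh\pi(X)=K$: the inclusion $\subset$ holds because $\wh\pi(\un v^0)=\mathcal O(x_0)$ has closure contained in $K$ and $\wh\pi$ is continuous; the inclusion $\supset$ holds because $\mathcal O(x_0)$ is dense in $K$ and $\wh\pi(X)$ is compact.

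The third and final step is to make the whole construction compatible with \emph{all} such $K$ simultaneously and with the other requirements (C1)–(C5), i.e.\ to prove that the coding can be ``chosen to satisfy ($\widehat{\mathrm C}$9)'' without sacrificing anything. For this I would note that the argument above does not modify $\wh\Sigma$ at all — given the coding of \cite{Sarig-JAMS}, the set $X$ is produced inside it — so ($\widehat{\mathrm C}$9) is a consequence of (C1)–(C5) and the shadowing/coding lemma of \cite{Sarig-JAMS}, not an extra constraint; the phrase ``can be chosen'' is only there because one must use a version of the construction in which the Pesin charts genuinely cover every $\chi$-hyperbolic compact set, which is the case for the construction in \cite{Sarig-JAMS} once one fixes the regularity level $\chi$. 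The main obstacle, and the point needing the most care, is the surjectivity $\wh\pi(X)=K$ on the nose (rather than $\wh\pi(X)\supset K$ with possibly larger image): this requires that every bi-infinite admissible chain appearing in $X$ shadows an orbit that actually lies in $K$ and not merely near it. This is where $\chi$-hyperbolicity of $K$ (uniform hyperbolic constants, Proposition~\ref{p.chihypset}) together with the expansiveness of $f$ on the $\epsilon$-neighborhood of a $\chi$-hyperbolic set enters: an orbit that stays $\epsilon$-close to $K$ for all time is forced into $K$, because $\chi$-hyperbolicity upgrades to uniform hyperbolicity on a neighborhood and one can invoke the standard fact that a uniformly hyperbolic set has arbitrarily small locally maximal hyperbolic neighborhoods, so shrinking $\epsilon$ confines shadowing orbits to $K$ itself (after intersecting over a decreasing sequence of such neighborhoods, or by choosing $\epsilon$ below the expansiveness constant and using that $x_0$'s orbit is dense in $K$ so its closure in any such neighborhood is exactly $K$).
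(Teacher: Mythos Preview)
Your overall strategy---use uniform $\chi$-hyperbolicity to bound the chart parameters, produce a compact lift, and take an orbit closure to obtain transitivity---is the right idea and close in spirit to the paper's approach. However, there is a genuine gap at the heart of your Step~2.

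You assume that the specific point $x_0\in K$ with dense orbit can be lifted to some $\un v^0\in\wh\Sigma$ with $\wh\pi(\un v^0)=x_0$. But the coding property (P4), as established in the paper following \cite{Sarig-JAMS}, only applies to points of the set $\NUH^\#_\chi$, whose definition requires the Oseledets-type limits $\lim_{n\to\pm\infty}\tfrac1n\log\|Df^n\un e^{s/u}(x)\|$ to \emph{exist}. For a general point in a uniformly hyperbolic set these limits need not exist (the orbit can alternate between regions with different expansion rates), so $x_0$ need not lie in $\NUH^\#_\chi$ and there is no a priori reason it belongs to $\wh\pi(\wh\Sigma)$ at all. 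Your appeal to ``the shadowing lemma of \cite{Sarig-JAMS}'' does not bridge this: that lemma produces a point from an admissible chain, not a chain from an arbitrary point.

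The paper closes this gap by a two-stage argument. First it enlarges $K$ to a slightly larger transitive $\chi$-hyperbolic set $\widetilde K$ carrying a fully supported ergodic measure $\nu$ (built from specification and expansiveness of hyperbolic sets). Then $\nu(\NUH^\#_\chi)=1$ by (P1), so $\widetilde K\cap\NUH^\#_\chi$ is dense in $\widetilde K$; by (P2)--(P4) every point of this dense set lifts via a chain using only finitely many symbols, yielding a compact $X_0\subset\wh\Sigma$ with $\wh\pi(X_0)\supset\widetilde K\supset K$. Second, Zorn's lemma produces a compact invariant $X\subset X_0$ \emph{minimal} for the property $\wh\pi(X)\supset K$. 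Now pick $z\in K$ with dense forward orbit and lift it to some $x\in X$---this lift exists because $\wh\pi(X)\supset K$, \emph{not} because of (P4). The $\omega$-limit set $\omega(x)$ is compact, invariant, and satisfies $\wh\pi(\omega(x))=K$ (continuity gives $\subset$; compactness of $X$ gives $\supset$), so by minimality $X=\omega(x)$, which is transitive with $\wh\pi(X)=K$.

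Your Step~3 is then unnecessary and somewhat confused: once one has an orbit closure $X=\overline{\mathcal O(\un v^0)}$ with $\wh\pi(\un v^0)=x_0$ and $\overline{\mathcal O(x_0)}=K$, the equality $\wh\pi(X)=K$ follows immediately from continuity of $\wh\pi$ and compactness of $X$, with no need for expansiveness or local maximality arguments. The entire difficulty lies in producing the lift of the dense-orbit point, and the paper's enlargement-plus-minimality trick is precisely what substitutes for that missing step.
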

\noindent
We check that the construction performed in \cite{Sarig-JAMS} satisfies this property.
In the next section we summarize the steps of the proof of Theorem \ref{thm.sarig} and give the precise corresponding references to \cite{Sarig-JAMS}.

\subsubsection{The global coding: some properties of the construction}

The $\chi$-hyperbolic dynamics of $f$ on $M$ is analyzed through the following construction:

\begin{itemize}
\item[--] \textbf{An invariant measurable set $\NUH^\#_\chi$.}
Its construction is explained below. It satisfies:
\begin{itemize}
\item[(P1)] \it The set $\NUH^\#_\chi$ has full measure for any ergodic $\chi$-hyperbolic measure of saddle type.
%\jb{higher D: distinguish indices}
\end{itemize}

\item[--] \textbf{A map $q_\varepsilon \colon \NUH^\#_\chi\to (0,+\infty)$.} It measures the quality of the hyperbolicity.
It is uniformly bounded from above, but  it could have arbitrarily small positive values.  {However:}
\begin{itemize}
\item[(P2)] \it If $K$ is a $\chi$-hyperbolic compact invariant set, then $q_\varepsilon$ has a positive lower bound over $K\cap\NUH_\chi^\#$.
\end{itemize}

\item[--] \textbf{A countable collection $\mathcal{R}$ of pairwise disjoint Borel sets.}
These sets are obtained as rectangles of a Markov partition and satisfy:
$$\bigcup_{R\in\mathcal{R}} R\supset \NUH^\#_\chi.$$
Moreover the following finiteness property holds:
\begin{itemize}
\item[(P3)] \it For each $t>0$, the set $\{x\in \NUH^\#_\chi, q_\varepsilon(x)>t\}$ meets only finitely many $R\in \mathcal R$.
\end{itemize}

\item[--] \textbf{A Markov shift $\widehat \Sigma$ together with a map $\widehat \pi\colon \widehat \Sigma\to M$.}
The shift $\wh \Sigma$ is associated to the collection of vertices $\mathcal{R}$ and to the collection
of edges $R\to S$ such that $f(R)\cap S\neq \emptyset$. It codes the set $\NUH^\#_\chi$:
\begin{itemize}
\item[(P4)] \emph{Given any $x\in\NUH^\#_\chi$, consider the sequence $\underline R\in\wh\Sigma$ such that, for all $n\in\Z$, $f^n(x)\in R_n$.
Then $\wh\pi(\underline R)=x$.}
\end{itemize}
\end{itemize}

\paragraph{Comments on the constructions.}
Let us briefly explain where and how these constructions are performed in~\cite{Sarig-JAMS}.
\smallskip

\paragraph{\it Step 1. Non-uniformly hyperbolic dynamics.}
The set $\NUH^\#_\chi$  is described in~\cite[Section~2.5]{Sarig-JAMS}.
It consists of points $x\in M$ such that:
%\jb{higher D: non unique basis, consider $C_\chi$ or the Lyapunov norm?}
\begin{enumerate}[(a)]
\item $T_xM$ admits a splitting $T_xM=E^s(x)\oplus E^u(x)$ satisfying
\begin{enumerate}[(1)]
\item $E^s(x)=\Span\{\un{e}^s(x)\}$, $\|\un{e}^s(x)\|=1$, $\lim\limits_{n\to\pm\infty}\frac{1}{n}\log\|(Df^n)\un{e}^s(x)\|<-\chi$;
\item $E^u(x)=\Span\{\un{e}^u(x)\}$, $\|\un{e}^u(x)\|=1$, $\lim\limits_{n\to\pm\infty}\frac{1}{n}\log\|(Df^n)\un{e}^u(x)\|>\chi$;
\item $\lim\limits_{n\to\pm\infty}\frac{1}{n}\log|\sin\alpha(f^n(x))|=0$, where
$\alpha(x):=\measuredangle(E^s(x),E^u(x))$.
\end{enumerate}
%\jb{Higher D: In \cite{Ben-Ovadia}, proof of 1.1.7, $Q_\eps$ is given by $\sup_{\xi\in E^s(x),\eta\in E^u(x)} \frac{S^2(x,\xi)+U^2(x,\eta)}{\|\xi+\eta\|^2}$. Hence it is again bounded over $\chi$-hyperbolic compact sets.}
To these points one associates positive numbers $u(x)$ and $s(x)$ defined by
$$u(x)^2:=2\sum_{n=0}^\infty e^{2n\chi}\|Df^{-n} e^u(x)\|^2, \quad s(x)^2:=2\sum_{n=0}^\infty e^{2n\chi}\|Df^{n} e^s(x)\|^2$$
and a number $Q_\eps(x)>0$, which is an explicit continuous function of $\alpha(x)$, $1/u(x)$, $1/s(x)$, and which goes to zero as any
of these three quantities goes to zero, see \cite{Sarig-JAMS}, Section 2.3.
\item[(b)] $\frac 1 n \log Q_\epsilon(f^n(x))\to 0$ as $|n|\to \infty$.

\item[(c)]
%\jb{Higher D: \cite{Ben-Ovadia}, proof 1.1.15, uses the same - up to a factor of $3$}
 $\underset{n\to +\infty}{\limsup}\; q_\varepsilon(f^n(x))\neq 0$ and $\underset{n\to -\infty}{\limsup}\; q_\varepsilon(f^n(x))\neq 0$, where
 $$q_\varepsilon(x)^{-1}:=\varepsilon^{-1}\sum_{k\in\mathbb Z} e^{-\varepsilon|k|/3} Q_{\varepsilon}(f^k(x))^{-1}.$$

\end{enumerate}
%\jb{Higher D: remark after 1.1.17 in \cite{Ben-Ovadia}}
Property (P1) follows from the Oseledets Theorem, see \cite[Sec. 2.5]{Sarig-JAMS}.

\paragraph{\it Uniformly hyperbolic dynamics.}
%\jb{higher D: as before consider Lyapunov norm or $C_\chi$}
Let $K$ be a $\chi$-hyperbolic compact invariant set.
At any point $x$ in $K\cap \NUH^\#_\chi$,
the splitting given by the definition of $\NUH^\#_\chi$ corresponds
to the hyperbolic splitting.
By Proposition~\ref{p.chihypset}, the $\chi$-hyperbolicity of $K$ implies
that {the functions $\alpha(x)$, $1/u(x)$, $1/s(x)$ which appear in the definition of $\NUH^\#_\chi$}
are uniformly bounded away from zero.
In particular $Q_\varepsilon$ and therefore $q_\varepsilon$ are also bounded away from zero, proving (P2).

\medbreak

\paragraph{\it Step 2. A first  Markov shift $\Sigma$.}
%\jb{Higher D:The countable $\mathcal A$ is built in Prop. 1.21 of \cite{Ben-Ovadia}}
%\jb{Higher D: The Markov shift is defined in Def 2.9 of \cite{Ben-Ovadia}}
%\jb{Higher D: The map $\pi$ in Theorem 2.12 of \cite{Ben-Ovadia}}
A first countable collection $\mathcal A$ of Pesin charts  $\Psi_x^{p^u,p^s}$ is built in~\cite[Proposition 3.5]{Sarig-JAMS}:
$\Psi_x^{p^u,p^s}$ is a pair of two concentric Pesin charts with the same center $x$, but different sizes $p^u, p^s$.
The set $\{\Psi_x^{p^u,p^s}\in \mathcal A: \min(p^s,p^u)>t\}$
is finite for each $t>0$. Definition 4.3 in~\cite{Sarig-JAMS} introduces a directed graph with vertices $\mathcal A$,
the associated shift $\Sigma$, and a projection $\pi\colon \Sigma\to M$ (\cite[Theorem 4.16]{Sarig-JAMS}).
For each $v\in \mathcal A$, one sets $Z(v)=\{\pi(\underline u): \underline u\in \Sigma^\# \text{ and } u_0=v\}$.
Each point $x\in \NUH_\chi^\#$ lifts by $\pi$ to a sequence $(\Psi_{x_n}^{p^u_n,p^s_n})\in \Sigma^\#$
satisfying (\cite[proof of Proposition 4.5]{Sarig-JAMS}):
%\jb{Higher D: This lower bound is quoted without explanation as following from the analoguous Prop. 1.3.8 of \cite{Ben-Ovadia}}
\begin{equation}\label{e.lowerbound}
\forall n\in \mathbb Z, \quad \min(p^u_n,p^s_n)\geq q_\varepsilon(f^n(x))e^{-\varepsilon/3}.
\end{equation}
The sets $Z(v)$ define a covering $\mathcal Z$ (\cite[Section 10.1]{Sarig-JAMS}) of
an invariant set containing $\NUH_\chi^\#$. The covering is {\em locally finite} \cite[Theorem 10.2]{Sarig-JAMS}: For every $Z\in\mathcal Z$, $\#\{Z'\in\mathcal Z: Z'\cap Z\neq \emptyset\}<\infty$.

%\jb{Higher D: cover: implied before 3.1.1 in \cite{Ben-Ovadia}}
%\jb{Higher D: locally finite refinement: follows from 3.1.2 in \cite{Ben-Ovadia} but not stated. In fact $\mathcal R$ not explicitly introduced?}
\paragraph{\it Step 3. The subshift $\wh\Sigma$.} The local finiteness of the cover $\mathcal Z$ is used in  \cite[Section 11]{Sarig-JAMS}  to construct a collection $\mathcal R$ of
{\em pairwise disjoint} sets such that $\bigcup \mathcal R=\bigcup\mathcal Z$, $\forall R\in\mathcal R$ $(\exists Z\in\mathcal Z$ such that $R\subset Z)$, and $\forall Z\in\mathcal Z$ $(\#\{R\in\mathcal R: Z\supset R\}<\infty)$.

Property (P3) can be checked as follows. By step 2,  the set $\{\Psi_x^{p^u,p^s}\in\mathcal A: \min(p^u,p^s)>t\}$ is finite for every $t>0$.
By \eqref{e.lowerbound},  $\{x\in\NUH_\chi^\#: q_\epsilon(x)>t\}$ can be convered by finitely many $Z\in\mathcal Z$. The local finiteness implies that it
meets only finitely many $Z\in\mathcal Z$.
Each $Z$ contains at most finitely many $R\in\mathcal R$. So (P3) follows.

%\jb{Higher D:$\widehat\pi$ is not considered in \cite{Ben-Ovadia}?}
{Lemmas 12.1 and 12.4 in~\cite{Sarig-JAMS} prove that for any $\underline R\in \widehat \Sigma$,
the sequence $\operatorname{Closure}\left( \cap_{k=-n}^n f^{-k}(R_k)\right)$ decreases to a singleton: by definition, this is
$\widehat \pi(\underline R)$.
Property (P4) immediately follows from the definitions of $\wh \Sigma$ and $\wh \pi$.}

\subsubsection{Proof of Proposition~\ref{p.uniform}}\label{s-p-uniform}

First, we substitute for $K$ a larger set $\widetilde K$ with a fully supported invariant probability measure as follows. Since $K$ is hyperbolic and transitive, for any neighborhood $V$ of $K$, there is a closed invariant set $K\subset\widetilde K\subset V$, equal to  a continuous factor of a transitive shift of finite type (built from specification and expansiveness).  Thus there is an ergodic invariant probability measure $\nu$ on $\widetilde K$ with full support. Since hyperbolicity is an open property,
the set $\widetilde K$ is still $\chi$-hyperbolic (choosing the neighborhood $V$ small enough) and from property (P1) we get $\nu(\widetilde K\cap \NUH_\chi^\#)=1$.
Since $\supp(\nu)=\widetilde K$, the set $\widetilde K\cap\NUH_\chi^\#$ is dense in $\widetilde K$.

Let us consider $q_\eps:\NUH_\chi^\#\to(0,+\infty)$ from the preceding section.
According to (P2), this function has a nontrivial lower bound on $\widetilde K\cap\NUH_\chi^\#$.
For each $x\in K\cap\NUH_\chi^\#$, Property (P4) defines a sequence $\underline{R}(x)\in\wh\Sigma$.
Property (P3) implies that all  sequences $\underline{R}(x)$, $x\in K\cap\NUH_\chi^\#$, only use finitely many symbols hence are contained in some invariant compact set $X_0\subset \wh\Sigma$. As $\wh\pi$ is continuous, $\wh \pi(X_0)$ is a compact set which contains $\widetilde K\cap \NUH_\chi^\#$, hence $K$.

Let $X\subset X_0$ be an invariant compact set such that $\wh \pi(X)\supset K$,
and which is minimal for the inclusion (such a set exists by Zorn's Lemma).
By assumption,  there exists $z\in K$ having a dense forward orbit
in $K$. Consider a lift $x\in X$ of $z$.
The $\omega$-limit set of the forward orbit of $x$ is an invariant compact subset
of $X$, which projects on $K$ by $\wh \pi$ since the forward orbit of $z$ is dense in $K$.
{Since $X$ has been chosen minimal}, this limit set coincides with $X$, hence the forward orbit of $x$ is dense in $X$.
This shows that $X$ is transitive. The proof of Proposition~\ref{p.uniform} is complete.
\qed

\subsection{Lifting homoclinic classes of measures}\label{ss-lifting-hcm}
In this section, we prove Theorem~\ref{Theorem-Symbolic-Dynamics-C^r} as a consequence of the following:

{\begin{proposition}\label{SymbolicDynamics}
Let $r>1, \chi>0$ and let  $f$ be a $C^{r}$ diffeomorphism on a closed manifold $M$.  Consider a locally compact countable state Markov shift $\widehat \Sigma$, and a continuous map ${\wh \pi}:{\wh \Sigma}\to M$ satisfying ${\wh \pi}\circ\sigma=f\circ{\wh \pi}$
and (C1), ($\widehat{ C}$2), (C5), (C6), (C7), ($\widehat{ C}$9). Then the following property holds.

For any hyperbolic ergodic measure $\mu$
there is an irreducible component $\Sigma\subset \widehat \Sigma$ satisfying (C2).
\end{proposition}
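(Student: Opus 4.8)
The plan is to construct the irreducible component from a horseshoe homoclinically related to $\mu$, to show this component is the \emph{only} one that can carry a measure from the measured homoclinic class of $\mu$, and then to read off (C2) formally from (C6), $(\widehat{\rm C}2)$ and the fact that an irreducible component is irreducible.

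\medskip
\noindent\textbf{Reduction.}
By $(\widehat{\rm C}2)$ and (C6), for any irreducible component $\Sigma'\subset\widehat\Sigma$ the map $\widehat\pi_\ast$ sends $\Proberg(\Sigma')$ into the set of ergodic hyperbolic measures of $f$ that are pairwise homoclinically related and whose entropies agree with those of their lifts. Hence it is enough to exhibit one irreducible component $\Sigma$ such that (i) some $\bar\nu\in\Proberg(\Sigma)$ has $\widehat\pi_\ast\bar\nu\hsim\mu$, and (ii) $\Sigma$ is the \emph{only} irreducible component with property~(i). Indeed, given such a $\Sigma$: it is irreducible, so (C0) holds; (C2.b) follows from applying (C6) to $\bar\nu$ and any other ergodic measure on $\Sigma$, together with the ``converse'' part of $(\widehat{\rm C}2)$; and for (C2.a), any $\chi$-hyperbolic $\nu\in\HPM(f)$ with $\nu\hsim\mu$ lifts, by $(\widehat{\rm C}2)$, to an ergodic measure carried by some irreducible component, which therefore has property~(i), hence equals $\Sigma$ by~(ii); since ergodic measures live on $\Sigma^\#$, this gives $\nu(\widehat\pi(\Sigma^\#))=1$ and produces the required lift. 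The remaining (C1), (C3) are inherited from $\widehat\Sigma$.

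\medskip
\noindent\textbf{Existence of a good component.}
Using Theorem~\ref{t.katok} applied to $\mu$ (the two Lyapunov exponents of $\mu$ lie strictly outside $[-\chi,\chi]$ after shrinking, if necessary, the relevant gap, so Katok's perturbation can be kept $\chi$-hyperbolic; periodic measures are dense in the resulting horseshoe by Corollary~\ref{Cor-HC}), one obtains a $\chi$-hyperbolic horseshoe $\Lambda_0$ with $\Lambda_0\hsim\mu$. Applying $(\widehat{\rm C}9)$ to the transitive $\chi$-hyperbolic compact set $\Lambda_0$ yields a transitive invariant compact set $X_0\subset\widehat\Sigma$ with $\widehat\pi(X_0)=\Lambda_0$. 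Being transitive, $X_0$ lies in a single irreducible component $\Sigma$, and any of its ergodic measures (for instance a periodic one) projects into $\Lambda_0\hsim\mu$; so $\Sigma$ has property~(i).

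\medskip
\noindent\textbf{Uniqueness --- the main obstacle.}
Let $\Sigma_1,\Sigma_2$ both have property~(i), witnessed by $\bar\nu_j\in\Proberg(\Sigma_j)$ with $\nu_j:=\widehat\pi_\ast\bar\nu_j\hsim\mu$; by (C6) each $\nu_j$ is $\chi/3$-hyperbolic and $\nu_1\hsim\nu_2$. I would apply Theorem~\ref{t.katok} to $\nu_1$ and $\nu_2$ to get $\chi$-hyperbolic horseshoes $\Lambda_j\hsim\nu_j$, pick $\chi$-hyperbolic periodic orbits $P_j\subset\Lambda_j$ with $P_1\hsim P_2$ (possible since $\Lambda_1\hsim\Lambda_2$), and enclose them in a $\chi$-hyperbolic horseshoe $\Gamma\supset P_1\cup P_2$ by the Birkhoff--Smale construction with sufficiently high return times to preserve $\chi$-hyperbolicity. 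The goal is then to force the supports of $\bar\nu_1,\bar\nu_2$, the lift $X_\Gamma$ of $\Gamma$ given by $(\widehat{\rm C}9)$, and the (canonical, periodic) lifts of $P_1,P_2$ all into one irreducible component, which yields $\Sigma_1=\Sigma_2$. The mechanism is that $\widehat\pi$ restricted to $\NUH^\#_\chi$ is the itinerary map through the disjoint Markov rectangles (property (P4)): a transverse homoclinic or heteroclinic intersection point between $\chi$-hyperbolic orbits again lies in $\NUH^\#_\chi$, so its canonical itinerary is an admissible bi-infinite path whose past-recurrent symbols belong to the component of one orbit and whose future-recurrent symbols belong to that of the other. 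Running this simultaneously with intersections witnessing $P_1\hsim P_2$, $P_j\hsim\nu_j$ (and with the intersections used to prove $(\widehat{\rm C}9)$ for $\Gamma$) produces graph-paths, in both directions, linking the vertex sets of $\Sigma_1$, the lift of $P_1$, the lift of $\Gamma$, the lift of $P_2$, and $\Sigma_2$.

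I expect the genuinely delicate steps to be: (a) keeping every auxiliary horseshoe and periodic orbit $\chi$-hyperbolic throughout the applications of Theorem~\ref{t.katok} and the Birkhoff--Smale construction, so that $(\widehat{\rm C}9)$, $(\widehat{\rm C}2)$ and (C6) all apply to them; and (b), above all, the bookkeeping that identifies the canonical symbolic lift of a transverse intersection point with the lifts of the two orbits it connects --- i.e.\ verifying that such a point's itinerary eventually enters, and then recurs in, the Markov rectangles visited by the periodic lifts. This last point is where (P4) and the locally finite Bowen property (C5) must be combined with the uniform hyperbolicity of the ambient horseshoes; this is the crux of the argument and the place I would spend most of the effort.
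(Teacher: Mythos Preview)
Your reduction is built on the uniqueness claim (ii), and this claim is \emph{false} under the abstract hypotheses of the proposition. Take any $\chi$-hyperbolic horseshoe $K\subset M$ with symbolic model $\Sigma_K$, and set $\widehat\Sigma:=\Sigma_K\sqcup\Sigma_K'$ (two disjoint copies) with $\widehat\pi$ the natural projection on each. This satisfies (C1), $(\widehat{\rm C}2)$, (C5), (C6), (C7), $(\widehat{\rm C}9)$, yet both irreducible components project onto the same homoclinic class. Your linking argument cannot produce a graph-path between the two copies because there is none. More generally, a measure $\nu$ on $M$ may lift to ergodic measures in several irreducible components; what (C2) requires is not that a single component be the \emph{only} one carrying such lifts, but that a single component carry \emph{some} lift of \emph{every} $\chi$-hyperbolic $\nu\hsim\mu$.

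Two further gaps: your ``mechanism'' paragraph invokes (P4) and the set $\NUH^\#_\chi$, which are features of the specific construction in \cite{Sarig-JAMS} and are \emph{not} among the hypotheses of this proposition. And your $\chi$-hyperbolicity bookkeeping breaks down: $\mu$ is only assumed hyperbolic (not $\chi$-hyperbolic), and measures carried by $\Sigma_j$ are only $\chi/3$-hyperbolic by (C6), so Katok's theorem applied to $\nu_j$ does not give $\chi$-hyperbolic horseshoes, and $(\widehat{\rm C}9)$ does not apply to them.

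The paper's proof proceeds quite differently. It first builds, for each finite collection of $\chi$-hyperbolic periodic orbits homoclinically related to $\mu$, a $\chi$-hyperbolic basic set containing them, lifts it via $(\widehat{\rm C}9)$ to a transitive compact $X_n\subset\widehat\Sigma$, and then uses the \emph{finiteness of lifts} of a fixed periodic orbit (from (C1)) as a pigeonhole to force infinitely many $X_n$ to share a common periodic lift, hence to lie in one irreducible component $\Sigma$. The passage from periodic orbits to general $\chi$-hyperbolic $\nu\hsim\mu$ is then done by periodic approximation inside $\widehat\Sigma$: one lifts $\nu$ somewhere by $(\widehat{\rm C}2)$, approximates a generic point by periodic sequences $\underline{q}^i$, projects to $\chi$-hyperbolic periodic orbits (using (C6) and (C7)), lifts those to $\Sigma$ as $\underline{p}^i$, and finally uses the locally finite Bowen property (C5)/(C8) to control the $\underline{p}^i$ and pass to a limit in $\Sigma^\#$. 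The role of (C5) here is not to identify canonical itineraries, but to transport relative compactness from one preimage sequence to another.
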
}

\begin{proof}
Let us fix $\mu\in \HPM(f)$.
One can assume that the homoclinic class of $\mu$ contains a $\chi$-hyperbolic measure,
since otherwise the conclusion of the theorem holds trivially.
Recall that a basic set is  a compact, invariant, transitive, uniformly hyperbolic, locally maximal set.

\begin{lemma}\label{Lemma-K_n}
Suppose $\cO_1,\cO_2,\ldots,\cO_N$ are homoclinically related $\chi$-hyperbolic periodic orbits. Then there exists a $\chi$-hyperbolic basic set $K$ which contains every $\cO_i$.
\end{lemma}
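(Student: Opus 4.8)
The plan is to obtain $K$ as a basic set produced by a Birkhoff--Smale construction around a transverse heteroclinic cycle through $\cO_1,\dots,\cO_N$, carried out with enough control on the hyperbolicity rates that $\chi$-hyperbolicity is preserved. First I would dispose of a degenerate case: if $N=1$ and $\cO_1$ carries no transverse homoclinic point outside itself, then $\cO_1$ is already a basic set (compact, invariant, transitive, hyperbolic, and locally maximal since on a surface $W^s_{\loc}(p)\pitchfork W^u_{\loc}(p)=\{p\}$ for each $p\in\cO_1$) and it is $\chi$-hyperbolic by hypothesis, so $K=\cO_1$ works. In all other cases -- in particular as soon as $N\ge2$ -- I use that the homoclinic relation is an equivalence relation on $\HPO(f)$: reading indices mod $N$, one has $\cO_k\preceq\cO_{k+1}$ for every $k$, so I may choose transverse intersection points $z_k\in W^u(\cO_k)\pitchfork W^s(\cO_{k+1})$; since distinct periodic orbits are disjoint and no periodic point lies in the stable set of another orbit, each $z_k$ lies on a genuine heteroclinic orbit with $\overline{\cO(z_k)}=\cO(z_k)\cup\cO_k\cup\cO_{k+1}$.

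I would then consider the compact $f$-invariant set
\[
   Y:=\Bigl(\bigcup_{k=1}^{N}\cO_k\Bigr)\ \cup\ \Bigl(\bigcup_{k=1}^{N}\overline{\cO(z_k)}\Bigr),
\]
and verify that it is hyperbolic and, more precisely, $\chi'$-hyperbolic for some $\chi'>\chi$. The $Df$-invariant splitting on $Y$ is given by the tangent lines to $W^u$ and $W^s$ along the periodic and heteroclinic orbits, and it is continuous (a standard fact for finite unions of hyperbolic periodic orbits and transverse heteroclinic orbits connecting them, via the inclination lemma). For the rates, Proposition~\ref{p.chihypset} gives a uniform rate $\kappa_0>\chi$ of contraction along $E^s$ and expansion along $E^u$ on the $\chi$-hyperbolic compact invariant set $\bigcup_k\cO_k$; along each of the finitely many heteroclinic orbits the forward (resp.\ backward) orbit enters a fixed neighbourhood of one of the $\cO_m$ after a uniformly bounded time, so only a bounded transient distortion must be absorbed into the constant, and $Y$ inherits a uniform rate $\kappa>\chi$. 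By Proposition~\ref{p.chihypset} again, $Y$ is $\chi'$-hyperbolic for some $\chi'>\chi$.

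Next I would invoke the classical Birkhoff--Smale construction of a basic set around a heteroclinic cycle (the same argument as in Smale's horseshoe theorem, cf.\ \cite[Theorem~6.5.5]{Katok-Hasselblatt-Book}): for a sufficiently small neighbourhood $V$ of $Y$, the maximal invariant set $\Lambda:=\bigcap_{n\in\Z}f^n(\overline V)$ is a locally maximal hyperbolic set containing $Y$, and its uniform hyperbolicity rate can be taken as close as desired to that of $Y$, hence strictly above $\chi$. By Proposition~\ref{p.chihypset}, $\Lambda$ is $\chi$-hyperbolic, and so is every invariant compact subset of $\Lambda$ (ergodic measures on a subset are ergodic measures on $\Lambda$). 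Finally, by Smale's spectral decomposition \cite{Smale-Diff-Dynam-Syst}, the non-wandering set $\Omega(f|_\Lambda)$ is a finite disjoint union of basic sets $\Lambda^{(1)},\dots,\Lambda^{(m)}$, with no cycle among them in Smale's preorder; writing $\cO_k\subset\Lambda^{(i_k)}$, the heteroclinic orbit $\cO(z_k)\subset\Lambda$ gives $\Lambda^{(i_k)}\preceq\Lambda^{(i_{k+1})}$, and running over the cycle $1\to 2\to\cdots\to N\to 1$ forces all the $\Lambda^{(i_k)}$ to coincide. Then $K:=\Lambda^{(i_1)}$ is a basic set, it is $\chi$-hyperbolic, and it contains every $\cO_i$, which proves the lemma.

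I expect the main obstacle to be the robustness estimate in the last paragraph -- guaranteeing that the uniform hyperbolicity rate of the maximal invariant set $\Lambda$ remains strictly above $\chi$ as the neighbourhood $V$ shrinks to $Y$ -- together with the clean bookkeeping of Smale's spectral decomposition along the heteroclinic cycle; by contrast, the construction of $Y$ and the verification of its hyperbolicity are routine.
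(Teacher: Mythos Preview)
Your argument is correct and takes a genuinely different route from the paper. The paper constructs $K$ explicitly via shadowing: it fixes transverse heteroclinic points $t_{ij}$ for \emph{all} pairs $(i,j)$, forms the compact invariant $\chi'$-hyperbolic set $L=\bigcup_{i,j}\cO(t_{ij})$, truncates to a finite set $L_m$, and defines $K$ as the set of points whose orbits $\delta$-shadow some $\epsilon$-pseudo-orbit in $L_m^{\Z}$. Transitivity is then checked by exhibiting an irreducible finite graph on $L_m$, and $\chi$-hyperbolicity comes from $K\subset\bigcap_n f^n(\overline V)$ with $V$ a $\delta$-neighbourhood of $L$. This hands you $K$ together with a finite Markov coding in one stroke.

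Your approach---take the maximal invariant set $\Lambda$ in a small neighbourhood of the cycle $Y$, then select the relevant piece of the spectral decomposition of $\Omega(f|_\Lambda)$---is more abstract but equally valid. One point deserves a line of justification: the assertion ``with no cycle among them in Smale's preorder'' is not part of the standard statement of the spectral decomposition (Axiom~A systems can have cycles). What makes it true here is that your heteroclinic orbits $\cO(z_k)$ lie \emph{inside} the locally maximal hyperbolic set $\Lambda$; a cycle among pieces via orbits in $\Lambda$ can be closed up by the shadowing lemma to produce periodic orbits in $\Omega(f|_\Lambda)$ accumulating on every piece of the cycle, and since the pieces are relatively open in $\Omega(f|_\Lambda)$ this forces them to coincide. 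With that remark added, your proof goes through; the $\chi$-hyperbolicity of the chosen basic piece follows from that of $\Lambda$ since every ergodic measure on the piece is already an ergodic measure on $\Lambda$.
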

\begin{proof}
Since {the orbits} $\cO_i$ are homoclinically related, for every $1\leq i,j\leq N$ there are $t_{ij}\in M$, $x_k\in\cO_k$, $0\leq a_{ij}<|\cO_j|$ such that
$d(f^{-n}(t_{ij}), f^{-n}(x_i))\xrightarrow[n\to\infty]{}0$, $d(f^{n}(t_{ij}), {f^{n+a_{ij}}(x_j)})\xrightarrow[n\to\infty]{}0$ exponentially fast.
 When $i=j$, let $t_{ii}:=x_i$.

 Let
 $
 L:=\bigcup_{i,j=1}^N \cO(t_{ij}).
 $
This set is compact, invariant, and uniformly $\chi'$-hyperbolic for some $\chi'>\chi$.
By the shadowing lemma and expansivity, there are $\epsilon,\delta>0$ so small that:
\begin{enumerate}[(a)]
\item Every $\epsilon$-pseudo-orbit in $L^\Z$ is $\delta$-shadowed by at least one orbit \cite[Theorem 18.1.2]{Katok-Hasselblatt-Book};
\item Every $\epsilon$-pseudo-orbit in $L^\Z$ is $2\delta$-shadowed by at most one orbit, see \cite[Theorem 18.1.3]{Katok-Hasselblatt-Book} (in particular the orbit in (a) is unique);
\item Let $U$ be a $\delta$-neighborhood of $L$, then $\bigcap_{n\in\Z}f^n(U)$ is  uniformly $\chi$-hyper\-bolic \cite[Prop. 6.4.6 and its proof]{Katok-Hasselblatt-Book}.
\end{enumerate}

Let $L_m:=\bigcup_{i,j=1}^N\{f^k(t_{ij}):-m\leq k\leq m{+a_{ij}}-1\}$. Choosing $m$ large enough guarantees that $L_m$  contains $\cO_1,\ldots,\cO_N$, $d(f^m(t_{ij}),f^{m{+a_{ij}}}(t_{jj}))<\epsilon/2$, and
$d(f^{-m}(t_{ij}),f^{-m}(t_{ii}))<\epsilon/2$.  Let
$$
K:=\{x:\text{the orbit of $x$ is $\delta$-shadowed by an $\epsilon$-pseudo-orbit in $L_m^\Z$}\}.
$$
This set contains $\cO_1,\ldots,\cO_N$. It is also invariant, uniformly $\chi$-hyperbolic, locally maximal, and (since $L_m$ is finite) closed.

We claim that $K$ is transitive. Let $\Sigma$ be the finite state Markov shift associated with the graph with set of vertices  $L_m$ and edges
$\xi\to\eta$ when $d(f(\xi),\eta)<\epsilon$. Let $\pi:\Sigma\to K$ denote the map which sends a pseudo-orbit to the unique orbit it shadows.
The uniqueness of the shadowed orbit implies that $\pi\circ\sigma=f\circ\pi$ and that  $\pi$ is continuous, see e.g. \cite[Lemma 3.13]{Bowen-LNM}. Thus to show that $K$ is transitive it is enough to show that $\Sigma$ is topologically transitive, or equivalently that $\Sigma$ is irreducible. Any two vertices in  $V_{ii}:=\{f^k(t_{ii}):k\in\Z\}$ can be connected by a path, because $t_{ii}$ is periodic.
The paths $f^{-m}(t_{ii})\to f^{-m+1}(t_{ij})\to f^{-m+2}(t_{ij})\to \cdots\to f^{m-1}(t_{ij})\to f^{m+a_{ij}}(t_{jj})$ are admissible by choice of $m$. So for every $\xi\in V_{ij}:=\{f^k(t_{ij}):-m\leq k\leq m{+a_{ij}}-1\}$ there is a path which starts in $V_{ii}$, passes through $\xi$, and terminates in $V_{jj}$. This implies that any two $\xi,\eta\in L_m=\bigcup_{i,j} V_{ij}$ can be connected by a path, whence  $\Sigma$ is transitive.
\end{proof}

\begin{lemma}\label{l.lift-periodic}
Let $\{\cO_i\}$ be {the set of all} $\chi$-hyperbolic periodic orbits which are homoclinically related to $\mu$. Then there is
 an irreducible component $\Sigma\subset \wh \Sigma$ such that every $\cO_i$ lifts to a periodic orbit in $\Sigma$.
\end{lemma}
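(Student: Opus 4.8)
The plan is to enumerate the (countably many) orbits $\cO_1,\cO_2,\dots$ and to produce a \emph{single} irreducible component of $\widehat\Sigma$ containing a periodic lift of every $\cO_i$, by combining Lemma~\ref{Lemma-K_n}, property~($\widehat{\mathrm C}$9), and the finite-to-one property~(C1). Since $\hsim$ is an equivalence relation on hyperbolic measures (Proposition~\ref{Prop-Equiv-Rel}) and every $\cO_i\hsim\mu$, the orbits $\cO_1,\dots,\cO_N$ are pairwise homoclinically related and $\chi$-hyperbolic, so Lemma~\ref{Lemma-K_n} gives a $\chi$-hyperbolic basic set $K_N\supseteq\cO_1\cup\dots\cup\cO_N$. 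Being a transitive $\chi$-hyperbolic compact set, $K_N$ lifts by~($\widehat{\mathrm C}$9) to a transitive invariant compact set $X_N\subseteq\widehat\Sigma$ with $\widehat\pi(X_N)=K_N$. Here I would record two soft facts: a compact $\sigma$-invariant subset of $\widehat\Sigma$ uses only finitely many symbols (the map $\underline x\mapsto x_0$ is locally constant, hence has finite, $\sigma$-invariant image on a compact set); consequently every point of $X_N$ repeats some symbol infinitely often both forward and backward, so $X_N\subseteq\widehat\Sigma^\#$, and, by transitivity, all symbols used by $X_N$ lie on loops and communicate pairwise in the underlying graph, so $X_N$ is contained in a single irreducible component $\Sigma_N$ of $\widehat\Sigma$.

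Next — and this is the key step — I would show that $\Sigma_N$ contains a periodic orbit projecting onto $\cO_i$ for each $i\le N$. Fix $p_i\in\cO_i$ and set $q:=\operatorname{Card}(\cO_i)$. Since $\cO_i\subseteq K_N=\widehat\pi(X_N)$, there is $\underline z\in X_N\subseteq\widehat\Sigma^\#$ with $\widehat\pi(\underline z)=p_i$. Then $\widehat\pi(\sigma^{kq}\underline z)=f^{kq}(p_i)=p_i$ for all $k\in\Z$, so the sequences $\sigma^{kq}\underline z$ all belong to $\widehat\pi^{-1}(p_i)\cap\widehat\Sigma^\#$, which is \emph{finite} by~(C1). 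Hence $\sigma^{k_1q}\underline z=\sigma^{k_2q}\underline z$ for some $k_1\neq k_2$, so $\underline z$ is $\sigma$-periodic, and its orbit $\cO_\sigma(\underline z)\subseteq X_N\subseteq\Sigma_N$ is a periodic lift of $\cO_i$.

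Finally I would run a pigeonhole argument. A periodic sequence lies in $\widehat\Sigma^\#$, so by~(C1) the point $p_1$ has only finitely many lifts in $\widehat\Sigma^\#$; consequently only finitely many periodic orbits of $\widehat\Sigma$ project onto $\cO_1$, and each is contained in a unique irreducible component, say among $\Theta_1,\dots,\Theta_m$. By the key step each $\Sigma_N$ contains such a periodic orbit, so $\Sigma_N\in\{\Theta_1,\dots,\Theta_m\}$ for all $N$; hence there is a component $\Sigma$ with $\Sigma_N=\Sigma$ for infinitely many $N$. Given any $i$, choosing such an $N\geq i$ and reapplying the key step shows that $\Sigma$ contains a periodic lift of $\cO_i$, which proves the lemma.

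The main obstacle is exactly the non-canonicity of $\widehat\pi$: a priori the (possibly infinitely many) lifts of a point, or the lifts of the various $\cO_i$, could be scattered among arbitrarily many irreducible components, and nothing in the raw construction of~\cite{Sarig-JAMS} prevents this. What rescues the argument is, on one hand, property~($\widehat{\mathrm C}$9) — forcing the transitive lift of a \emph{uniformly} hyperbolic basic set into one component — and, on the other, the finite-to-one property~(C1), which simultaneously forces lifts of periodic points to be periodic and bounds their number, making the pigeonhole step work. All the substantial work (constructing $K_N$ by shadowing, and constructing the transitive lift $X_N$) has already been done in Lemma~\ref{Lemma-K_n} and Proposition~\ref{p.uniform}.
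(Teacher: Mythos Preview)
Your proof is correct and follows essentially the same route as the paper: build the $\chi$-hyperbolic basic sets $K_N$ via Lemma~\ref{Lemma-K_n}, lift them to transitive compact sets $X_N\subset\widehat\Sigma^\#$ via~($\widehat{\mathrm C}$9), use~(C1) to force lifts of periodic points to be periodic, and then pigeonhole on the finitely many lifts of $\cO_1$ to find a single irreducible component that works for all $N$. The only cosmetic difference is that the paper pigeonholes on the periodic lifts $\underline w^1$ themselves rather than on the components containing them.
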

\begin{proof}
If $\#\{\cO_i\}\leq 1$ the lemma is trivial, so assume that $\#\{\cO_i\}\geq 2$. In this case $\cO_1$ has a transverse homoclinic intersection, and there are infinitely many $\chi$-hyperbolic periodic orbits which are homoclinic to $\mu$.  By \cite[Prop. 1.1.4]{Katok-Hasselblatt-Book}), $\{\cO_i\}$  is countable. Let
$\cO_1,\cO_2,\dots$ be an enumeration.

For each $n\geq 1$, there exists a uniformly $\chi$-hyperbolic basic set $K_n$ which contains $\cO_1,\dots,\cO_n$
(Lemma ~\ref{Lemma-K_n}).
By ($\widehat{\rm C}$5), there exists an invariant compact transitive set $X_n\subset \wh \Sigma$
which lifts $K_n$. Since $X_n$ is compact and invariant, there is a finite set of vertices $A$ such that $X_n\subset A^\Z$.
So $X_n\subset \wh \Sigma^\#$.

Let us consider a periodic point $x^i\in \cO_i\subset \NUH_\chi^\#$,
with period $\tau_i$. Let $\underline v^i$ be a lift of $x^i$ in $X_n\subset\wh\Sigma^\#$. For every $k$, $\sigma^{k\tau_i}(\un{v}^i)\in \Sigma^\#$.
Property (C1) says that $x^i$ has only finitely many $\wh{\pi}$-lifts in $\wh{\Sigma}^\#$. Hence $\underline v^i\in X_n$
is periodic. Thus, $\cO_1,\ldots,\cO_n$ lift to periodic orbits in $X_n$.

Each of the sets $X_1,X_2,\dots$ contains some lift of the orbit $\cO_1$. Since $\cO_1$ has only finitely many
lifts in $\wh \Sigma^\#$, there exists an infinite subsequence $X_{n_i}$ of the previous sets which contain the same periodic lift $\un{w}^1$ of $\cO_1$. Let $\Sigma$ denote the irreducible component of $\wh{\Sigma}$ which contains $\un{w}^1$. This is the  maximal closed, invariant, and  transitive set containing this element.
Necessarily,  $X_{n_i}\subset \Sigma$ for all $i$. So $\Sigma$ contains periodic lifts of every $\cO_i$, $i\geq1$.
\end{proof}

\begin{lemma}
Every $\chi$-hyperbolic ergodic invariant measure $\nu$ of saddle type that is homoclinically related to $\mu$ lifts to an ergodic shift invariant measure on the irreducible $\Sigma$ given by Lemma \ref{l.lift-periodic}.
\end{lemma}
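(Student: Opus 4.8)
Here is a proof plan; one should proceed in two cases.

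\medskip

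\noindent\textit{Reduction to the atomless case.} If $\nu$ is carried by a periodic orbit $\cO$, then $\cO$ is a $\chi$-hyperbolic periodic orbit of saddle type which is homoclinically related to $\mu$, hence $\cO$ is one of the orbits $\cO_i$ in the exhaustion used in the proof of Lemma~\ref{l.lift-periodic}. That lemma produces a periodic lift of $\cO_i$ inside the distinguished component $\Sigma$, and the periodic measure it carries is the desired ergodic lift of $\nu$. So from now on assume $\nu$ is atomless.

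\medskip

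\noindent\textit{Lifting and localizing.} By ($\widehat{C}$2) there is an ergodic measure $\bar\nu$ on $\widehat\Sigma$ with $\widehat\pi_*\bar\nu=\nu$. Since $\bar\nu$ is ergodic and shift-invariant, the symbols carrying positive $\bar\nu$-mass are, by Poincar\'e recurrence and ergodicity, pairwise joined by admissible paths in both time directions, so they all lie in a single irreducible component $\Sigma'\subset\widehat\Sigma$, and $\bar\nu(\Sigma')=1$. It therefore suffices to prove $\Sigma'=\Sigma$; since irreducible components are maximal, this amounts to producing a symbol of $\Sigma'$ that reaches, and is reached from, a symbol of $\Sigma$.

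\medskip

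\noindent\textit{Bridging the two components through a common horseshoe.} Apply Katok's horseshoe theorem (Theorem~\ref{t.katok}) to $\nu$: for $\varepsilon$ small enough and a prescribed weak-$*$ neighbourhood of $\nu$ it yields a horseshoe $\Lambda$ whose ergodic measures have Lyapunov exponents within $\varepsilon$ of those of $\nu$ — so $\Lambda$ is $\chi$-hyperbolic — and with $\Lambda\hsim\nu$, hence $\Lambda\hsim\mu$ by transitivity of the homoclinic relation (Proposition~\ref{Prop-Equiv-Rel}); moreover all ergodic measures on the basic set $\Lambda$ are mutually homoclinically related, so every periodic orbit of $\Lambda$ is one of the $\cO_i$. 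By ($\widehat{C}$9), $\Lambda$ lifts to a transitive compact invariant set $X\subset\widehat\Sigma$ with $\widehat\pi(X)=\Lambda$; $X$ uses finitely many symbols, so $X\subset\widehat\Sigma^\#$ and $X$ lies in a single irreducible component $\Sigma_\Lambda$. As in the proof of Lemma~\ref{l.lift-periodic}, each periodic orbit $\cO_{i_0}\subset\Lambda$ lifts to a periodic orbit inside $X$ (by (C1), a $\widehat\Sigma^\#$-lift of a periodic point is periodic), so $\Sigma_\Lambda$ carries a periodic lift of $\cO_{i_0}$; re-running the exhaustion/diagonal argument of Lemma~\ref{l.lift-periodic} — with $\cO_{i_0}$, $\cO_1$ and $\Lambda$ placed in one $\chi$-hyperbolic basic set via Lemma~\ref{Lemma-K_n}, lifted transitively by ($\widehat{C}$9), and keeping the distinguished periodic lift of $\cO_1$ fixed — identifies $\Sigma_\Lambda$ with $\Sigma$. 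Finally one closes the loop: pick a periodic orbit $\cO_{i_0}\subset\Lambda$; then $\cO_{i_0}\hsim\nu$, so, arguing as in the proof of Proposition~\ref{Prop-Equiv-Rel} through the Inclination Lemma, there is a set of positive $\nu$-measure — which we may take among the $\bar\nu$-generic points whose $\widehat\pi$-lifts lie over the symbols of $\Sigma'$ — on which $W^u(x)\pitchfork W^s(\cO_{i_0})\ne\emptyset$ and $W^s(x)\pitchfork W^u(\cO_{i_0})\ne\emptyset$. The local product structure of the Markov partition underlying $\widehat\Sigma$ — the same gluing mechanism used in the proof of Proposition~\ref{p-proj-measure} — converts these transverse intersections into admissible paths of $\widehat\Sigma$ linking a symbol of $\Sigma'$ to a symbol of the periodic lift of $\cO_{i_0}$ in $X\subset\Sigma_\Lambda$, and conversely; combined with the recurrence of the lift of $x$ in $\widehat\Sigma^\#$, this shows $\Sigma'$ and $\Sigma_\Lambda=\Sigma$ share a symbol, hence coincide. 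Thus $\bar\nu$ is carried by $\Sigma$, as claimed.

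\medskip

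\noindent\textit{Main obstacle.} The delicate point is the last one: turning a homoclinic relation between $\nu$ and a periodic orbit living in a lifted horseshoe into genuine vertex-connectivity inside the abstract, highly non-injective shift $\widehat\Sigma$. This cannot be done with the abstract properties (C1)--($\widehat{C}$9) alone; it uses the locally finite Bowen property (C5) together with the precise bracket/local-product structure of Sarig's Markov partition. Secondary technical points are the bookkeeping of $\chi$ versus $\chi/3$ hyperbolicity along the way, and making the finitely many choices of $\widehat\Sigma^\#$-lifts of the $\cO_i$ consistently, so that the component built in Lemma~\ref{l.lift-periodic} is exactly the one surfacing here.
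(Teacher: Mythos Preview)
Your plan has a real gap, and you correctly put your finger on it in the ``Main obstacle'' paragraph. The step where you try to promote the homoclinic relation $\nu\hsim\cO_{i_0}$ in $M$ to vertex-connectivity in $\widehat\Sigma$ between $\Sigma'$ and $\Sigma_\Lambda$ is not available: the gluing in Proposition~\ref{p-proj-measure} goes the other way (connectivity in $\widehat\Sigma$ $\Rightarrow$ homoclinic relation in $M$), and reversing it is precisely the content one is trying to establish. There is also a second, earlier gap: your identification $\Sigma_\Lambda=\Sigma$ is not justified. Lemma~\ref{l.lift-periodic} produces \emph{some} lift of each $\cO_i$ inside $\Sigma$, but ($\widehat{\rm C}$9) only hands you \emph{some} transitive lift $X$ of your enlarged basic set, and there is no mechanism to force this $X$ to contain the distinguished lift $\un{w}^1$ of $\cO_1$ that determined $\Sigma$. ``Keeping the distinguished periodic lift of $\cO_1$ fixed'' is exactly what you cannot do.

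The paper's argument avoids both problems by \emph{not} trying to show $\Sigma'=\Sigma$; it builds a possibly different lift of $\nu$ directly on $\Sigma$. Take a $\bar\nu$-generic recurrent point $\un v\in\widehat\Sigma^\#$ and close it up to periodic sequences $\un q^i$ (using $v_{-m_i}=v_{n_i}=v_0$). Their projections $x^i=\widehat\pi(\un q^i)$ are hyperbolic periodic points homoclinically related to $\mu$ by (C6), and $\chi$-hyperbolic for large $i$ by (C7), hence by Lemma~\ref{l.lift-periodic} they have periodic lifts $\un p^i$ in $\Sigma$. The sequence $(\un q^i)$ is relatively compact in $\widehat\Sigma$ because $q^i_0=v_0$; the locally finite Bowen property (C5) (via (C8)) transfers this to $(\un p^i)$. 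Any limit $\un p\in\Sigma$ satisfies $\widehat\pi(\un p)=\widehat\pi(\un v)$, and (C5) again forces $\un p\in\Sigma^\#$. So $\nu$-a.e.\ point has a lift in $\Sigma^\#$, and averaging over the finitely many such lifts gives an invariant measure on $\Sigma$ projecting to $\nu$; an ergodic component is the desired lift. The crucial idea you are missing is this use of (C5)/(C8) to pass compactness from one family of lifts to another, which replaces your attempted ``bridging'' entirely.
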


\begin{proof}
Let $\nu$ be as above.
By ($\rm\widehat{C}$2), there exists an ergodic measure $\bar \nu$ supported on $\wh \Sigma$
which lifts $\nu$.
Suppose  $\underline v\in \wh \Sigma^\#$ is recurrent and generic for $\bar \nu$, i.e.  $(1/n)\sum_{i=0}^{n-1}\delta_{\sigma^i(\un{v})}\xrightarrow[n\to\infty]{w^\ast}\bar\nu$. We set $x:=\wh \pi(\underline v)$.

By recurrence, there exist sequences $m_i,n_i\to +\infty$ such that $v_{n_i}=v_{-m_i}=v_0$.
Let $\un{q}^i$ denote the periodic sequence with period $m_i+n_i$ such that  $(q^i_k)_{k=-m_i}^{n_i-1}=(v_{-m_i},\ldots,v_0,\ldots, v_{n_i-1})$.
Notice that
 $\underline q^i$ is in $\wh \Sigma^\#$, in the same irreducible component as $\bar \nu$.
Moreover, the invariant probability measure supported by the orbit of $\underline q^i$
converges as $i\to \infty$ to $\bar \nu$.

Let $x^i=\wh \pi(\underline q^i)$.
From (C6),
$x^i$ is a hyperbolic periodic point whose orbit is homoclinically related to $\mu$.
Since the invariant probability measure supported by the orbit of $\underline q^i$
converges to $\bar \nu$, (C7) shows that the corresponding measures are also $\chi$-hyperbolic for all $i$ large enough.
Therefore Lemma~\ref{l.lift-periodic} gives an irreducible component $\Sigma$, depending only on $\mu$,
containing some periodic lifts $p^i$ of all $x^i$ with $i$ large enough.

The sequence $(\underline q^i)$
is relatively compact in $\Sigma$, because $q^i_0=v_0$ for all $i\geq 0$ and ${\Sigma}$ is associated to a graph all of whose vertices have finite degrees ($\wh{\Sigma}$ is locally compact).
Since $\wh{\pi}(\un{q}^i)=\wh{\pi}(\un{p}^i)=x^i$ and $\un{p}^i,\un{q}^i\in\Sigma^\#$, we have by (C5) and Proposition~\ref{p.properness} that $\{\un{p}^i\}$ is also relatively compact in $\Sigma$.
Let $\un{p}\in\Sigma$ be the limit of some convergent sub-sequence $\{\un{p}^{i_k}\}$.
By continuity of the projection, $\wh \pi(\un{p})=x$.

We claim that $\un{p}\in\Sigma^\#$. By construction, $q^i_{n_j}=q^i_{-m_j}=v_0$ for $j=1,\ldots,i$. By the Bowen property (C5),
$p^i_{n_j},p^i_{-m_j}\in\{a: a\sim v_0\}$ for $j=1,\ldots,i$. This property is inherited by all limits of subsequences of $\{\un{p}^i\}$, and so
$p_k\in \{a: a\sim v_0\}$ for infinitely many positive and negative $k$. Since  $\{a: a\sim v_0\}$ is finite, $\un{p}\in\Sigma^\#$.

We just showed that any point $x\in E:=\wh{\pi}\{\text{generic points for $\bar\nu$}\}$ lifts to $\Sigma^\#$. Since $\nu(E^c)=0$, $\nu$-almost every $x$ has a lift to $\Sigma^\#$. The number of such lifts is finite by (C1). Now set
$$\bar \mu(E)=\int_M\left( \frac{1}{|\pi^{-1}(x)\cap \Sigma^\#|}\sum_{\underline v\in \pi^{-1}(x)\cap \Sigma^\#} {\mathbf 1}_E(\underline v)\right)d\mu(x).$$
This is an invariant probability measure on  $\Sigma^\#$, and almost every ergodic component of $\bar\mu$ is an ergodic lift of $\nu$ to $\Sigma^\#$ (see e.g. \cite[Proposition 13.2]{Sarig-JAMS}).
\end{proof}

Property (C2.b) is also true, because for every ergodic measure $\bar \nu$ supported on the irreducible component $\Sigma$ given by Lemma~\ref{l.lift-periodic}, the projection $\nu:=\widehat \pi_*(\bar \nu)$ is obviously ergodic and homoclinically related to $\mu$ by property (C6). Thus $\Sigma$ satisfies property (C2).

The proof of Proposition~\ref{SymbolicDynamics} is now complete.
\end{proof}

We deduce the following stronger version of Theorem~\ref{Theorem-Symbolic-Dynamics-C^r}.

\begin{thm}\label{Theorem-Symbolic-Dynamics-C^r2}
Let $r>1$ and $f$ be a $C^r$ diffeomorphism on a closed surface $M$. Suppose $\mu$ be an ergodic hyperbolic measure for $f$.
For every $\chi>0$ there are  a {locally compact} countable state Markov shift $\Sigma$ and {a H\"older-continuous map} $\pi:\Sigma\to M$ such that  ${\pi}\circ\sigma=f\circ{\pi}$ and which satisfies
 properties (C0)-(C8), as well as
\begin{enumerate}[{\it (f)}]
\item[\rm (C9)]
For any transitive compact $\chi$-hyperbolic set $K\subset M$ that is \emph{homoclinically related to $\mu$},
there exists a transitive invariant compact set $X\subset \Sigma$
such that $\pi(X)=K$.
\end{enumerate}
\end{thm}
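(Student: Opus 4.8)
The plan is to obtain $\Sigma$ as a suitably chosen irreducible component of the global coding of Theorem~\ref{thm.sarig}. First apply Theorem~\ref{thm.sarig} together with Proposition~\ref{p.uniform} to produce, for the given $\chi$, a locally compact countable state Markov shift $\widehat\Sigma$ and a H\"older map $\widehat\pi\colon\widehat\Sigma\to M$ with $\widehat\pi\circ\sigma=f\circ\widehat\pi$ satisfying (C1), ($\widehat C$2), (C5) and ($\widehat C$9); Propositions~\ref{p-proj-measure}, \ref{p-contExp} and \ref{p.properness} then supply (C6), (C7) and (C8) as well. Proposition~\ref{SymbolicDynamics} applied to $\mu$ yields an irreducible component $\Sigma\subset\widehat\Sigma$ satisfying (C2); set $\pi:=\widehat\pi|_\Sigma$. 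Property (C0) holds because $\Sigma$ is an irreducible component, and — besides local compactness, which is clearly inherited — properties (C1), (C3), (C4), (C5), (C6), (C7), (C8) are ``local'' statements: they concern individual points of $\pi(\Sigma)\subset\widehat\pi(\widehat\Sigma)$, the fibres of $\pi$, H\"older continuity of $\widehat\pi$ and of $\underline x\mapsto E^{s/u}$, and the restriction to the alphabet of $\Sigma$ of the affiliation relation $\sim$ of (C5); hence they pass verbatim to the subsystem $\Sigma\subset\widehat\Sigma$. Only (C9) requires work.

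So fix a transitive compact $\chi$-hyperbolic set $K\subset M$ with $K\hsim\mu$. Since $K$ is $\chi$-hyperbolic, Proposition~\ref{p.chihypset} provides uniform angle and contraction bounds on $K$, whence (as in the argument for (P2)) $q_\varepsilon$ is bounded below on $K$ and every point of $K$ meets the defining conditions (a)--(c) of $\NUH^\#_\chi$; thus $K\subset\NUH^\#_\chi$, and by (P3) the canonical lifts $\{\underline R(x):x\in K\}$ use only finitely many symbols. Property ($\widehat C$9) then furnishes a transitive compact $\widehat X\subset\widehat\Sigma$ with $\widehat\pi(\widehat X)=K$, automatically $\widehat X\subset\widehat\Sigma^\#$; moreover, by inspecting the minimal-transitive-lift construction in the proof of Proposition~\ref{p.uniform}, $\widehat X$ may be taken to contain a periodic lift $\underline p$ of some periodic point $p\in K$ (such a $p$ exists since $K$ is transitive hyperbolic; its orbit $\cO(p)$ is $\chi$-hyperbolic, and, $K$ being transitive and homoclinically related to $\mu$, $\cO(p)\hsim\mu$). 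It now suffices to show $\widehat X\subset\Sigma$, for then $X:=\widehat X$ proves (C9).

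To route $\widehat X$ into $\Sigma$, recall that in the proof of Lemma~\ref{l.lift-periodic} the component $\Sigma$ is identified as the one containing a fixed periodic lift $\underline w^1$ of a $\chi$-hyperbolic periodic orbit $\cO_1\hsim\mu$. As $\cO(p)$ and $\cO_1$ are both homoclinically related to $\mu$, they are homoclinically related to one another, so there are transverse heteroclinic orbits from $\cO(p)$ to $\cO_1$ and from $\cO_1$ to $\cO(p)$. Each such orbit has $\chi$-hyperbolic closure contained in $\NUH^\#_\chi$, hence a lift in $\widehat\Sigma^\#$ using finitely many symbols; since its past (resp. future) iterates converge exponentially to $\cO(p)$ (resp. $\cO_1$) and symbols are discrete, this lift runs through a full period of the symbol pattern of a periodic lift of $\cO(p)$ far in the past and of $\cO_1$ far in the future, giving a path in the graph of $\widehat\Sigma$ from the orbit of a periodic lift of $\cO(p)$ to the orbit of a periodic lift of $\cO_1$; the reverse heteroclinic gives a path back. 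Using (C1), (C5) and the compactness property (C8) to keep the various periodic lifts that occur inside a single irreducible component, one concludes that $\underline p$ and $\underline w^1$ lie in the same irreducible component, namely $\Sigma$. Since $\widehat X$ is transitive and contains $\underline p$, this forces $\widehat X\subset\Sigma$, completing (C9) and the theorem.

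The real difficulty is concentrated in this last step: among the (generally infinitely many) irreducible components of $\widehat\Sigma$, one must show that the lift of $K$ lands precisely in the component $\Sigma$ singled out by Proposition~\ref{SymbolicDynamics}. All the needed tools — the finiteness in (C1), the Bowen property (C5), the compactness property (C8), Lemma~\ref{l.lift-periodic}, and the shadowing and minimal-transitive-lift arguments of Proposition~\ref{p.uniform} — are already available, so the proof is largely a matter of careful assembly; the one genuinely technical point is that the heteroclinic connections between $\cO(p)$ and $\cO_1$ do lift, in finitely many symbols, to actual paths in the graph of $\widehat\Sigma$ joining the appropriate periodic lifts.
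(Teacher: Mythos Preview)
Your setup and the verification of (C0)--(C8) are correct and match the paper. The gap is in (C9). You obtain a transitive compact lift $\widehat X\subset\widehat\Sigma$ of $K$ via $(\widehat{\mathrm C}9)$ and a periodic lift $\underline p\in\widehat X$ of some $p\in K$, and then try to force $\underline p\in\Sigma$ by lifting heteroclinic orbits between $\cO(p)$ and the distinguished orbit $\cO_1$. But a periodic point of $M$ can have several periodic lifts in $\widehat\Sigma^\#$, possibly in different irreducible components; Lemma~\ref{l.lift-periodic} only guarantees that \emph{some} periodic lift of $\cO(p)$ lies in $\Sigma$, not that your particular $\underline p$ does. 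Likewise, even if a heteroclinic orbit lifts to a path in the graph of $\widehat\Sigma$, the periodic patterns it matches in the far past and future need not be those of $\underline p$ and $\underline w^1$ --- they could be other lifts of $p$ and of $\cO_1$. Your sentence ``using (C1), (C5) and (C8) to keep the various periodic lifts inside a single irreducible component'' is exactly the point that needs a proof, and those properties alone do not provide one. (A minor side issue: a transitive compact hyperbolic set need not itself contain periodic points unless it is locally maximal, so the existence of $p\in K$ also requires the enlargement step you skipped.)

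The paper bypasses this component-matching problem by a measure-theoretic route: enlarge $K$ to a transitive $\chi$-hyperbolic $K'\supset K$ supporting a fully supported ergodic $\chi$-hyperbolic measure $\nu\hsim\mu$; then (C2.a) lifts $\nu$ \emph{directly into $\Sigma$} as some $\widehat\nu$. To see that $\supp\widehat\nu$ is compact, use $(\widehat{\mathrm C}9)$ to get a compact lift $Y\subset\widehat\Sigma$ of $K'$ over a finite alphabet $A$, and then the locally finite Bowen property (C5) forces every $\widehat\Sigma^\#$-preimage of $K'$ --- in particular $\widehat\nu$-a.e.\ point --- to use only symbols affiliated to $A$, hence finitely many. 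Thus $\supp\widehat\nu\subset\Sigma$ is compact and projects onto $K'\supset K$, and one concludes with the minimal-transitive-lift argument of Proposition~\ref{p.uniform}. The key trick you are missing is to use (C2.a) to land in $\Sigma$ from the start, rather than to try to steer a pre-existing lift $\widehat X$ into $\Sigma$ after the fact.
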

\begin{proof}
Let us consider a locally compact countable state Markov shift $\wh{\Sigma}$ and a map
$\widehat \pi:\wh\Sigma\to M$ given by Theorem~\ref{thm.sarig}. It satisfies properties (C0), (C1), ($\rm\wh C 2$), (C3), (C4), (C5) and, by Propositions \ref{p-proj-measure}, \ref{p-contExp}, \ref{p.properness}, \ref{p.uniform}, also  properties  (C6), (C7), (C8),($\widehat{\rm C}$9).

Let us consider an irreducible component $\Si\subset \widehat \Si$ given by Proposition~\ref{SymbolicDynamics}
and the restriction $\pi$ of $\widehat \pi$ to $\Si$.
In particular (C2) holds for $\Sigma$ and $\pi$.
Item~(C0) of Theorem \ref{Theorem-Symbolic-Dynamics-C^r} and the local compactness hold because $\Sigma$ is an irreducible component of the locally compact Markov shift $\wh{\Sigma}$. Items (C4)-(C8) and the H\"older-continuity of $\pi$ are immediate, since  $\pi$ is the restriction of $\wh{\pi}$.

It  remains to prove (C9).
If $K$ is a transitive $\chi$-hyperbolic compact set homoclinically related to $\mu$,
one considers as in the proof of Proposition \ref{p.uniform} an invariant ergodic measure $\nu$ whose support is a transitive invariant $\chi$-hyperbolic set $K'\supseteq K$. Using the continuity of  $W^u_{loc}(\cdot)$, $W^s_{loc}(\cdot)$ on $K'$ \cite{Hirsch-Pugh-Stable-Manifold-Thm}, it is not difficult to verify that $\nu\hsim\mu$. So $\nu$ lifts to some $\widehat\nu$ on $\Sigma$. We claim that $\supp\widehat\nu$ is compact. It will follow that $K\subset\pi(\supp\widehat\nu)$ and we will conclude as in the proof of Proposition~\ref{p.uniform}.

We turn to the claim. ($\widehat{\rm C}$9) gives a compact set $Y\subset\widehat\Sigma$ such that $\widehat\pi(Y)=\supp\nu$. In particular $Y\subset A^\Z$ for some finite set $A$ and  $Y\subset\widehat\Sigma^\#$. The Bowen property (C5) for $\wh \pi|_{\wh\Sigma^\#}$
implies that $(\widehat\pi|_{\widehat\Sigma^\#})^{-1}(\supp\nu)\subset B^\Z$ where $B:=\{b:\exists a\in A$ $b\sim a\}$. The set $B$ is finite as the Bowen relation is locally finite. Since $\pi$ is a restriction of $\widehat\pi$, $\nu$-a.e. point belongs to $\Sigma\cap B^\Z$. Therefore, $\supp\widehat\nu\subset\Sigma\cap B^\Z$ is compact as claimed.
\end{proof}

\subsection{Properties of equilibrium measures}\label{equilibrium1}
Let $f$ be a $C^r$ diffeomorphism, $r>1$, of a closed surface $M$. Let $\phi:M\to\RR\cup\{-\infty\}$ be an admissible potential.
We now prove a slightly strengthened version of Corollary~\ref{c-local-uniqueness}. To be precise,  given an invariant measure $\nu$, let:
 $
    P_\phi(f,\nu):=h(f,\nu)+\int \phi\, d\nu.
 $
We consider an ergodic hyperbolic measure $\mu$ which is an equilibrium for $\phi$ \emph{in its homoclinic class}, i.e.,
 \begin{equation}\label{eq-homo-eq}
    \forall\nu\in\Prob(f)\quad \nu\hsim\mu\implies P_\phi(f,\nu) \leq P_\phi(f,\mu).
 \end{equation}
This is weaker than being an equilibrium measure.

We choose $\chi>0$ small enough so that $\mu$ is
$\chi$-hyperbolic. Theorem \ref{Theorem-Symbolic-Dynamics-C^r} applied to
the set of measures homoclinically related to $\mu$ and the threshold $\chi$ provides $\Sigma$ transitive and $\pi:\Sigma\to M$ with an invariant measure $\widehat \mu$
for $\Sigma$ such that $\pi_*(\widehat \mu)=\mu$. Theorem~\ref{t.katok} gives some hyperbolic periodic orbit $\cO$ related to $\mu$ that is $\chi$-hyperbolic. It has a periodic lift $\widehat{\cO}\subset\Sigma^\#$.

Note that $\widehat\mu$ is an equilibrium measure on $(\Sigma,\sigma)$ for the potential $\phi\circ\pi$. Indeed, for any $\widehat \nu\in\Proberg(\sigma)$, $\pi_*(\widehat \nu)$ is an ergodic, hyperbolic measure and $\pi_*\widehat \nu\hsim\mu$ by (C6) so that $P_{\phi\circ\pi}(\sigma,\widehat \nu)=P_{\phi}(f,\pi_*\widehat \nu)\leq P_\phi(f,\mu)$ (the equality is because finite-to-one factors preserve entropy, the inequality is because of eq.~\eqref{eq-homo-eq}). Moreover,  as the potential $\phi$ is admissible, $\phi\circ\pi:\Sigma\to\RR$ is H\"older-continuous
(see Remark~\ref{r.potential}).

\paragraph{\sc Uniqueness.}
{\emph{Any ergodic equilibrium measure $\nu$ for $\phi$ in the homoclinic class of $\mu$ coincides with $\mu$.}}

One takes $\chi>0$ small enough so that $\nu$ is $\chi$-hyperbolic. By {(C2.a)}, there is an invariant measure $\widehat \nu$
for $\Sigma$ such that $\pi_*(\widehat \nu)=\nu$. By the above argument, $\wh\mu$ and $\wh\nu$ are equilibrium measures for $\phi\circ\pi$. As $\Sigma$ is transitive and $\phi\circ\pi$ is H\"older-continuous, \cite{Buzzi-Sarig} implies that $\widehat \mu=\widehat\nu$,
hence $\mu=\nu$, proving the first property of Corollary~\ref{c-local-uniqueness}.

\paragraph{\sc Support.}
\emph{The support of $\mu$ is $\HC(\mu)$.}

Since $\widehat\mu$ is an equilibrium measure for a H\"older-continuous potential, and $\Sigma$ is  transitive, $\widehat\mu$ has full support in $\Sigma$
\cite{Sarig-thermodynamics,Buzzi-Sarig}.  So $\ov{\pi(\Sigma)}=\supp(\mu)\subseteq\HC(\mu)$.
We claim that  $\pi(\Sigma)$ is dense in $\HC(\mu)$.

It is enough to prove that  the $\chi$-hyperbolic periodic points which are homoclinically related to $\mu$
are dense in $\HC(\mu)$, because by (C2.a), they all belong to $\pi(\Sigma)$.

Firstly, the union of all hyperbolic periodic orbits $\cO'\hsim\cO$ is dense in $\HC(\cO)=\HC(\mu)$. Secondly any point in $\cO'$
can be approximated by a hyperbolic periodic point $x''$ homoclinically related to $\cO$ whose orbit induces a probability measure arbitrary
close to the invariant probability supported on $\cO$, for the weak-$*$ topology on $\Proberg(\wh\Sigma)$
(take $x''$ to be the initial condition of a periodic orbit which shadows $\ell$-loops of $\cO'$ and then $m$-loops of $\cO$,
and such that $\ell/m$ is large).
The property (C7) ensures that the orbit of $x''$ is $\chi$-hyperbolic, as $\cO$.

\paragraph{\sc Bernoulli property and period.}
{\emph{The measure $\mu$ is isomorphic to the product of a Bernoulli scheme with a cyclic permutation of order
$\gcd\{\operatorname{Card}(\cO)\; :\; \cO\hsim \mu\}$.}}

As shown in \cite{Sarig-Bernoulli-JMD},  Ornstein theory implies that the equilibrium measure $\widehat \mu$ for $(\sigma,\Sigma,\phi\circ\pi)$ is isomorphic to the product of a Bernoulli scheme and a cyclic permutation, and this property is inherited by the factor $\mu$.
It remains to identify the order of the permutation.

Let us decompose $\mu$: there exist disjoint measurable sets $A_1,\dots,A_\ell$ such that $f(A_i)=A_{i+1}$ when $i<\ell$, $f(A_\ell)=A_1$,
$\mu(A_i)>0$ and $\mu_i:=\mu(\ \cdot\ |A_i)\equiv \ell\mu|_{A_i}$ is Bernoulli for $f^\ell$.
Note that each $\mu_i$ is an equilibrium measure for $f^\ell$ and the potential $\phi_\ell=\frac 1 \ell (\phi+\phi\circ f+\dots+ \phi\circ f^{\ell-1})$.
From the above uniqueness (applied to $f^\ell$), the $\mu_i$ are not homoclinically related.

Let $\cO'$ be some hyperbolic periodic orbit homoclinically related to $\mu$,
it decomposes into disjoint periodic orbits $\cO'_1,\dots,\cO'_k$ for $f^\ell$ with equal cardinality.
Since $\mu$ is homoclinically related to $\cO'$, each $\cO'_j$ is homoclinically related to one and only one $\mu_i$ as these measures are not related.
 The number of orbits $O_j$ related to $\mu_i$, does not depend on $i$
(by invariance of the dynamics), hence the period $\ell$ of $\mu$ divides the number of orbits $\cO'_j$,
and then the period of $\cO'$. This proves that the period of $\mu$ divides $\gcd\{\operatorname{Card}(\cO')\; :\; \cO'\hsim \mu\}$,
the period of the homoclinic class of $\cO$.

Conversely, Proposition~\ref{p.period} proves that the period of the homoclinic class divides the period of the measure $\mu$.
Hence, the period of $\mu$ is equal to $\gcd\{\operatorname{Card}(\cO')\; :\; \cO'\hsim \mu\}$.

The proof of Corollary~\ref{c-local-uniqueness} is now complete. \qed

\subsection{Injective coding on a large set}\label{ss-injective-coding}
The coding $\pi:\Sigma\to M$ obtained in Theorem~\ref{Theorem-Symbolic-Dynamics-C^r} is finite-to-one on its regular part $\Sigma^\#$. We present two combinatorial constructions that use the Bowen property (C5) to create large injectivity sets without destroying the irreducibility property (C0).

{\paragraph{\sc Injective coding of a given measure.}
When an ergodic measure $\mu$ is given,
the constructions in \cite{Boyle-Buzzi} yield irreducible, finite-to-one, and H\"older-continuous codings that can be made $\mu$-almost everywhere injective.
We have the following variant of \cite[Prop. 6.3]{Boyle-Buzzi}.

\begin{thm}\label{thm-almostinjective1}
Let $f$ be a $C^r$ diffeomorphism, $r>1$, on a closed surface $M$. Let $\mu$ be an ergodic hyperbolic measure for $f$. Then there are $\chi>0$, a locally compact countable state Markov shift $\Sigma$ and  a H\"older-continuous map $\pi:\Sigma\to M$
such that  ${\pi}\circ\sigma=f\circ{\pi}$, (C0), (C1), (C2.b), (C3)-(C8), and:
 \begin{equation}\label{eq-aeinj}
    \mu(\{x\in M:|\pi^{-1}(x)\cap\Sigma^\#|=1\}) = 1.
 \end{equation}

 In particular, there is a unique invariant probability measure $\nu$ on $\Sigma$ such that $\pi_*(\nu)=\mu$; moreover $\pi:(\nu,\sigma)\to(\mu,f)$ is a measure-preserving conjugacy.
\end{thm}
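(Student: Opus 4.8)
The plan is to bootstrap from the coding of Theorem~\ref{Theorem-Symbolic-Dynamics-C^r2} and then collapse the redundancy of the projection over a set of full $\mu$-measure by means of the combinatorial recoding of \cite{Boyle-Buzzi}. Concretely, I would first fix $\chi>0$ small enough that $\mu$ is $\chi$-hyperbolic and apply Theorem~\ref{Theorem-Symbolic-Dynamics-C^r2} to the homoclinic class of $\mu$: this provides a locally compact irreducible countable state Markov shift $\Sigma_0$, with alphabet $\mathcal V_0$, and a H\"older map $\pi_0\colon\Sigma_0\to M$ with $\pi_0\circ\sigma=f\circ\pi_0$ satisfying (C0)--(C9). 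In particular (C2.a) gives an ergodic measure $\bar\mu$ on $\Sigma_0$ with $(\pi_0)_\ast\bar\mu=\mu$, and (C5) gives a symmetric, locally finite relation $\sim$ on $\mathcal V_0$ such that, for $\underline x,\underline y\in\Sigma_0^\#$, one has $\pi_0(\underline x)=\pi_0(\underline y)$ if and only if $x_n\sim y_n$ for all $n$.

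The core step is to apply a variant of \cite[Prop.~6.3]{Boyle-Buzzi} to $(\Sigma_0,\pi_0,\bar\mu)$ and the Bowen relation $\sim$. Using ergodicity, fix $a\in\mathcal V_0$ with $\bar\mu[a]>0$; then $\bar\mu$-a.e.\ $\underline x\in\Sigma_0^\#$ visits the cylinder $[a]$ infinitely often in both time directions, and at each such time every competing preimage $\underline y\in\Sigma_0^\#$ of $\pi_0(\underline x)$ carries a symbol in the finite set $\{b:b\sim a\}$. The construction of \cite{Boyle-Buzzi} recodes $\Sigma_0$ by enriching its symbols with a bounded amount of bookkeeping (position relative to the last and next visit to $[a]$, together with the branch of the Bowen relation currently being followed) and retains an irreducible component $\Sigma$; this produces a locally compact irreducible countable state Markov shift $\Sigma$, a finite-to-one H\"older code $\tau\colon\Sigma\to\Sigma_0$ induced by a symbol-to-symbol map, with $\tau\circ\sigma=\sigma\circ\tau$ and $\tau(\Sigma^\#)\subseteq\Sigma_0^\#$, together with an ergodic measure $\nu$ on $\Sigma$ satisfying $\tau_\ast\nu=\bar\mu$, such that the map $\pi:=\pi_0\circ\tau$ satisfies \eqref{eq-aeinj}. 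I expect the main obstacle to lie exactly here: the recoding must be engineered so as to be \emph{at once} $\mu$-almost everywhere injective, irreducible, and still equipped with a locally finite Bowen relation and with finite valencies — the first requirement pulls one toward thinning out or restricting the shift, while the others must be preserved. This balance is what the argument of \cite{Boyle-Buzzi} achieves, so the task reduces to transcribing it in the present setting.

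It then remains to verify the listed properties for $\pi=\pi_0\circ\tau$. The relation $a'\approx b'$ defined by $\tau(a')\sim\tau(b')$ on the alphabet of $\Sigma$ is symmetric and, since $\tau$ is finite-to-one and $\sim$ is locally finite, locally finite; by (C5) for $\pi_0$ it witnesses the Bowen property (C5) for $\pi$, which combined with (C1) for $\pi_0$ and for $\tau$ yields (C1) for $\pi$, while (C0) holds because $\Sigma$ is irreducible. Since $\pi(\Sigma)\subseteq\pi_0(\Sigma_0)$, property (C3) holds at every $x\in\pi(\Sigma)$ with the same (unique) splitting, and $\underline x\mapsto E^{s/u}(\pi(\underline x))$ is H\"older on $\Sigma$, being the composition of the Lipschitz code $\tau$ with the corresponding H\"older map on $\Sigma_0$; (C4) holds with $V^{s/u}(\underline x):=V^{s/u}(\tau(\underline x))$. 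Granting (C1), (C3) and (C5), properties (C6), (C7), (C8) then follow from Propositions~\ref{p-proj-measure}, \ref{p-contExp} and \ref{p.properness}. Finally, (C2.b) follows because an ergodic $\bar\nu$ on $\Sigma$ has $\tau_\ast\bar\nu$ ergodic on $\Sigma_0$, so by (C2.b) for $\pi_0$ the measure $\pi_\ast\bar\nu=(\pi_0)_\ast\tau_\ast\bar\nu$ is ergodic, hyperbolic, homoclinically related to $\mu$, with entropy $h(\sigma,\bar\nu)$ since finite-to-one factors preserve entropy.

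For the last assertion, \eqref{eq-aeinj} supplies an $f$-invariant Borel set $M_0$ with $\mu(M_0)=1$ on which $x\mapsto s(x)$, the unique point of $\pi^{-1}(x)\cap\Sigma^\#$, is a Borel section satisfying $\pi\circ s=\mathrm{id}$ and $s\circ f=\sigma\circ s$ (its graph is Borel with singleton sections). If $\nu'$ is any $\sigma$-invariant probability with $\pi_\ast\nu'=\mu$, then $\nu'(\Sigma^\#)=1$, so disintegrating $\nu'$ over the fibers of $\pi$ forces its conditionals to be $\delta_{s(x)}$ for $\mu$-a.e.\ $x$, whence $\nu'=s_\ast\mu=\nu$; this gives uniqueness of the lift, and $s$ exhibits $\pi$ as a measure-preserving conjugacy from $(\Sigma,\nu,\sigma)$ onto $(M,\mu,f)$, completing the plan.
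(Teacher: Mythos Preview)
Your approach is essentially the paper's: both start from a finite-to-one coding with the locally finite Bowen property and a lift of $\mu$, then apply the Boyle--Buzzi recoding machinery to obtain an irreducible Markov shift on which the projection is $\mu$-a.e.\ injective. The only cosmetic difference is that the paper starts from the \emph{global} coding $\wh\pi:\wh\Sigma\to M$ of Theorem~\ref{thm.sarig} rather than from the irreducible $\Sigma_0$ of Theorem~\ref{Theorem-Symbolic-Dynamics-C^r2}, and invokes \cite[Theorem~5.2]{Buzzi-BFCD} (which packages the construction you describe and directly outputs (C0), (C1), (C5), (C8) and \eqref{eq-aeinj}), writing $\pi=\wh\pi\circ q$ for a $1$-Lipschitz $q:\Sigma\to\wh\Sigma$.

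One caution on your verification of (C5). You define $a'\approx b'$ by $\tau(a')\sim\tau(b')$ and argue it is locally finite ``since $\tau$ is finite-to-one and $\sim$ is locally finite''. This does not follow: $\tau$ being finite-to-one on \emph{sequences} says nothing about the symbol map being finite-to-one, and indeed your own description of the enrichment (position relative to the last and next visit to $[a]$) is unbounded, so a given $c\in\mathcal V_0$ may have infinitely many $\tau$-preimages. The local finiteness of the Bowen relation on the recoded shift is a genuine feature of the Boyle--Buzzi construction, not something you can deduce formally from (C5) for $\pi_0$; you should take it (and the stronger form of (C1)) as a direct output of the cited theorem, as the paper does.
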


Contrary to Theorem~\ref{Theorem-Symbolic-Dynamics-C^r}, this construction  does not ensure (C2.a), i.e., $\nu(\pi(\Sigma^\#))=1$ for every $\chi$-hyperbolic measure $\nu$ homoclinically related to $\mu$ or (C9), i.e., the lifting of any transitive  $\chi$-hyperbolic compact set homoclinically related to $\mu$.

\begin{proof}
Let $\mu$ be an ergodic and hyperbolic measure for $f$. It is $\chi$-hyperbolic for some $\chi>0$. Let $\wh\Sigma$ and $\wh\pi$ be given by Theorem~\ref{thm.sarig} for that parameter $\chi$. Observe that $\wh\pi:\wh\Sigma^\#\to M$ is continuous and satisfies (C1) and the locally finite Bowen property (C5): $\wh\pi$ is \emph{excellent} with a \emph{multiplicity bound} in the terminology of \cite{Buzzi-BFCD}. Since $\mu(\wh\pi(\wh\Sigma^\#))=1$, we can apply Theorem 5.2 of that paper and get an irreducible Markov shift $\Sigma$ and a map $\pi:\Sigma\to M$ such that  $\Sigma$ is  locally compact (because $\wh\Sigma$ was) and $\pi$ is H\"older-continuous (because $\wh\pi$ was and $\pi=\wh\pi\circ q$ for some $1$-Lipschitz map $q:\Sigma\to\wh\Sigma$). One easily checks the remaining claims:
\smallbreak

 -- (C0): $\Sigma$ is irreducible;

 -- (C1),(C5):  from item (1) of \cite[Theorem 5.2]{Buzzi-BFCD};

 -- (C2.b),(C3),(C4),(C6),(C7):  from the same properties of $\wh\pi$, since $\pi=\wh\pi\circ q$ with $q$ continuous;

 -- (C8): from item (4) of \cite[Theorem 5.2]{Buzzi-BFCD};

 -- Equation~\eqref{eq-aeinj}: from item (5) of \cite[Theorem 5.2(7)]{Buzzi-BFCD}. This implies the last part of the theorem: existence of a unique lift $\nu$ of $\mu$ and $\pi:(\Sigma,\nu)\to(M,\mu)$ is a measure-preserving conjugacy.
\end{proof}

\paragraph{Injective coding for all equilibrium measures.}
Using the magic word theory of \cite{Buzzi-BFCD}, one gets injectivity on a larger set:
\newcommand\tak{\widetilde}

\begin{thm}\label{thm-ae-inj}
Let $f$ be a $C^r$ diffeomorphism, $r>1$, on a closed surface $M$. Let $\mu$ be an ergodic hyperbolic measure for $f$. For every $\chi>0$, there are a locally compact countable state Markov shift $\tak\Sigma$ and a H\"older-continuous map $\tak\pi:\tak\Sigma\to M$ such that ${\tak \pi}\circ\sigma=f\circ{\tak \pi}$, which satisfies (C0)-(C9) and
the following additional property.
\smallskip

\noindent
(C10)\qquad There is an open set $\emptyset\ne U\subset\tak\Sigma$
such that for any ergodic $\nu$ on $M$
with $\nu(\tak \pi(U\cap\tak\Sigma^\#))>0$,
$$\nu(\{x\in M:|\tak\pi^{-1}(x)\cap\tak\Sigma^\#|=1\})=1.$$
\end{thm}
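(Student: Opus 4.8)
We take for $\tak\Sigma$ and $\tak\pi$ the very shift $\Sigma$ and coding $\pi\colon\Sigma\to M$ furnished by Theorem~\ref{Theorem-Symbolic-Dynamics-C^r2}, so that $(\tak\Sigma,\tak\pi)$ satisfies (C0)--(C9) by construction; the whole point is to locate the open set $U$ of (C10) inside $\Sigma$ by means of the magic word theory of \cite{Buzzi-BFCD}. The coding $\pi$ is H\"older-continuous, commutes with $\sigma$, is finite-to-one on $\Sigma^\#$ with a multiplicity bound $C(a,b)$ by (C1), and satisfies the locally finite Bowen property (C5); thus, in the terminology of \cite{Buzzi-BFCD}, it is \emph{excellent with a multiplicity bound}, and $\Sigma$ is irreducible by (C0). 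Under exactly these hypotheses the magic word theory of \cite{Buzzi-BFCD} applies and produces a \emph{magic word} $w=w_0\cdots w_{m-1}$ in the alphabet of $\Sigma$. Let $U:=\{\underline x\in\Sigma:\ x_i=w_i\ \text{for}\ 0\le i<m\}$, a non-empty open subset of $\Sigma$ (non-empty because $w$ is admissible and $\Sigma$ is irreducible). The property of $w$ that we extract is the following form of injectivity over $U$: \emph{if $\underline x\in\Sigma^\#$ satisfies $\sigma^k(\underline x)\in U$ for infinitely many $k>0$ and for infinitely many $k<0$, then $\pi^{-1}(\pi(\underline x))\cap\Sigma^\#=\{\underline x\}$.}

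Granting this, we prove (C10). Let $\nu$ be ergodic on $M$ with $\nu\big(\pi(U\cap\Sigma^\#)\big)>0$, and put $E:=\{x\in M:\ |\pi^{-1}(x)\cap\Sigma^\#|=1\}$. Since $\Sigma^\#$ is $\sigma$-invariant and $\pi\circ\sigma=f\circ\pi$, the set $E$ is $f$-invariant, so by ergodicity $\nu(E)\in\{0,1\}$ and it suffices to show $\nu(E)=1$. The set $\pi(\Sigma^\#)$ is $f$-invariant and $\nu$ charges it, hence $\nu(\pi(\Sigma^\#))=1$; averaging against $\nu$ the uniform probability on the finite fibres $F_x:=\pi^{-1}(x)\cap\Sigma^\#$ yields a $\sigma$-invariant probability $\widehat\mu$ on $\Sigma^\#$ with $\pi_*\widehat\mu=\nu$ (the construction used at the end of the proof of Proposition~\ref{SymbolicDynamics}; see also \cite[Prop.~13.2]{Sarig-JAMS}). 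For $x\in\pi(U\cap\Sigma^\#)$ one has $F_x\cap U\neq\emptyset$, so $\widehat\mu$ gives $U$ mass at least $\int_{\pi(U\cap\Sigma^\#)}|F_x|^{-1}\,d\nu(x)>0$, the integrand being a strictly positive function on a set of positive $\nu$-measure. Therefore some ergodic component $\widehat\nu$ of $\widehat\mu$ satisfies $\widehat\nu(U)>0$, and $\pi_*\widehat\nu=\nu$ because $\nu$ is ergodic.

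Applying the Birkhoff ergodic theorem to $(\sigma,\widehat\nu)$ and to $(\sigma^{-1},\widehat\nu)$, we get that for $\widehat\nu$-a.e. $\underline x$ the set $\{k\in\Z:\ \sigma^k(\underline x)\in U\}$ has positive frequency in both directions, hence contains infinitely many $k$ of each sign. For every such $\underline x$ the magic word property yields $\pi^{-1}(\pi(\underline x))\cap\Sigma^\#=\{\underline x\}$, i.e. $\pi(\underline x)\in E$. Thus $\widehat\nu\big(\{\underline x:\ \pi(\underline x)\in E\}\big)=1$, and pushing forward by $\pi$ gives $\nu(E)=1$, as required; since $(\tak\Sigma,\tak\pi)=(\Sigma,\pi)$ this is (C10), and (C0)--(C9) hold by the choice of $\Sigma$. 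The only non-routine step is the appeal to \cite{Buzzi-BFCD} in the first paragraph: one must verify that the magic word produced there for an excellent, multiplicity-bounded coding of an irreducible countable Markov shift lives in $\Sigma$ itself and has exactly the stated injectivity-over-$U$ property. This is the crux; the remainder is bookkeeping with invariant sets and ergodicity, the one mild subtlety being the positivity $\widehat\mu(U)>0$, which reduces to the elementary fact that the integral of a strictly positive function over a set of positive measure is positive.
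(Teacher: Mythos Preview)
Your argument for deducing (C10) from a magic-word property is clean and essentially matches the paper's final paragraph. The gap is exactly where you flag it: the claim that \cite{Buzzi-BFCD} produces a magic word \emph{in the alphabet of $\Sigma$ itself}. It does not. Theorem~5.3 of \cite{Buzzi-BFCD}, which is what the paper invokes, takes as input the coding $\pi:\Sigma\to M$ and outputs a \emph{new} Markov shift $\tak\Sigma$ together with a H\"older $1$-Lipschitz map $q:\tak\Sigma\to\Sigma$; the magic word $w$ lives in $\tak\Sigma$, not in $\Sigma$. There is no general reason to expect a word with the injectivity property you state to exist in the original alphabet --- the whole point of the recoding is to manufacture one.

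Because of this, the paper cannot simply set $(\tak\Sigma,\tak\pi)=(\Sigma,\pi)$; it must take $\tak\pi=\pi\circ q$ and then re-establish (C0)--(C9) for the new pair. Properties (C0)--(C8) transfer more or less formally via $q$ and the items of \cite[Thm.~5.3]{Buzzi-BFCD}, but (C9) --- lifting transitive $\chi$-hyperbolic compact sets --- requires a separate argument using properness of $q$ on the regular parts and (C8). Your proposal skips all of this by assuming the recoding is unnecessary, and that assumption is precisely what fails. Once one accepts the new shift, your Birkhoff/ergodicity argument for (C10) is essentially the paper's own (the paper phrases it via Poincar\'e recurrence and the set $\tak\Sigma_1$ of sequences in which $w$ either never occurs or occurs infinitely often in both directions).
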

\begin{proof}
From Theorem~\ref{Theorem-Symbolic-Dynamics-C^r}, there are a Markov shift $\Sigma$ and a map $\pi:\Sigma\to M$ satisfying $\pi\circ\sigma=f\circ\pi$ and all the properties (C0)-(C9). We can apply Theorem 5.3 of \cite{Buzzi-BFCD} since $\pi|_{\Sigma^\#}$ is Borel, finite-to-one and satisfies the locally finite Bowen property. We get
a new Markov shift $\tak \Sigma$ (locally compact by item (1) of that theorem) and a H\"older-continuous map $q:\tak\Sigma\to\Sigma$ such that $\tak\pi:\tak\Sigma\to M$ defined by $\tak\pi:=\pi\circ q$ is a H\"older-continuous map satisfying $f\circ\tak\pi=\tak\pi\circ\sigma$. One checks (C0)-(C8):
 \smallbreak

 -- (C0): by item (7) since $\Sigma$ is irreducible;

 -- (C1), (C5): by items (1) since $\Sigma$ satisfies a multiplicity bound in the terminology of \cite{Buzzi-BFCD};

 -- (C2.a): by items (1) and (5);

 -- (C2.b), (C3), (C4), (C6), (C7): from the same properties of $\wh\pi$, since $\pi=\wh\pi\circ q$ with $q$ continuous;

 -- (C8): because $\pi$ satisfies the same property and $q:\tak\Sigma^\#\to\Sigma^\#$ is a proper map.
\smallskip

We then prove (C9). As  in Section~\ref{s-p-uniform}, it is enough to show that any  $\chi$-hyperbolic transitive hyperbolic set  $K$ which is the support of some ergodic $\chi$-hyperbolic measure $\nu\hsim\mu$ is contained in $\tak\pi(\tak K)$ for some compact subset $\tak K$ of $\tak\Sigma$. It follows from (C9) for $\pi$ that there is a compact set $X\subset\Sigma$ such that $K=\pi(X)$. By (C8) for $\pi$,  $\pi^{-1}(K)\cap\Sigma^\#$ is also compact.  By item (4) of  Theorem 5.3 of \cite{Buzzi-BFCD}, $\tak K:=q^{-1}(\pi^{-1}(K)\cap\Sigma^\#)\cap \tak\Sigma^\#$ must also be compact. Note that $\tak K = \tak\pi^{-1}(K)\cap\tak\Sigma^\#$ since $q(\tak X^\#)\subset X^\#$. This implies that $\tak\pi(\tak K)\supset\pi(\tak\Sigma^\#)\cap K$. The latter set $\pi(\tak\Sigma^\#)\cap K$ has full $\nu$-measure by (C2.a). Since $\tak\pi(\tak K)$ is compact, it must contain $K$ since it is the support of $\nu$.
\smallskip

We finally prove (C10) for $U:=[w]$ the cylinder in $\tak\Sigma$ defined by the word $w$  from \cite[Theorem 5.3(6)]{Buzzi-BFCD}.  Let $\nu\in\Proberg(f)$ with $\nu(\pi(U\cap\tak\Sigma^\#))>0$. Let $\tak\Sigma_1$ be the set of sequences $x\in\tak\Sigma$ such that either $w$ does not occur, or it occurs infinitely many times in the past and infinitely many times in the future. By Poincar\'e recurrence, $\tak\Sigma_1$ has full measure for all invariant probability measures of $\tak\Sigma$. Since $\tak\pi|_{\tak\Sigma^\#}$ is finite-to-one, this implies that $\nu(\tak\pi(\tak\Sigma^\#\setminus\tak\Sigma_1))=0$ so $\nu(\tak\pi(U\cap\tak\Sigma_1))>0$. Therefore for $\nu$-a.e. $y\in M$, $y=\pi(\sigma^n(x))$ with  $n\in\Z$ and $x\in U\cap\tak\Sigma_1$. By the choice of $U$ and $\tak\Sigma_1$,   $w$ occurs in $x$ infinitely often in the past and also in the future. (C10) now follows from \cite[Theorem 5.3(6)]{Buzzi-BFCD}.
\end{proof}

Recall the notion of equilibrium in a homoclinic class, {see Section~\eqref{equilibrium1}.}

\begin{corollary}
Let $f$ be a $C^r$ diffeomorphism, $r>1$, of a closed surface $M$, let $\chi>0$, let $\mu$ be an ergodic $\chi$-hyperbolic measure
and consider a locally compact countable state Markov shift $\Sigma$ and a continuous map ${\pi}:{\Sigma}\to M$ satisfying ${\pi}\circ\sigma=f\circ{\pi}$ and properties (C0), {(C2.a)}, (C10).

Let $\phi:M\to\RR\cup\{-\infty\}$ be an admissible potential and assume that there exists a $\chi$-hyperbolic measure $\nu$
which is an equilibrium measure for $\phi$ in the homoclinic class of $\mu$.

Then $\Sigma$ admits an equilibrium measure $\bar \nu$ for the potential $\phi\circ\pi$.
It is unique and $\pi:(\Sigma,\bar \nu)\to (M,\nu)$ is a measure-preserving conjugacy.
\end{corollary}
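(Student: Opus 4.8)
The plan is to transport the problem to the symbolic setting via the coding $\pi$ and then invoke the uniqueness of equilibrium measures for Hölder potentials on irreducible countable Markov shifts, exactly as in the proof of Corollary~\ref{c-local-uniqueness}. First I would observe that, since $\phi$ is an admissible potential and $\Sigma$, $\pi$ satisfy (C3) (which is part of the assumed package via (C2.a), or can be added since the hypotheses come from Theorems~\ref{Theorem-Symbolic-Dynamics-C^r} or~\ref{thm-ae-inj}), the lifted potential $\phi\circ\pi:\Sigma\to\RR$ is Hölder-continuous on $\Sigma$; this is Remark~\ref{r.potential}. Next, using (C2.a) I lift $\nu$: there is an ergodic $\bar\nu$ on $\Sigma$ with $\pi_*(\bar\nu)=\nu$. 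Since $\pi|_{\Sigma^\#}$ is finite-to-one by (C1) and $\bar\nu(\Sigma^\#)=1$, the factor map $\pi$ preserves entropy, $h(\sigma,\bar\nu)=h(f,\nu)$, so $P_{\phi\circ\pi}(\sigma,\bar\nu)=P_\phi(f,\nu)$.

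The core step is to check that $\bar\nu$ is an equilibrium measure for $\phi\circ\pi$ on $(\Sigma,\sigma)$. Given any ergodic $\bar\eta\in\Proberg(\sigma)$, its projection $\eta:=\pi_*(\bar\eta)$ is ergodic, hyperbolic, and homoclinically related to $\mu$ by property (C6), and again $h(\sigma,\bar\eta)=h(f,\eta)$ because $\pi$ is finite-to-one on $\Sigma^\#$. Hence $P_{\phi\circ\pi}(\sigma,\bar\eta)=P_\phi(f,\eta)\le P_\phi(f,\nu)$, the last inequality because $\nu$ is an equilibrium measure for $\phi$ in the homoclinic class of $\mu$ and $\eta\hsim\mu$. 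Taking the supremum over ergodic $\bar\eta$ (which suffices for the topological pressure) gives $P_{\phi\circ\pi}^{\mathrm{top}}(\sigma)\le P_\phi(f,\nu)=P_{\phi\circ\pi}(\sigma,\bar\nu)$, so $\bar\nu$ attains the pressure and is an equilibrium measure on $\Sigma$.

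Now I apply the thermodynamic formalism for irreducible (topologically transitive) countable state Markov shifts with a Hölder potential: by property (C0), $\Sigma$ is irreducible, and by \cite{Buzzi-Sarig} (see also \cite{Sarig-thermodynamics}) an irreducible countable Markov shift carries at most one equilibrium measure for a Hölder potential with finite pressure. Therefore $\bar\nu$ is the \emph{unique} equilibrium measure for $\phi\circ\pi$ on $\Sigma$. It remains to see that $\pi:(\Sigma,\bar\nu)\to(M,\nu)$ is a measure-preserving conjugacy, i.e., that $\pi$ is $\bar\nu$-a.e.\ injective: this is precisely where (C10) enters. By the uniqueness just proved and \cite{Sarig-thermodynamics,Buzzi-Sarig}, $\bar\nu$ has full support in $\Sigma$ (an equilibrium state of a Hölder potential on a transitive countable Markov shift is fully supported), so $\bar\nu(U\cap\tak\Sigma^\#)>0$ for the distinguished open set $U$ of (C10), hence $\nu(\tak\pi(U\cap\tak\Sigma^\#))>0$, and (C10) yields $\nu\{x:|\pi^{-1}(x)\cap\Sigma^\#|=1\}=1$. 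Combined with $\bar\nu(\Sigma^\#)=1$ this shows $\pi$ restricts to a Borel isomorphism between full-measure sets, completing the proof.

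The main obstacle is the middle step: verifying that $\bar\nu$ is genuinely an equilibrium measure \emph{on the shift} rather than just a measure projecting to one. This requires (i) the entropy-preservation of the finite-to-one factor $\pi|_{\Sigma^\#}$, (ii) the fact that the topological pressure of $\phi\circ\pi$ on $\Sigma$ is computed as a supremum over ergodic measures, all of which project under (C6) into the homoclinic class of $\mu$ where $\nu$ dominates, and (iii) the admissibility of $\phi$ ensuring $\phi\circ\pi$ is Hölder so that the uniqueness theorem of \cite{Buzzi-Sarig} applies and gives full support. None of these is deep individually, but the equilibrium-in-the-homoclinic-class hypothesis (weaker than being a global equilibrium measure) must be used carefully, precisely because every $\sigma$-ergodic measure on $\Sigma$ projects into that class.
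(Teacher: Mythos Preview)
Your proposal is correct and follows essentially the same route as the paper: lift $\nu$ via (C2.a), show the lift is an equilibrium measure on $\Sigma$ by comparing against arbitrary ergodic $\sigma$-measures using (C6) and entropy preservation of the finite-to-one factor, invoke \cite{Buzzi-Sarig} on the irreducible shift for uniqueness and full support, and then use (C10) on the distinguished open set $U$ to get almost-everywhere injectivity. The paper's proof is terser (it simply points back to Section~\ref{equilibrium1} for the first three steps), but the structure and ingredients are identical to yours.
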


\begin{proof}[Proof of the Corollary]
As proved in Section~\ref{equilibrium1}, {(C2.a)} implies that $\mu=\pi_*(\bar\mu)$ where $\bar \mu$ is an equilibrium measure for the potential $\phi\circ\pi$. Moreover, there is no other equilibrium for that H\"older-continuous potential
and $\bar \mu$ has full support in the irreducible shift $\Sigma$. In particular $\bar \mu(U)>0$ for $U$ as in (C10).
This implies that $\pi$ is injective on a measurable subset of $\Sigma$ which has full $\bar \mu$-measure.
\end{proof}

\begin{corollary}
Let $f$ be a $C^{r}$ diffeomorphism, $r>1$, on a closed surface $M$,
let $\Sigma$ be a locally compact countable state Markov shift and ${\pi}:{\Sigma}\to M$ be a continuous map satisfying ${\pi}\circ\sigma=f\circ{\pi}$
and (C0), (C6), (C9), (C10) for some $\chi>0$. Let us assume also that $\Sigma$ supports an ergodic measure $\widehat \mu$ whose
projection $\mu:=\pi_*\widehat\mu$ is $\chi$-hyperbolic.

Then the period of $\Sigma$ equals the period $\ell:=\gcd\{\Card\cO:\cO\hsim\mu\}$ of the homoclinic class of $\mu$.
\end{corollary}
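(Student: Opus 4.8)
The plan is to fix notation — let $p$ be the period of $\Sigma$ and $\ell:=\gcd\{\Card\cO:\cO\hsim\mu\}$, and note that by (C0) the shift $\Sigma$ coincides with its unique irreducible component — and then to establish the two divisibilities $\ell\mid p$ and $p\mid\ell$ separately.

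For $\ell\mid p$, I would argue as follows. Given any loop of length $q$ in the graph of $\Sigma$, let $\underline v\in\Sigma^\#$ be the associated periodic point and $q'\mid q$ its $\sigma$-period. Applying the first part of (C6) to the ergodic measure $\delta_{\cO(\underline v)}$ shows that $x:=\pi(\underline v)$ is a hyperbolic periodic point of saddle type; applying the second part of (C6) to $\delta_{\cO(\underline v)}$ and $\widehat\mu$, which both live on the irreducible component $\Sigma$, shows that $\cO(x)\hsim\mu$. Hence $\ell\mid\Card\cO(x)$, and since $\Card\cO(x)$ divides $q'$ and therefore $q$, we get $\ell\mid q$; taking the gcd over all loop lengths yields $\ell\mid p$.

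For $p\mid\ell$ I would first prove a \emph{magic-word lemma}: if $U$ and the admissible word $w$ with $[w]\subset U$ are as in (C10), then every periodic $\underline v\in\Sigma^\#$ whose $\sigma$-orbit meets $[w]$ has $\sigma$-period exactly $\Card\cO(\pi(\underline v))$. The point is that $\nu:=\delta_{\cO(\pi(\underline v))}$ is ergodic with $\nu(\pi(U\cap\Sigma^\#))>0$ (some $\sigma^j\underline v$ lies in $U\cap\Sigma^\#$), so (C10) forces $\pi(\underline v)$ to have a unique preimage in $\Sigma^\#$; since $\sigma^{\Card\cO(\pi(\underline v))}\underline v$ is such a preimage it must equal $\underline v$, which gives the lemma, the reverse divisibility being automatic. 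In particular every such $\Card\cO(\pi(\underline v))$ is a loop length of $\Sigma$, so $p$ divides it, and $\cO(\pi(\underline v))\hsim\mu$ by the argument above. It then suffices to produce periodic $\underline v_1,\dots,\underline v_r\in\Sigma^\#$, each with orbit meeting $[w]$, whose projected orbits have cardinalities with gcd $\ell$: then $p$ divides each of them, hence $p\mid\ell$. When the homoclinic class of $\mu$ is trivial this is immediate, since by (C6) the shift $\Sigma$ then carries only $\widehat\mu$ and (by the magic-word lemma) reduces to a single loop of length $\ell$. In the non-trivial case I would: (i) use Theorem~\ref{t.katok} and the density of $\chi$-hyperbolic periodic orbits homoclinically related to $\mu$ in $\HC(\mu)$ (as in the proof of Corollary~\ref{c-local-uniqueness}), together with Propositions~\ref{p-hc-cyclic} and~\ref{p.period}, to find $\chi$-hyperbolic periodic orbits $\cO_i'\hsim\mu$ with $\gcd\{\Card\cO_i'\}=\ell$; (ii) lift each $\cO_i'$ via (C9) to a transitive compact $X_i\subset\Sigma$; (iii) use irreducibility to route, for each $i$, a loop winding a suitable number of times around $X_i$ and making a single bounded excursion through the cylinder $[w]$ and back, producing a periodic $\underline v_i\in\Sigma^\#$ through $[w]$ whose projected orbit shadows $\cO_i'$; and (iv) use the equivalence~\eqref{eq-equiv-period} describing the partition $B_1,\dots,B_\ell$ of $H_\cO$ to control the cyclic position of each loop and to choose the winding numbers so that $\gcd\{\Card\cO(\pi(\underline v_i))\}=\ell$.

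The hard part will be step (iv), together with the $\chi$-hyperbolicity bookkeeping in (i)--(iii): one must realize the class-period $\ell$ through loops that genuinely pass through the magic cylinder \emph{and} whose underlying orbits stay $\chi$-hyperbolic so that (C9) applies, and then check that the gcd of the resulting projected periods is exactly $\ell$ and not a proper multiple of it. This is precisely where Proposition~\ref{p.period}, the equivalence~\eqref{eq-equiv-period}, and the Abdenur--Crovisier cyclic description of homoclinic classes do the real work. Combining the two directions gives $p=\ell$.
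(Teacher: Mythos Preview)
Your $\ell\mid p$ argument is correct and is exactly the paper's. Your magic-word lemma is also right and is the essential use of (C10). But your steps (iii)--(iv) for $p\mid\ell$ contain a real gap which the paper avoids by a cleaner and quite different route.

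The difficulty with your plan is this: once you route a loop through $[w]$ by winding around a lift $X_i$ of $\cO_i'$ and adding an excursion, the resulting $\sigma$-period is not $|\cO_i'|$. It is the period of some periodic point in $X_i$ (which, absent (C1), is only known to be a \emph{multiple} of $|\cO_i'|$) times a winding number, plus an excursion length you do not control. Proposition~\ref{p.period} and \eqref{eq-equiv-period} only constrain things modulo $\ell$; they cannot by themselves force the gcd of these composite periods down to $\ell$ rather than a proper multiple of it. You acknowledge step (iv) is the hard part, but as written there is no argument there.

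The paper's route for $p\mid\ell$ sidesteps this completely. Fix a periodic point of $\Sigma$ in $U$, projecting to $q_0\in M$; its orbit $\cO_0$ is $\hsim\mu$ by (C0) and (C6). Choose $\cO_1,\dots,\cO_N\hsim\mu$ with $\gcd|\cO_i|=\ell$, and (after the usual $\chi$-hyperbolicity adjustments via Theorem~\ref{t.katok}) place $\cO_0,\dots,\cO_N$ in a \emph{single} $\chi$-hyperbolic basic set $K$ using Lemma~\ref{Lemma-K_n}. Lift $K$ \emph{once} to a compact $X\subset\Sigma$ via (C9). A short compactness argument (any sequence $y^n\in X\setminus U$ with $\pi(y^n)\to q_0$ would subconverge in $X\subset\Sigma^\#$ to a second $\pi$-preimage of $q_0$, contradicting (C10)) produces a neighborhood $V$ of $q_0$ with $V\cap K\subset\pi(U)$. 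Now use shadowing \emph{inside $K$} to replace $\cO_0,\dots,\cO_N$ by periodic orbits in $K$ that all meet $V$ and still have periods with gcd $\ell$. Each such orbit then meets $\pi(U\cap\Sigma^\#)$, so by (C10) it has a unique preimage in $\Sigma^\#$; hence its lift is a periodic orbit of $\Sigma$ with the \emph{same} period. This gives loop lengths in $\Sigma$ with gcd $\ell$, and $p\mid\ell$ follows at once --- no separate lifts, no excursions, no winding-number arithmetic.
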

\begin{proof}
Let us consider some hyperbolic periodic orbits $\cO_1,\dots,\cO_N$ homoclinically related to $\mu$ and satisfying
$\gcd(|\cO_1|,\dots,|\cO_N|)=\ell$. Let $U\subset \Sigma$ be an open set as in (C10) and let $q_0$ be the projection by $\pi$ of a periodic point in $U$. By (C0) and (C6), the periodic orbit $\cO_0$ of $q_0$ is homoclinically related to $\mu$. By Theorem~\ref{t.katok}, there is a $\chi$-hyperbolic orbit $\cO_*$ with $\cO_*\hsim\mu$.
Lemma~\ref{Lemma-K_n} provides a hyperbolic basic set $K$ containing all the previous periodic orbits.
As in section \ref{equilibrium1} when we analyzed the support of equilibrium measures, using
one can assume that $\cO_0,\cO_1,\dots,\cO_N$ and $K$ are $\chi$-hyperbolic.
By (C9), there exists an invariant compact subset $X\subset \Sigma$ such that
$\pi(X)=K$.

We claim that there is a neighborhood $V$ of $q_0$ such that  $V\cap K\subset \pi(U)$. Otherwise one can consider points $y^n\in X\setminus U$ with $\pi(y^n)\to q_0$. Since $X$ is  a compact subset of $\Sigma$, one can assume that $(y^n)$ converges to
some point $y_*\in X\subset \Sigma^\#$. By continuity, $\pi(y_*)=\pi(q_0)$. Property (C10) then implies that $y_*=q_0$, a contradiction proving the claim.

 Using shadowing, one can replace the orbits $\cO_0,\cO_1,\dots,\cO_N$ by periodic orbits in $K$ that all meet $V$ and whose periods still have greatest common divisor equal to $\ell$. Now these periodic orbits are contained in the projection $\pi(X)$ and in the injectivity set of $\pi$: they lift as periodic orbits in $\Sigma$ having the same periods.
In particular the period of $\Sigma$ divides $\ell$.
By (C6), all the periodic orbits in $\Sigma$ project by $\pi$ on periodic orbits homoclinically related to $\mu$,
hence $\ell$ divides the period of $\Sigma$, so $\ell$ is the period of $\Sigma$.
\end{proof}

%\subsection{Higher dimension}\label{ss.higher}
%*************** TO BE COMPLETED ***************
%
%The Theorem~\ref{higher-dim} is a consequence of the following:
%
%\begin{theorem}
%The results stated in Theorem~\ref{Theorem-Symbolic-Dynamics-C^r}, Corollary~\ref{c-local-uniqueness} and  Section ***ss.further-prop***
%are also valid for $C^r$ diffeomorphisms, $r>1$, of a closed manifold with an arbitrary dimension.
%\end{theorem}
%
%Indeed we note that the proofs of the results of Section~\ref{Section-Symbolic} for surface diffeomorphism remain unchanged for surface diffeomorphisms, once
%the following results have been obtained (for each $\chi>0$):
%\begin{myitemize}
%\item[--] a global coding $\widehat\pi \colon \Sigma\to M$ as in Theorem~\ref{thm.sarig},
%\item[--] the locally finite Bowen property (item ($f$) after Theorem~\ref{thm.sarig}),
%\item[--] the construction of $\NUH^\#_\chi$, $q_\eps$, $\mathcal{R}$, $\widehat \Sigma$
%satisfying the properties (P1-4) as in Section~\ref{ss.additional}.
%\end{myitemize}
%The existence of a global coding and the locally finite Bowen property have been proved
%in the recent paper~\cite{Ben-Ovadia}.
%
%The intermediate constructions and the properties (P1-4) are somewhat more delicate to obtain
%than on surface but can also  be extracted from~\cite{Ben-Ovadia}.
%
%

\section{Dynamical laminations and Sard's Lemma}\label{Sec.Sard}
\newcommand\Emb{\mathrm{Emb}}

Throughout this section, $M$ is a closed Riemannian surface and $f:M\to M$  is a $C^r$ diffeomorphism, $1<r<\infty$.
Let
$$
\Emb^r((-1,1),M):=\{\vf:(-1,1)\to M:\vf\text{ is a $C^r$ embedding}\}.
$$
We endow it with the \emph{compact-open} $C^r$-topology.

By a $C^1$-curve, we mean the image of the restriction of an element $\vf\in\Emb^1((-1,1),M)$
to the interval $(-1/2,1/2)$. Hence, the compact-open $C^1$-topology induces a \emph{uniform $C^1$-topology} on parametrized curves $\tau=\vf((-1/2,1/2))$. Most of the time, we will abuse the language and speak about $C^1$-close curves or $C^1$-neighborhoods of curves when we are really discussing their parametrizations.

\subsection{Laminations, transversals, holonomies, dimensions}

\noindent
A {\em partition} of a set $K\subset M$ is a collection $\mathfs L$ of non-empty pairwise disjoint subsets of $K$, whose union equals $K$. For every $x\in K$, we let
$$
\mathfs L(x):=\text{the (unique) element of $\mathfs L$ which contains $x$}.
$$
The set $K$ will sometimes be denoted by $K=\supp(\mathfs L)$.

A {\em continuous (one-dimensional) lamination with $C^r$ leaves} is a partition $\mathfs L$ into connected sets such that every point $x_0\in \supp(\mathfs L)$ has an open neighborhood $U$ and a  map $\Theta:U\cap K\to\mathrm{Emb}^r((-1,1),M)$ satisfying  for all $x\in U\cap K$:
\begin{enumerate}[\quad(L1)]
\item  $\Theta(x)(0)=x$;
\item $\mathfs L_{U}(x):=\Theta(x)((-1/2,1/2))$  is the connected component of $U\cap\mathfs L(x)$ which contains $x$;
\item $\Theta$ is continuous on {$U\cap K$} in the {compact-open} $C^r$ topology.
\end{enumerate}
The set $\mathfs L(x)$ is called the (global) {\em leaf of }$x$ and is an injectively embedded $C^r$ submanifold. The set $\mathfs L_{U}(x)$ is the {\em local leaf of $x$ (in $U$)}.
The sets $U$ above are called {\em lamination neighborhoods}.
Note that $\supp(\mathfs L)$ may be covered by a finite or  countable collection of  lamination neighborhoods.

\medskip
\noindent
{\sc Transversals.}
A {\em transversal}
to a continuous lamination $\mathfs L$ is a $C^1$-embedded one-dimensional sub-manifold $\tau\subset M$ which intersects $\supp(\mathfs L)$, such that for every $x\in\tau\cap\supp(\mathfs L)$ and every local leaf $L$ of $x$,
it holds $T_x M=T_x\tau\oplus T_x L$. We write in this case $\tau\pitchfork\mathfs L$.

\medskip
\noindent
{\sc Transverse dimension.}
Let $\dim_H$ denote the Hausdorff dimension.
 The {\em (upper) transverse dimension} of a continuous lamination $\mathfs L$ is
\begin{equation}\label{eq-tHD}
 \dbar(\mathfs L):={\sup}\{\dim_H(\tau\cap\supp(\mathfs L)): \tau\text{ is a transversal  }\}.
\end{equation}

\medskip
\noindent
{\sc Holonomy projections.}
Consider a point $x_0\in \supp(\mathfs L)$ with a lamination neighborhood $U_0$ and a transversal $\tau_0$ containing $x_0$.

Up to reducing $\tau_0$, we may assume that, for every $y\in \supp(\mathfs L)$ close to $x_{0}$, $\tau_0$ intersects the leaf ${\mathfs L}_{U_0}(y)$
in exactly one point.
Then there is an open set $V\owns x_0$
such that, for any transversal $\tau$ that is sufficiently  {uniformly} $C^1$-close to $\tau_0$, the following map $\Pi_\tau: V\cap \supp(\mathfs L)\to \tau$ is well-defined:
$$
{\Pi_{\tau}(y):=\text{unique point in  $\mathfs L_{U_0}(y)\cap \tau$}.}
$$
{This is because of the continuity condition (L3) in the definition of a lamination, and the transversality of the  intersection of $\mathfs L_{U_0}(y)$ and $\tau$ at $x_0$.}

\medskip
\noindent
{\sc Lipschitz holonomies property:}
We say that a lamination $\mathfs L$  has {\em Lipschitz holonomies} if, associated to any $x_0,U_0,\tau_0$ as above,
there exist $L>0$, a {uniformly $C^1$} neighborhood $\mathcal T$ of $\tau_0$
and a neighborhood $V$ of $x_0$ with the following property:
For any transversals $\tau,\tau'\in \mathcal T$,
the holonomy projection map $\Pi_{{\tau\to\tau'}}\colon \tau\cap V\cap \supp(\mathfs L)\to \tau'$
has Lipschitz constant  $\leq L$, where $\Pi_{{\tau\to\tau'}}$ is the restriction of the map $\Pi_{{\tau'}}$ to $\tau$.

\subsection{The stable and unstable laminations of a horseshoe}\label{Section-Dynamical-Laminations}
We will now demonstrate the definitions of the previous section by an important example. Throughout this section, we fix a real number $r>1$, and a $C^r$ diffeomorphism $f$ of a surface $M$. We also consider a basic set $\Lambda$ with stable and unstable dimensions {both equal to $1$.}

{Recall from Section~\ref{ss.hyperbolic} the local stable and unstable  manifolds $W^{s/u}_\varepsilon(x)$, where $\varepsilon>0$ is small enough.}
Let $W^s_\epsilon(\Lambda):=\bigcup_{x\in\Lambda}W^s_\epsilon(x), \quad W^u_\epsilon(\Lambda):=\bigcup_{x\in\Lambda}W^u_\epsilon(x).$
These sets are naturally partitioned: For each $x\in W^s_\epsilon(\Lambda)$, one considers the connected component
of $W^s(x)\cap W^s_\epsilon(\Lambda)$ containing $x$ (for the topology of $W^s(x)$). This defines
two one-dimensional continuous laminations with $C^r$ leaves, called (local) {\em stable} and {\em unstable laminations of $\Lambda$},
and denoted by $\mathfs W^u_\epsilon(\Lambda)$ and  $\mathfs W^s_\epsilon(\Lambda)$.  See~\cite{Hirsch-Pugh-Stable-Manifold-Thm} for details.
Set
$$\lambda^u(f,\Lambda):=\lim\limits_{n\to+\infty}\frac{1}{n}\log(\sup_{x\in\Lambda}\|Df^n_{x}\|) \text{ and }
\lambda^s(f,\Lambda):=\lim\limits_{n\to+\infty}\frac{1}{n}\log(\sup_{x\in\Lambda}\|Df^{-n}_{x}\|).$$

The following statement summarizes well-known results:

\begin{thm}\label{basic}
{Let $\Lambda$ be a basic set of a $C^r$ diffeomorphism $f$ on a closed surface, where $r>1$. Then,} {for small enough $\varepsilon>0$:}
\begin{enumerate}[(1)]
\item\label{basic0} The local manifolds $W^{s/u}_\varepsilon(x)$ are given by $C^r$ embeddings depending continuously on $x\in\Lambda$;
\item\label{basic1} The laminations $\mathfs W^u_\epsilon(\Lambda)$, $\mathfs W^s_\epsilon(\Lambda)$ have Lipschitz holonomies.
\item\label{basic2} There exist $\dbar^u_\Lambda,\dbar^s_\Lambda\in (0,1)$ such that for any $\varepsilon>0$ small enough and any $x\in \Lambda$,
$$\dim_H({W^s_\epsilon(x)}\cap \Lambda)=\dbar^u_\Lambda
\text{ and } \dim_H({W^u_\epsilon(x)}\cap \Lambda)=\dbar^s_\Lambda.$$
\item\label{basic3} $\dbar^u_\Lambda \geq h_{\top}(f|_{\Lambda})/{\lambda^s}(f,\Lambda) \text{ and }
\dbar^s_\Lambda \geq h_{\top}(f|_{\Lambda})/{\lambda^u}(f,\Lambda).$

{Also $\dbar^s_\Lambda \geq h(f,\mu)/{\lambda^u}(f,\mu)$ and $\dbar^u_\Lambda \geq h(f,\mu)/{\lambda^s}(f,\mu)$ for any ergodic measure supported on $\Lambda$.}
\end{enumerate}
\end{thm}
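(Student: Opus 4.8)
The plan is to establish the five assertions of Theorem~\ref{basic} in sequence, invoking the standard Pesin–Hirsch–Pugh theory for uniformly hyperbolic sets together with the thermodynamic formalism for subshifts of finite type. For item~(\ref{basic0}), I would recall that by \cite{Hirsch-Pugh-Stable-Manifold-Thm} the local stable and unstable manifolds $W^{s/u}_\varepsilon(x)$ of a hyperbolic set are graphs of $C^r$ maps over the corresponding subspaces $E^{s/u}(x)$, and that these graphs vary continuously in the $C^r$ topology with $x\in\Lambda$ once $\varepsilon$ is small enough; this is precisely the content of condition (L3) in the definition of a continuous lamination with $C^r$ leaves, so $\mathfs W^{s/u}_\epsilon(\Lambda)$ are indeed such laminations. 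For item~(\ref{basic1}), the Lipschitz regularity of the stable and unstable holonomies is classical for horseshoes on surfaces but, as the paper's outline announces, is relegated to the appendix; so here I would simply cite that appendix, noting that the key point is the one-dimensionality of the leaves (so that the holonomy is a monotone map between transversals) combined with the bounded distortion estimates for $Df^{\pm n}$ along the contracting/expanding directions, which give Hölder — in fact Lipschitz — control because $\dim E^s = \dim E^u = 1$.

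For item~(\ref{basic2}), I would code $\Lambda$ by an irreducible subshift of finite type $\sigma\colon\Sigma\to\Sigma$ via a Markov partition, so that $W^s_\epsilon(x)\cap\Lambda$ is, up to the holonomy, in bi-Lipschitz correspondence with a "cylinder set" in $\Sigma$ in the past coordinates, and dually for $W^u_\epsilon(x)\cap\Lambda$. The Hausdorff dimension of these Cantor sets is computed by Bowen's equation: $\dbar^u_\Lambda$ is the unique $t$ such that $P(-t\log\|Df|_{E^u}\|)=0$ for the restricted dynamics (and symmetrically for $\dbar^s_\Lambda$), which is independent of $x$ by the transitivity of $\Sigma$ and the Lipschitz holonomies of item~(\ref{basic1}); since $\Lambda$ is infinite and hyperbolic the relevant potential is non-constant and bounded away from $0$, forcing $0<\dbar^{s/u}_\Lambda<1$. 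For item~(\ref{basic3}), the inequalities follow from the variational principle for the pressure together with Bowen's equation: for any ergodic $\mu$ on $\Lambda$ one has $h(f,\mu)-\dbar^u_\Lambda\int\log\|Df|_{E^u}\|\,d\mu \le P(-\dbar^u_\Lambda\log\|Df|_{E^u}\|)=0$, and since $\int\log\|Df|_{E^u}\|\,d\mu=\lambda^u(f,\mu)$ this rearranges to $\dbar^u_\Lambda\ge h(f,\mu)/\lambda^s(f,\mu)$ — wait, more carefully: the stable Cantor set $W^s_\epsilon(x)\cap\Lambda$ sees the contraction of $Df^{-1}$ along $E^u$ (equivalently expansion along $E^u$ forward), so its dimension is governed by $\lambda^u$ while $W^u_\epsilon(x)\cap\Lambda$ is governed by $\lambda^s$; taking $\mu$ a measure of near-maximal entropy and then passing to the supremum $\lambda^{s/u}(f,\Lambda)\ge\lambda^{s/u}(f,\mu)$ yields the topological-entropy version as well.

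The main obstacle, I expect, is the careful bookkeeping in item~(\ref{basic2}): one must verify that the symbolic cylinder sets really are bi-Lipschitz to $W^s_\epsilon(x)\cap\Lambda$ \emph{uniformly in} $x$, which is exactly where the Lipschitz holonomy property (\ref{basic1}) and the bounded distortion of $Df$ on $\Lambda$ enter, and one must check that the Hausdorff dimension does not depend on the choice of transversal in \eqref{eq-tHD} — again a consequence of Lipschitz holonomies. Everything else is a direct application of Bowen's formula and the variational principle, which I would only sketch rather than rederive. I would also remark that the strict inequality $\dbar^{s/u}_\Lambda<1$ uses that $\Lambda$ has zero Lebesgue measure (it is totally disconnected, being a horseshoe, or more generally a basic set which is not open), so the contracting potential is strictly negative on an invariant set and Bowen's $t$ is strictly below~$1$.
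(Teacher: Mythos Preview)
Your strategy is essentially the one behind the references the paper cites (Hirsch--Pugh for~(\ref{basic0}), the appendix for~(\ref{basic1}), Palis--Viana for~(\ref{basic2}), Manning for~(\ref{basic3})), so at the level of architecture there is nothing to object to. The paper itself gives no self-contained argument here: it simply points to these sources and notes that Manning's inequality $\dbar^u_\Lambda\ge h(f,\mu)/\lambda^s(f,\mu)$, combined with $\lambda^s(f,\mu)\le\lambda^s(f,\Lambda)$ and the variational principle, gives the topological-entropy version.

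However, your $s$/$u$ bookkeeping in items~(\ref{basic2}) and~(\ref{basic3}) is genuinely wrong, and your attempted self-correction lands on the wrong answer. The Cantor set $W^s_\epsilon(x)\cap\Lambda$ lives \emph{inside} the stable leaf; the map that refines its covers is $f^{-1}$, which expands along $E^s$ at rate $\|Df^{-1}|_{E^s}\|$. Hence Bowen's equation for $\dbar^u_\Lambda:=\dim_H(W^s_\epsilon(x)\cap\Lambda)$ is
\[
P\bigl(-t\log\|Df^{-1}|_{E^s}\|\bigr)=0\quad\text{at }t=\dbar^u_\Lambda,
\]
not the equation with $\|Df|_{E^u}\|$ that you wrote (that one gives $\dbar^s_\Lambda$). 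With the correct potential, the variational principle reads $h(f,\mu)-\dbar^u_\Lambda\,\lambda^s(f,\mu)\le 0$, which is exactly $\dbar^u_\Lambda\ge h(f,\mu)/\lambda^s(f,\mu)$ as stated. Your sentence ``the stable Cantor set $W^s_\epsilon(x)\cap\Lambda$ sees the contraction of $Df^{-1}$ along $E^u$\dots\ so its dimension is governed by $\lambda^u$'' has the direction reversed: the stable leaf is tangent to $E^s$, not $E^u$, and its transverse Cantor set is governed by $\lambda^s$. Once you swap $s\leftrightarrow u$ in those two lines, your argument goes through and coincides with Manning's.

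A minor point: your justification of the strict inequality $\dbar^{s/u}_\Lambda<1$ via ``$\Lambda$ has zero Lebesgue measure'' is circular as written; the clean argument is that $P(-\log\|Df|_{E^u}\|)\le 0$ with equality only if there is an SRB measure absolutely continuous along $W^u$, which forces $\Lambda$ to contain an open set --- impossible for a proper basic set on a surface.
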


\noindent
\emph{Comments.}
$\dbar^u_\Lambda$ is the transverse dimension to the lamination $\mathfs W^u_\epsilon(\Lambda)$. It should not be confused with the
(non-transverse!) unstable dimension $\delta^u(\mu)$ of a measure $\mu$, introduced in Section~\ref{intro-finite-regularity}.

Part~\eqref{basic0} is standard, including for non-integer $r$, see, e.g., \cite{yoccoz}.
Part~\eqref{basic1} can be obtained for $C^2$ diffeomorphisms, by noticing that $\mathfs W^{s/u}(\Lambda)$ extend to $C^1$-foliations of an open set (see for instance~\cite{Bonatti-Crovisier}).
When  $r\in (1,2)$, one can use the methods of  \cite{Pinto-Rand}. We give a proof in Appendix~\ref{s.holonomy} for completeness.
Part~\eqref{basic2} is proved in~\cite{palis-viana}.
Part~\eqref{basic3} uses~\cite{manning-dimension}:
it asserts that for any basic set $\Lambda$ and any ergodic measure $\mu$ on $\Lambda$, there is an inequality:
 $$
  \dbar^u_\Lambda\geq h(f,\mu)/\lambda^s(f,\mu)=\delta^s(\mu).
 $$
One concludes using $\lambda^s(f,\mu)\leq \lambda^s(f,\Lambda)$ and the variational principle $h_\top(f,\Lambda)=\sup_\mu h(f,\mu)$, where $\mu$ ranges over the ergodic measures supported on $\Lambda$.
Note that~\cite{manning-dimension} assumes $f$ to be Axiom A, but does not use it.
{(In case $r\geq 2$, part~\ref{basic3} also follows from~\cite{Ledrappier-Young-II}.)}

{The parts (2), (3), (4) do not generalize in general to higher dimensions.}

\subsection{A ``dynamical" Sard's Theorem}
Fix a real number $r>1$ and let  $\mathfs L$ be a one-dimensional continuous lamination with $C^r$ leaves inside a closed  Riemannian surface $M$.

Suppose  $\gamma:[0,1]\to M$ is a $C^1$-curve such that  $\gamma'(t)\neq 0$ for all $t\in [0,1]$. We are interested in the lamination
$$
\mathfs N_\gamma(\mathfs L):=\{\mathfs L(\gamma(t)):t\in [0,1],\ \gamma(t)\in\supp(\mathfs L)\text{ and }\gamma'(t)\in T_{\gamma(t)}\mathfs L\}
$$
made from all $\mathfs L$--leaves with at least one non-transverse intersection with $\gamma$.
The following version of Sard's Theorem  says that the higher the smoothness of the leaves of $\mathfs L$, the smaller  the transverse dimension of $\mathfs N_\gamma(\mathfs L)$.
Recall the definition of the transverse dimension $\dbar$ from eq.~\eqref{eq-tHD}.

\begin{thm}\label{t.DynSard}
Suppose $\mathfs L$ is a continuous lamination with $C^r$ leaves $(r> 1)$ and  Lipschitz holonomies, inside a compact closed surface.
Then for every $C^r$ curve $\gamma$,
$$\dbar (\mathfs N_\gamma(\mathfs L))\leq 1/r.$$
\end{thm}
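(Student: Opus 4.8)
The statement is a dynamical (foliated) refinement of the classical Sard theorem: instead of ``critical values of a $C^r$ map have measure zero'', we want ``the transverse dimension of the set of leaves tangent to a fixed $C^r$ curve $\gamma$ is at most $1/r$''. The plan is to work in a single lamination chart, reduce the problem to a one-variable statement about a $C^r$ function, and then quote the sharp quantitative Sard estimate (Bézout/Yomdin-type) which says that the set of points where a $C^r$ function on an interval has ``first $k$ derivatives small'' is covered by boundedly many intervals of controlled length. Concretely: fix a point $x_0\in\supp(\mathfs N_\gamma(\mathfs L))$, take a lamination neighborhood $U_0$ with the map $\Theta\colon U_0\cap\supp(\mathfs L)\to\Emb^r((-1,1),M)$, and a transversal $\tau_0$ through $x_0$. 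Since $\supp(\mathfs N_\gamma(\mathfs L))$ is covered by countably many such neighborhoods and Hausdorff dimension is countably stable, it suffices to bound $\dim_H(\tau\cap\supp(\mathfs N_\gamma(\mathfs L))\cap V)$ for a transversal $\tau$ near $\tau_0$ and a small $V\ni x_0$.

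The key reduction is as follows. Using the chart $\Theta$ and the Lipschitz holonomies hypothesis, I can set up coordinates $(u,v)$ on $V$ in which the local leaves are (uniformly Lipschitz, and leafwise $C^r$) graphs $v=g(u;v_0)$, $v_0$ parametrizing the leaf, and in which $\tau$ is, say, the axis $\{u=0\}$, so that a leaf is detected on $\tau$ by its parameter $v_0$. The curve $\gamma$, being $C^r$ and transverse-or-not to the leaves, can be written (after reparametrization by its $u$-coordinate, which is legitimate away from the finitely many points where $\gamma$ is tangent to the $v$-direction) as $v=\psi(u)$ for a $C^r$ function $\psi$. A leaf $v=g(\cdot;v_0)$ is \emph{tangent} to $\gamma$ at some point iff the $C^r$ function $u\mapsto \psi(u)-g(u;v_0)$ has a point where both it and its first derivative vanish; by the holonomy Lipschitz bound, the map $v_0\mapsto$ (value of this function at a fixed $u$) is bi-Lipschitz onto its image, so measuring the $v_0$-set on $\tau$ is comparable (up to Lipschitz distortion, hence with no change of Hausdorff dimension) to measuring the set of critical values of the single $C^r$ function $F(u):=\psi(u)-g(u;v_0)$ \emph{viewed appropriately} — more precisely, to measuring the set of $u$ where $\gamma$ meets a leaf non-transversally, pushed to $\tau$ by holonomy. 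Thus $\tau\cap\supp(\mathfs N_\gamma(\mathfs L))\cap V$ is, up to a Lipschitz change of variables, the \emph{set of critical values} of the $C^{\lfloor r\rfloor}$ (with $(r-\lfloor r\rfloor)$-Hölder top derivative) function $G(u):=\Pi_\tau(\gamma(u))$ defined on the parameter interval of $\gamma$, where ``critical'' means $G'(u)=0$ (this is exactly the condition $\gamma'\in T\mathfs L$).

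Once the problem is reduced to: \emph{the set of critical values of a $C^r$ function $G\colon[0,1]\to\R$ has Hausdorff dimension $\le 1/r$}, I invoke the classical refinement of the Morse–Sard theorem for functions of one variable (this goes back to Whitney and is made quantitative in the Yomdin–Gromov / Comte–Yomdin circle of ideas): if $G$ is $C^k$ with $k=\lceil r\rceil$ and its $\lfloor r\rfloor$-th derivative is $(r-\lfloor r\rfloor)$-Hölder, then for every $\delta>0$ the sublevel set $\{u: |G'(u)|\le\delta\}$ is covered by $O(\delta^{-1/r})$ intervals on each of which $G$ oscillates by $O(\delta^{1+1/r})$ — wait, cleaner: the set $\{G'=0\}$ has the property that for each scale $\delta$, $G$ maps it into $O(\delta^{1/r})$ wait, I will state it as the standard conclusion — the image $G(\{G'=0\})$ is covered, at scale $\varepsilon$, by $O(\varepsilon^{-1/r})$ intervals of length $\varepsilon$, hence has box (and a fortiori Hausdorff) dimension $\le 1/r$. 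The proof of this one-dimensional quantitative Sard estimate is the usual Taylor-expansion-plus-covering argument: near a critical value one controls $G$ by its Taylor polynomial of degree $\lfloor r\rfloor$ with Hölder error, giving the $\varepsilon^{1/r}$ gain.

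The main obstacle is the first, geometric step: organizing the reduction so that the Lipschitz holonomy hypothesis genuinely lets us replace the two-parameter family of leaves by the single function $u\mapsto\Pi_\tau(\gamma(u))$ \emph{without losing smoothness}. The leaves are only a \emph{continuous} family of $C^r$ curves, so the ``graph function'' $g(u;v_0)$ is $C^r$ in $u$ but merely Lipschitz (via holonomy) in $v_0$; the care is in checking that the tangency condition and the holonomy projection interact so that $G=\Pi_\tau\circ\gamma$ is as regular as $\gamma$ \emph{in the $u$ variable alone}, which is what one needs — the holonomy is applied at fixed transversal and only its Lipschitz-in-$v_0$ nature is used, to transfer the dimension bound from the $u$-side to the $\tau$-side. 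I would handle the finitely many points where $\gamma$ is tangent to the leaf direction \emph{transversally to the transversal} (i.e.\ where the $u$-reparametrization of $\gamma$ degenerates) by splitting $[0,1]$ into finitely many arcs, on each of which either $\gamma$ is a graph over the $u$-axis or over the $v$-axis, and applying the symmetric argument; these finitely many exceptional points contribute nothing to the dimension. The conclusion $\dbar(\mathfs N_\gamma(\mathfs L))\le 1/r$ then follows by taking the supremum over transversals and the countable union over chart neighborhoods.
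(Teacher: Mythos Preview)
Your plan has a genuine gap at precisely the step you flag as the ``main obstacle'': the function $G(u):=\Pi_\tau(\gamma(u))$ is in general \emph{not} $C^r$ --- it is only Lipschitz. The holonomy projection $\Pi_\tau$ is assumed merely Lipschitz, and composing it with the $C^r$ curve $\gamma$ gives nothing better than a Lipschitz map. Concretely, in local coordinates where the leaves are graphs $v=g(u;v_0)$ with $g$ only continuous (or Lipschitz) in $v_0$, solving $\psi(u)=g(u;v_0)$ for $v_0$ in terms of $u$ produces a function that inherits only the transverse (Lipschitz) regularity, not the leafwise $C^r$ regularity. A simple example: let the leaves be $v=v_0$ for $v_0\le 0$ and $v=v_0(1+h(u))$ for $v_0>0$ with $h$ smooth, $h(0)=0$; the holonomies are Lipschitz but not $C^1$, and for $\gamma$ given by $v=\psi(u)$ the function $G$ equals $\psi(u)$ where $\psi\le 0$ and $\psi(u)/(1+h(u))$ where $\psi>0$, which is typically not even $C^1$ across $\{\psi=0\}$. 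So the one-variable $C^r$ Morse--Sard theorem simply does not apply to $G$.

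The paper circumvents this by never forming such a composite function. In a chart where $\gamma$ is the horizontal axis, each leaf through $\gamma$ at abscissa $t$ is the graph of a $C^r$ function $\varphi_t$ with $\varphi_t(t)=0$; the family $t\mapsto\varphi_t$ is only continuous in $C^r$, but each $\varphi_t$ individually is $C^r$. The tangency locus on a fixed transversal $\tau_0$ is $\mathfs T=\{\varphi_t(0):\varphi_t'(t)=0\}$, and the proof bounds $\dim_H(\mathfs T)$ by stratifying the tangency set according to the order of contact: $T_s=\{t:\varphi_t'(t)=\cdots=\varphi_t^{(s)}(t)=0,\ \varphi_t^{(s+1)}(t)\ne 0\}$ for $1\le s<[r]$, and $\wh T_r=\{t:\varphi_t'(t)=\cdots=\varphi_t^{([r])}(t)=0\}$. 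On $\wh T_r$ the Taylor expansion of $\varphi_t$ (not of any Lipschitz composite) yields $|\varphi_t(s)|\le C|t-s|^r$, and the Lipschitz holonomy transports this to $\tau_0$, giving the $1/r$ bound by a direct covering argument. The strata $T_s$ are handled by a separate argument showing they are at most countable: if $t_n\to t_*$ in $T_s$, one compares $\varphi_{t_n}$ and $\varphi_{t_*}$ at two scales and derives a contradiction with the Lipschitz constant of the holonomy. This order-of-contact decomposition, together with the use of Taylor's formula \emph{leafwise} rather than for a projected function, is the missing idea in your approach.
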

\begin{remark}
Theorem \ref{t.DynSard} is an immediate consequence of the classical Sard's theorem \cite[Thm 2]{Sard-1958} whenever the lamination extends to a $C^r$ foliation of a neighborhood of its support.
 But the laminations $\mathfs W^s(\Lambda),\mathfs W^u(\Lambda)$ to which we intend to apply Theorem \ref{t.DynSard} do not in general~\cite{HW} even extend to  a differentiable foliation of an open neighborhood of their support.
\end{remark}
\begin{proof}
It is enough to find a countable cover of $\mathfs N_\gamma(\mathfs L)$ by open sets $W_i$ such that
$
\dbar (\mathfs N_\gamma(\mathfs L,W_i))\leq 1/r
$, where
$$
\mathfs N_\gamma(\mathfs L,W_i):=\{\ell\in\mathfs L:\ell\text{ intersects $\gamma$ non-transversally somewhere in  } W_i\}.
$$
This is because
 $\mathfs N_\gamma(\mathfs L)\subset\bigcup \mathfs N_\gamma(\mathfs L,W_i)$, and the Hausdorff dimension of a countable union is the supremum of the Hausdorff dimensions of its elements.

We cover $\supp(\mathfs L)$ by a countable collection of lamination neighborhoods $U_\alpha$.
For each tangency point $p$, there exist $U_\alpha$ and a $C^r$ chart
$\chi\colon (-1,1)^2\to V$ such that (see Figure \ref{figure-inside-chi}):
\begin{enumerate}[\quad(1)]
\item $V\subset U_\alpha$, $\gamma\cap V=\chi((-1,1)\times\{0\})$ and $p=\chi(0,0)$.
\item For each $t\in (-1,1)$ such that $\chi(t,0)\in \supp(\mathfs L)$,
we denote by $\mathfs L_{V}(t)$ the connected component of $\mathfs L(\chi(t,0))\cap V$
containing $\chi(t,0)$. Its preimage by $\chi$ is
the graph of a $C^r$ function $\varphi_t\colon (-1,1)\to (-1,1)$.
It satisfies $\varphi_t(t)=0$ and by (L3), it varies continuously with $t$ in the $C^r$ topology { on $\{t:\chi(t,0)\in\supp(\mathfs L)\}$.}
\item Let $\tau_s:=\{s\}\times (-1,1)$ for each $s\in (-1,1)$.
For each $s$, let $\pi_{s}$ denote the holonomy
which sends  points $(x,\varphi_t(x))$ (with $\chi(t,0)\in \supp(\mathfs L)$)
to $(s,\varphi_t(s))$.
There exists $L_0>0$ such that for any $s,s'\in (-1,1)$, the restriction of the map $\pi_s$
to $\tau_{s'}$ is $L_0$-Lipschitz.
\end{enumerate}

Let $W=\chi((-1/2,1/2)\times (-1,1))$ . We will show that
$\dbar(\mathfs N_\gamma(\mathfs L,W))\leq 1/r.$
We parametrize the set of leaves in $\mathfs N_\gamma(\mathfs L,W)$ by
$$T:=\{t\in (-1/2,1/2): \chi(t,0)\in \supp(\mathfs L) \text{ and }\vf_t'(t)=0\}$$
 $$
   \text{ and } \mathfs T:=\{(0,\varphi_t(0)): t\in T\}=\pi_0({\chi(T\times\{0\})})
$$
(so that $\mathfs T$ is the locus of tangency and $T$ is the set of their abscissas).
\begin{figure}
\begin{center}
\includegraphics[scale=0.55]{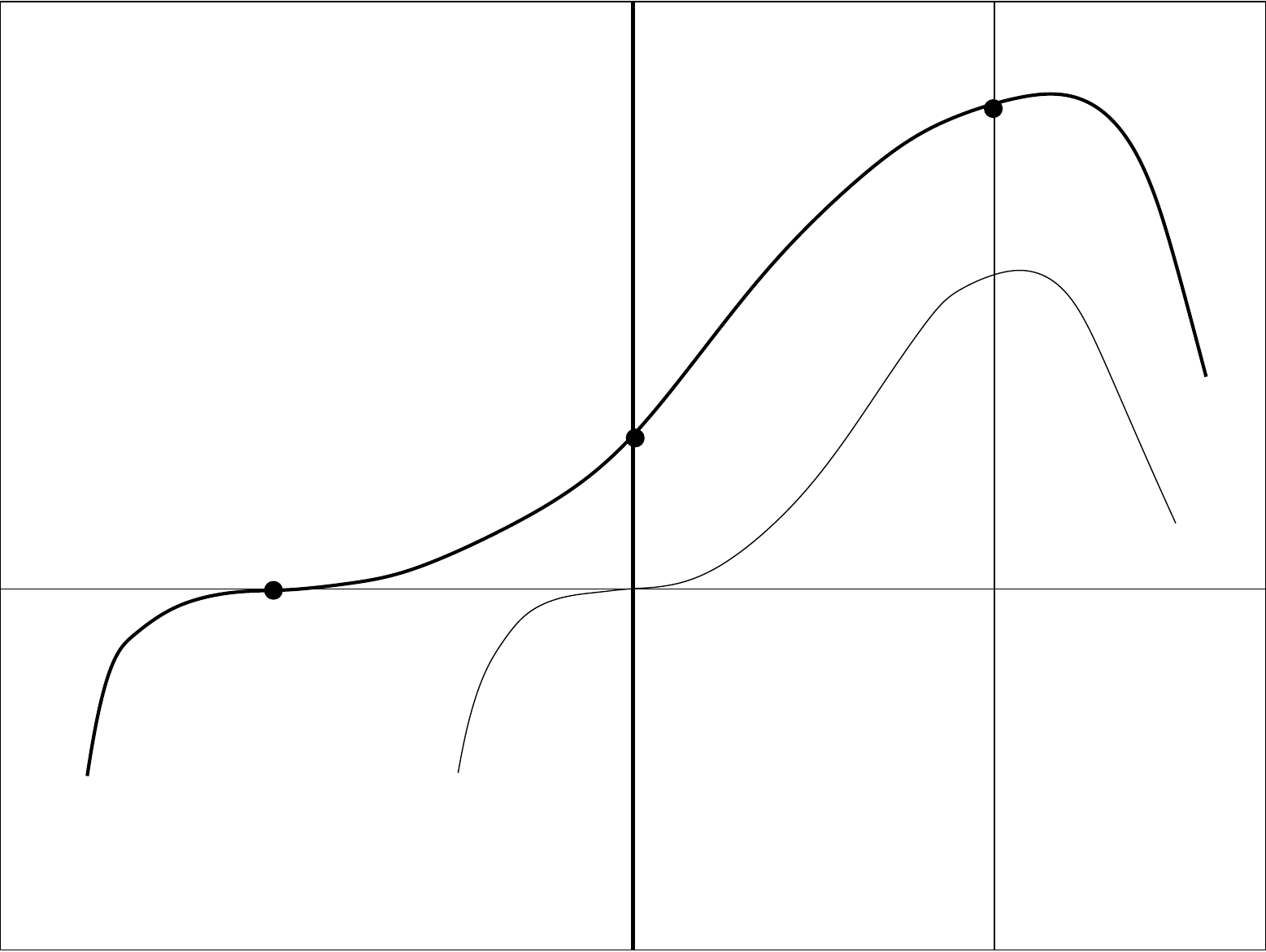}
\put(-10,60){$\gamma$}
\put(-27,160){\small $\mathcal{L}_V(q)$}
\put(-52,30){$\tau_s$}
\put(-122,30){$\tau_0$}
\put(-77,170){$\pi_s(q)$}
\put(-150,103){$\pi_0(q)$}
\put(-220,80){\small $q=\chi(t,0)$}
\put(-120,63){\small $p = \chi(0,0)$}
\caption{Inside the chart $\chi$. A point of tangency ${q}=\chi(t,0)\in\supp(\folL)\cap\gamma$ parameterized by $t\in T$. The holonomy projections $\pi_0(q)=\folL_V(q)\cap\tau_0$ and $\pi_s(q)=\chi(s,\phi_t(s))\in\tau_s$ are also drawn.}\label{figure-inside-chi}
\end{center}
\end{figure}

\bigskip
\noindent
{\sc Claim 1.\/} $\dbar(\mathfs N_\gamma(\mathfs L,W))\leq \dim_H(\mathfs T)$.

\medskip
\noindent
{\em Proof.\/}
Fix some transversal $\tau\pitchfork \mathfs N_\gamma(\mathfs L{,W})$.

 For each $x\in \tau\cap \supp N_\gamma(\mathfs L{,W})$,
 there exists a compact  disc $D\subset \mathfs L(x)$
 which contains $x$ and some point $y=\chi(0,\varphi_t(0))$ in $\mathfs T$.
Using the definition of holonomy projections and the compactness of $D$,
one can construct a sequence of points
$x_0=x, x_1, x_2,\ldots,x_{n-1},x_n=y$
in $D$, open sets $V_i, V_i'$ such that $V_i\owns x_i$, $V_i'\owns x_{i+1}$ and   $V_i'\subset V_{i+1}$,
 transversals
$\tau^0:=\tau, \tau^1:=\tau_{s_1},\ldots,\tau^{n-1}:=\tau_{s_{n-1}},\tau^n:=\tau_0$
passing through $x_i$, and bi-Lipschitz {holonomies $\Pi_{\tau^i\to\tau^{i+1}}$}$:\supp(\mathfs L)\cap \tau^{i}\cap V_i\to\supp(\mathfs L)\cap \tau^{{i+1}}\cap V_{i}'$.

Composing these maps, we obtain an open neighborhood $O$ of $y$,
an {open set } $X\owns x$
and a {bi-Lipschitz holonomy} which associates to each $x'{=\chi(t',\vf_{t'}(t'))\in X\cap \tau\cap \supp N_\gamma(\mathfs L)}$
a point $y'=\chi(0,\varphi_{t'}(0))$ in { $\mathfs T\cap O$}, by holonomy.
Note that {by choosing $X$ sufficiently small one can guarantee that} $x'$ and $y'$ are contained in a compact disc $D'\subset \mathfs L(x')$ which is close to $D$ in the Hausdorff topology.

Since the Hausdorff dimension does not {change after taking the image by a bi-Lipschitz holonomy},
one gets $\dim_H(X\cap\tau\cap \supp N_\gamma(\mathfs L))\leq \dim_H(\mathfs T)$.

 Let $\mathfs C$ denote the space the  triples $(x,D,y)$, endowed with the Hausdorff topology.
Each  triple $(x,D,y)\in \mathfs C$ admits a neighborhood $U$ in $\mathfs C$ such that
$$\dim_H(\{x'{\in\tau}:\; (x',D',y')\in U\})\leq\dim_H(\mathfs T).$$
Since the Hausdorff topology on $\mathfs C$ is separable {and metrizable},
one can cover $\mathfs C$ by countably many such neighborhoods $U_i$.
This gives
$$\dim_H[\tau\cap \supp N_\gamma(\mathfs L{,W})]= \sup_i \dim_H[\{x':\; (x',D',t')\in {U_i}\}]\leq\dim_H(\mathfs T).$$
Passing to the supremum over all  $\tau\pitchfork\mathfs N_\gamma(\mathfs L{,W})$, gives $\dbar(\mathfs N_\gamma(\mathfs L,W))\leq \dim_H(\mathfs T)$.

\medskip
To estimate  $\dim_H(\mathfs T)$, we decompose
 $\mathfs T=\bigcup_{s=1}^{[r]-1} \mathfs T_s\cup\wh{\mathfs T}_{r}$, where (identifying $s$ and $\chi(s,0)$ as convenient),
\begin{align*}
{\mathfs T}_s&:=\pi_0(T_s) \text{ and } \wh {\mathfs T}_{{r}}:=\pi_0(\wh T_r),\\
T_s&:=\{t\in T: \;{\vf_t(t)=}\vf_t'(t)=\cdots=\vf_t^{(s)}(t)=0,\vf_t^{(s+1)}(t)\neq 0\},\\
\wh{T}_r&:=\{t\in T:\;\vf_t(t)=\vf_t'(t)=\cdots=\vf_t^{([r])}(t)=0\}.
\end{align*}
The theorem follows immediately from the following claims.

\bigskip
\noindent
{\sc Claim 2.\/} $\dim_H(\wh{\mathfs T}_r)\leq 1/r$.
\begin{proof}
For $\delta>0$, let $\wh{T}_r=\bigcup_{i=1}^N J_i$ be a {partition} into sets of diameter $<\delta$
with disjoint convex hulls.
In particular $\sum_{i=1}^N \diam(J_i)\leq 2$. For each $i=1,\ldots,N$, one chooses $t_i\in J_i$
and projects $J_i$ {along $\mathfs L$} {to} a subset $\mathfs J_i:=\pi_{t_i}(J_i)$ of the vertical line $\tau_i:=\{t_i\}\times (-1,1)$ . Each $x\in J_i\subset {\wh{T}_r}$, {viewed as the point $(x,0)=(x,\vf_x(x))\in{\chi^{-1}}(\mathfs N_\gamma(\mathfs L))$},   maps to the point
$(t_i,\varphi_x(t_i))$.

The Taylor  formula at $x$ (in the Lagrange form) gives
$$
|\vf_x(t_i)|=|\vf_x(x+(t_i-x))|\leq \frac{|t_i-x|^{[r]}}{[r]!}\biggl(\sup_{|y-x|\leq \delta}|\vf_x^{([r])}(y)|\biggr)\leq C(\diam(J_i))^{r},
$$
where
$$
C:=\begin{cases}
\frac{1}{r!}\sup\limits_{{t\in T}}\|\vf^{(r)}_t\| & \text{ if $r\in \NN$,}\\
\frac{1}{[r]!}\;\sup\limits_{{t\in T}}\;\sup\limits_{-1/2\leq x<y\leq 1/2}\frac{\|\vf^{([r])}_t(x)-\vf^{([r])}_t({y})\|}{{|x-y|}^{r-[r]}} & \text{ otherwise.}
\end{cases}
$$

By (L3),  the map $t\mapsto \varphi_t$ is continuous {on $T$} in the $C^r$ topology, so $C<\infty$.

It follows that $\diam(\mathfs J_i)<2C(\diam(J_i))^r$ and
 $$
    \diam[\pi_0(J_i)]=\diam[\pi_0(\mathfs {J}_i)]\leq 2CL_0(\diam(J_i))^r.
 $$
Thus $\wh{\mathfs T}_r$ is covered by $N$ sets $\pi_0(J_i)$ of diameter smaller than $2CL_0\delta^{r}$
and
 $
    \sum_{i=1}^N [\diam \pi_0(J_i)]^{1/r}\leq (2CL_0)^{1/r}\sum_{i=1}^N \diam(J_i)\leq 2(2CL_0)^{1/r}.
 $
Hence
the $1/r$-dimensional Hausdorff measure of $\wh{\mathfs T}_r$ is finite.
\end{proof}

\bigskip
\noindent
{\sc Claim 3.\/} \emph{For $1\leq s\leq [r]-1$, $\mathfs T_s$ is at most countable,  whence $\dim_H(\mathfs T_s)=0$.}
\begin{proof} It is enough to show that  all accumulation points of  $T_s$ lie outside $T_s$.
So we consider a limit $t_\ast\in [-1/2,1/2]$ of some sequence of points $t_n\in T_s\setminus\{t_\ast\}$ and assume by contradiction that $t_\ast\in T_s$.

Let $\vf_{n}:=\vf_{t_n}$, $\vf_{\ast}:=\vf_{t_\ast}$, $c_n=\vf_{n}^{({s+1})}(t_n)/(s+1)!$ and $c_\ast=\vf_{\ast}^{({s+1})}(t_\ast)/(s+1)!$.
Since $t_\ast, t_n\in {T_s}$, we have $c_n, c_\ast\neq 0$.
Since $\vf_{n}\to\vf_{\ast}$ in the $C^r$ topology, we have $c_n\to c_\ast$.
By Taylor's formula in the Lagrange form, there exists $\varepsilon\colon \RR_+\to \RR_+$ {non-decreasing} such that $\varepsilon(u)\xrightarrow[u\to 0^+]{} 0$, and
 for any $n$ and any $x\in (-1,1)$,
$$
\big|\vf_{n}(x)-c_n (x-t_n)^{{s+1}}\big|\leq \varepsilon(|x-t_n|)\cdot|x-t_n|^{s+1},$$
$$\big|\vf_{\ast}(x)-c_\ast (x-t_\ast)^{{s+1}}\big|\leq \varepsilon(|x-t_\ast|)\cdot|x-t_\ast|^{s+1}.$$

\begin{figure}[ht]
\begin{center}
\includegraphics[scale=0.55]{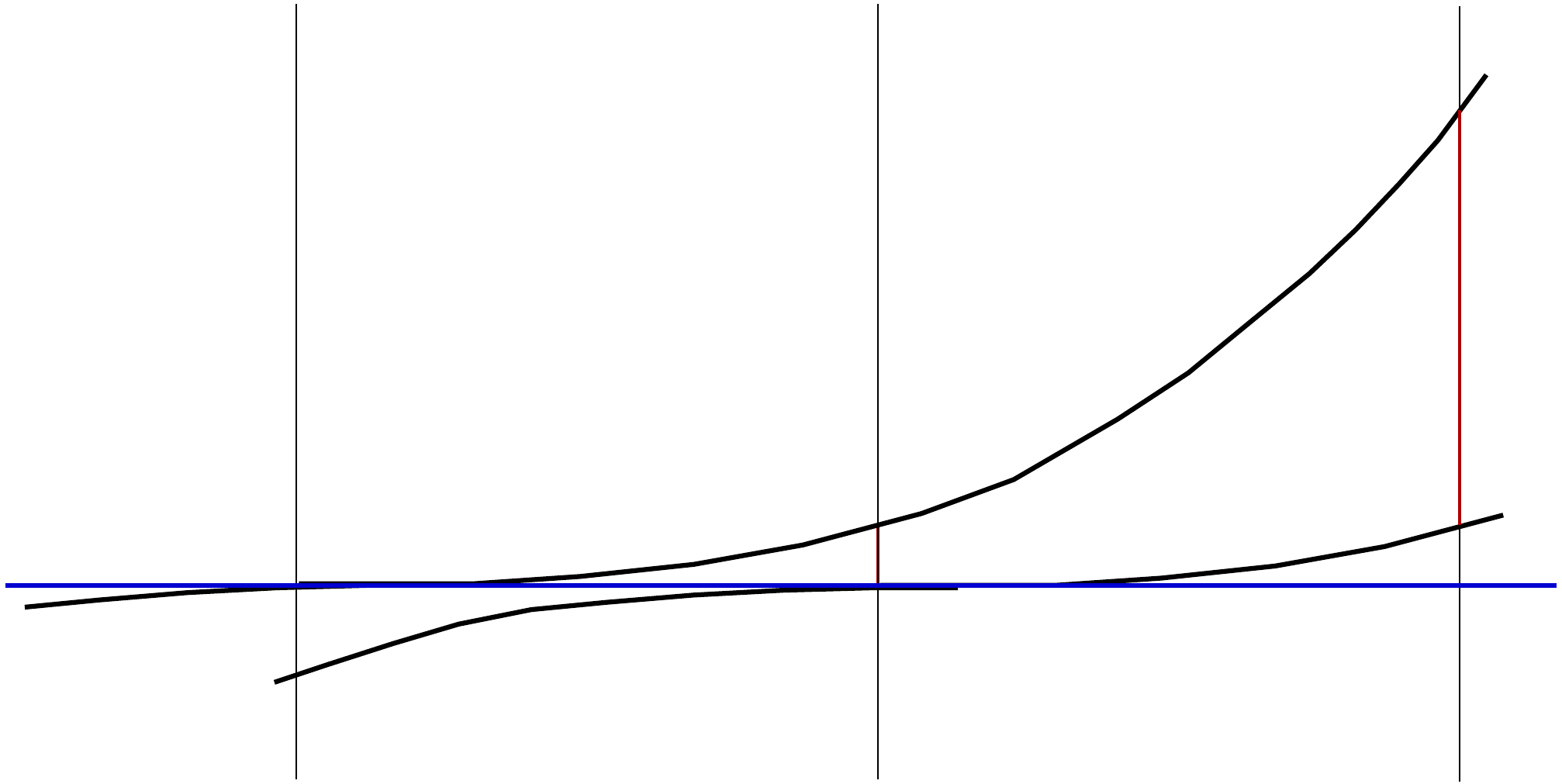}
\begin{picture}(0,0)
\put(-310,47){$\gamma$}
\put(-143,45){$B_n$}
\put(-48,53){$\mathfs L_*$}
\put(-82,100){$\mathfs L_n$}
\put(-33,5){$x$}
\put(-155,5){$x_*$}
\put(-278,5){$x_n$}
\put(-20,90){$A_n$}
\end{picture}
\end{center}
\caption{\label{f.Sard-proof} {Proof that $\mathfs T_s$ is countable (Claim 3).}}
\end{figure}

{Choose $K>2$ so large that $(K+1)^{s+1}-K^{s+1}>L_0$, where $L_0>0$ is the Lipschitz constant of the holonomies.}
For $n\gg1$, let $x:=t_\ast+K(t_\ast-t_n)$ and
\begin{align*}
A_n&:=|\vf_{n}(x)-\vf_{\ast}(x)|
\geq \bigg|c_n(K+1)^{s+1}(t_\ast-t_n)^{s+1}
-c_\ast K^{s+1}(t_\ast-t_n)^{s+1}\bigg|\\
&\hspace{5cm}-2\varepsilon\big(|K+1||t_\ast-t_n|\big).|K+1|^{s+1}|t_\ast-t_n|^{s+1},\\
B_n&:=|\vf_n(t_\ast)-\vf_\ast(t_\ast)|=|\vf_n(t_\ast)|
\leq |c_n (t_\ast-t_n)^{s+1}|+\varepsilon(|t_\ast-t_n|).|t_\ast-t_n|^{s+1}\leq (c_n+o(1))|t_\ast-t_n|^{s+1}.
\end{align*}
See Figure~\ref{f.Sard-proof}.
By definition of $\varepsilon$, this gives
$$\frac {A_n}{B_n}\geq (1-o(1))\left(\big|({K+1})^{s+1}-\frac{c_\ast}{c_n}K^{s+1}\big|-o(1)\right)\xrightarrow[n\to\infty]{}(K+1)^{s+1}-K^{s+1},
$$
so $\liminf A_n/B_n\geq (K+1)^{s+1}-K^{s+1}$.

At the same time,  $(t_\ast,\varphi_n(t_\ast))$ and $(t_\ast,\varphi_\ast(t_\ast))$ project by $\pi_{x}$
to $(x,\varphi_n(x))$ and $(x,\varphi_\ast(x))$. Since $\pi_x$ is $L_0$-Lipschitz,
 $A_n/B_n\leq L_0<(K+1)^{{s+1}}-K^{s+1}$ for all $n$, a contradiction.
\end{proof}

This concludes the proof of Theorem~\ref{t.DynSard}.
\end{proof}

\subsection{Intersection of horseshoes and su-quadrilaterals}

{We will use repeatedly  the following consequence of Theorem~\ref{t.DynSard}. Recall  the Smale pre-order $\preceq$ and $su$-quadrilaterals from Definitions  \ref{Def-H-Eq-Newhouse}~and~\ref{def-quadrilateral}.

\begin{proposition}\label{prop-common-Q}
Let $M$ be a closed surface, $f\in\Diff^r(M)$, $r>1$, and let $Q$ be a $su$-quadrilateral.
Let $x_1$ be hyperbolic periodic point of $f$ such that
$W^u(x_1)$ meets both $Q$ and $M\setminus\overline{Q}$.
Assume furthermore one of the following:
 \begin{enumerate}[(1)]
  \item $f$ is Kupka-Smale; or
  \item there is $\mu_1\in \HPM(f)$ such that $\cO(x_1)\hsim\mu_1$ and $\delta^s(\mu_1)>1/r$.
 \end{enumerate}
Then {$W^u(\cO(x_1))$ accumulates on $\partial^u Q$ for the $C^1$ topology.
Moreover, if $x_2$ is a hyperbolic periodic point of $f$ such that
$W^s(x_2)$ meets both $Q$ and $M\setminus\overline{Q}$,} then ${\cO}(x_1)\preceq {\cO}(x_2)$.
\end{proposition}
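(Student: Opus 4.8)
The plan is to use the two unstable sides $\partial^{u,1}Q,\partial^{u,2}Q$ of the quadrilateral, which lie on $W^u(\cO)$ for some periodic orbit $\cO$ inside a horseshoe $\Lambda$, together with the stable lamination $\mathfs W^s_\eps(\Lambda)$, to force a transverse intersection. First I would set up the horseshoe: $Q$ is an $su$-quadrilateral associated to some $\cO$ contained in a horseshoe $\Lambda$, and since $Q$ has unstable sides on $W^u(\cO)$ and stable sides on $W^s(\cO)$, the stable sides of $Q$ determine leaves of the stable lamination $\mathfs W^s_\eps(\Lambda)$ (after shrinking $\eps$ and iterating, using that $W^s(\cO)\cap Q$ accumulates on a local stable manifold). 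The point is that the boundary $\partial Q$ separates $M$ locally, so a curve $W^u(x_1)$ that meets both $Q$ and $M\setminus\overline Q$ must cross $\partial Q$; since $W^u(x_1)$ cannot cross $\partial^s Q\subset W^s(\cO)$ transversally and then leave — more precisely, because $W^u(x_1)$ and $W^u(\cO)$ are both unstable manifolds, I want to argue that $W^u(x_1)$ (or rather $W^u(\cO(x_1))$, after iterating forward) must run nearly parallel to $\partial^u Q$ in the $C^1$ topology. Here is where the Inclination Lemma~\ref{l.density-submanifolds} enters in case (2): since $\cO(x_1)\hsim\mu_1$, the unstable manifold of $x_1$ accumulates (in the $C^1$ topology) on the unstable manifold of any point homoclinically related to it, and in particular some forward iterate of a piece of $W^u(x_1)$ approximates $\partial^u Q$. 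Alternatively, since a piece of $W^u(x_1)$ passes through $Q$ and exits, it must, after forward iteration under $f$ (which expands along the unstable direction and contracts transversally), converge $C^1$ to a local unstable leaf of $\Lambda$; this yields the first assertion.

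Next, for the ``moreover'' part, I would apply the dynamical Sard theorem (Theorem~\ref{t.DynSard}) to the stable lamination $\mathfs W^s_\eps(\Lambda)$, which by Theorem~\ref{basic}\eqref{basic1} has Lipschitz holonomies and $C^r$ leaves. The curve $\gamma$ to which Sard is applied is a piece of $W^u(x_1)$ crossing $Q$ transversally to the stable sides. Theorem~\ref{t.DynSard} gives $\dbar(\mathfs N_\gamma(\mathfs W^s_\eps(\Lambda)))\le 1/r$, i.e.\ the set of stable leaves of $\Lambda$ tangent to $\gamma$ has transverse dimension at most $1/r$. On the other hand, in case (2) the hypothesis $\delta^s(\mu_1)>1/r$ together with Theorem~\ref{basic}\eqref{basic3} — applied to a horseshoe homoclinically related to $\mu_1$ and with stable dimension $\dbar^u>\delta^s(\mu_1)>1/r$, obtained via Katok's horseshoe theorem~\ref{t.katok} — shows that the transverse dimension of the stable lamination along such a curve is $>1/r$. (Here I would need to arrange that $\Lambda$ itself, or an enlargement of it homoclinically related to $\cO$ and $\mu_1$, has transverse stable dimension $>1/r$; this uses Lemma~\ref{Lemma-K_n}-type constructions to put everything in one basic set.) Comparing the two dimension bounds, there must be a stable leaf $W^s(z)$, $z\in\Lambda$, that meets $\gamma\subset W^u(x_1)$ \emph{transversally}; since $z\in\Lambda\subset\HC(\cO)$, $W^s(z)$ accumulates $C^1$ on $W^s(\cO)$ by the Inclination Lemma, so $W^u(x_1)$ has a transverse intersection with $W^s(\cO)=W^s(\cO(x_2))$... wait — we want $\cO(x_1)\preceq\cO(x_2)$, so I actually need $W^u(\cO(x_1))\pitchfork W^s(\cO(x_2))\ne\emptyset$. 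The symmetric hypothesis on $x_2$ (that $W^s(x_2)$ meets both $Q$ and its complement) is used to show $W^s(\cO(x_2))$ accumulates $C^1$ on $\partial^s Q\subset W^s(\cO)$, hence on the local stable leaves of $\Lambda$; then combining with the fact that $W^u(\cO(x_1))$ accumulates $C^1$ on $\partial^u Q$, and that $\partial^u Q$ and $\partial^s Q$ are sides of the same quadrilateral and so cross transversally at a corner, I get that $W^u(\cO(x_1))$ and $W^s(\cO(x_2))$ have a transverse intersection point near a corner of $Q$. This gives $\cO(x_1)\preceq\cO(x_2)$.

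In case (1), when $f$ is Kupka-Smale, the argument is simpler and does not need Sard at all: all homoclinic and heteroclinic intersections of invariant manifolds of periodic orbits are automatically transverse, so once one shows (by the crossing/separation argument above) that $W^u(\cO(x_1))$ and $W^s(\cO(x_2))$ intersect at \emph{some} point near a corner of $Q$, the intersection is transverse by hypothesis, giving $\cO(x_1)\preceq\cO(x_2)$ directly.

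The main obstacle I anticipate is the first assertion — proving that $W^u(\cO(x_1))$ accumulates $C^1$ on $\partial^u Q$. The topological input (a curve meeting both $Q$ and $M\setminus\overline Q$ must cross $\partial Q$, which is a Jordan curve made of two stable and two unstable arcs) is easy, but upgrading a mere crossing to a $C^1$-accumulation statement requires care: one must use that the crossing happens through a stable side (not an unstable side — which requires ruling out the unstable-side case using that two unstable manifolds cannot cross transversally, or by an inclination-lemma argument), and then that forward iteration of the crossing piece, under the hyperbolicity of $\Lambda$, drives it $C^1$-close to a local unstable leaf; controlling the $C^1$ (not just $C^0$) convergence, and ensuring the limiting leaf is genuinely a piece of $\partial^u Q$ rather than some other unstable leaf of $\Lambda$, is the delicate part. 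The dimension-counting step, by contrast, is a clean application of Theorems~\ref{t.DynSard} and~\ref{basic} once the geometric picture is in place.
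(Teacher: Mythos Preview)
Your Sard setup has the roles reversed, and the resulting dimension comparison cannot be closed from the hypotheses. You apply Theorem~\ref{t.DynSard} to the \emph{stable} lamination $\mathfs W^s_\eps(\Lambda)$ of the horseshoe $\Lambda$ attached to $Q$, with $\gamma\subset W^u(x_1)$; to conclude you then need $\dbar^s_\Lambda>1/r$. But the hypothesis $\delta^s(\mu_1)>1/r$ gives, via Theorem~\ref{basic}\eqref{basic3} and Theorem~\ref{t.katok}, a lower bound on $\dbar^u$ (not $\dbar^s$) of a horseshoe related to $\mu_1$ (not to $\cO$). Your proposed fix --- enlarging $\Lambda$ by a Lemma~\ref{Lemma-K_n}-type construction to contain both --- requires $\cO\hsim\mu_1$, which is nowhere assumed; indeed, relating $\cO(x_1)$ to $\cO$ is precisely what must be proved. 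The same missing symmetry breaks your ``moreover'' argument: you invoke ``$W^s(\cO(x_2))$ accumulates $C^1$ on $\partial^s Q$'', but no thickness assumption is made on $x_2$ in case~(2).

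The paper's argument swaps all three ingredients. The horseshoe is produced from $\mu_1$ by Theorem~\ref{t.katok} (so it is homoclinically related to $\cO(x_1)$, one may take $x_1\in\Lambda$, and after the obvious estimates $h_\top(f,\Lambda)/\lambda^s(f,\Lambda)>1/r$, whence $\dbar^u_\Lambda>1/r$); the lamination is its \emph{unstable} lamination $\mathfs W^u_\eps(\Lambda)$; and the curve $\gamma$ lies in $W^s(\cO)$, a stable side of $Q$. Since $W^u(x_1)$ must cross $\partial^s Q$ (it cannot meet $\partial^u Q\subset W^u(\cO)$, as global unstable manifolds of distinct periodic orbits are disjoint), Lemma~\ref{lemma-W-minimality} forces \emph{every} unstable leaf of the relevant spectral piece of $\Lambda$ to meet $\gamma$; Sard then produces a transverse intersection, yielding $\cO(x_1)\preceq\cO$. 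The first assertion is now a one-line consequence of the Inclination Lemma at $\cO$ --- so the $C^1$-accumulation on $\partial^u Q$ comes \emph{after} the transversality, not before as in your outline. For $x_2$, disjointness of stable manifolds shows $W^s(x_2)$ crosses $\partial^u Q$; the unstable leaves of $\Lambda$ that accumulate there therefore all meet $W^s(x_2)$, and a \emph{second} application of Sard (same unstable lamination, new curve $\gamma'\subset W^s(x_2)$) gives $W^u(\cO(x_1))\pitchfork W^s(x_2)\ne\emptyset$ directly, with no hypothesis on $x_2$. Your Kupka--Smale case~(1) is right in spirit, but the clean route is again via $\cO$: one gets $\cO(x_1)\preceq\cO$ and $\cO\preceq\cO(x_2)$ from two crossings made transverse by the Kupka--Smale assumption, then concludes by transitivity of $\preceq$.
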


We will use the following property of the stable and unstable laminations of basic sets. Recall that the spectral decomposition $\Lambda=\Lambda_0\cup\dots\cup\Lambda_{p-1}$ of a basic set satisfies: $f(\Lambda_i)=\Lambda_{i+1}$, $f(\Lambda_{p-1})=\Lambda_0$ and $f^p|_{\Lambda_0}$ topologically mixing.

\begin{lemma}\label{lemma-W-minimality}
Let $\Lambda=\Lambda_0\cup\dots\cup\Lambda_{p-1}$ be a basic set with its spectral decomposition. If $x,y$ belong to the same $\Lambda_i$ for some $0\leq i<p$ then $W^u(x)$ accumulates on $W^u(y)$. More precisely, for any compact disc $D\subset W^u(y)$, there are compact discs $D_1,D_2,\dots\subset W^u(x)$ such that $D_n\to D$ in the $C^1$ topology.
\end{lemma}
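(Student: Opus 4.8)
The plan is to reduce the claim to a local statement, then to prove that $W^u(x)\cap\Lambda_i$ is dense in $\Lambda_i$, and finally to invoke the $C^r$-continuity of the family of local unstable manifolds over $\Lambda$ (part~\eqref{basic0} of Theorem~\ref{basic}). Fix once and for all $\epsilon>0$ small enough that: the local manifolds $W^{s/u}_\epsilon(z)$ are defined and vary $C^r$-continuously with $z\in\Lambda$; the basic set $\Lambda$ has local product structure at scales $(\epsilon,\delta)$ in the sense of Section~\ref{ss.hyperbolic}; and $\epsilon<\tfrac12\min_{j\neq k}\dist(\Lambda_j,\Lambda_k)$. The last requirement will guarantee that for $a,b\in\Lambda_i$ with $\dist(a,b)<\delta$ the bracket $[a,b]:=W^u_\epsilon(a)\cap W^s_\epsilon(b)$ lies again in $\Lambda_i$, since its forward $f^p$-orbit remains $\epsilon$-close to that of $b\in\Lambda_i$ while $f^p$ preserves each piece $\Lambda_j$.

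First I would reduce to the following local statement: \emph{for all $x,y\in\Lambda_i$ there are compact discs $E_n\subset W^u(x)$ with $E_n\to W^u_\epsilon(y)$ in the $C^1$ topology.} Indeed, given an arbitrary compact disc $D\subset W^u(y)$, pick $y_0\in D$ (so that $W^u(y_0)=W^u(y)$); because $f^{-p}$ uniformly contracts unstable leaves and $f^{-pk}(y_0)$ stays exponentially close, along the leaf, to $f^{-pk}(y)\in\Lambda_i$, there is an integer $m\geq0$ divisible by $p$ with $f^{-m}(D)\subset W^u_\epsilon(f^{-m}y)$. Applying the local statement to the pair $(f^{-m}x,f^{-m}y)\in\Lambda_i\times\Lambda_i$, then parametrizing $W^u_\epsilon(f^{-m}y)$ by a $C^r$ embedding, restricting the approximating discs to the parameter sub-interval carrying $f^{-m}(D)$, and pushing everything forward by the $C^r$ diffeomorphism $f^{m}$, yields compact discs $D_n\subset W^u(x)$ with $D_n\to D$ in $C^1$. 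And once density of $W^u(x)\cap\Lambda_i$ in $\Lambda_i$ is established, the local statement is immediate: choosing $q_n\in W^u(x)\cap\Lambda_i$ with $q_n\to y$, each disc $W^u_\epsilon(q_n)$ is compact and lies in $W^u(q_n)=W^u(x)$, and $W^u_\epsilon(q_n)\to W^u_\epsilon(y)$ in $C^r$ by part~\eqref{basic0} of Theorem~\ref{basic}.

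The essential point, and the step I expect to be the real obstacle, is the density of $W^u(x)\cap\Lambda_i$ in $\Lambda_i$; I would prove it by marrying topological mixing to the hyperbolic structure. Put $g:=f^p|_{\Lambda_i}$, which is topologically mixing by the spectral decomposition recalled above, and upgrade mixing to a uniform version by a compactness argument: for every $\eta>0$ there is $N(\eta)$ such that for all $n\geq N(\eta)$ and all $u,v\in\Lambda_i$ one can find $w\in\Lambda_i$ with $\dist(w,u)<\eta$ and $\dist(g^n w,v)<\eta$ (cover $\Lambda_i$ by finitely many balls of radius $\eta/2$ and take the largest of the pairwise mixing times). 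Now given $a\in\Lambda_i$ and $\rho>0$, set $\eta:=\min(\rho/4,\delta)$ and take $n\geq N(\eta)$ large enough that uniform exponential contraction of stable manifolds (Section~\ref{ss.hyperbolic}) forces $\dist(f^{pn}\zeta,f^{pn}\zeta')<\rho/2$ whenever $\zeta'\in W^s_\epsilon(\zeta)$. Applying the uniform mixing property with $u:=g^{-n}(x)$ and $v:=a$ gives $w\in\Lambda_i$ with $\dist(w,g^{-n}x)<\eta\leq\delta$ and $\dist(g^n w,a)<\eta$; then $z:=W^u_\epsilon(g^{-n}x)\cap W^s_\epsilon(w)\in\Lambda_i$ by local product structure, $g^n(z)\in f^{pn}(W^u_\epsilon(g^{-n}x))\subset W^u(x)\cap\Lambda_i$, and $\dist(g^n z,a)\leq\dist(g^n z,g^n w)+\dist(g^n w,a)<\rho/2+\eta\leq\rho$, exhibiting a point of $W^u(x)\cap\Lambda_i$ within $\rho$ of $a$. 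This density assertion is classical for mixing basic sets, so one could alternatively cite it rather than reprove it; the routine verifications of the uniform mixing lemma and of the bookkeeping in the reduction step are left to the write-up.
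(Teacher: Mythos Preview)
The paper states this lemma without proof, treating it as a classical fact about basic sets, so there is no argument to compare against. Your proof is correct: the reduction to the local statement via backward iteration, the density of $W^u(x)\cap\Lambda_i$ in $\Lambda_i$ obtained from topological mixing of $f^p|_{\Lambda_i}$ combined with local product structure, and the conclusion via $C^r$-continuity of the local unstable manifolds are all standard and sound (the minor imprecision of writing $E_n\to W^u_\epsilon(y)$ with the right-hand side open rather than compact is easily fixed by taking closures or slightly smaller discs).
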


\begin{proof}[{Proof of Proposition \ref{prop-common-Q}}]
By definition, the boundary of $Q$ is a finite union of segments of $W^s(\cO)\cup W^u(\cO)$ for some $\cO\in\HPO(f)$.

\medbreak

{To begin with, we assume (1), i.e.,  $f$ is Kupka-Smale. Clearly,} $W^u(x_1)$ intersects $\partial^sQ$, hence $W^s(\cO)$.
The Kupka-Smale property implies that this intersection is transverse. This gives  $\cO(x_1)\preceq \cO$. {The Inclination Lemma~\ref{l.density-submanifolds} then shows that $W^u(\cO(x_1))$ accumulates on $W^u(\cO)\supset\partial^u Q$, as claimed.}
Similarly we get $\cO\preceq \cO(x_2)$ and therefore $\cO(x_1)\preceq \cO(x_2)$, concluding the proof in this case.

\medbreak

We then assume (2): $\cO(x_1)\hsim\mu$ with $\mu\in\HPM(f)$ such that $\delta^s(\mu)>1/r$. By Theorem \ref{t.katok}, for arbitrarily small $\eps>0$, the measure $\mu$ is homoclinically related to a horseshoe $\Lambda$ with $h_\top(f,\Lambda)>h(f,\mu)-\eps$ and $\lambda^s(f,\nu)<\lambda^s(f,\mu)+\eps$ for all $\nu\in\Prob(f|_{\Lambda})$. A routine argument (see, e.g., Proposition~\ref{p.chihypset}) shows that $\lambda^s(f,\Lambda)<\lambda^s(\mu)+\eps$. So $h_{\top}(f,\Lambda)>h(f,\mu)-\epsilon\equiv \frac{\lambda^s(\mu)}{r}+\lambda^s(\mu)\left(\delta^s(\mu)-\frac{1}{r}\right)-\epsilon$.
Since by assumption $\delta^s(\mu)>1/r$,  $h_\top(f,\Lambda)>\lambda^s(f,\Lambda)/r$ for all $\epsilon>0$ sufficiently small.
\medbreak

The Inclination Lemma and the transitivity of Smale's pre-order show that we can also assume that $x_1\in\Lambda$.
Let $\Lambda=\Lambda_0\cup\dots\cup\Lambda_{p-1}$ be the spectral decomposition with, say, $x_1\in\Lambda_0$.

Since $f$ is $C^r$, Theorem \ref{basic} \eqref{basic0}-\eqref{basic1} shows that $\mathfs W^u_\epsilon(\Lambda)$ is  a continuous lamination with $C^r$ leaves,
that $W^s(\cO)$ is $C^r$, and that $\mathfs W^u_\epsilon(\Lambda)$ has Lipschitz holonomies.
By Lemma~\ref{lemma-W-minimality}, for all $x\in\Lambda_0$, $W^u(x)$ accumulates on $W^u(x_1)$ and therefore, for all $x\in\Lambda$, $W^u(x)\cap W^s(\cO)\ne\emptyset$.
By invariance, there exists a curve $\gamma\subset W^s(\cO)$ which intersects each leaf of $\mathfs W^u_\epsilon(\Lambda_0)$.
Let us assume that there is no $x\in\Lambda_0$ such that $W^u(x)\pitchfork \gamma\neq \emptyset$,
so $\mathfs W^u_\varepsilon(\Lambda_0)\subset\mathfs N_\gamma(\mathfs W^u_\varepsilon(\Lambda))$.
Thus Theorem~\ref{t.DynSard} gives:
$$
\dbar(\mathfs W^u_\epsilon(\Lambda_0)) \leq 1/r.
$$
But Theorem~\ref{basic} (\ref{basic2}) and (\ref{basic3}) say that for every $x\in\Lambda$,
$$
{\dbar(\mathfs W^u_\epsilon(\Lambda_0))\geq \dim_H(W^s_\epsilon(x)\cap\Lambda_0)=\dbar^u_{\Lambda_0} \geq h_\top(f,\Lambda) / \lambda^s(f,\Lambda)> 1/r.}
$$
This contradiction {gives some} $x\in\Lambda_0$ with $W^u(x)\pitchfork W^s(\cO)\neq\emptyset$.
From Lemma~\ref{lemma-W-minimality} this holds also for $x_1$. By the inclination lemma, $W^u(\cO(x_1))$ accumulates on $W^u(\cO)$ and $\partial^u(Q)$ in the $C^1$ topology.

Let us consider now some hyperbolic periodic point $x_2$ such that $W^s(x_2)$ intersects both $Q$ and $M\setminus \overline{Q}$.
The manifold $W^s(x_2)$ crosses $W^u(q)$ topologically for some $q\in\cO$, and hence intersects each global unstable manifold of $\Lambda_j$
for some $0\leq j<p$.
We refer to Figure \ref{figure-intersectQK}.

\begin{figure}[ht]
\begin{center}
\includegraphics[width=10cm,angle=0]{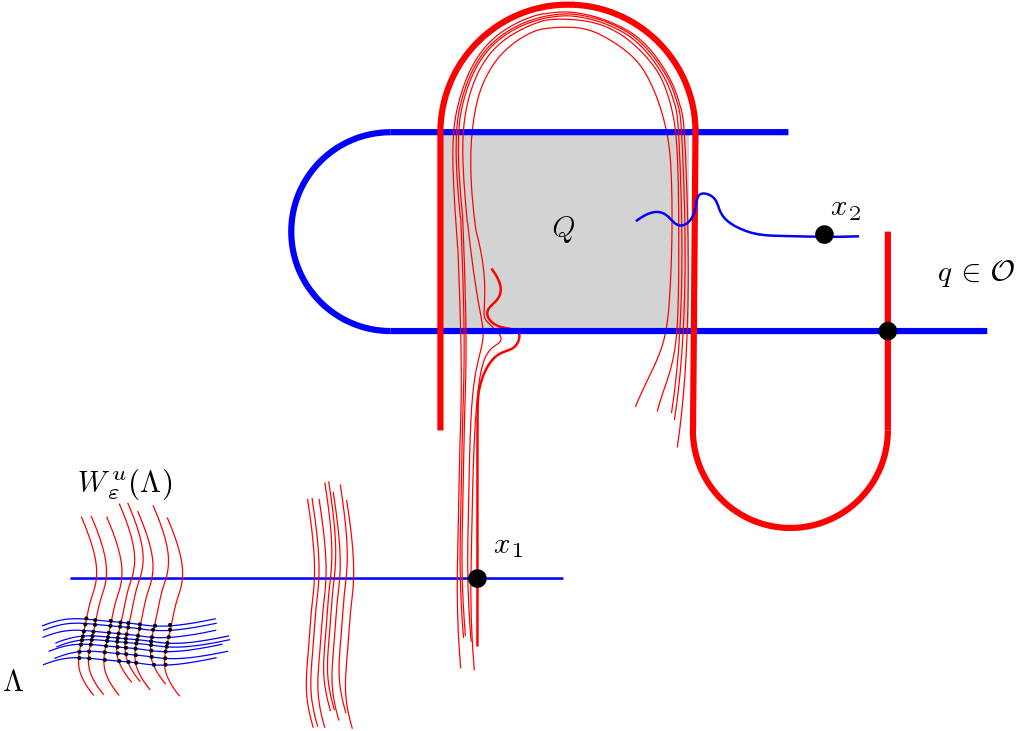}
\end{center}
\caption{\label{figure-intersectQK} Proof of Proposition \ref{prop-common-Q}: {The $su$-quadrilateral $Q$, the two periodic points $x_1,x_2$, the horseshoe $\Lambda$ and the relevant stable (horizontal) and unstable (vertical) manifolds and foliations.}}
\end{figure}

We can conclude as before that, for some (and then for all) $x\in\Lambda_j$, one has $W^u(x)\pitchfork W^s(x_2)\ne\emptyset$.
Considering $x=f^j(x_1)\in\Lambda_j$, we get $\cO(x_1)\preceq \cO(x_2)$.
\end{proof}

%%%%%%%%%%%%%%%%%%%%%%%%%%%%%%%%%%%%%%%%%%%%%%%%%%%%%%%%%%%%%%%%%%%
\section{Bounding the number of homoclinic classes with large entropy}\label{sec-finite}
In this section, we show that for some small  constant $c$,  the number of homoclinic classes of ergodic hyperbolic measures with entropy bigger than $c$ is finite. We do this by showing that $\limsup h(f,\mu_n)\leq c$ for every infinite sequence of pairwise non homoclinically related  ergodic hyperbolic measures $\mu_n$.
The value of $c$ depends on the regularity of the diffeomorphism and its expansion properties. Suppose  $f\in\Diff^r(M)$ and define  $\|Df^k\|:=\max\{\|Df^k v\|: v\in T M, \|v\|=1\}$. Let
\begin{equation}\label{lambda-max-lambda-min}
\begin{aligned}
\lambda_{\min}(f)&=\min\{\lambda^s(f),\lambda^u(f)\}\\
\lambda_{\max}(f)&=\max\{\lambda^s(f),\lambda^u(f)\}
\end{aligned}
\text{ where }\lambda^u(f):=\lim\limits_{n\to\infty}\frac{1}{n}\log\|Df^n\|,\ \lambda^s(f):=\lim\limits_{n\to\infty}\frac{1}{n}\log \|Df^{-n}\|.
\end{equation}
We shall see below that $c=\lambda_{\min}(f)/r$. In particular, if $f\in \Diff^\infty(M)$ then for every $\epsilon>0$ there are at most finitely many different homoclinic classes carrying measures with entropy bigger than $\epsilon$.

\subsection{Preliminaries on entropy}\label{ss.entropy}
We need the following facts on the metric entropy of invariant measures of homeomorphisms on compact metric spaces (such as $f:M\to M$).

Given $\epsilon>0$ and $n\in\mathbb N$, an  {\em $(\epsilon,n)$-Bowen ball} is a set of the form
$$
B(x,\epsilon,n):= \{y\in M:  d(f^k(x),f^k(y))\leq \epsilon\ \forall 0\leq k<n\},
$$
where $x\in M$.
Consider $Y\subset M$ (not necessarily invariant),  and let $\varepsilon>0$.
A set $F\subset Y$ is called {\em $(\epsilon,n)$-spanning} for $Y$, if $Y\subset \bigcup_{x\in F} B(x,\eps,n)$.

Let
$r_f(\epsilon,n,Y)$ denote the minimal cardinality of an $(\epsilon,n)$-spanning subset of $Y$.
When $Y=M$, we  write $r_f(\epsilon,n)$.

\medskip
\noindent
{\sc Topological entropy on non-invariant sets \cite{Bowen-Entropy-Expansive}.} The {\em topological entropy} of $f$ on a  set $Y$ is
\begin{align*}
&h_{\top}(f,Y):=\lim_{\epsilon\to 0^+} h(f,Y,\epsilon),\text{where}\\
&h_{\top}(f,Y,\epsilon):=\limsup_{n\to\infty}\frac{1}{n} \log r_f(\epsilon,n,Y).
\end{align*}

\medskip
\noindent
{\sc Katok entropy formula \cite[Theorem 1.1]{KatokIHES}.} Let $\mu$ be an $f$-ergodic invariant probability measure, then the metric entropy of $\mu$ satisfies
 $$\begin{aligned}
    &h(f,\mu)=\lim_{\eps\to 0^+} h(f,\mu,\eps),\text{ where }\\
    &h(f,\mu,\eps):=\limsup_{n\to\infty} \frac1n\inf_{\mu(Y)>1/2}\log r_f(\eps,n,Y).
 \end{aligned}$$
Since $\mu$ is ergodic, $1/2$ can be replaced by any $0<\tau<1$ without affecting the value of $h(f,\mu,\epsilon)$.
Katok's entropy formula implies that $h(f,\mu,\epsilon)\leq h(f,\mu)\leq h_{\top}(f)$ (the last inequality was first proved in \cite{Goodwyn}).

\medskip

The metric entropy is an affine function of the measure, and if  $\mu=\int_X \mu_x\, d\mu(x)$ is the ergodic decomposition of $\mu\in\Prob(f)$, then $h(f,\mu)=\int_X h(f,\mu_x)\, d\mu(x)$ (see \cite[Sec. 9.8]{RokhlinLectures}). This suggests the following
 definition of $h(f,\mu,\epsilon)$ for non-ergodic measures $\mu$:
 $$
    h(f,\mu,\eps):=\int_X h(f,\mu_x,\eps)\, d\mu(x).
 $$
By the  monotone convergence theorem,  $h(f,\mu)=\lim_{\eps\to 0^+} h(f,\mu,\eps)$.

\medskip
For $C^1$ surface diffeomorphisms, ergodic measures with positive entropy are hyperbolic of saddle type because of Ruelle's Entropy Inequality~\cite{Ruelle-Entropy-Inequality}:
\begin{theorem}[Ruelle]\label{thm.ruelle}
Let $f$ be a $C^1$ diffeomorphism on a surface, and let $\mu$ be an ergodic invariant measure. The Lyapunov exponents $\chi_1(f,\mu)\leq \chi_2(f,\mu)$
(counted with multiplicity) satisfy
$$
    h(f,\mu)\leq \max\{\min(-\chi_1(f,\mu), \chi_2(f,\mu)),0\}.
$$
In particular, if $h(f,\mu)>0$ then $\mu\in \HPM(f)$ and $\lambda_{\min}(f)\geq h(f,\mu)$.
\end{theorem}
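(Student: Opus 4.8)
The plan is to reduce to the classical Ruelle entropy inequality and then exploit the symmetry between $f$ and $f^{-1}$. Recall Ruelle's inequality~\cite{Ruelle-Entropy-Inequality}: for any $C^1$ diffeomorphism $g$ of a compact manifold and any $g$-invariant Borel probability measure $\nu$,
$$
 h(g,\nu)\ \le\ \int S(g,x)\,d\nu(x),
$$
where $S(g,x)$ is the sum of the positive Lyapunov exponents of $g$ at $x$, counted with multiplicity. I would only cite its proof (see~\cite{Ruelle-Entropy-Inequality}, or~\cite[Chapter~S]{Katok-Hasselblatt-Book}): one fixes a finite partition $\mathcal P$ of small diameter, uses $h(g,\nu)=\sup_{\mathcal P}h_\nu(g,\mathcal P)$, and controls the information growth of the refinements $\bigvee_{j=0}^{n-1}g^{-j}\mathcal P$ by the linear-algebra estimate that the image of a small ball under $Dg^n_x$ meets at most $\asymp\exp(nS(g,x)+o(n))$ atoms of $\mathcal P$, the exponent being identified via Oseledets and Kingman's subadditive ergodic theorem.

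Now let $\dim M=2$ and let $\mu$ be ergodic, with exponents $\chi_1\le\chi_2$ counted with multiplicity; writing $\chi^+:=\max(\chi,0)$ we have $S(f,x)=\chi_1^+ + \chi_2^+$ for $\mu$-a.e.\ $x$, so Ruelle's inequality gives $h(f,\mu)\le\chi_1^+ + \chi_2^+$. The diffeomorphism $f^{-1}$ preserves $\mu$, has the same measure-theoretic entropy $h(f^{-1},\mu)=h(f,\mu)$, and has Lyapunov exponents $-\chi_2\le-\chi_1$ at $\mu$-a.e.\ point; applying Ruelle's inequality to $f^{-1}$ gives $h(f,\mu)\le(-\chi_1)^+ + (-\chi_2)^+$. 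Therefore
$$
 h(f,\mu)\ \le\ \min\!\bigl(\chi_1^+ + \chi_2^+,\ (-\chi_1)^+ + (-\chi_2)^+\bigr),
$$
and a short case analysis on the signs of $\chi_1\le\chi_2$ shows the right-hand side equals $\max(\min(-\chi_1,\chi_2),0)$: if $\chi_2\le0$ the first term is $0$ and $\min(-\chi_1,\chi_2)=\chi_2\le0$; if $\chi_1\ge0$ the second term is $0$ and $\min(-\chi_1,\chi_2)=-\chi_1\le0$; and if $\chi_1<0<\chi_2$ the two terms are $\chi_2$ and $-\chi_1$, with minimum $\min(-\chi_1,\chi_2)>0$. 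This is the asserted inequality.

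For the ``in particular'' part, suppose $h(f,\mu)>0$. Then $\max(\min(-\chi_1,\chi_2),0)>0$, forcing $\chi_1<0<\chi_2$: thus $\mu$ is ergodic and hyperbolic of saddle type, i.e.\ $\mu\in\HPM(f)$, and $h(f,\mu)\le\min(-\chi_1,\chi_2)$. Since the largest Lyapunov exponent of any invariant measure of a diffeomorphism $g$ is at most $\lim_{n\to\infty}\frac1n\log\|Dg^n\|$, taking $g=f$ gives $\chi_2\le\lambda^u(f)$ and taking $g=f^{-1}$ gives $-\chi_1\le\lambda^s(f)$, whence $h(f,\mu)\le\min(\lambda^s(f),\lambda^u(f))=\lambda_{\min}(f)$. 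The only nontrivial ingredient here is the classical Ruelle inequality, which I would invoke from the literature; the $f^{-1}$ trick and the elementary sign bookkeeping are exactly what upgrade it to the sharp two-sided surface estimate above.
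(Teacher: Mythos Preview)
The paper does not supply its own proof of this theorem: it is quoted as a classical result of Ruelle with a citation to \cite{Ruelle-Entropy-Inequality}, and the ``in particular'' clause is left as an immediate consequence. Your derivation is correct and is exactly the standard one --- apply Ruelle's inequality to $f$ and to $f^{-1}$, take the minimum, and do the three-case sign analysis; the bound $\lambda_{\min}(f)\ge h(f,\mu)$ then follows from $\chi_2\le\lambda^u(f)$ and $-\chi_1\le\lambda^s(f)$. There is nothing to compare against beyond the citation, and your write-up would serve as a self-contained justification of the statement.
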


\medskip
\noindent
{\sc Tail Entropy.}
The {\em  tail entropy} of $f$  is defined by
 $$\begin{aligned}
     h^*(f) \quad &:=\lim_{\eps\to0^+} h^*(f,\eps) \text{ where }\\
    h^*(f,\eps)&:=\sup_{x\in M} h_{\top}(f,\{y\in M:\forall n\geq0\; d(f^ny,f^nx)<\eps\}).
     \end{aligned}$$

The quantities $h^*(f,\eps)$ were introduced by Bowen \cite{Bowen-Entropy-Expansive} and then studied by Misiurewicz \cite{misiurewicz,MisiurewiczTCE} together with their limit $h^*(f)$ under the name of topological conditional entropies. The relevance of this concept for us lies in the following well-known inequalities (variants go back to \cite{Bowen-Entropy-Expansive,misiurewicz}). First, recalling that for any $\delta>0$
 $$
   h^*(f,\eps)= \lim_{n\to\infty} \frac1n\log\sup_{x\in M} r_f(\delta,n,B(x,\eps,n))
 $$
(see Proposition 2.2 in \cite{Bowen-Entropy-Expansive}), it follows that:
\begin{equation}\label{Tail-Inequality}
  h(f,\mu)\leq h(f,\mu,\eps)+h^*(f,\eps).
\end{equation}
Second, we have the following upper semicontinuity property (see \cite[Theorem 4.2]{MisiurewiczTCE}):
\begin{equation}\label{Tail-semi-continuity}
    \limsup_{n\to\infty} h(f,\mu_n)\leq h(f,\mu)+h^*(f) \text{ if }\mu_n\to\mu \text{ weak-$*$}.
\end{equation}

When $f$ is a $C^r$ diffeomorphism of a closed surface, Burguet has obtained in~\cite{Burguet} the following bound on the tail entropy,
which improves previous estimates by Buzzi~\cite{BuzziSIM} using Yomdin theory:
\begin{equation}\label{Tail-bound}
    h^*(f)\leq \lambdamin(f)/r.
\end{equation}
This implies the following special case of a theorem of Newhouse \cite{Newhouse-Entropy}: Given a $C^\infty$ surface diffeomorphism $f$, the entropy function $\mu\mapsto h(f,\mu)$ is upper semicontinuous over the set of invariant probability measures. (Newhouse's result holds in any dimension.)

\medskip
\noindent
{\sc Dependence on the diffeomorphism.}
According to a theorem of Newhouse~\cite{Newhouse-Entropy}, $f\mapsto h_{\top}(f)$ is continuous on $\Diff^\infty(M)$. In $\Diff^r(M)$ we have the following bound on the defect of upper semicontinuity. The \emph{robust tail entropy} (introduced in \cite{BurguetFibered2017}) is defined by
 $$
      \HrobR(f):=\lim_{\eps\to0^+} \HrobR(f,\eps) \text{ with }\HrobR(f,\eps):=\limsup_{g\stackrel{C^r}\to f} h^*(g,\eps).
 $$
It bounds the defect in upper semi-continuity as follows:
 $$
    \limsup_{(f_n,\mu_n)\to(f,\mu)} h(f_n,\mu_n) \leq h(f,\mu) + \HrobInf(f) \text{ when $(f_n,\mu_n)\to f$ in $\Diff^r(M)\times\Prob(M)$.}
 $$

In this setting, uniform estimates from Yomdin's theory \cite[Prop. 7.17]{BuzziPhD} show :\footnote{Burguet has explained to us how the variational principle established in  \cite{BurguetFibered2017} yields the sharp estimate $\HrobR(f)\leq \lambdamin(f)/r$ in the case of  surface diffeomorphisms.}
 $$
   \HrobR(f) \leq \frac{\dim M}{r} \cdot \lambdamin(f).
 $$
In particular, $\HrobInf(f)=0$ for any $f\in\Diff^\infty(M)$. Observe that if $f_n\to f$ in the $C^\infty$-topology and $\mu_n$ is a m.m.e. for $f_n$, then any weak-$*$ accumulation point of $\mu_n$ is a m.m.e. for $f$.

\subsection{Finiteness of the number of homoclinic classes with large entropy}
There are only finitely many homoclinic classes containing measures with large entropy:

\begin{theorem}\label{thmEntropyDecay}
Suppose $M$ is a closed surface and $f:M\to M$  is a $C^r$ diffeomorphism with $r>1$. If $\mu_n$ are ergodic hyperbolic measures such that $\mu_i,\mu_j$ are not homoclinically related for $i\neq j$, then
 \begin{equation}\label{e.limit}
    \limsup_{n\to\infty} h(f,\mu_n)\leq \lambdamin(f)/r.
 \end{equation}
If $f$ is Kupka-Smale we have the better bound $\limsup_{n\to\infty} h(f,\mu_n)\leq h^*(f)$.
\end{theorem}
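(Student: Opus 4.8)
Assume, for contradiction, that the stated bound fails; set $c:=\lambdamin(f)/r$ in the general case, and $c:=h^*(f)$ when $f$ is Kupka--Smale. Passing to a subsequence, we may assume $h(f,\mu_n)\to L>c$ and, by compactness of the set of probability measures, $\mu_n\to\mu$ weak-$*$. For all large $n$ we have $h(f,\mu_n)>0$, so by Ruelle's inequality (Theorem~\ref{thm.ruelle}) these $\mu_n$ are atomless and lie in $\HPM(f)$; after discarding finitely many indices, assume this for every $n$. Since $h^*(f)\le c$ (for the first case this is the bound~\eqref{Tail-bound}, for the second it is an equality), the semicontinuity estimate~\eqref{Tail-semi-continuity} gives $h(f,\mu)\ge L-h^*(f)\ge L-c>0$. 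In the general case we also note that $\lambda^s(\mu_n)\le\lambda^s(f)$ and $\lambda^u(\mu_n)\le\lambda^u(f)$, hence $\min(\lambda^s(\mu_n),\lambda^u(\mu_n))\le\lambdamin(f)$, so that $h(f,\mu_n)>\lambdamin(f)/r$ forces $\max(\delta^s(\mu_n),\delta^u(\mu_n))>1/r$; after one more extraction and, if necessary, replacing $f$ by $f^{-1}$ (which changes none of the hypotheses nor the value of $c$, and exchanges $\delta^s$ with $\delta^u$), we may assume $\delta^s(\mu_n)>1/r$ for all $n$.

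Next I would exploit the limit $\mu$. Let $A:=\{x:h(f,\mu_x)>0\}$ for the ergodic decomposition $\mu=\int\mu_x\,d\mu(x)$; then $\mu(A)>0$ and $\mu^+:=\mu(\,\cdot\mid A)$ is an atomless invariant probability measure which is hyperbolic of saddle type, since almost every ergodic component of $\mu^+$ is (Theorem~\ref{thm.ruelle}); in particular $\diam(\supp\mu^+)>0$. Fix $t\in(0,\diam(\supp\mu^+))$. Proposition~\ref{p.rectangles} applied to $\mu^+$ yields $su$-quadrilaterals $Q_1,\dots,Q_N$, each associated to a hyperbolic periodic orbit, with $\diam Q_k<t$ and $\mu^+(\bigcup_k Q_k)>1-t$. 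Then $\mu(\bigcup_k Q_k)\ge\mu(A)(1-t)>0$; since $\bigcup_k Q_k$ is open, $\liminf_n\mu_n(\bigcup_k Q_k)\ge\mu(\bigcup_k Q_k)>0$, and pigeonholing over the $N$ quadrilaterals and over $n$ produces one index $k_0$ and an infinite set $I$ of indices with $\mu_n(Q_{k_0})>0$ for all $n\in I$.

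For each $n\in I$, Theorem~\ref{t.katok} and Corollary~\ref{Cor-HC} provide $\cO_n\in\HPO(f)$ with $\cO_n\hsim\mu_n$. Choosing a $\mu_n$-generic point $y_n\in Q_{k_0}$ (possible as $\mu_n(Q_{k_0})>0$ and $Q_{k_0}$ is open), the relation $\cO_n\hsim\mu_n$ together with the Inclination Lemma~\ref{l.density-submanifolds} shows that $W^u(\cO_n)$ and $W^s(\cO_n)$ accumulate in the $C^1$ topology on arcs of $W^u(y_n)$ and $W^s(y_n)$ through $y_n$, hence each meets $Q_{k_0}$; a routine argument — whose only point is to exclude the degenerate possibility $\HC(\cO_n)\subseteq\overline{Q_{k_0}}$, which fails once $t<\diam(\supp\mu^+)$ because $\supp\mu_n\subseteq\HC(\cO_n)$ and $\mu_n\to\mu$ — shows that $W^u(\cO_n)$ and $W^s(\cO_n)$ also meet $M\setminus\overline{Q_{k_0}}$. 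Pick distinct $i,j\in I$. Applying Proposition~\ref{prop-common-Q} to the quadrilateral $Q_{k_0}$, with $x_1\in\cO_i$ (using $\cO_i\hsim\mu_i$ and $\delta^s(\mu_i)>1/r$, or else that $f$ is Kupka--Smale) and $x_2\in\cO_j$, we get $\cO_i\preceq\cO_j$, hence $\mu_i\preceq\mu_j$. Exchanging the roles of $i$ and $j$ gives $\mu_j\preceq\mu_i$, so $\mu_i\hsim\mu_j$, contradicting the hypothesis that the $\mu_n$ are pairwise non homoclinically related. (For $r=\infty$, run the whole argument with an arbitrary finite $r'>1$ in place of $r$ and let $r'\to\infty$.)

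The soft parts — compactness, pigeonholing, the weak-$*$ semicontinuity of entropy up to $h^*(f)$, and the elementary inequality $\min(\lambda^s(\mu_n),\lambda^u(\mu_n))\le\lambdamin(f)$ — are routine. The substance is packaged in the earlier results: Proposition~\ref{p.rectangles} (construction of $su$-quadrilaterals, where two-dimensionality enters) and Proposition~\ref{prop-common-Q}, where the hypothesis $\delta^s(\mu_i)>1/r$ is spent via the dynamical Sard Lemma (Theorem~\ref{t.DynSard}) to upgrade a topological intersection of $W^u(\cO_i)$ with the stable foliation of an approximating horseshoe into a transverse one. Within the present deduction, the main thing requiring care is matching the invariant manifolds of $\cO_i$ and $\cO_j$ to the single common quadrilateral $Q_{k_0}$ — in particular ruling out the trapped case above — so that Proposition~\ref{prop-common-Q} becomes applicable; this is the step I expect to be the principal (if modest) obstacle.
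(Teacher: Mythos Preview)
Your overall strategy matches the paper's: pass to a weak-$*$ limit, apply Proposition~\ref{p.rectangles} to the positive-entropy part to obtain $su$-quadrilaterals, and then invoke Proposition~\ref{prop-common-Q} to force a homoclinic relation between two of the $\mu_n$'s. The reduction to $\delta^s(\mu_n)>1/r$ (after possibly passing to $f^{-1}$) is also exactly what the paper does. However, the step you flag as a ``modest obstacle'' is in fact the heart of the proof, and your argument does not close it.

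The issue is that Proposition~\ref{prop-common-Q} requires $W^u(x_1)$ for a \emph{single} periodic point $x_1$ to meet both $Q_{k_0}$ and $M\setminus\overline{Q_{k_0}}$; knowing that $W^u(\cO_n)=\bigcup_{x\in\cO_n}W^u(x)$ meets both regions is strictly weaker. Concretely, recall (Proposition~\ref{p-hc-cyclic}) the cyclic decomposition $\HC(\cO_n)=K_{n,0}\cup\cdots\cup K_{n,\tau_n-1}$ with $K_{n,j}=\overline{W^u(f^jx_n)\pitchfork W^s(f^jx_n)}$. Your diameter argument only excludes the scenario $\HC(\cO_n)\subset\overline{Q_{k_0}}$; it does \emph{not} exclude the possibility that every cyclic piece $K_{n,j}$ which touches $Q_{k_0}$ is entirely contained in $\overline{Q_{k_0}}$, while the pieces meeting $M\setminus\overline{Q_{k_0}}$ are different iterates. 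In that ``trapped'' scenario there is no single $x_1\in\cO_n$ whose $W^u(x_1)$ crosses in and out of $Q_{k_0}$, so Proposition~\ref{prop-common-Q} cannot be applied. The paper rules out this scenario not by a soft diameter bound but by an entropy-counting argument (its Claim~2): one covers using \emph{all} the quadrilaterals $Q_1,\dots,Q_N$ together with an $(\eps,N_0)$-spanning set for the zero-entropy component of the limit, and shows that if every $K_{n,j}$ meeting some $Q_i$ were trapped in $\overline{Q_i}$, then a generic orbit of $\mu_n$ could be $(\eps,m)$-spanned by too few Bowen balls, forcing $h(f,\mu_n)\le h^*(f,\eps)+\delta/2<h$. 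This counting step --- not the pigeonholing or the Sard-based Proposition~\ref{prop-common-Q} --- is where the gap between $\limsup h(f,\mu_n)$ and $h^*(f)$ (or $\lambdamin(f)/r$) is actually spent.
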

\begin{proof}
By~\eqref{Tail-bound},  $\lambdamin(f)/r\geq  h^*(f)$.
We assume by contradiction that there exists some number $h$ close to $\limsup_{n\to\infty} h(f,\mu_n)$
such that
$$h^*(f)<h<\limsup_{n\to\infty} h(f,\mu_n) \text{ if $f$ is Kupka-Smale,}$$
$$\lambdamin(f)/r<h<\limsup_{n\to\infty} h(f,\mu_n) \text{ if $f$ is not Kupka-Smale.}$$

Without loss of generality, $\lambdamin(f)=\lambda^s(f)$ (otherwise we replace $f$ by $f^{-1}$).
From our assumptions, when $f$ is not Kupka-Smale, the measures $\mu_n$ with $n$ large have entropy larger than $\lambda^s(f)/r$,
hence satisfy $\delta^s(\mu_n):=h(f,\mu)/\lambda^s(\mu)>(\lambda^s(f)/r)/\lambda^s(\mu)\geq \frac 1 r$.
Since $M$ is compact, we may assume without loss of generality that
$$
\mu_n\xrightarrow[n\to\infty]{\text{weak-$*$ }}\nu.
$$
From~\eqref{Tail-semi-continuity}
 $$
     h(f,\nu)\geq \limsup h(f,\mu_n)-h^*(f)\geq h-h^*(f)>0.
 $$
The limit measure $\nu$ needs not be ergodic. Using the ergodic decomposition, we can represent
$$
\nu=(1-\alpha)\nu_0+\alpha\nu_1,
$$
where $\alpha\in (0,1]$, and where $\nu_0,\nu_1$ are two $f$-invariant probability measures such that $h(f,\nu_0)=0$ and almost every ergodic component of $\nu_1$ has positive entropy.

Let $\delta:=h-h^*(f)$, and fix $\eps>0$ such that
\begin{equation}\label{e.eps}
   h^*(f,\eps)<h^*(f)+\delta/2.
\end{equation}
Pick a finite subset $\mathcal C_1\subset M$ with  $\bigcup_{x\in \mathcal C_1} B(x,\eps/2)=M$.
Choose  $\kappa>0$ so small that
$$
     \kappa< \frac{\delta/10}{\log \#\mathcal C_1} \text{ and }
    h(\kappa)<\delta/10, \text{ where }h(t):=-t\log t-(1-t)\log t.
$$

\medskip
\noindent
{\sc Claim 1:}
{\em For all integers $N_0$ large enough there are subsets $\mathcal C_0=\mathcal C_0(N_0)\subset M$  with
 $\# \mathcal C_0\leq  e^{\delta N_0/10}$ and such that $\nu_0(B_0)>1-\kappa$, where
$$
    B_0:=\bigcup_{x\in \mathcal C_0} \mathrm{int}\,B(x,\eps/2,N_0).
$$}

\begin{proof}[Proof of the Claim] Find a finite measurable partition $\beta$ of $M$ into sets of diameter less than $\epsilon/2$.
Let $\beta_n:=\bigvee_{i=0}^{n-1}f^{-i}\beta$ and $\beta_n(x):=$ the atom of $\beta_n$ which contains $x$.

By definition, the metric entropy with respect to the partition $\beta$ is:
 $$
   h_{\nu_0}(f,\beta):=\lim_{N\to\infty} -\frac1{N}\sum_{A\in\beta_N} \nu_0(A)\log\nu_0(A).
 $$
Since $h({\nu_0},f)=0$,  $h_{\nu_0}(f,\beta)=0$. A direct computation shows that for all $N$ large enough,
 $$
    B_0':=\{x\in M: \nu_0(\beta_N(x))>e^{-N\delta/10}\}\text{ satisfies } \nu_0(B_0')>1-\kappa.
 $$
Enumerate $\{\beta_{N}(x):x\in B_0'\}$ as $\beta_N(x_1),\ldots,\beta_N(x_{\ell(N)})$, and  take $\mathcal C_0:=\{x_1,\ldots,x_{\ell(N)}\}.$
Observe that $\ell(N)\le e^{N\delta/10}$ and
$B_0=\bigcup_{x\in \mathcal C_0}B(x,\epsilon/2,N)\supset\bigcup\beta_N(x_i)=B_0'$, whence $\nu_0(B_0)>1-\kappa$.
\end{proof}

\medskip
Choose $N_0$ and $\mathcal C_0$ as in the claim, making sure that  $N_0$ is large enough to satisfy $h(1/N_0)<\delta/10$.

\medskip
By construction, almost every ergodic component of $\nu_1$ has positive entropy and is therefore hyperbolic of saddle type. By Proposition \ref{p.rectangles},  there are  finitely many $su$-quadrilaterals  $Q_1,\dots,Q_N$, associated to $\wt{\cO}_1,\dots,\wt{\cO}_N\in\HPO(f)$   such that
$$
   \diam (Q_i)<\epsilon \text{ and }\nu_1\left(\bigcup_{i=1}^N Q_i\right)>1-\kappa,
$$

\medskip

The measures $\mu_n$ converge weak-$*$ to $\nu$ and $B_0\cup\bigcup_i Q_i$ is open, so
 $$
    \nu\left(B_0\cup\bigcup_i Q_i\right) \leq \liminf_{n\to\infty} \mu_n\left(B_0\cup\bigcup_i Q_i\right).
 $$
Therefore, for all large $n$,
 \begin{equation}\label{eq-large-nu-n}
   \mu_n\left(B_0\cup\bigcup_{i=1}^N Q_i\right)>1-\kappa, \text{ and   }
   \mu_n\left(\bigcup_{i=1}^N Q_i\right)>\alpha(1-\kappa)>0.
 \end{equation}

Since $h_{\mu_n}(f)>h$, Theorem \ref{t.katok} gives a horseshoe $\Lambda_n$  such that $h(f|_{\Lambda_n})>h$ and $\Lambda_n$ contains  a saddle $\cO_n$ homoclinically related to $\mu_n$. In particular, $\mu_n(\HC(\cO_n))=1$. Consider the spectral decomposition of $\HC(\cO_n)$ from Proposition \ref{p-hc-cyclic}:
 $$
   \HC(\cO_n)=K_{n,0}\cup\dots\cup K_{n,\tau_n-1}
 $$
 where $K_{n,j}=\overline{W^u(f^jx_n)\ti W^s(f^jx_n)}$ for some $x_n\in\cO_n$ and all $0\leq j<\tau_n$.
 We have $f(K_{n,j})=K_{n,j+1}$ for $j=0,\ldots,\tau_n-2$ and $f(K_{n,\tau_n-1})=K_{n,0}$. It is convenient to extend $K_{n,j}$ to $j\geq \tau_n$ periodically: $K_{n,j+\tau_n}=K_{n,j}$.

\medbreak\noindent
{\sc Claim 2.} {\em  For all large $n$, there exist $0\leq j< \tau_n$ and $1\leq i\leq N$ such that $K_{n,j}$ intersects both $Q_i$ and $M\setminus\ov{Q_i}$.}

\begin{proof}[Proof of the Claim]
We fix $n$ large. Since $\mu_n(\bigcup_{j=0}^{\tau_n-1} K_{n,j})=\mu_n(\HC(\cO_n))=1$ and $\mu_n(\bigcup_{i=1}^N Q_i)>0$, there exist $0\leq j\leq \tau_n-1$ and $1\leq i\leq N$ such that  $K_{n,j}$ intersects $Q_i$. Assume by way of contradiction that every $K_{n,j}$ that intersects $Q_i$   is contained in $\ov{Q_i}$.

Notice that $\mu_n(K_{n,\ell})\geq 1/\tau_n$ for all $\ell$. We fix a measurable $G\subset K_{n,0}$ such that $\mu_n(G)>1/(2\tau_n)$
and so that for some $N_1$, for all $m>N_1$ and every $y\in G$
\begin{equation}\label{genericity}
\#\{k\in \{0,\dots,m-1\}\: : \: f^k(y)\in  B_0\cup\bigcup Q_i\} > (1-\kappa)m.
\end{equation}
Such a set exists because of the ergodicity of $\mu_n$, \eqref{eq-large-nu-n}, and the pointwise ergodic theorem.

We fix some large integer $m>N_1$, and cover $G$ by a ``small" collection of $(\eps,m)$-Bowen balls.
 Let
 $$
    J_1:=\{0\leq j<m:K_{n,j}\subset \bigcup_{1\leq i\leq N} \ov{Q_i}\}.
 $$
For every $j\in J_1$,  let $i(j)$ be the smallest  $1\leq i\leq N$ such that $\ov{Q_i}$ contains $K_{n,j}$. Clearly,  for all $j\in J_1$, $j+\tau_n\in J_1$ and $i(j+\tau_n)=i(j)$.
Since $G\subset K_{n,0}$, for every $y\in G$,
 $
 j\in J_1\implies f^j(y)\in K_{n,j}\subseteq \ov{Q}_{i(j)}
 $.

Let  $N_0$ be as in  Claim 1, set $a_0:=-N_0$, and define inductively $a_{\ell+1}:=\min\{j\geq a_\ell+N_0:f^j(y)\in B_0\}$ if the set is non-empty, and $a_{\ell+1}:=+\infty$ otherwise. Let $\ell_*:=\max\{\ell\geq0:a_\ell<m\}$ and
 $$
 J_0:=\{a_\ell:1\leq \ell\leq\ell_*\}\,\subset\,
   J_2:=\bigcup_{1\leq \ell\leq \ell_*}[a_\ell,a_\ell+N_0-1] \text{ , }
   J_3:=\{0,1,\dots,m-1\}\setminus(J_1\cup J_2).
 $$
Note that $J_0$, $J_2$ and $J_3$ (but not $J_1$) depend on $y$ and that $j\in J_3\implies f^j(y)\notin B_0\cup\bigcup_i Q_i$. So by (\ref{genericity}), we have  $\#J_3< \kappa\cdot m$.

Recall the finite collections of points $\mathcal C_0,\mathcal C_1$ such that $B_0=\bigcup_{x\in\mathcal C_0} B(x,\epsilon/2,N_0)$ and   $\bigcup_{x\in \mathcal C_1}B(x,\epsilon/2)=M$. To each $y\in G$, we associate the following data:
 \begin{enumerate}[(i)]
  \item $J_0\subset[0,m-1]$ whose elements are separated by at least $N_0$;
  \item $X_0:=(x_{0,j})_{j\in J_0}\in(\mathcal C_0)^{J_0}$ such that for all
   $j\in J_0$, we have $f^j(y)\in B(x_{0,j},\eps/2,N_0)$;
  \item $J_3\subset[0,m-1]$ with $\#J_3< \kappa\cdot m$;
  \item $X_3:=(x_{3,j})_{j\in J_3}\in (\mathcal C_1)^{J_3}$ such that
  for all $j\in J_3$ we have  $f^j(y)\in B(x_{3,j},\eps/2).
  $
 \end{enumerate}

Here is an  upper bound for the number of possibilities for $(J_0,X_0,J_3,X_3)$ as $y$ ranges over $G$:
  $$
    m\choice{m\\ \lceil m/N_0\rceil} \times (\#\mathcal C_0)^{\lceil m/N_0\rceil} \times
    m\choice{m\\ \lfloor m\cdot \kappa\rfloor} \times (\#\mathcal C_1)^{\lfloor \kappa\cdot m\rfloor}.
  $$
The first and third factors are upper bounds for the number of subsets of $\{1,\ldots,m\}$ with cardinality less than $m/N_0$ or $\kappa m$ (bounds satisfied by $J_0$, $J_3$).

Recall the de Moivre-Laplace approximation: for every $p\in (0,1)$, if  $p_m\to p$ as $m\to\infty$, then  ${m\choose p_m m}\sim e^{m h(p_m)}/\sqrt{2\pi m p (1-p)}=e^{m(h(p)+o(1))}$ as $m\to\infty$.  It follows that the number of possibilities for $(J_0,X_0,J_3,X_3)$ as $y$ ranges over $G$ is bounded by
 $$
    \exp m\biggl( (h(1/N_0)+o(1)) + \delta/10 + (h(\kappa)+o(1)) + \kappa\log\#\mathcal C_1 \biggr) , \text{ as $m\to\infty$.}
 $$
Recall that $h(1/N_0)<\delta/10$, $h(\kappa)<\delta/10$, and $\kappa\log\#\mathcal C_1<\delta/10$. So for all $m$ large enough, the number of possibilities for $(J_0,X_0,J_3,X_3)$ is less than
$\exp (\delta m/2)$.

For  $j\in J_1$,  since $y\in G\subset K_{n,0}$,   $f^j(y)\in K_{n,j}\subset \ov{Q_{i(j)}}$ which has diameter $<\eps$.
For $j\in J_2$, the location of $f^j(y)$ is determined up to error $\epsilon/2$  by $X_0$, and  for $j\in J_3$ by $X_3$.
  So  if $y,y'\in G$ share the same data $(J_0,X_0,J_3,X_3)$,
then $y'\in B(y,\eps,m)$.

It follows that  for every $m$ large enough,  $G$ has a cover by $(\epsilon, m)$-Bowen balls with cardinality at most $\exp(\delta m/2)$. By Katok's entropy formula, this means that
  $
  h(f,\mu_n,\eps) \leq {\delta}/{2}
 $. We chose $\epsilon$ so that $h^*(f,\epsilon)<h^*(f)+\delta/2$. By  (\ref{Tail-Inequality}) and \eqref{e.eps},
 $$
  h_{\mu_n}(f)\leq h(f,\mu_n,\eps)+ h^*(f,\eps)
   < \frac{1}{2}\delta +h^*(f)+\frac1{2}\delta
  = h^*(f)+\delta= h.
 $$
But by assumption, $h_{\mu_n}(f)>h$. This contradiction proves the claim.
\end{proof}

\medskip
We can now complete the proof of the theorem as follows.
Fix $M_0$ so large that every $n>M_0$ satisfies Claim 2. For such $n$, let $0\leq j(n)<\tau_n$ and $1\leq i(n)\leq N$ be indices such that $K_{n,j(n)}$ intersects $Q_{i(n)}$ and $M\setminus\ov{Q_{i(n)}}$. Since the range of $i(\cdot)$ is bounded, there are $n_1,n_2>M_0$ such that $n_1\neq n_2$ and $i(n_1)=i(n_2)=:i$.

By the definition of $K_{n,j}$ there are $y_1\in\cO_{n_1}$ and $y_2\in \cO_{n_2}$ such that  $K_{n_1,j(n_1)}=\ov{W^u(y_1)\pitchfork W^s(y_1)}$ and $K_{n_2,j(n_2)}=\ov{W^u(y_2)\pitchfork W^s(y_2)}$. So $W^\sigma(y_1), W^\sigma(y_2)$ ($\sigma=u,s$) both intersect $Q_i$ and $M\setminus\ov{Q}_i$.
We may apply  Proposition \ref{prop-common-Q} to the orbits $\cO_{n_1},\cO_{n_2}$
(either $f$ is Kupka-Smale or $\cO_{n_1},\cO_{n_2}$ are homoclinically related to measures $\mu_{n_i}$ such that
$\delta^s(\mu_{n_i})>1/r$). This gives $\cO_{n_1}\hsim\cO_{n_2}$. But this implies that $\mu_{n_1}\hsim\mu_{n_2}$ in contradiction to our assumptions.
\end{proof}
\begin{corollary}\label{Cor-finite-mme}A $C^\infty$ surface diffeomorphism with positive topological entropy has at most finitely many ergodic measures of maximal entropy.
\end{corollary}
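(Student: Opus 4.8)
The plan is to assemble the corollary from two results already at our disposal: Theorem~\ref{thmEntropyDecay}, which controls the entropy along sequences of pairwise non-homoclinically related measures, and Corollary~\ref{c-local-uniqueness}, which gives uniqueness of a hyperbolic equilibrium measure inside a single homoclinic class. First I would fix $h:=h_\top(f)>0$ and note that every ergodic measure of maximal entropy $\mu$ has $h(f,\mu)=h>0$; since $M$ is a surface, Ruelle's inequality (Theorem~\ref{thm.ruelle}) forces $\mu\in\HPM(f)$, i.e., $\mu$ is hyperbolic of saddle type. By Proposition~\ref{Prop-Equiv-Rel} the homoclinic relation $\hsim$ is an equivalence relation on $\HPM(f)$, so the ergodic m.m.e.'s are partitioned into measured homoclinic classes, and it suffices to bound both the number of classes containing an m.m.e. and the number of ergodic m.m.e.'s in each such class.

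For the number of classes, I would argue by contradiction: if there were infinitely many, choosing one ergodic m.m.e.\ $\mu_n$ in each produces an infinite sequence of pairwise non-homoclinically related ergodic hyperbolic measures with $h(f,\mu_n)=h$ for all $n$. Since $f\in\Diff^\infty(M)$ is in particular $C^r$ for every finite $r>1$, Theorem~\ref{thmEntropyDecay} gives $\limsup_n h(f,\mu_n)\le\lambdamin(f)/r$ for every such $r$; letting $r\to\infty$ yields $\limsup_n h(f,\mu_n)=0$, contradicting $h(f,\mu_n)=h>0$. Hence only finitely many homoclinic classes of measures carry an ergodic m.m.e.

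For the number of m.m.e.'s inside one class, I would use that the zero potential $\phi\equiv 0$ is admissible (being H\"older continuous) and that, as $P_\top(0)=h_\top(f)$, the ergodic equilibrium measures of $\phi\equiv0$ are exactly the ergodic m.m.e.'s. By the previous paragraph these measures are hyperbolic, so Corollary~\ref{c-local-uniqueness} applies and shows that two ergodic hyperbolic equilibrium measures for $\phi$ that are homoclinically related must coincide. Thus each homoclinic class contains at most one ergodic m.m.e., and combined with the finiteness of the number of classes this proves the corollary.

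I do not expect a genuine obstacle in this assembly: the analytic content is entirely in Theorem~\ref{thmEntropyDecay} (which itself rests on the Yomdin--Newhouse tail-entropy bound, the $su$-quadrilateral construction, and the dynamical Sard Lemma) and in Corollary~\ref{c-local-uniqueness} (which rests on the irreducible symbolic coding of Theorem~\ref{Theorem-Symbolic-Dynamics-C^r} together with Gurevi\v c/Buzzi--Sarig uniqueness). The only points to verify carefully are the passage $r\to\infty$ in Theorem~\ref{thmEntropyDecay} and the fact that every m.m.e.\ is hyperbolic of saddle type; the latter is precisely where two-dimensionality enters, via Ruelle's inequality, and explains why the statement fails in higher dimension (e.g.\ $\mathrm{Id}\times$ Anosov).
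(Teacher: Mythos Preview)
Your proof is correct and uses exactly the same two ingredients as the paper: Corollary~\ref{c-local-uniqueness} (with $\phi\equiv 0$) to get at most one ergodic m.m.e.\ per homoclinic class, and Theorem~\ref{thmEntropyDecay} (letting $r\to\infty$) to bound the number of classes. The paper's own proof is the slightly more streamlined reformulation: since by Corollary~\ref{c-local-uniqueness} any two distinct ergodic m.m.e.'s are non-homoclinically related, an infinite collection of ergodic m.m.e.'s would directly give an infinite sequence of pairwise non-homoclinically related hyperbolic measures with entropy $h_\top(f)>0$, contradicting Theorem~\ref{thmEntropyDecay}.
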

\begin{proof}
Corollary \ref{c-local-uniqueness} with $\phi\equiv 0$ says that different ergodic measures of maximal entropy are not homoclinically related. So the existence of infinitely many ergodic measures of maximal entropy would contradict  Theorem \ref{thmEntropyDecay}.
\end{proof}

\subsection{Semi-continuity of the simplex of measures of maximal entropy (theorem \ref{theorem-N-usc})}\label{sec-semicontinuity-nbrMME}
In this section we prove theorem \ref{theorem-N-usc}.
We need the following well-known stability result \cite{Katok-Hasselblatt-Book}, \cite{Shub-Book}: For every hyperbolic periodic point $p$ of $f\in\Diff^\infty(M)$ with given local stable and unstable manifolds $W^u_{loc}(p), W^s_{loc}(p)$ and period $\pi(p)$, there are $\delta>0$ and a neighborhood   $f\in \mathcal U\subset\Diff^\infty(M)$ such that for every $g\in \mathcal U$, $g^{\pi(p)}$ has a unique hyperbolic fixed point $p(g)$ $\delta$-close to $p$.
This is a $\pi(p)$-periodic point for $g$
and  for $\mathcal U$ sufficiently small,  the local stable and unstable manifolds of $p(g)$  are $\delta$-close in the $C^1$-topology to $W^s_{loc}(p), W^u_{loc}(p)$.
We call $p(g)$, $W^s_{loc}(p(g)), W^u_{loc}(p(g))$ the {\em hyperbolic continuations} of  $p$, $W^s_{loc}(p), W^u_{loc}(p)$ {\em on} $\mathcal U$.

\medskip
The proof of Theorem~\ref{theorem-N-usc} is based on Theorem~\ref{thmEntropyDecay} and the following proposition:

\begin{proposition}\label{prop-fn}
Let $f$ be a $C^\infty$-surface diffeomorphism of a closed surface $M$ with $h_\top(f)>0$.
Consider some m.m.e. $\mu_0=\alpha \nu + (1-\alpha)\nu'$, $\alpha\in (0,1]$,
where  $\nu$ is an ergodic m.m.e. and let $\cO\hsim\nu$ be some hyperbolic periodic orbit. Then there are neighborhoods $\cU\ni f$ in $\Diff^\infty(M)$ and $U\ni\mu_0$ and a number $h<h_\top(f)$ such that, for any $g\in\cU$, and every measure $\mu\in\Proberg(g)\cap U$ with entropy $h(g,\mu)>h$, $\mu$ is homoclinically related to the hyperbolic
continuation $\cO_g$ of the orbit $\cO$.
\end{proposition}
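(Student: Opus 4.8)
The plan is to use the $su$-quadrilateral machinery together with the openness of weak-$*$ neighborhoods and the stability of hyperbolic periodic orbits, much as in the proof of Theorem~\ref{thmEntropyDecay}, but now with the perturbed diffeomorphism $g$ playing the role of the diffeomorphism and a fixed $su$-quadrilateral carrying the key combinatorial information. First I would fix, using Proposition~\ref{p.rectangles} applied to $\nu$, a finite collection of $su$-quadrilaterals $Q_1,\dots,Q_N$ associated to $\cO_1,\dots,\cO_N\in\HPO(f)$ with $\mu_0\{x:\mu_x\hsim\cO_i\}\ne 0$, and (up to discarding some) with $\nu(\bigcup Q_i)>0$; since $\cO\hsim\nu$ and each $\cO_i\hsim\nu$ by construction, Theorem~\ref{t.katok} and transitivity of Smale's preorder let me assume that in fact each $\cO_i\hsim\cO$ and that $\partial Q_i\subset W^s(\cO_i)\cup W^u(\cO_i)$. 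The boundary curves $\partial Q_i$ being compact pieces of invariant manifolds of periodic orbits, they admit hyperbolic continuations: for $g$ in a small $C^\infty$-neighborhood $\cU$ of $f$ there are continuations $\cO_i(g)$, $\cO(g)$, and $C^1$-close pieces of $W^s(\cO_i(g))\cup W^u(\cO_i(g))$ bounding an $su$-quadrilateral $Q_i(g)$ for $g$ which is $C^1$-close to $Q_i$; moreover the $\cO_i(g)$ remain homoclinically related to $\cO(g)$ (homoclinic intersections being transverse and persistent).

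Next I would show the key ``escape'' property: there is an open set $W\owns\mu_0$ in $\Prob(M)$ such that for all $g\in\cU$ and all $g$-invariant $\mu\in W$ with positive entropy, the homoclinic class $\HC(\cO_\mu)$ of a saddle $\cO_\mu\hsim\mu$ (obtained from Theorem~\ref{t.katok}) must, along its spectral decomposition $\HC(\cO_\mu)=K_0\cup\dots\cup K_{\tau-1}$, have some piece $K_j$ meeting both $Q_i(g)$ and $M\setminus\overline{Q_i(g)}$ for some $i$. This is where the entropy hypothesis enters: the tail-entropy / Bowen-ball counting argument of Claim~2 in the proof of Theorem~\ref{thmEntropyDecay} goes through \emph{uniformly} for $g$ near $f$, using the robust tail entropy bound $\HrobInf(f)=0$ from Section~\ref{ss.entropy} so that $h^*(g)$ is uniformly small; one concludes that if every $K_j$ meeting $\bigcup_i\overline{Q_i(g)}$ were contained in it, then $h(g,\mu)$ would be bounded by a quantity strictly below $h_\top(f)$, contradicting $h(g,\mu)>h$ for $h$ chosen appropriately close to $h_\top(f)$ (here continuity of $f\mapsto h_\top(f)$ on $\Diff^\infty(M)$ is used to keep $h_\top(g)$ close to $h_\top(f)$). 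The fact that $\mu\in W$ with $W$ a small neighborhood of $\mu_0$ guarantees $\mu(\bigcup Q_i(g))>0$ via upper semicontinuity of measures of open sets under weak-$*$ convergence, exactly as in~\eqref{eq-large-nu-n}.

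Then I would invoke Proposition~\ref{prop-common-Q} for the diffeomorphism $g$: since $g$ is $C^\infty$, hence (after a further arbitrarily small perturbation, or by noting we only need it for the specific orbits) we may assume $g$ is Kupka-Smale, or alternatively use case (2) of that proposition with the hyperbolic measures supplied by Theorem~\ref{t.katok} — the point is that $W^u(\cO(x_1))$ and $W^s(\cO(x_2))$ cutting a common $su$-quadrilateral $Q_i(g)$ forces $\cO(x_1)\preceq\cO(x_2)$. Applying this with $x_1\in K_{j}$ a saddle with $\overline{W^u(x_1)\pitchfork W^s(x_1)}=K_j$ and $x_2$ any point of $\cO_i(g)$, and symmetrically, yields $\cO_\mu\hsim\cO_i(g)$; since $\cO_i(g)\hsim\cO(g)$, transitivity of $\hsim$ (Proposition~\ref{Prop-Equiv-Rel}) gives $\cO_\mu\hsim\cO(g)$, and then $\mu\hsim\cO(g)$ as $\mu$ is homoclinically related to $\cO_\mu$. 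Taking $U:=W$ finishes the proof.

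The main obstacle I anticipate is making the tail-entropy counting argument of Claim~2 genuinely uniform in $g\in\cU$: one must check that the finite covers $\mathcal C_0,\mathcal C_1$, the constant $\eps$, and the bound $h^*(g,\eps)<h^*(g)+\delta/2$ can all be chosen independently of $g$ in a neighborhood, which relies on the robust tail entropy $\HrobInf(f)=0$ and on the fact that the horseshoes $\Lambda$ and their spectral decompositions, and the bound $h_\top(g|_{\Lambda})>h$, persist; a secondary technical point is controlling the $C^1$-persistence of the $su$-quadrilaterals $Q_i(g)$ and the fact that $K_j$ meeting $Q_i(g)$ and its complement is a $C^1$-open condition on the relevant finite pieces of invariant manifolds. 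Both are routine given the stability results quoted in Section~\ref{sec-semicontinuity-nbrMME} and the uniform estimates of Section~\ref{ss.entropy}, but they require care.
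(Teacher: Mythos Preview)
Your approach matches the paper's: continue the $su$-quadrilaterals for $\nu$ (and the homoclinic relations $\cO'_i\hsim\cO$) to nearby $g$, then use the Bowen-ball counting with the robust tail entropy $\HrobInf(f,\eps)\to0$ to force some spectral piece $K_j$ of $\HC(\cO_\mu)$ to escape a $Q_i(g)$, and conclude via Proposition~\ref{prop-common-Q}. Two small corrections: you cannot perturb the given $g$ to be Kupka-Smale, so commit to case~(2) of Proposition~\ref{prop-common-Q}---both $\cO_\mu$ (via $\mu$ itself, since $h(g,\mu)>h>0$ and $g$ is $C^\infty$) and $\cO'_i(g)$ (via a structurally stable horseshoe related to $\cO$) are $s$-thick for $g$; and you need $\nu(\bigcup Q_i)>1/2$ rather than merely $>0$, so that $\mu(\bigcup Q_i(g))>\alpha/2$ and the counting argument yields the explicit threshold $h=4\delta+(1-\alpha/2)h_\top(f)<h_\top(f)$.
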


\begin{proof}[Proof of Theorem \ref{theorem-N-usc}, given proposition \ref{prop-fn}]:
{Consider $f_n, f\in\Diff^\infty(M)$ such that $f_n\xrightarrow[n\to\infty]{C^\infty}f$, and let $\Sigma_n$ be some $k$-faces of the simplex of measures of maximal entropy of $f_n$. Assume that $(\Sigma_n)_{n\geq 1}$ converges in the Hausdorff topology to some set $\Sigma$.
}

The extremal points of $\Sigma_n$ are $k+1$ distinct ergodic m.m.e.'s $\mu^0_n,\dots,\mu^k_n$. By passing to a subsequence, we can assume that, for each $i$, the sequence $(\mu^i_n)$ converges to some probability measure $\mu^i$.
Each $\mu^i$ is a m.m.e. for $f$ (see the end of section~\ref{ss.entropy}), and  $\Sigma$ is the convex hull of $\mu^0,\ldots,\mu^k$.

To  prove that $\Sigma$ is $k$-dimensional, we show that no $\mu^i$ is in the convex hull of $\{\mu^j:j\ne i\}$. Assume by contradiction that $\mu^0=\sum_{i\neq 0}  \alpha_i\mu^i$ (say). Without loss of generality $\alpha_1\neq 0$.  By Corollary \ref{Cor-finite-mme}, $f$ has only finitely many ergodic m.m.e.'s. One of them, say $\nu$, is an ergodic component of   $\mu^0$ and $\mu^1$.

Fix $\cO\in\HPO(f)$ such that $\nu\hsim\cO$. For $i=0,1$, the proposition applied to $(\mu^i,\nu,\cO)$ yields neighborhoods $\cU_i$, $U_i$ and a number $h_i$. For $n$ large enough, $f_n\in\cU_0\cap\cU_1$, $\mu^0_n\in\Proberg(f_n)\cap U_0$ and $\mu^1_n\in\Proberg(f_n)\cap U_1$.
As recalled in Section~\ref{ss.entropy}, $\max(h_0,h_1)\leq \limsup_{(f_n,\mu_n^i)\to (f,\mu^i)}h(f_n,\mu_n^i)\leq h(f,\mu^i)$. Let $\cO_{f_n}$ denote the hyperbolic continuation of $\cO$ to $\mathcal U_0\cap\mathcal U_1$, then $\mu^0_n\hsim\cO_{f_n}\hsim\mu^1_n$ for $n$ large, whence  $\mu^0_n\hsim\mu^1_n$ (Proposition \ref{Prop-Equiv-Rel}). By  Corollary~\ref{c-local-uniqueness}, $\mu^0_n=\mu^n_1$, a contradiction.

{We have thus proved the second part of the theorem. Property~\eqref{e.semi-con} follows immediately.}
\end{proof}

\begin{proof}[Proof of Proposition \ref{prop-fn}]
Let us consider the robust tail entropy $\HrobInf(f,\eps)$ at scale $\varepsilon$ as defined in Section~\ref{ss.entropy}.
Since $f$ is $C^\infty$, it goes to $0$ as $\varepsilon\to 0$.
Hence, we can choose $\delta,\eps>0$ small enough, so that
 $$
  \HrobInf(f,\eps) <\delta < \frac{\alpha}{10} h_\top(f).
   $$
Fix some integers $N_0, \ell\geq 1$  large so that
$$
   r_f(\eps/2,N_0) \leq  \exp \bigl((h_\top(f)+\delta)N_0 \bigr) \text{ and } \tfrac{2h_\top(f)}{\delta}N_0 \leq \ell.
$$
The following number is positive and smaller than $h_\top(f)$:
 $$
    h:=4\delta+(1-\alpha/2)h_\top(f).
$$
For $g$ $C^0$-close to $f$ and all $n\geq0$, writing $n=sN_0+t$ with $s\in\N$ and $0\leq t<N_0$, we have
 \begin{equation}\label{eq-rg}\begin{aligned}
   r_g(\eps,n)& \leq r_g(\eps,N_0)^{s+1} \leq r_f(\eps/2,N_0)^{s+1}
   \leq e^{(h_\top(f)+\delta)N_0(n/N_0+1)}\\
   & \leq e^{2h_\top(f) N_0} e^{(h_\top(f)+\delta)n}
    \leq e^{\delta \ell} e^{(h_\top(f)+\delta)n}.
  \end{aligned}
  \end{equation}

Proposition \ref{p.rectangles} applied to $\nu$ yields $su$-quadrilaterals $Q_1,\dots,Q_N$ associated to periodic orbits $\cO'_1,\dots\cO'_N$ that are homoclinically related to $\cO$, such that $\diam(Q_i) <\eps/(\Lip(f)^{\ell}+1)\leq \eps$  and $\nu(\bigcup Q_i)>1/2$.

Let $\cU\ni f$ and $U\ni\mu^0$ be ``small" open sets in $\Diff^\infty(M)$ and $\Prob(M)$ (we will determine how small they need to be below). Now let $g\in\cU$ and $\mu\in\Proberg(g)\cap U$.
The $su$-quadrilaterals $Q_i$ are bounded by transverse curves contained in the stable and unstable manifolds of $\cO'_i$.
For $g\in \cU$, the local stable and unstable manifolds of $\cO_i'$, their images under bounded iterations, and
their transverse intersection admit hyperbolic continuations. Hence if $\mathcal U$ is sufficiently small, then $\cO$ and the $su$-quadrilaterals $Q_i$ admit hyperbolic continuation to $\mathcal U$. We denote these continuations by $\cO_g$ and $Q_i=Q_i(g)$ $(g\in\mathcal U)$.
Choosing $\mathcal U$ and $U$  appropriately guarantees that   $\diam Q_i(g)<\eps/\Lip(g)^\ell$ and  $\mu\left(\bigcup_{i=1,\dots,N}Q_i(g)\right)>\alpha/2$ and $\Lip(g)>1$  for all  $g\in\mathcal U, \mu\in U\cap \Proberg(g)$.

We assume by contradiction that $\mu\in U\cap\Proberg(g)$, and $h(g,\mu)>h$, but $\mu$ is not homoclinically related to $\cO_g$.
Proceeding as in the proof of Theorem~\ref{thmEntropyDecay},  let $\cO(p)$ be a hyperbolic periodic orbit (of $g$) homoclinically related to $\mu$.
Its homoclinic class must have full measure for $\mu$ and, by Proposition \ref{p-hc-cyclic},
it coincides with $\bigcup_{j=0}^{\tau-1} g^j K$ with $K:=\overline{W^s(p)\pitchfork W^u(p)}$ and $g^{\tau}(K)=K$.
Using Proposition~\ref{prop-common-Q} and arguing as in the end of the proof of Theorem~\ref{thmEntropyDecay},
one can show that if some iterate $g^j(K)$ intersects both $Q_i$ and $M\setminus \ov Q_i$, then
$\cO(p)\hsim\cO'_i$,  whence $\mu\hsim\cO_g$. This contradicts our assumption.
One deduces that for each $j\in\Z$ and $1\leq i\leq N$, either $g^j(K)\subset \overline{Q_i}$ or $g^j(K)\subset M\setminus {Q_i}$. Let $J_1:=\{j\in\Z:g^j(K)$ is contained in some $\ov Q_i\}$. Note that $J_1$ contains a fraction $\mu(\bigcup_i \ov Q_i)$ of the integers. For $j\in J_1$, let $i(j):=\min\{1\leq i\leq N:g^j(K)\subset Q_i\}$. Since $g^{\tau}(K)=K$, the set $J_1$ and the function $i$ are $\tau$-periodic.

Continuing as in the proof of Theorem~\ref{thmEntropyDecay}, we bound $r_g(\eps,n,K)$.
Observe that if $g^j(K)\subset \ov Q_i$ and $0\leq n<\ell$, then $\diam (g^{j+n}(K))\leq \diam(g^n(\ov Q_i))\leq \mathrm{Lip}(g)^\ell \diam(\ov Q_i)\leq \eps$.
Thus, the images $g^n(K)$ have diameter less than $\eps$ for all $n\in \widehat J_1:=J_1+[0,\ell-1]$.
Now, $\N\setminus\widehat J_1$ is a disjoint union of maximal subintervals $[a_1,b_1]<[a_2,b_2]<\dots$ with $b_{s+1}>a_s+\ell$ for all $s$.
For each $s\geq 1$, let $C_s$ be a $(\eps/2,b_s-a_s)$-spanning subset of $M$ with minimal cardinality.
To each $x\in K$, associate $(y_s)_{s\geq1}$ with $y_s\in C_s$ such that $g^{a_s}(x)$ belongs to the Bowen ball $B_g(y_s,\eps/2,b_s-a_s)$ for $g$.

We claim that, if $x,x'\in K$ are associated to the same sequence $(y_s)_{s\geq1}$, then all their iterates $g^k(x),g^k(x')$ stay $\eps$-close. This is clear for $k\in \widehat J_1$. If $k\in\N\setminus\widehat J_1$, then both iterates belong to $g^{k-a_s}(B_g(y_s,\eps/2,b_s-a_s))$ for some $s\geq1$
such that $0\leq k-a_s\leq b_s-a_s$. This set also has diameter smaller than $\eps$.
Let us consider some large $n=b_t$. The cardinality of $[a_1,b_1]\cup\dots \cup [a_t,b_t]$ is smaller than $(1-\alpha/2)n$,
and $t$ is smaller than $n/\ell+1$.
By \eqref{eq-rg}, the minimal cardinality of an $(\eps,n)$-spanning subset of $K$ is bounded for $n=b_t$ large by
 $$
  r_g(\eps,n,K) \leq \prod_{s=1}^t |C_s| \leq \prod_{s=1}^t e^{\delta \ell} e^{(h_\top(f)+\delta)(b_s-a_s)} \leq e^{\delta (n+\ell)} e^{(1-\alpha/2)(h_\top(f)+\delta)n}.
 $$
Since $\mu(K)>0$ and $\mu$ is ergodic, this gives $h(g,\mu,\eps)<2\delta +(1-\alpha/2)h_\top(f).$

Now make the neighborhood $\mathcal U$ of $f$  to be so small that for every $g\in\mathcal U$, $h^\ast(g,\eps)\leq h^\ast_{\Diff^\infty}(f,\eps)+\delta$.
Since $\HrobInf(f,\eps)<\delta$  one gets from \eqref{Tail-Inequality} that
 $$
   h(g,\mu) \leq h(g,\mu,\eps)+h^*(g,\eps)
      < 4\delta + (1-\alpha/2)h_\top(f) = h.
 $$
This contradicts our assumption that the entropy of $\mu$ is larger than $h$.
\end{proof}

%%%%%%%%%%%%%%%%%%%%%%%%%%%%%%%%%%%%%%%%%%%%%%%%%%%%%%%%%%%%%%%%%%%%%

\section{Spectral decomposition and topological homoclinic classes}\label{s.spectral-decomposition}
In this section we discuss the spectral decomposition for $C^r$ surface diffeomorphisms. To achieve this we analyze  the structure of transitive sets, and the intersection of topological homoclinic classes.
\newcommand\Pchi{\mathbb P_\chi}
\medbreak

\subsection{Thickness of homoclinic classes}
Recall that if $\mu$ is a hyperbolic ergodic measure, then  $\delta^s(\mu)=h(f,\mu)/\lambda^s(\mu)$ where $\lambda^s$ is the absolute value of the negative Lyapunov of $\mu$. The following definition is motivated by Proposition \ref{prop-common-Q}:}

\begin{definition}\label{d-thickness-HC}
Let $\mu$ be a hyperbolic ergodic measure for a diffeomorphism $f$
on a closed surface $M$. We say that $\mu$ is \emph{$s$-thick}
if  there exist $\nu\in \HPM(f)$ and $r>1$ such that $f$ is $C^r$, $\nu\hsim\mu$ and $\delta^s(\nu)>1/r$.
Similarly $\mu$ is \emph{u-thick} if it is $s$-thick for $f^{-1}$. And $\mu$ is \emph{thick}
if it is both $s$-thick and $u$-thick.
\end{definition}

\begin{remark}\label{r.thick}
As before these definitions extend to saddle periodic orbits $\cO$ by considering  $\mu_\cO$, the   invariant probability measure on $\cO$.
Thickness only depends on the equivalence class of the hyperbolic measure for the homoclinic relation.
Note that:
\begin{myitemize}
\item[--] for a $C^\infty$ diffeomorphism, any   ergodic hyperbolic non-atomic measure is thick, because it is homoclinically related to a horseshoe with positive entropy and we can choose $r$ arbitrarily large.
\item[--] for a $C^r$ diffeomorphism any ergodic measure with entropy larger than $\lambda^s(f)/r$ is $s$-thick and
any ergodic measure with entropy larger than $\max\{\lambda^u(f),\lambda^s(f)\}/r$ is thick.
\end{myitemize}
\end{remark}

\begin{proposition}
A measure $\mu\in \HPM(f)$ is $s$-thick
 if and only if there exist $r\in (1,\infty)$ and  a basic set $\Lambda$ such that $f$ is $C^r$, $\Lambda\hsim\mu$,  and $h_\top(f,\Lambda)>\lambda^s(f,\Lambda)/r$.
\end{proposition}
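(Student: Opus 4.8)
The plan is to prove the two implications separately, each time reducing to a tool already available in the paper: Katok's Horseshoe Theorem (Theorem~\ref{t.katok}) for one direction, and the variational principle together with the very definition of $\lambda^s(f,\Lambda)$ for the other. In both cases the only point requiring care is the comparison between $\lambda^s(f,\Lambda)$ and the values $\lambda^s(f,\cdot)$ of individual ergodic measures carried by $\Lambda$, which on a surface is controlled by the one-dimensionality of the stable bundle.

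First I would treat the implication from $s$-thickness to a basic set. Assume $\mu$ is $s$-thick, witnessed by $r>1$ with $f\in\Diff^r(M)$ and $\nu\in\HPM(f)$ such that $\nu\hsim\mu$ and $\delta^s(\nu)=h(f,\nu)/\lambda^s(f,\nu)>1/r$; in particular $h(f,\nu)>0$, so $\nu$ is atomless. Fixing a small $\eps>0$, I would apply Theorem~\ref{t.katok} to $\nu$ to get a horseshoe $\Lambda$ with $\Lambda\hsim\nu$ (hence $\Lambda\hsim\mu$ by transitivity of $\hsim$, Proposition~\ref{Prop-Equiv-Rel}), with $h_\top(f,\Lambda)>h(f,\nu)-\eps$, and with $\lambda^s(f,\rho)<\lambda^s(f,\nu)+\eps$ for every $\rho\in\Proberg(f|_\Lambda)$. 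A routine subadditive argument as in the proof of Proposition~\ref{p.chihypset} --- using that the hyperbolic splitting of $\Lambda$ has one-dimensional bundles, so that $\|Df^{-n}_x\|$ and $\|Df^{-n}|_{E^s(x)}\|$ differ by a bounded factor for large $n$ --- then gives $\lambda^s(f,\Lambda)=\sup_{\rho}\lambda^s(f,\rho)\le\lambda^s(f,\nu)+\eps$. Writing $h(f,\nu)=\lambda^s(f,\nu)/r+\lambda^s(f,\nu)\bigl(\delta^s(\nu)-1/r\bigr)$ and substituting, the quantity $h_\top(f,\Lambda)-\lambda^s(f,\Lambda)/r$ is bounded below by $\lambda^s(f,\nu)\bigl(\delta^s(\nu)-1/r\bigr)-\eps(1+1/r)$, which is positive once $\eps$ is chosen small (the constant $\lambda^s(f,\nu)(\delta^s(\nu)-1/r)$ being fixed and positive). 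This is exactly the computation carried out inside the proof of Proposition~\ref{prop-common-Q}, case~(2).

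For the converse I would assume $f\in\Diff^r(M)$ and that $\Lambda$ is a basic set with $\Lambda\hsim\mu$ and $h_\top(f,\Lambda)>\lambda^s(f,\Lambda)/r$. Uniform hyperbolicity forces $\lambda^s(f,\Lambda)>0$, so $h_\top(f,\Lambda)>0$; in particular $\Lambda$ is infinite and both invariant bundles are one-dimensional (a locally maximal transitive set that is uniformly contracted, or uniformly expanded, is a single periodic orbit). By the variational principle, choose $\nu\in\Proberg(f|_\Lambda)$ with $h(f,\nu)>\lambda^s(f,\Lambda)/r$; then $\nu\in\HPM(f|_\Lambda)$. Since $\lambda^s(f,\nu)$ is the top Lyapunov exponent of $Df^{-1}$ along $\nu$ and $\nu$ is carried by $\Lambda$, one has $\lambda^s(f,\nu)\le\lim_n\tfrac1n\log\sup_{x\in\Lambda}\|Df^{-n}_x\|=\lambda^s(f,\Lambda)$, so $\delta^s(\nu)=h(f,\nu)/\lambda^s(f,\nu)\ge h(f,\nu)/\lambda^s(f,\Lambda)>1/r$. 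Finally $\Lambda\hsim\mu$ gives some $\nu'\in\HPM(f|_\Lambda)$ with $\nu'\hsim\mu$; since any two ergodic measures on the transitive hyperbolic set $\Lambda$ are homoclinically related, $\nu\hsim\nu'$, and transitivity of $\hsim$ yields $\nu\hsim\mu$. Thus $\mu$ is $s$-thick.

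Because both directions ride on earlier results, I do not expect any serious difficulty. The main (minor) obstacle is the exponent bookkeeping: establishing $\lambda^s(f,\Lambda)=\sup_\rho\lambda^s(f,\rho)$ and $\lambda^s(f,\nu)\le\lambda^s(f,\Lambda)$ for a basic set on a surface, and choosing the parameter $\eps$ in Katok's theorem only after the fixed positive constant $\lambda^s(f,\nu)(\delta^s(\nu)-1/r)$ has been identified.
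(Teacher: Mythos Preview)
Your proof is correct and follows essentially the same route as the paper: the forward direction reproduces the construction in case~(2) of the proof of Proposition~\ref{prop-common-Q} (Katok's horseshoe plus the exponent comparison via Proposition~\ref{p.chihypset}), and the converse uses the variational principle and $\lambda^s(f,\nu)\le\lambda^s(f,\Lambda)$ exactly as the paper does. Your write-up is in fact slightly more detailed than the paper's, spelling out the transitivity step for $\nu\hsim\mu$ and the reason $\Lambda$ has one-dimensional bundles.
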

\begin{proof}
Suppose  $\mu$ is $s$-thick, then  $\Lambda$ can be constructed as in case (2)
of the proof of Proposition~\ref{prop-common-Q}.
Conversely, if there is a basic set $\Lambda$ such that  $\mu\hsim\Lambda$  and $h_\top(f,\Lambda)>\lambda^s(f,\Lambda)/r$, then
by the variational principle there exists  an ergodic measure $\nu$  on $\Lambda$ whose entropy is so close to $h_\top(f,\Lambda)$ that
 $h(f,\nu)>\lambda^s(\nu)/r$ and $\delta^s(\nu)>1/r$.
Since $\nu$ is supported in $\Lambda$,  $\nu\hsim\mu$. So $\mu$ is $s$-thick.
\end{proof}

\subsection{Homoclinic relation and topological transitivity}

The next theorem is the key technical result of this section. We will use it below to show that if $f$ is a topologically transitive $C^\infty$ surface diffeomorphism, then any two $\mu_1,\mu_2\in\HPM(f)$ with positive entropy are homoclinically related.

\begin{theorem}\label{thmTransitivity}
Suppose $r>1$ and  $f$ is a $C^r$ diffeomorphism on a closed surface. Let $\Lambda$ be a
transitive set, and suppose $\mu_1,\mu_2$ are two hyperbolic ergodic measures such that
$\HC(\mu_1)\cap\Lambda,\HC(\mu_2)\cap\Lambda$  carry non-atomic ergodic hyperbolic measures
(this holds whenever $h_\top(f,\HC(\mu_i)\cap\Lambda)>0$).
If
\begin{myitemize}
  \item[--] $f$ is Kupka-Smale, or
  \item[--] $\mu_1$ is $s$-thick.
\end{myitemize}
then $\mu_1\preceq\mu_2$.
\end{theorem}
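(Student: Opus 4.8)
The plan is to reduce the statement to the situation handled by Proposition~\ref{prop-common-Q}: I want to produce an $su$-quadrilateral $Q$ together with periodic orbits $\cO(x_1)\hsim\mu_1$ and $\cO(x_2)\hsim\mu_2$ such that $W^u(\cO(x_1))$ meets both $Q$ and $M\setminus\overline Q$, and $W^s(\cO(x_2))$ meets both $Q$ and $M\setminus\overline Q$. Then Proposition~\ref{prop-common-Q} (case (1) if $f$ is Kupka-Smale, case (2) if $\mu_1$ is $s$-thick) gives $\cO(x_1)\preceq\cO(x_2)$, hence $\mu_1\preceq\mu_2$ since the homoclinic relation for measures is compatible with that of periodic orbits and $\preceq$ passes through the relation (Proposition~\ref{Prop-Equiv-Rel}).

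First I would set up the two homoclinic classes. By hypothesis $\HC(\mu_1)\cap\Lambda$ and $\HC(\mu_2)\cap\Lambda$ carry non-atomic ergodic hyperbolic measures $\nu_1,\nu_2$ (and when the topological entropy of $\HC(\mu_i)\cap\Lambda$ is positive, Katok's theorem or the variational principle produces such a measure inside $\HC(\mu_i)\cap\Lambda$, which is then automatically homoclinically related to $\mu_i$ since it is supported in the homoclinic class; here I should be slightly careful and use Corollary~\ref{Cor-HC} / Proposition~\ref{prop-HcO} to ensure that a non-atomic ergodic hyperbolic measure supported in $\HC(\mu_i)$ is indeed homoclinically related to $\mu_i$ — this may require the extra input that $\HC(\mu_i)=\HC(\cO_i)$ for suitable $\cO_i$). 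Applying Proposition~\ref{p.rectangles} to the non-atomic measure $\nu_2$, I obtain a finite family of $su$-quadrilaterals $Q_1,\dots,Q_N$ of small diameter associated to periodic orbits $\cO'_j\hsim\nu_2\hsim\mu_2$ with $\nu_2(\bigcup_j Q_j)$ close to $1$. I would also fix a periodic orbit $\cO_1\hsim\mu_1$ with a transverse homoclinic point, obtained from Theorem~\ref{t.katok} applied to $\nu_1$ (in case (2) I choose it so that it lies in a horseshoe realizing the $s$-thickness of $\mu_1$, so the hypothesis of case (2) of Proposition~\ref{prop-common-Q} is met). The key point to exploit is transitivity of $\Lambda$: since $\HC(\mu_1)\cap\Lambda$ and $\HC(\mu_2)\cap\Lambda$ are nonempty closed subsets of the transitive set $\Lambda$, and $W^u(\cO_1)$, $W^s(\cO'_j)$ are dense in suitable pieces of these classes, a transitivity/shadowing argument should let me find a single $Q=Q_{j_0}$ such that $W^u(\cO_1)$ enters $Q$ and also escapes $Q$ (because $\HC(\mu_1)$ is nontrivial and not contained in the small quadrilateral), while $W^s(\cO'_{j_0})$ also enters and escapes $Q$. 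Concretely: because $\nu_2(\bigcup_j Q_j)>0$ and $\HC(\mu_2)$ is a nontrivial transitive set not contained in $\overline{Q_{j_0}}$, the unstable and stable foliations of the horseshoes approximating $\HC(\mu_2)$ must cross the boundary of some $Q_{j_0}$; and using that $\Lambda$ is transitive, an orbit travelling from a neighbourhood of $\HC(\mu_1)\cap\Lambda$ to a neighbourhood of $Q_{j_0}$ produces, via the inclination lemma, iterates of pieces of $W^u(\cO_1)$ that cross $\partial Q_{j_0}$.

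The main obstacle I anticipate is precisely this last crossing argument: ensuring that $W^u(\cO(x_1))$ genuinely \emph{crosses} $Q$ (meets both $Q$ and its complement) rather than just touching it, and doing so uniformly enough that the same quadrilateral works for both $\mu_1$ and $\mu_2$. This is where transitivity of $\Lambda$ must be used in an essential, non-formal way — one needs to propagate the density of stable/unstable manifolds inside $\HC(\mu_i)$ along a transitive orbit of $\Lambda$ and then invoke the Inclination Lemma (Lemma~\ref{l.density-submanifolds}) to get $C^1$-convergence of disc iterates, combined with the fact that a nontrivial homoclinic class cannot sit inside an arbitrarily small disc. I would handle the Kupka-Smale case and the $s$-thick case uniformly up to this point, the only difference being which clause of Proposition~\ref{prop-common-Q} is finally invoked; in the $s$-thick case one additionally uses Remark~\ref{r.thick} to know thickness is a property of the homoclinic class, so it transfers from $\mu_1$ to the chosen periodic orbit $\cO(x_1)$. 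Once the crossing configuration is in place, Proposition~\ref{prop-common-Q} finishes the proof immediately, yielding $\cO(x_1)\preceq\cO(x_2)$ and hence $\mu_1\preceq\mu_2$.
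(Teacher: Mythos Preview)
Your overall framework is right---reduce to Proposition~\ref{prop-common-Q} via periodic orbits $\cO_1\hsim\mu_1$, $\cO_2\hsim\mu_2$---but the crossing mechanism you describe has a genuine gap, and the paper fills it with a device you are missing: it uses \emph{two} $su$-quadrilaterals, one $Q_1$ built near $\supp\nu_1\subset\HC(\mu_1)\cap\Lambda$ and one $Q_2$ near $\supp\nu_2\subset\HC(\mu_2)\cap\Lambda$, with the additional care that $f^n(\partial^sQ_1)$ eventually avoids $\overline{Q_2}$ and $f^{-n}(\partial^uQ_2)$ eventually avoids $\overline{Q_1}$ (arranged by making the $Q_i$ small and disjoint from the relevant periodic orbits). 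Since $Q_1,Q_2$ are \emph{open} sets meeting $\Lambda$, transitivity gives some $n$ with $f^n(Q_1)\cap Q_2\neq\emptyset$; then a purely topological Jordan-curve argument (if $f^n(\partial^uQ_1)$ missed $Q_2$ entirely, one would force $Q_2\subset f^n(\overline{Q_1})$, contradicting the disjointness set up above) shows that $f^n(\partial^uQ_1)$ meets both $Q_2$ and $M\setminus\overline{Q_2}$. Proposition~\ref{prop-common-Q} is now applied \emph{twice}: first to $Q_1$ to get that $W^u(\cO_1)$ accumulates in $C^1$ on $\partial^uQ_1$, hence also crosses $Q_2$; then to $Q_2$ to conclude $\cO_1\preceq\cO_2$.

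Your single-quadrilateral approach tries to push $W^u(\cO_1)$ into a quadrilateral on the $\mu_2$ side directly via ``transitivity plus inclination lemma''. This does not work as written: transitivity of $\Lambda$ moves \emph{points of $\Lambda$}, not pieces of $W^u(\cO_1)$, and the inclination lemma requires a transverse intersection of $W^u(\cO_1)$ with the stable manifold of some point on the orbit you want to follow---precisely what you are trying to establish. The paper's trick is that $\partial^uQ_1$ is \emph{already} a piece of an unstable manifold on the $\mu_1$ side, while $Q_1$ is simultaneously an open neighborhood of a point of $\Lambda$; this is what lets transitivity act on the unstable manifold. A secondary point: you worry about showing $\nu_i\hsim\mu_i$, but the paper never needs this---it only uses $\supp\nu_i\subset\HC(\mu_i)$ so that $Q_i$ meets $\HC(\mu_i)$, and that $\HC(\mu_i)\not\subset\overline{Q_i}$ since the diameter of $Q_i$ is small.
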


\begin{remark}
The Kupka-Smale condition can be replaced by the following local assumption:
There exist hyperbolic periodic orbits of saddle type $\cO_1\hsim \mu_1$ and $\cO_2\hsim \mu_2$
such that all the intersections between $W^u(\cO_1)$ and $W^s(\cO_2)$ are transverse.
(See the comment at the end of the proof.)
\end{remark}
\newcommand\step[2]{\medbreak\noindent{\sc Step #1.} {\it #2}\medbreak}

\begin{proof}
The idea is to construct $\cO_i\in\HPO(f)$ homoclinically related to $\mu_i$ so that $W^u(\cO_1), W^s(\cO_2)$ intersect the interior and the exterior of the same $su$-quadrilateral, and then invoke Proposition \ref{prop-common-Q}.

\step{1}{
There are $su$-quadrilaterals $Q_1,Q_2$ such that $Q_i\cap\Lambda\cap\HC(\mu_i)\ne\emptyset$, $\HC(\mu_i)\not\subset \ov{Q_i}$, and
 $f^n(\partial^sQ_1)\cap \ov{Q_2}=\emptyset$ and $f^{-n}(\partial^u Q_2)\cap \ov{Q_1}=\emptyset$, for all large $n\geq0$.
}

For each $i=1,2$, we pick:
\begin{myitemize}
 \item[--] $\nu_i$ an ergodic non-atomic hyperbolic measure such that $\nu_i(\Lambda\cap\HC(\mu_i))=1$;
 \item[--]  $\cO'_i$ a periodic hyperbolic orbit with $\cO'_i\hsim\nu_i$ and $\cO'_1\ne\cO'_2$ (these exist by the assumptions on $\nu_i$.);
 \item[--] $x_i\in\supp\nu_i\setminus(\cO'_1\cup\cO'_2)$ (these exist by the assumptions on $\nu_i$.)
\end{myitemize}
Let
 $
   0<\rho<\frac13\min\{d(x_1,x_2),d(\{x_1,x_2\},\cO'_1\cup\cO'_2),\diam(\HC(\mu_1)),\diam(\HC(\mu_2))\}.
 $
Fix $i\in\{0,1\}$.
Proposition \ref{p.rectangles} gives $su$-quadrilaterals with diameters less than $\rho/2$ and whose union has measure larger than
 $1-\nu_i(B(x_i,\rho/2))$. At least  one of the quadrilaterals (call it $\wh{Q}_i$) must be contained in $B(x_i,\rho)$ and have positive $\nu_i$-measure. $\wh{Q}_i$ is associated to a periodic orbit
 homoclinically related to $\nu_i$, whence to $\cO_i'$. Using the inclination lemma, one can replace the $\wh{Q}_i$ by an  $su$-quadrilateral $Q_i$ associated to $\cO_i'$ such that $\diam(Q_i)<\rho/2$, $Q_i\subset B(x_i,\rho/2)$ and $\mu_i(Q_i)>0$
 (see Definition~\ref{def-quadrilateral}).

The choice of $\rho$ ensures that $\ov{Q_i}$ is disjoint from $\cO'_1\cup\cO'_2$. Note that $f^n(\partial^sQ_1)$ and $f^{-n}(\partial^uQ_2)$ converge to $\cO_1'$ and $\cO'_2$ as $n\to\infty$.
The claimed properties of $Q_1$ and $Q_2$ are now easy to check.

\step{2}{
There is an integer $n\geq0$ such that $f^n(\partial^uQ_1)$ meets both $Q_2$ and $M\setminus\ov{Q_2}$.
}

Since $\Lambda$ is transitive, there exists $n\geq0$  arbitrarily large such that $f^n(Q_1)\cap Q_2\ne\emptyset$. From Step~1, one sees that $f^n(\partial^s Q_1)\subset M\setminus\ov{Q_2}$. Since $\partial^u Q_1$ has the same endpoints as $\partial^s Q_1$, the image $f^n(\partial^u Q_1)$ meets $M\setminus\ov{Q_2}$. To conclude, we assume by contradiction that $f^n(\partial^u Q_1)\cap Q_2=\emptyset$. Thus $f^n(\partial Q_1)\cap Q_2=\emptyset$. Since $Q_2$ is connected and meets $f^n(Q_1)$, this implies that $Q_2$ is contained in $f^n(Q_1)$. In particular, $\partial^uQ_2\subset\ov{Q_2}\subset f^n(\ov{Q_1})$, contradicting $f^{-n}(\partial^uQ_2)\cap \ov{Q_1}=\emptyset$.

\step{3}{
Let $\cO_1\in\HPO(f)$ such that $\cO_1\hsim\mu_1$. Then $W^u(\cO_1)$ meets both $Q_2$ and $M\setminus\ov{Q_2}$.
}

$Q_1$ meets $\HC(\mu_1)=\HC(\cO_1)$, hence $W^u(\cO_1)\cap Q_1\ne\emptyset$. At the same time, $\HC(\mu_1)\not\subset \ov Q_1$, so $W^u(\cO_1)$ also meets $M\setminus \ov{Q_1}$. Proposition \ref{prop-common-Q} says that $W^u(\cO_1)$ accumulates on $\partial^u Q_1$ both in the case when $f$ is Kupka-Smale and in the case when  $\mu_1$ is $s$-thick. By step 2, $W^u(\cO_1)$ meets $Q_2$ and $M\setminus\overline{Q_2}$.

\step{4}{
Conclusion.
}

Let $\cO_2$ be any periodic orbit homoclinically related to $\mu_2$.
As before, $W^s(\cO_2)$ meets $Q_2$ since $Q_2\cap\HC(\mu_2)\ne\emptyset$  and $W^s(\cO_2)$ also meets $M\setminus\ov{Q_2}$ since $\HC(\cO_2)\not\subset\ov{Q_2}$. Proposition~\ref{prop-common-Q} then gives $\cO_1\preceq\cO_2$. The theorem follows.

\medskip
We now explain Remark 6.5.
Suppose all that we know on $f$ and $\mu_i$ is that for some $\cO_i\hsim\mu_i$, all the intersections of $W^u(\cO_1), W^s(\cO_2)$ are transverse. Since  $\HC(\mu_i)=\ov{W^u(\cO_i)\pitchfork W^s(\cO_i)}$, we can approximate the $Q_i$ from step 1 by $su$-quadrilaterals  $Q_i'$ with  sides in $W^u(\cO_i),W^s(\cO_i)$ which are $C^0$-close to the sides of $Q_i$. By step 2, for some $n$, $f^n(\partial^u Q_1)$ meets both $Q_2$ and $M\setminus\ov{Q}_2$. Make the approximation good enough that  $f^n(\partial^u Q_1')$ meets
 $Q_2'$ and $M\setminus\ov{Q_2'}$. By Jordan's theorem, $W^u(\cO_1)$ intersects $W^s(\cO_2)$. By the assumption on $\cO_i$ the intersection is transverse. So $\mu_1\preceq\cO_1\preceq\cO_2\preceq\mu_2$.
\end{proof}

\subsection{Support and homoclinic relations of measures}
\label{Section-Measure/Topological-HC}
\begin{proposition}\label{p.measure-related}
Suppose $r>1$ and let $f$ be a $C^r$ diffeomorphism on a closed surface.  Let $\mu$
be a non-atomic  ergodic hyperbolic measure supported on a homoclinic class $\HC(\cO)$. Suppose
\begin{myitemize}
  \item[--] $f$ is Kupka-Smale, or
  \item[--] $\cO$ and $\mu$ are both $s$-thick, or
  \item[--] $\cO$ is thick.
 \end{myitemize}
Then $\mu$ is homoclinically related to $\cO$.
\end{proposition}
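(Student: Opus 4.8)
The goal is to show that $\mu \hsim \cO$, i.e., $\mu \preceq \cO$ and $\cO \preceq \mu$. Since $\mu$ is supported on $\HC(\cO)$, intuitively every stable/unstable leaf of a typical point of $\mu$ lives inside the closure of the transverse homoclinic intersections of $\cO$, so one should be able to produce the required transverse intersections. The plan is to apply Theorem~\ref{thmTransitivity} with $\Lambda := \HC(\cO)$, which is a transitive set, and with the two measures being $\mu$ and $\mu_\cO$ (and then also with the roles reversed).

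First I would set up the two applications of Theorem~\ref{thmTransitivity}. For the direction $\mu \preceq \mu_\cO$: take $\mu_1 := \mu$, $\mu_2 := \mu_\cO$, $\Lambda := \HC(\cO)$. We must check the hypothesis that $\HC(\mu_1)\cap\Lambda$ and $\HC(\mu_2)\cap\Lambda$ carry non-atomic ergodic hyperbolic measures. For $\mu_2 = \mu_\cO$ this is immediate since $\HC(\mu_\cO) = \HC(\cO) = \Lambda$ and $\HC(\cO)$ carries the non-atomic hyperbolic measure $\mu$ itself (by hypothesis $\mu$ is non-atomic, ergodic, hyperbolic, and supported on $\HC(\cO)\subset\Lambda$). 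For $\mu_1 = \mu$ we have $\mu(\HC(\mu)\cap\Lambda) = 1$ as soon as $\HC(\mu)\cap\HC(\cO)\ne\emptyset$ — and indeed $\supp\mu\subset\HC(\cO)$ gives a point common to $\HC(\mu)$ and $\HC(\cO)=\Lambda$, so $\mu$ restricted to that class works. Then I would verify the ``or'' clause of Theorem~\ref{thmTransitivity}: in case $f$ is Kupka-Smale this holds directly; in the case ``$\cO$ and $\mu$ are both $s$-thick'' we use that $\mu_1=\mu$ is $s$-thick; in the case ``$\cO$ is thick'' we use that $\cO$ is thick, hence $\cO$ is in particular both $s$-thick and $u$-thick — but here $\mu_1 = \mu$, not $\cO$, so for this last case I would instead run the argument with $\mu_1 := \mu_\cO$ (which is thick, hence $s$-thick) and $\mu_2 := \mu$, obtaining $\mu_\cO \preceq \mu$; combined with the reverse this still yields the equivalence. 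This is the place to be careful about which measure must carry the thickness in each invocation.

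Concretely, I would argue both inequalities as follows. To get $\cO \preceq \mu$: apply Theorem~\ref{thmTransitivity} with $\mu_1 := \mu_\cO$, $\mu_2 := \mu$. Here $\mu_1$ being $s$-thick (needed in the non-Kupka-Smale case) follows from: $\cO$ is $s$-thick in the second bullet hypothesis, and $\cO$ is thick hence $s$-thick in the third; and in the first bullet $f$ is Kupka-Smale. The carrying-non-atomic-measure hypotheses are checked exactly as above. This gives $\mu_\cO \preceq \mu$, i.e., $\cO \preceq \mu$. To get $\mu \preceq \cO$: apply Theorem~\ref{thmTransitivity} with $\mu_1 := \mu$, $\mu_2 := \mu_\cO$. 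The $s$-thickness of $\mu_1 = \mu$ (needed in the non-Kupka-Smale case) is given directly by the second bullet; for the third bullet (``$\cO$ thick'') we cannot conclude $\mu$ is $s$-thick a priori — but thickness depends only on the homoclinic equivalence class (Remark~\ref{r.thick}), and once we have shown $\cO \preceq \mu$ and $\mu \preceq \cO$ by the other route it is circular, so for the third bullet I would instead symmetrize: prove $\mu_\cO \preceq \mu$ as above, and prove $\mu \preceq \mu_\cO$ by applying Theorem~\ref{thmTransitivity} to $f^{-1}$, using that $\cO$ is $u$-thick (equivalently $s$-thick for $f^{-1}$) and that the homoclinic relation and the hypotheses are all symmetric under $f \leftrightarrow f^{-1}$. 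Putting the two inequalities together gives $\mu \hsim \cO$.

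The main obstacle is purely bookkeeping: Theorem~\ref{thmTransitivity} requires $\mu_1$ (its \emph{first} argument) to be $s$-thick in the non-Kupka-Smale case, so in each of the three hypothesis bullets I must arrange that whichever measure plays the role of $\mu_1$ is the one carrying the available thickness, possibly passing to $f^{-1}$ (which swaps $s$-thick and $u$-thick) to cover the asymmetric bullets, and invoking Remark~\ref{r.thick} that thickness is a homoclinic-class invariant. There is no substantial analytic difficulty beyond checking that $\HC(\mu)\cap\HC(\cO) = \HC(\cO) = \supp$-containing-$\mu$ supplies the required non-atomic hyperbolic measures on both intersected classes, which is immediate from $\supp\mu \subset \HC(\cO)$ and $\mu$ non-atomic.
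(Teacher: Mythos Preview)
Your proposal is correct and follows essentially the same approach as the paper: apply Theorem~\ref{thmTransitivity} with $\Lambda=\HC(\cO)$ and $(\mu_1,\mu_2)=(\mu_\cO,\mu)$ to get $\cO\preceq\mu$, and then obtain $\mu\preceq\cO$ either by exchanging the roles of $\mu$ and $\mu_\cO$ (second bullet) or by passing to $f^{-1}$ (first and third bullets). The paper's proof is terser but the logic and the case analysis are identical; your more careful verification that $\HC(\mu_i)\cap\Lambda$ carries the required non-atomic measure (via $\supp\mu\subset\HC(\mu)\cap\HC(\cO)$) is exactly what the paper leaves implicit.
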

\begin{proof}
We apply Theorem~\ref{thmTransitivity} to the transitive set $\Lambda=\HC(\cO)$ and to the
the measures $\mu_\cO,\mu$ and get $\cO\preceq\mu$.
Replacing $f$ by $f^{-1}$ (if $f$ is Kupka-Smale or when $\cO$ is thick), or exchanging $\mu$ and $\cO$
(if $\cO$ and $\mu$ are both $s$-thick), we get $\mu\preceq\cO$.
\end{proof}

\begin{corollary}\label{c.measure-related}
Suppose $r>1$ and  $f$ is a $C^r$ diffeomorphism on a closed surface. Let
$\HC(\cO)$ be a homoclinic class  and suppose $\mu\in\HPM(f)$ satisfies at least one of the following conditions:
 \begin{enumerate}[(1)]
  \item $\mu$ non-atomic and $h_\top(f,\HC(\cO))>\lambda_{\max}(f)/r$ (see \eqref{lambda-max-lambda-min});
  \item $h(f,\mu)>\lambda_{\min}(f)/r$ and $\cO\hsim\nu$ with $h(f,\nu)>\lambda_{\min}(f)/r$
  (see \eqref{lambda-max-lambda-min});
 \end{enumerate}
Then:
 $
    \mu(\HC(\cO))=1 \text{ if and only if }\mu\hsim\cO.
 $
\end{corollary}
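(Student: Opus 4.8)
The plan is to deduce both implications from results already established, chiefly Proposition~\ref{p.measure-related} and Theorem~\ref{thmTransitivity}, after a preliminary normalization. The implication ``$\mu\hsim\cO\Rightarrow\mu(\HC(\cO))=1$'' needs no hypothesis on entropy or thickness: if $\mu\hsim\cO$ then $\mu\hsim\mu_\cO$, so by Corollary~\ref{Cor-HC} and transitivity of $\hsim$ on periodic orbits one gets $\HC(\mu)=\HC(\cO)$, and since $\supp\mu\subseteq\HC(\mu)$ (because $\mu\hsim\mu$) this yields $\mu(\HC(\cO))=1$. So the content is the converse: assuming $\mu(\HC(\cO))=1$, show $\mu\hsim\cO$.

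Replacing $f$ by $f^{-1}$ if necessary --- an operation that fixes $\HC(\cO)$ as a set, preserves $\hsim$ and all metric entropies, and exchanges $\lambda^s(f)$ with $\lambda^u(f)$ --- I may assume $\lambdamin(f)=\lambda^s(f)$. Note first that $\mu$ is non-atomic: in case $(1)$ this is assumed, and in case $(2)$ one has $h(f,\mu)>\lambdamin(f)/r\ge 0$, hence $h(f,\mu)>0$, which rules out atomic measures. In case $(2)$, Remark~\ref{r.thick} gives that $\mu$ is $s$-thick (its entropy exceeds $\lambda^s(f)/r$) and that $\nu$ is $s$-thick; since thickness depends only on the homoclinic class and $\nu\hsim\cO$, the orbit $\cO$ is $s$-thick as well. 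Thus $\mu$ is a non-atomic ergodic hyperbolic measure supported on $\HC(\cO)$ with $\cO$ and $\mu$ both $s$-thick, and Proposition~\ref{p.measure-related} gives $\mu\hsim\cO$.

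In case $(1)$ the key step will be to prove that $\cO$ is \emph{thick}. By the variational principle on the compact invariant set $\HC(\cO)$, there is an ergodic measure $m$ on $\HC(\cO)$ with $h(f,m)>\lambdamax(f)/r$; by Ruelle's inequality (Theorem~\ref{thm.ruelle}) $m$ is non-atomic and hyperbolic of saddle type, and by Remark~\ref{r.thick} it is thick. I then apply Theorem~\ref{thmTransitivity} with $\Lambda=\HC(\cO)$, $\mu_1=m$, $\mu_2=\mu_\cO$: the transitive set $\HC(\cO)$ carries the non-atomic hyperbolic measure $m$ (so the ``carrying'' hypothesis holds, as also guaranteed by $h_\top(f,\HC(\cO))>0$), and $m$ is $s$-thick, whence $m\preceq\cO$. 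Applying the same theorem to $f^{-1}$, for which $m$ is $s$-thick because it is $u$-thick for $f$, gives $\cO\preceq m$. Hence $m\hsim\cO$, and since $m$ is thick this forces $\cO$ to be thick. Finally Proposition~\ref{p.measure-related}, applied to the non-atomic ergodic hyperbolic measure $\mu$ supported on $\HC(\cO)$ with $\cO$ thick, yields $\mu\hsim\cO$.

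The main obstacle is case $(1)$: one cannot feed $\mu$ itself into Theorem~\ref{thmTransitivity} since $\mu$ may have small entropy and fail to be thick, so the argument routes through the auxiliary high-entropy measure $m$ and transfers thickness along the homoclinic relation $m\hsim\cO$. The remaining details are bookkeeping --- keeping track of the $f\leftrightarrow f^{-1}$ symmetry in Definition~\ref{d-thickness-HC}, and checking in each invocation of Theorem~\ref{thmTransitivity} that the relevant sets $\HC(\mu_i)\cap\Lambda$ carry non-atomic ergodic hyperbolic measures, which here is immediate from $h_\top(f,\HC(\cO))>0$ together with $\supp m,\supp\mu\subseteq\HC(\cO)$.
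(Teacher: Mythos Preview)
Your proof is correct and follows essentially the same approach as the paper. The treatment of case~(2) matches exactly; in case~(1) you take a slight detour---first showing $m\hsim\cO$ to conclude that $\cO$ is thick, then invoking Proposition~\ref{p.measure-related}---whereas the paper applies Theorem~\ref{thmTransitivity} twice (to $f$ and $f^{-1}$) with the thick measure $\nu$ against an arbitrary $\nu'\in\HPM(f|_{\HC(\cO)})$, taking $\nu'=\mu$ and $\nu'=\mu_\cO$ to get $\mu\hsim\nu\hsim\cO$ directly, but the underlying ideas are identical.
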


\begin{proof}
If $\mu\hsim\cO$, then $\mu(\HC(\cO))=1$ by Corollary \ref{Cor-HC} and the transitivity of the homoclinic relation.

We prove the converse. In case (1), the variational principle gives an ergodic $\nu$ carried by $\HC(\cO)$ with $h(f,\nu)>\lambdamax(f)/r$.  So $\nu$ is thick. For any $\nu'\in\HPM(f|_{\HC(\cO)})$, we apply Theorem \ref{thmTransitivity} to $(f,\HC(\cO),\nu,\nu')$ and $(f^{-1},\HC(\cO),\nu,\nu')$ and get $\nu\hsim\nu'$. Taking $\nu'=\mu$ and $\nu'=\mu_\cO$, the transitivity of the relation $\hsim$
gives $\mu\hsim\cO$.
In case (2), if $\lambda_{\min}(f)=\lambda^s(f)$, then  $\cO$ and $\mu$ are $s$-thick and we conclude by Proposition~\ref{p.measure-related}. If $\lambda_{\min}(f)=\lambda^u(f)$, then $\lambda_{\min}(f^{-1})=\lambda^s(f^{-1})$ and conclude as before.
\end{proof}
\subsection{The intersection of different homoclinic classes}\label{Section-Intersectio-HC}
\begin{prop}\label{c.classes-intersection}
Suppose $r>1$ and $f$ is a $C^r$ diffeomorphism on a closed surface. Let $\HC(\cO_1)$, $\HC(\cO_2)$ be two homoclinic classes
such that $\cO_1\not\hsim\cO_2$.
Then $h_\top(f, \HC(\cO_1)\cap\HC(\cO_2))\leq \lambdamax(f)/r$.
If $f$ is  Kupka-Smale, or if $\cO_1,\cO_2$ are both $s$-thick, then $h_\top(f, \HC(\cO_1)\cap\HC(\cO_2))=0$.
\end{prop}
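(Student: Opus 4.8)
The strategy is to reduce the statement about the intersection $\HC(\cO_1)\cap\HC(\cO_2)$ to Theorem \ref{thmTransitivity}, via a suitable transitive set and a pair of ergodic measures. The intersection $K:=\HC(\cO_1)\cap\HC(\cO_2)$ is a compact invariant set, but it need not be transitive, so I would \emph{not} apply Theorem \ref{thmTransitivity} with $\Lambda=K$ directly. Instead, I argue by contradiction: suppose $h_\top(f,K)>\lambdamax(f)/r$ (in the general $C^r$ case), or $h_\top(f,K)>0$ (in the Kupka--Smale case or when $\cO_1,\cO_2$ are both $s$-thick). By the variational principle there is an ergodic measure $\mu$ with $\mu(K)=1$ and $h(f,\mu)$ as close to $h_\top(f,K)$ as we like; in particular $\mu$ is non-atomic, hyperbolic of saddle type, and in the $C^r$ case it can be taken with $h(f,\mu)>\lambdamax(f)/r$, hence thick (Remark \ref{r.thick}). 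Since $\mu(\HC(\cO_i))=1$ for $i=1,2$, the set $\HC(\cO_i)\cap\HC(\cO_j)$ (for each pair $i,j$ — here $K$ itself) carries the non-atomic hyperbolic measure $\mu$.

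The key step is to feed this into Theorem \ref{thmTransitivity} twice, with $\Lambda=\HC(\cO_1)$ and then $\Lambda=\HC(\cO_2)$, each of which \emph{is} transitive. Apply Theorem \ref{thmTransitivity} to $\Lambda=\HC(\cO_1)$ with the pair $(\mu_{\cO_1},\mu)$: the sets $\HC(\mu_{\cO_1})\cap\Lambda=\HC(\cO_1)$ and $\HC(\mu)\cap\Lambda\supseteq K$ both carry non-atomic hyperbolic ergodic measures (the former because $\HC(\cO_1)$ carries $\mu$ or because $h_\top>0$ on it; the latter is $\mu$ itself, since $\mu(K)=1$). Under either hypothesis — $f$ Kupka--Smale, or $\mu$ $s$-thick (which holds in the $C^r$ case since $\mu$ is thick, and which is \emph{not} automatic if only $\cO_1,\cO_2$ are $s$-thick, so there one must instead use $\cO_1$ $s$-thick and run the theorem with the roles so that the thick measure is $\mu_1$) — we obtain a Smale relation. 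Running the theorem in both directions (or applying it to $f^{-1}$, using that thickness is symmetric for thick measures, or swapping $\mu$ and $\mu_{\cO_1}$ when both are $s$-thick) gives $\mu\hsim\cO_1$. By the same argument with $\Lambda=\HC(\cO_2)$ we get $\mu\hsim\cO_2$. Since $\hsim$ is an equivalence relation on $\HPM(f)$ (Proposition \ref{Prop-Equiv-Rel}, and its compatibility with the relation on periodic orbits), this forces $\cO_1\hsim\cO_2$, contradicting the hypothesis. Hence $h_\top(f,K)\leq\lambdamax(f)/r$ in general, and $h_\top(f,K)=0$ in the Kupka--Smale case or when both $\cO_i$ are $s$-thick.

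The main obstacle is bookkeeping the hypotheses correctly in each invocation of Theorem \ref{thmTransitivity}: that theorem requires either $f$ Kupka--Smale \emph{or} that the \emph{first} measure $\mu_1$ in the pair $\mu_1\preceq\mu_2$ be $s$-thick, and to get a genuine homoclinic relation $\hsim$ (not just $\preceq$) one must obtain both $\mu\preceq\cO_i$ and $\cO_i\preceq\mu$. When $f$ is Kupka--Smale this is immediate by also applying the theorem to $f^{-1}$. In the $C^r$ case with $h_\top(f,K)>\lambdamax(f)/r$, one needs $\mu$ to be $s$-thick \emph{and} $u$-thick, i.e.\ thick — which is exactly why the threshold $\lambdamax(f)/r$ (rather than $\lambdamin(f)/r$) appears: $h(f,\mu)>\lambdamax(f)/r$ guarantees both $\delta^s(\mu)>1/r$ and $\delta^u(\mu)>1/r$. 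In the remaining case where only $\cO_1,\cO_2$ are assumed $s$-thick, the asymmetry is real: I would apply Theorem \ref{thmTransitivity} with $\Lambda=\HC(\cO_1)$ and the \emph{ordered} pair $(\mu_{\cO_1},\mu)$ — using $\mu_{\cO_1}$ $s$-thick — to get $\cO_1\preceq\mu$, and with the pair $(\mu_{\cO_2},\mu)$ on $\Lambda=\HC(\cO_2)$ to get $\cO_2\preceq\mu$; then, to close the loop, run $f^{-1}$, noting that $\cO_i$ $s$-thick for $f$ does not give $u$-thickness, so instead one invokes the last paragraph of the proof of Theorem \ref{thmTransitivity} (the "local" Kupka--Smale substitute) or appeals to Proposition \ref{p.measure-related} applied to $\HC(\cO_1)$, whose hypothesis "$\cO$ and $\mu$ both $s$-thick" is met once we know $\mu$ is $s$-thick — and $\mu$ \emph{is} $s$-thick because it is homoclinically related to $\cO_1$, which is $s$-thick, and $s$-thickness depends only on the $\hsim$-class (Remark \ref{r.thick}). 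Assembling these pieces carefully is the only delicate point; everything else is a direct citation of results already established.
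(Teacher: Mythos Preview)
Your argument for the general $C^r$ bound and for the Kupka--Smale case is correct and close to the paper's (the paper uses $\Lambda=\HC(\mu)$ rather than $\Lambda=\HC(\cO_i)$, but either transitive set works). The gap is in the case where only $\cO_1,\cO_2$ are assumed $s$-thick. There you obtain $\cO_1\preceq\mu$ and $\cO_2\preceq\mu$, but to get the reverse relations you fall back on ``$\mu$ is $s$-thick because $\mu\hsim\cO_1$''. That is circular: $s$-thickness is a property of the $\hsim$-class, and $\cO_1\preceq\mu$ alone does not place $\mu$ in the class of $\cO_1$. Neither the ``local Kupka--Smale'' remark nor Proposition~\ref{p.measure-related} rescues this, since both still require $\mu$ itself to be $s$-thick.

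The fix is to change the choice of $\Lambda$. Instead of $\HC(\cO_i)$, take $\Lambda:=\supp(\nu)$ for any ergodic $\nu$ with $\nu(K)=1$ and $h(f,\nu)>0$; this $\Lambda$ is transitive and contained in $\HC(\cO_1)\cap\HC(\cO_2)$, so $\HC(\cO_i)\cap\Lambda=\Lambda$ carries the non-atomic hyperbolic measure $\nu$ for $i=1,2$. Now apply Theorem~\ref{thmTransitivity} \emph{directly} to the pair $(\mu_{\cO_1},\mu_{\cO_2})$ on $\Lambda$ using that $\cO_1$ is $s$-thick, giving $\cO_1\preceq\cO_2$; and to $(\mu_{\cO_2},\mu_{\cO_1})$ using that $\cO_2$ is $s$-thick, giving $\cO_2\preceq\cO_1$. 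This avoids any need for $\mu$ (or $\nu$) to be thick, and is exactly what the paper does.
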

\begin{proof}
Assume by contradiction that $h_\top(f, \HC(\cO_1)\cap\HC(\cO_2))>\lambdamax(f)/r$, then by the variational principle, there exists $\mu_1\in \Proberg(f)$ carried by $\HC(\cO_1)\cap\HC(\cO_2)$ such that $h(f, \mu_1)> \lambdamax(f)/r$. Necessarily $\mu_1$ is non-atomic, hyperbolic and thick.

Since $\mu_1,\mu_{\cO_i}$ satisfy the assumptions of Theorem~\ref{thmTransitivity} with $\Lambda:=\HC(\mu_1)$, we have $\mu_1\preceq\mu_{\cO_i}$. The same theorem, this time applied to the diffeomorphism $f^{-1}$, gives $\mu_1\succeq\mu_{\cO_i}$. So
$\cO_i\hsim\mu_1$ for $i=1,2$, whence
$\cO_1\hsim\cO_2$. But this contradicts our assumption.

Next suppose $f$ is Kupka-Smale or $\cO_1,\cO_2$ are both $s$-thick, and $h_{\top}(f,\HC(\cO_1)\cap\HC(\cO_2))>0$.
By the variational principle, there is $\nu\in\Proberg(f|_{\HC(\cO_1)\cap\HC(\cO_2)})$ with positive entropy. The set $\Lambda:=\supp(\nu)$ is transitive, $\Lambda\subset \HC(\cO_1)\cap\HC(\cO_2)$, and  $h_\top(f,\Lambda)>0$. By Theorem~\ref{thmTransitivity}, $\cO_1\hsim \cO_2$.
\end{proof}

Different topological homoclinic classes may intersect, for instance along a periodic orbit or an invariant separating circle:
An example may be built by surgery, using the techniques of~\cite{katok-bernoulli-1979}.
It seems difficult to build more complicated intersections, even in low regularity:
we do not have any example of two distinct homoclinic classes of a $C^1$ surface diffeomorphism whose intersection has positive entropy.

\subsection{Equilibrium states}
We have already stated some properties of hyperbolic equilibrium states, see Corollary~\ref{c-local-uniqueness}.
Using the dynamical Sard theorem \ref{t.DynSard} and dimension estimates, we obtain further local uniqueness properties.

\begin{proposition}\label{p-transitive-equ}
Suppose $r>1$ and let $f$ be a $C^r$ diffeomorphism on a closed surface.
Let $\mu^1,\mu^2$ be two non-atomic hyperbolic ergodic equilibrium measures for some admissible potential $\phi:\Lambda\to\RR\cup\{-\infty\}$. Assume that $\mu_1,\mu_2$ are carried by the same transitive set $\Lambda$, and that
 \begin{enumerate}[(i)]
  \item $f$ is Kupka-Smale; or
  \item $\mu^1,\mu^2$ are both $s$-thick or both $u$-thick (for instance their entropy is larger than $\lambdamin(f)/r$); or
  \item $\mu^1$ is thick; or
  \item $h_\top(f,\Lambda)>\lambdamax(f)/r$.
\end{enumerate}
Then $\mu^1=\mu^2$.
\end{proposition}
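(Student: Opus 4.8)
The plan is to reduce Proposition~\ref{p-transitive-equ} to the uniqueness result already established for equilibrium measures inside a fixed homoclinic class (Corollary~\ref{c-local-uniqueness}), by showing that under any of the hypotheses (i)--(iv) the two measures $\mu^1,\mu^2$ are homoclinically related. Once $\mu^1\hsim\mu^2$, both are ergodic hyperbolic equilibrium measures for $\phi$ lying in the common homoclinic class $\HC(\mu^1)=\HC(\mu^2)$, and Corollary~\ref{c-local-uniqueness} (or more precisely the ``Uniqueness'' part of Section~\ref{equilibrium1}, which only needs $\mu^2$ to be an equilibrium \emph{in the homoclinic class of} $\mu^1$) gives $\mu^1=\mu^2$ at once. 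So the entire content of the proof is: \emph{establish the homoclinic relation $\mu^1\hsim\mu^2$}.

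For that I would invoke Theorem~\ref{thmTransitivity} with the transitive set $\Lambda$ carrying both measures. In case (i), $f$ is Kupka--Smale: applying the theorem to $(f,\Lambda,\mu^1,\mu^2)$ gives $\mu^1\preceq\mu^2$, and applying it to $(f^{-1},\Lambda,\mu^1,\mu^2)$ gives $\mu^1\succeq\mu^2$ (note that $f$ Kupka--Smale iff $f^{-1}$ is, and $\Lambda$ is $f^{-1}$-transitive, and the measures remain non-atomic hyperbolic), whence $\mu^1\hsim\mu^2$. In case (iii), $\mu^1$ is thick, i.e.\ $s$-thick and $u$-thick; Theorem~\ref{thmTransitivity} with the $s$-thickness hypothesis gives $\mu^1\preceq\mu^2$, and the same theorem applied to $f^{-1}$ using that $\mu^1$ is $u$-thick (hence $s$-thick for $f^{-1}$) gives $\mu^1\succeq\mu^2$. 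In case (iv), by the variational principle there is an ergodic $\nu$ on $\Lambda$ with $h(f,\nu)>\lambdamax(f)/r$, which is non-atomic and thick by Remark~\ref{r.thick}; running the argument of case (iii) with $\nu$ in place of $\mu^1$ shows $\nu\hsim\mu^1$ and $\nu\hsim\mu^2$ (taking $\mu^j$ as ``$\mu_2$'' in the theorem), and transitivity of $\hsim$ (Proposition~\ref{Prop-Equiv-Rel}) yields $\mu^1\hsim\mu^2$.

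Case (ii) is the one requiring a little more care, since $s$-thickness is not symmetric in the two measures the way the theorem's hypothesis ``$\mu_1$ is $s$-thick'' is stated asymmetrically. Suppose first both $\mu^1,\mu^2$ are $s$-thick. Apply Theorem~\ref{thmTransitivity} to $(f,\Lambda,\mu^1,\mu^2)$ using that $\mu^1$ is $s$-thick: this gives $\mu^1\preceq\mu^2$. Apply it again to $(f,\Lambda,\mu^2,\mu^1)$ using that $\mu^2$ is $s$-thick: this gives $\mu^2\preceq\mu^1$. Hence $\mu^1\hsim\mu^2$. If instead both are $u$-thick, the same argument applied to $f^{-1}$ (for which $u$-thick becomes $s$-thick and $\Lambda$ is still transitive) gives the conclusion. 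The parenthetical remark ``for instance their entropy is larger than $\lambdamin(f)/r$'' is covered by Remark~\ref{r.thick}, since an ergodic measure with $h(f,\mu)>\lambda^s(f)/r$ is $s$-thick and one with $h(f,\mu)>\lambda^u(f)/r$ is $u$-thick, and $\lambdamin(f)$ is one of these two; the only subtlety is that if $\lambdamin(f)=\lambda^s(f)$ we get $s$-thickness and if $\lambdamin(f)=\lambda^u(f)$ we get $u$-thickness, but in either case both $\mu^1$ and $\mu^2$ land in the same category, so hypothesis (ii) as stated applies.

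The main obstacle is purely bookkeeping: making sure at each invocation of Theorem~\ref{thmTransitivity} that its standing hypotheses are met --- namely that $\HC(\mu^i)\cap\Lambda$ carries a non-atomic ergodic hyperbolic measure (here $\mu^i$ itself works, since $\mu^i$ is non-atomic, ergodic, hyperbolic and carried by $\Lambda\subseteq\HC(\mu^i)$ by Corollary~\ref{Cor-HC}), and that when we pass to $f^{-1}$ the relevant thickness hypothesis transfers correctly. There is no genuine analytic difficulty beyond what is already in Theorem~\ref{thmTransitivity} and Corollary~\ref{c-local-uniqueness}; the proposition is essentially a packaging of those two results together with the symmetry $f\leftrightarrow f^{-1}$ and transitivity of the homoclinic relation.
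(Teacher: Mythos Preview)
Your proposal is correct and follows essentially the same approach as the paper: reduce to Corollary~\ref{c-local-uniqueness} by establishing $\mu^1\hsim\mu^2$ via Theorem~\ref{thmTransitivity}, applied twice (to $f$ and $f^{-1}$, or with the roles of the measures swapped) in cases (i)--(iii), and via an auxiliary thick measure $\nu$ on $\Lambda$ in case (iv). Your write-up is in fact more explicit than the paper's, which just says ``apply Theorem~\ref{thmTransitivity} twice'' in cases (i)--(iii) and invokes Corollary~\ref{c.measure-related} in case (iv); your careful separation of the $s$-thick versus $u$-thick subcases in (ii) and your direct argument in (iv) are exactly what underlies those references.
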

\begin{proof}
By corollary \ref{c-local-uniqueness}, it is enough to show that $\mu^1\hsim\mu^2$.

In the first three  cases, we apply Theorem~\ref{thmTransitivity} twice to the transitive set $\Lambda$,
and the measures $\mu_1$ and $\mu_2$: one gets $\mu_1\hsim \mu_2$.
In the fourth case, we consider an ergodic measure $\nu$ supported on $\Lambda$
such that $h(f,\nu)>\lambdamax(f)/r$. Case (1) of Corollary~\ref{c.measure-related} applied to $\HC(\nu)$ implies that $\mu_1\hsim \nu\hsim \mu_2$.
\end{proof}

\subsection{The size of the coded set $\pi(\Sigma^\#)$ in Theorem \ref{Theorem-Symbolic-Dynamics-C^r}}

\begin{theorem}\label{thm-coding-HC1}
Suppose $r>1$ and  $f$ is a $C^r$ diffeomorphism on a closed surface. Let $\mu$ be an ergodic hyperbolic measure for $f$.  Fix $\chi>0$ and let $\pi:\Sigma\to M$ be the coding given by Theorem \ref{Theorem-Symbolic-Dynamics-C^r}. Suppose
\begin{myitemize}
\item[--] $f$ is Kupka-Smale; or
\item[--] $h_\top(f,\HC(\mu))>\lambdamax(f)/r$; or
\item[--] $\mu$ is thick.
\end{myitemize}
Then $\overline{\pi(\Sigma)}=\overline{\pi(\Sigma^\#)}=\HC(\mu)$ and $\nu(\pi(\Sigma^\#))=1$ for each $\chi$-hyperbolic \emph{non-atomic} ergodic measure $\nu$ supported on $\HC(\mu)$.
In particular, every $\nu\in\Proberg(f)$ carried by $\HC(\mu)$ with $h(f,\nu)>\chi$ is carried by $\pi(\Sigma^\#)$.
\end{theorem}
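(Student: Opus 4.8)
Throughout I would fix $\cO\in\HPO(f)$ with $\cO\hsim\mu$ and $\HC(\cO)=\HC(\mu)$, as provided by Corollary~\ref{Cor-HC}, and use that the coding $\pi$ of Theorem~\ref{Theorem-Symbolic-Dynamics-C^r}, built as in Theorem~\ref{Theorem-Symbolic-Dynamics-C^r2}, satisfies (C2.a), (C2.b), (C7) and (C9), together with $\pi(\Sigma)\subseteq\HC(\mu)$ (Remark~\ref{r.potential}).

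First I would settle the measure statement. Let $\nu$ be a $\chi$-hyperbolic non-atomic ergodic measure supported on $\HC(\mu)$; it suffices to prove $\nu\hsim\mu$, since then (C2.a) yields $\nu(\pi(\Sigma^\#))=1$ at once. I obtain $\nu\hsim\cO$ from whichever hypothesis is in force: when $f$ is Kupka--Smale, from Proposition~\ref{p.measure-related} applied to the non-atomic measure $\nu$ on $\HC(\cO)$; when $\mu$ is thick, then so is $\cO$ (thickness depends only on the homoclinic class and $\cO\hsim\mu$, Remark~\ref{r.thick}), and Proposition~\ref{p.measure-related} again applies; when $h_\top(f,\HC(\mu))>\lambdamax(f)/r$, from case~(1) of Corollary~\ref{c.measure-related} applied to $\nu$ and $\HC(\cO)$, the hypothesis $\nu(\HC(\cO))=1$ being clear. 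In each case transitivity of the homoclinic relation (Proposition~\ref{Prop-Equiv-Rel}) gives $\nu\hsim\mu$. The concluding ``in particular'' is then automatic: if $h(f,\nu)>\chi$, Ruelle's inequality (Theorem~\ref{thm.ruelle}) forces $\lambda^s(\nu),\lambda^u(\nu)\ge h(f,\nu)>\chi$, so $\nu$ is $\chi$-hyperbolic, and it is non-atomic since it has positive entropy.

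Next, the identity $\overline{\pi(\Sigma)}=\overline{\pi(\Sigma^\#)}=\HC(\mu)$. One inclusion is free: $\overline{\pi(\Sigma^\#)}\subseteq\overline{\pi(\Sigma)}\subseteq\HC(\mu)$ because $\HC(\mu)$ is closed. For the reverse inclusion it is enough to show that the $\chi$-hyperbolic periodic orbits homoclinically related to $\mu$ are dense in $\HC(\mu)$, since each of them carries a $\chi$-hyperbolic measure in $\HPM(f)$ related to $\mu$ and so lies in $\pi(\Sigma^\#)$ by (C2.a). To anchor the density argument I need one $\chi$-hyperbolic periodic orbit $\cO_1\hsim\mu$: granting that $\HC(\mu)$ carries a $\chi$-hyperbolic non-atomic ergodic measure $\omega$ (cf. the last paragraph), the first part gives $\omega\hsim\mu$, and Theorem~\ref{t.katok} applied to $\omega$ yields a horseshoe $\hsim\mu$ all of whose ergodic measures have exponents within a prescribed $\varepsilon>0$ of those of $\omega$; for $\varepsilon$ small this horseshoe is $\chi$-hyperbolic (Proposition~\ref{p.chihypset}) and contains the required $\cO_1$. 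From there I would repeat the argument used for the support of equilibrium measures in Section~\ref{equilibrium1}: the periodic orbits $\cO'\hsim\cO_1$ are dense in $\HC(\cO_1)=\HC(\mu)$, every point of such an $\cO'$ is approximated by a periodic point whose orbit shadows finitely many loops of $\cO'$ followed by many more loops of $\cO_1$, and for a large enough ratio of loop counts the resulting periodic measure is as close as desired to $\mu_{\cO_1}$; by (C7) its orbit is then $\chi$-hyperbolic like $\cO_1$, while plainly homoclinically related to $\cO_1\hsim\mu$. This gives the density, hence the identity.

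The delicate points are, first, checking that $\HC(\mu)$ indeed carries a $\chi$-hyperbolic non-atomic ergodic measure under the three hypotheses — needed to launch the density argument — and, second, the density of $\chi$-hyperbolic periodic orbits related to $\mu$ in $\HC(\mu)$ itself. For the latter, once a $\chi$-hyperbolic reference orbit is available, the mechanism is exactly the one already exploited for equilibrium measures: the shadowing construction near $\cO_1$ combined with the openness property (C7). By contrast the measure statement reduces to bookkeeping the three hypotheses against Proposition~\ref{p.measure-related} and Corollary~\ref{c.measure-related}.
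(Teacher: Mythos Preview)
Your proof is correct and follows the same strategy as the paper: reduce the measure statement to $\nu\hsim\mu$ via Proposition~\ref{p.measure-related} (Kupka--Smale and thick cases) and Corollary~\ref{c.measure-related} (the $h_\top(f,\HC(\mu))>\lambdamax(f)/r$ case), then invoke (C2.a). The paper's proof stops there, asserting that everything ``follows immediately from Theorem~\ref{Theorem-Symbolic-Dynamics-C^r}''; you go further and justify the closure identity $\overline{\pi(\Sigma^\#)}=\HC(\mu)$ by re-running the density argument for $\chi$-hyperbolic periodic orbits from the ``Support'' paragraph of Section~\ref{equilibrium1}, which is indeed what the paper is tacitly relying on. The edge case you flag --- existence of a single $\chi$-hyperbolic orbit $\cO_1\hsim\mu$ to anchor that density argument --- is also left implicit in the paper (it is exactly the nondegeneracy assumption made at the start of the proof of Proposition~\ref{SymbolicDynamics}); once such an anchor exists, the shadowing construction you describe gives the density, and you could even bypass (C7) by observing directly that on a common horseshoe the Lyapunov exponents of the shadowing periodic orbit converge to those of $\cO_1$.
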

\begin{proof} Find $\cO\in\HPO(f)$ such that $\HC(\mu)=\HC(\cO)$.
By Proposition~\ref{p.measure-related} (when $f$ is Kupka-Smale or if $\mu$, whence $\cO$, is thick) and Corollary~\ref{c.measure-related}
(when $h_\top(f,\HC(\cO))>\lambdamax(f)/r$),
any non-atomic hyperbolic ergodic measure supported on $\HC(\mu)$ is homoclinically related to $\mu$.
The theorem then follows immediately from Theorem~\ref{Theorem-Symbolic-Dynamics-C^r}.
\end{proof}

\medskip

\begin{theorem}\label{thm-coding-HC2}
Suppose $r>1$ and  $f$ is a $C^r$ diffeomorphism on a closed surface. Let
$\mu$ be an \textbf{s-thick} ergodic hyperbolic measure. Fix $\chi>0$ and let $\pi:\Sigma\to M$ be the coding given by Theorem \ref{Theorem-Symbolic-Dynamics-C^r}. Then
$\overline{\pi(\Sigma)}=\overline{\pi(\Sigma^\#)}=\HC(\mu)$ and $\nu(\pi(\Sigma^\#))=1$ for each $\chi$-hyperbolic \textbf{s-thick} ergodic measure $\nu$
carried by $\HC(\mu)$.
In particular, if $0<\chi\leq\lambda^s(f)/r$, then any ergodic measure carried by
 $\HC(\mu)$ with entropy larger than $\lambda^s(f)/r$ is carried by $\pi(\Sigma^\#)$.
\end{theorem}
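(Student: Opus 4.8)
Following the pattern of the proof of Theorem~\ref{thm-coding-HC1}, the plan is to reduce everything to the single assertion that every $\chi$-hyperbolic $s$-thick ergodic measure carried by $\HC(\mu)$ is homoclinically related to $\mu$; once this is known, property (C2.a) of Theorem~\ref{Theorem-Symbolic-Dynamics-C^r} immediately yields $\nu(\pi(\Sigma^\#))=1$ for all such $\nu$, and the equality $\overline{\pi(\Sigma)}=\overline{\pi(\Sigma^\#)}=\HC(\mu)$ follows from a density argument. First I would fix, via Corollary~\ref{Cor-HC}, a hyperbolic periodic orbit $\cO$ with $\cO\hsim\mu$ and $\HC(\cO)=\HC(\mu)$. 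By Remark~\ref{r.thick} thickness depends only on the homoclinic class, so $\mu_\cO$ is $s$-thick; and since $\mu$ is $s$-thick it is homoclinically related to some $\nu''\in\HPM(f)$ with $h(f,\nu'')>0$, which is therefore non-atomic and satisfies $\supp\nu''\subseteq\HC(\nu'')=\HC(\cO)$. In particular $\HC(\cO)$ carries a non-atomic ergodic hyperbolic measure — this is the place where $s$-thickness of $\mu$ is essential, as it is the input Theorem~\ref{thmTransitivity} requires.

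For the key assertion I would distinguish two cases. If $\nu$ is non-atomic, apply Theorem~\ref{thmTransitivity} to the transitive set $\Lambda:=\HC(\cO)$: with $(\mu_1,\mu_2)=(\nu,\mu_\cO)$ the hypotheses hold because $\supp\nu\subseteq\HC(\nu)\cap\HC(\cO)$ and $\supp\nu''\subseteq\HC(\cO)$ provide the required non-atomic hyperbolic measures, and $\mu_1=\nu$ is $s$-thick, whence $\nu\preceq\cO$; swapping to $(\mu_1,\mu_2)=(\mu_\cO,\nu)$, with $\mu_1=\mu_\cO$ now $s$-thick, gives $\cO\preceq\nu$. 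Hence $\nu\hsim\cO\hsim\mu$. If $\nu$ is atomic, then (being $\chi$-hyperbolic) it is supported on a saddle periodic orbit $\cO'\subseteq\HC(\cO)$, and I would prove the auxiliary fact that any saddle periodic orbit lying inside $\HC(\cO)$ is homoclinically related to $\cO$: choosing $p'\in\cO'$ and points $z_n\to p'$ in $W^u(\cO)\pitchfork W^s(\cO)$ (which exist since $\HC(\cO)=\overline{W^u(\cO)\pitchfork W^s(\cO)}$), one uses that in a fixed neighborhood of $\cO'$ the invariant manifolds $W^u(\cO)$ and $W^s(\cO)$ are tangent to the unstable, resp. stable, cone field of the hyperbolic orbit $\cO'$; for $n$ large $z_n$ is close to $W^s_{\mathrm{loc}}(p')$, so the branch of $W^u(\cO)$ through $z_n$ crosses $W^s_{\mathrm{loc}}(p')$ transversally, giving $\cO\preceq\cO'$, and symmetrically $\cO'\preceq\cO$; hence $\cO'\hsim\cO$ and $\nu\hsim\mu$.

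Granting this, (C2.a) gives $\nu(\pi(\Sigma^\#))=1$ for every $\chi$-hyperbolic $s$-thick ergodic $\nu$ carried by $\HC(\mu)$. For the equality of closures: by (C2.b) and Remark~\ref{r.potential}, $\pi(\Sigma)\subseteq\HC(\mu)$; conversely every $\chi$-hyperbolic periodic orbit $\hsim\cO$ is a $\chi$-hyperbolic measure $\hsim\mu$, hence contained in $\pi(\Sigma^\#)$ by (C2.a), and these orbits are dense in $\HC(\cO)$ — $\HC(\cO)$ is by definition the closure of the union of periodic orbits $\hsim\cO$, each lies in a basic set homoclinically related to $\cO$ (Lemma~\ref{Lemma-K_n}) that may be taken to contain a $\chi$-hyperbolic periodic orbit (Theorem~\ref{t.katok}), and orbits in that basic set shadowing the $\chi$-hyperbolic one are $\chi$-hyperbolic (Proposition~\ref{p.chihypset}) and approximate any prescribed point, exactly as in the analysis of supports of equilibrium measures in Section~\ref{equilibrium1}. (If $\HC(\mu)$ carries no $\chi$-hyperbolic measure at all, $\Sigma$ is trivial and all assertions degenerate; one may assume otherwise.) This yields $\HC(\mu)\subseteq\overline{\pi(\Sigma^\#)}\subseteq\overline{\pi(\Sigma)}\subseteq\HC(\mu)$. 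Finally, for the ``in particular'': if $0<\chi\le\lambda^s(f)/r$ and $\nu\in\Proberg(f)$ is carried by $\HC(\mu)$ with $h(f,\nu)>\lambda^s(f)/r$, then by Ruelle's inequality (Theorem~\ref{thm.ruelle}) $\nu$ is hyperbolic of saddle type with $\lambda^s(\nu),\lambda^u(\nu)\ge h(f,\nu)>\lambda^s(f)/r\ge\chi$, so $\nu$ is $\chi$-hyperbolic, and by Remark~\ref{r.thick} it is $s$-thick, so the main statement applies.

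The hard part will be the atomic case, that is, showing a saddle periodic orbit sitting inside $\HC(\cO)$ is homoclinically related to $\cO$: this requires the cone-field argument above and relies crucially on the homoclinic points accumulating onto the periodic orbit, which makes the cone fields available — contrast with Proposition~\ref{prop-common-Q}, where transversality of a topological intersection is obtained only under Kupka-Smale or thickness precisely because there the crossing point need not be near a periodic orbit. A secondary technical point to handle carefully is verifying, in each invocation of Theorem~\ref{thmTransitivity}, that the relevant intersections $\HC(\mu_i)\cap\HC(\cO)$ genuinely carry non-atomic ergodic hyperbolic measures, which is where the $s$-thickness hypothesis on $\mu$ (hence the positive-entropy measure $\nu''$ in its class) is used.
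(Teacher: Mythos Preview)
Your non-atomic case is exactly the paper's argument: the paper's proof is the single line ``uses Proposition~\ref{p.measure-related} ($\cO$ and $\nu$ are both $s$-thick) and Theorem~\ref{Theorem-Symbolic-Dynamics-C^r}'', and your double application of Theorem~\ref{thmTransitivity} is precisely the content of that case of Proposition~\ref{p.measure-related}. The density argument for $\overline{\pi(\Sigma^\#)}=\HC(\mu)$ and the verification of the ``in particular'' clause are also correct and follow the paper (Section~\ref{equilibrium1}).

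There is a genuine gap in your treatment of the atomic case. The assertion that, in a neighborhood of $\cO'$, the manifold $W^u(\cO)$ is tangent to the unstable cone field of $\cO'$ is unfounded. Cone invariance constrains the tangent of a curve at a point $z$ only when a long segment of the \emph{orbit of $z$} stays in the cone neighborhood; but the piece of $W^u(\cO)$ through $z_n$ is backward-asymptotic to $\cO$, not to $\cO'$, so it may leave any fixed neighborhood of $\cO'$ immediately. Nothing forces $T_{z_n}W^u(\cO)$ into the unstable cone at $p'$, and the claimed transversality of $W^u(\cO)\cap W^s_{\mathrm{loc}}(p')$ does not follow. In fact, your auxiliary statement ``any saddle periodic orbit lying inside $\HC(\cO)$ is homoclinically related to $\cO$'' is false in general: it would force distinct topological homoclinic classes to be disjoint, contrary to the discussion following Proposition~\ref{c.classes-intersection}. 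Note that your argument never uses the $s$-thickness of $\nu$, which already signals that it cannot be right.

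The paper itself does not address the atomic case separately --- Proposition~\ref{p.measure-related} is stated only for non-atomic measures --- and the substantive content of the theorem (in particular the final clause, which is the one actually used later) concerns measures with positive entropy, hence non-atomic. For those your proof is correct and matches the paper.
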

\begin{proof}
Again the proof uses Proposition~\ref{p.measure-related}
($\cO$ and $\nu$ are both $s$-thick) and Theorem~\ref{Theorem-Symbolic-Dynamics-C^r}.
\end{proof}

\subsection{Spectral decomposition and periods for $C^r$ diffeomorphisms}

\begin{theorem}[Thick Spectral decomposition for $C^r$ diffeomorphisms]\label{t-spectral-revised1}
Suppose $r>1$ and  $f$ is a $C^r$ diffeomorphism on a closed surface. Consider a maximal family $\{\cO_i\}_{i\in I}$ of
\textbf{s-thick} hyperbolic periodic orbits such that: $\cO_i\hsim\cO_j\implies i=j$ for any $i,j\in I$. Then:
 \begin{enumerate}[(1)]
  \item\label{item-union} $\mu\left(\bigcup_{i\in I}\HC(\cO_i)\right)=1$ for every \textbf{s-thick} ergodic measure $\mu$, and $\mu\hsim\cO_i$ for some $i\in I$.
  \item\label{item-inter} $h_\top(f,\HC(\cO_i)\cap \HC(\cO_j))=0$ for any pair of distinct $i,j\in I$.
  \item\label{item-period} Let $\ell_i:=\gcd(\{\operatorname{Card}(\cO'):\;\cO'\hsim\cO_i\})$ and set $A_i:=\overline{W^s(p)\pitchfork W^u(p)}$ for some $p\in\cO_i$. Then:
\begin{myitemize}
\item[--] $\HC(\cO_i)=A_i\cup f(A_i)\cup\dots\cup f^{\ell_i-1}A_i$ and $f^{\ell_i}A_i=A_i$,
\item[--] $f^{\ell_i}:A_i\to A_i$ is topologically mixing,
\item[--] $f^j(A_i)\cap A_i$ has empty {relative} interior in $\HC(\cO_i)$ and zero topological entropy if $0<j<\ell_i$.
\end{myitemize}
 \item\label{item-fin} For any $\chi>\lambda^s(f)/r$, the set of $i\in I$ such that $h_\top(f,\HC(\cO_i))> \chi$ is finite.
 \item\label{item-trans}
If  $\Lambda$ is a transitive invariant compact set, there is at most one $i\in I $ such that $\HC(\cO_i)\subset\Lambda$. If, additionally, $f|_\Lambda$ is topologically mixing, the unique $\HC(\cO_i)\subset\Lambda$, if it exists, has period $\ell_i=1$.
\end{enumerate}
\end{theorem}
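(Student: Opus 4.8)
The plan is to assemble Theorem~\ref{t-spectral-revised1} from the machinery developed in Sections~\ref{Sec.homoclinic}--\ref{s.spectral-decomposition}, treating each of the five items separately since each corresponds to a result already proved for a single homoclinic class or a single pair of classes. First I would fix, once and for all, a maximal family $\{\cO_i\}_{i\in I}$ of $s$-thick hyperbolic periodic orbits, pairwise non-homoclinically related; maximality is used precisely to guarantee that \emph{every} $s$-thick orbit (and, via Theorem~\ref{t.katok} and Corollary~\ref{Cor-HC}, every $s$-thick ergodic measure) is homoclinically related to exactly one $\cO_i$.

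For item~\eqref{item-union}: let $\mu$ be an $s$-thick ergodic measure. By Theorem~\ref{t.katok} and Corollary~\ref{Cor-HC} there is $\cO\in\HPO(f)$ with $\cO\hsim\mu$; thickness of $\mu$ transfers to $\cO$ by Remark~\ref{r.thick}, so $\cO$ is $s$-thick, hence $\cO\hsim\cO_i$ for a (unique, by the pairwise condition) $i\in I$. Then $\mu\hsim\cO_i$, and $\mu(\HC(\cO_i))=1$ by Corollary~\ref{Cor-HC} together with the transitivity of $\hsim$ on $\HPM(f)$ (Proposition~\ref{Prop-Equiv-Rel}). Item~\eqref{item-inter} is exactly Proposition~\ref{c.classes-intersection}: if $i\ne j$ then $\cO_i\not\hsim\cO_j$, and since both are $s$-thick, that proposition gives $h_\top(f,\HC(\cO_i)\cap\HC(\cO_j))=0$. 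Item~\eqref{item-period} is a restatement of Proposition~\ref{p-hc-cyclic} applied to any $p\in\cO_i$, with $\ell_i=\ell(\cO_i)$ its period; nothing new is needed here.

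For item~\eqref{item-fin}: fix $\chi>\lambda^s(f)/r$. If $i\in I$ satisfies $h_\top(f,\HC(\cO_i))>\chi$, the variational principle yields an ergodic measure $\mu_i$ on $\HC(\cO_i)$ with $h(f,\mu_i)>\lambda^s(f)/r$, hence $\mu_i$ is $s$-thick (Remark~\ref{r.thick}) with $h(f,\mu_i)>\lambda^s(f)/r=\lambdamin(f^{\pm1})/r$ where we have arranged $\lambda^s(f)=\lambdamin(f)$ by replacing $f$ by $f^{-1}$ if necessary — but here one must be slightly careful, since $s$-thickness is not symmetric; the correct statement is that $h(f,\mu_i)>\lambda^s(f)/r$ makes $\mu_i$ $s$-thick directly, which is all Theorem~\ref{thmEntropyDecay} needs. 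The measures $\mu_i$ for distinct such $i$ are pairwise non-homoclinically related (by Proposition~\ref{c.classes-intersection} or simply because $\cO_i\not\hsim\cO_j$), so Theorem~\ref{thmEntropyDecay} forces $\limsup h(f,\mu_i)\le\lambdamin(f)/r$; but if infinitely many $i$ had $h_\top(f,\HC(\cO_i))>\chi$, we could extract such a sequence with $h(f,\mu_i)>\chi>\lambda^s(f)/r\ge\lambdamin(f)/r$ for all $i$, a contradiction. Hence only finitely many $i$ qualify. Finally, for item~\eqref{item-trans}: if $\HC(\cO_i),\HC(\cO_j)\subset\Lambda$ with $\Lambda$ transitive, pick an $s$-thick ergodic measure $\nu_i$ on $\HC(\cO_i)$ (e.g.\ from a horseshoe related to $\cO_i$, using that $\cO_i$ is $s$-thick) and likewise $\nu_j$; Theorem~\ref{thmTransitivity} applied to $\Lambda$ with the $s$-thick measure $\nu_i$ gives $\nu_i\preceq\nu_j$, and since $\nu_j$ is also $s$-thick, the symmetric application (swapping roles) gives $\nu_j\preceq\nu_i$, so $\nu_i\hsim\nu_j$, whence $\cO_i\hsim\cO_j$ and $i=j$. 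If moreover $f|_\Lambda$ is topologically mixing, then $\HC(\cO_i)\subset\Lambda$ is contained in a topologically mixing invariant compact set; by the $C^r$ analogue of Theorem~\ref{mainthm-class}(1) (equivalently, by the argument in Section~\ref{equilibrium1} identifying the period, applied to the equilibrium/maximal-entropy measure on $\HC(\cO_i)$, which is $s$-thick hence codable by the irreducible $\Sigma$ of Theorem~\ref{Theorem-Symbolic-Dynamics-C^r}), the period $\ell_i$ of the homoclinic class must equal $1$, since the cyclic factor of order $\ell_i$ would otherwise obstruct mixing.

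The main obstacle I expect is item~\eqref{item-trans}, and more precisely the mixing assertion: one must connect topological mixing of $f|_\Lambda$ to the vanishing of the cyclic factor, which is not purely topological for a general $C^r$ diffeomorphism and seems to require the symbolic coding (Theorem~\ref{Theorem-Symbolic-Dynamics-C^r} and the period identification of Section~\ref{equilibrium1}, via Theorem~\ref{t.katok} to produce a topologically mixing horseshoe when the invariant set is mixing). A secondary subtlety is bookkeeping around the asymmetry of $s$-thickness versus $u$-thickness throughout: every invocation of Theorem~\ref{thmTransitivity}, Proposition~\ref{c.classes-intersection} and Theorem~\ref{thmEntropyDecay} must use only the $s$-thick hypothesis actually available, so one should resist the temptation to pass to $f^{-1}$ except where a genuinely symmetric hypothesis (like Kupka--Smale, which is not assumed here) is present. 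Once these points are handled the proof is essentially an orchestration of the earlier results.
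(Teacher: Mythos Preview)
Your overall strategy matches the paper's, but there is a real gap in item~\eqref{item-period} and your ``main obstacle'' in item~\eqref{item-trans} dissolves once you see the paper's trick.

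\textbf{Item~\eqref{item-period} is not a restatement of Proposition~\ref{p-hc-cyclic}.} That proposition gives you the cyclic decomposition, the mixing of $f^{\ell_i}|_{A_i}$, and the empty-relative-interior dichotomy, but it says \emph{nothing} about the topological entropy of $f^j(A_i)\cap A_i$ for $0<j<\ell_i$. The paper supplies this missing piece as follows: work with $F:=f^{\ell_i}$ and observe that $A_i$ and $f^j(A_i)$ are the $F$-homoclinic classes of the $F$-orbits $\cO^0\ni p$ and $\cO^j\ni f^j(p)$. Since $\cO_i$ is $s$-thick for $f$, both $\cO^0$ and $\cO^j$ are $s$-thick for $F$ ($\delta^s$ is invariant under iteration). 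If $h_\top(F,A_i\cap f^j(A_i))>0$, Proposition~\ref{c.classes-intersection} applied to $F$ gives $\cO^0\overset{F}{\hsim}\cO^j$, hence $W^s(p)\pitchfork W^u(f^{j}(p))\ne\emptyset$, contradicting Proposition~\ref{p-hc-cyclic}(3). The empty-interior claim for $0<j<\ell_i$ (ruling out $f^j(A_i)=A_i$) also needs this argument: if $f^j(A_i)=A_i$ then $h_\top(F,A_i\cap f^j(A_i))=h_\top(F,A_i)>0$, the same contradiction.

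\textbf{Item~\eqref{item-trans}, mixing case.} You do not need symbolic dynamics or the period identification of Section~\ref{equilibrium1}. The paper's argument is one line: by item~\eqref{item-period}, the sets $A_i,f(A_i),\dots,f^{\ell_i-1}(A_i)$ are $\ell_i$ distinct $F$-homoclinic classes (for $F=f^{\ell_i}$), all $s$-thick for $F$ and all contained in $\Lambda$; since $f|_\Lambda$ is topologically mixing, $F|_\Lambda$ is topologically transitive, so the uniqueness part of item~\eqref{item-trans} applied to $F$ forces $\ell_i=1$.

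A small sloppiness in item~\eqref{item-fin}: ``simply because $\cO_i\not\hsim\cO_j$'' does not by itself give $\mu_i\not\hsim\mu_j$; you must first establish $\mu_i\hsim\cO_i$, which the paper does via Proposition~\ref{p.measure-related} (both $\cO_i$ and $\mu_i$ are $s$-thick).
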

\medskip

Note that the homoclinic class of a periodic hyperbolic orbit is either finite (and reduced a single orbit), or infinite and  has positive topological entropy.
\begin{theorem}[Spectral decomposition for Kupka-Smale diffeomorphisms]\label{t-spectral-revised2}
Let $f$ be a Kupka-Smale $C^r$ diffeomorphism, $r>1$, on a closed surface and consider a maximal family $\{\cO_i\}_{i\in I}$ of
hyperbolic periodic orbits such that: $\cO_i\hsim\cO_j\implies i=j$ for any $i,j\in I$. Then:
 \begin{enumerate}[(1)]
  \item[(1)] $\mu\left(\bigcup_{i\in I}\HC(\cO_i)\right)=1$ for any hyperbolic ergodic measure $\mu$: more precisely, $\mu\hsim\cO_i$ for some $i\in I$.
  \item[(2-3)] The properties 2 and 3 of Theorem~\ref{t-spectral-revised1} are satisfied by $\{\cO_i\}_{i\in I}$.
 \item[(4)] For any $\chi>h^*(f)$, the set of $i\in I$ such that $h_\top(f,\HC(\cO_i))> \chi$ is finite.
  \item[(5)] If  $\Lambda$ is a transitive invariant compact set, there is at most one $i\in I $ such that $\HC(\cO_i)$ is infinite and included in $\Lambda$. If, additionally, $f|_\Lambda$ is topologically mixing, then the unique infinite $\HC(\cO_i)$, if it exists, has period $\ell_i=1$.
  \end{enumerate}
\end{theorem}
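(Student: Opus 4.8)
The plan is to follow the proof of Theorem~\ref{t-spectral-revised1} line by line, everywhere replacing the appeal to $s$-thickness by the Kupka--Smale alternative available in the statements it invokes (Theorems~\ref{thmTransitivity}, \ref{thmEntropyDecay}, Propositions~\ref{c.classes-intersection}, \ref{p.measure-related}, \ref{prop-common-Q}), and then, for the period assertion in~(5), to bootstrap the uniqueness assertion to the power $f^{\ell_i}$.

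For~(1) I would argue: given a hyperbolic ergodic measure $\mu$ of saddle type (i.e.\ $\mu\in\HPM(f)$), Corollary~\ref{Cor-HC} yields $\cO\in\HPO(f)$ with $\cO\hsim\mu$ and $\HC(\mu)=\HC(\cO)$; maximality of the family $\{\cO_i\}_{i\in I}$ forces $\cO\hsim\cO_i$ for some $i$, so by transitivity of $\hsim$ one gets $\HC(\cO_i)=\HC(\cO)=\HC(\mu)\supseteq\supp\mu$, whence $\mu(\HC(\cO_i))=1$. For~(2), since $\cO_i\not\hsim\cO_j$ when $i\ne j$, the Kupka--Smale case of Proposition~\ref{c.classes-intersection} gives directly $h_\top(f,\HC(\cO_i)\cap\HC(\cO_j))=0$. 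For~(3), the cyclic decomposition $\HC(\cO_i)=\bigcup_{k=0}^{\ell_i-1}f^k(A_i)$, $f^{\ell_i}A_i=A_i$, $f^{\ell_i}|_{A_i}$ mixing, and empty relative interior of the overlaps, is exactly Proposition~\ref{p-hc-cyclic}; the refinement that $f^j(A_i)\cap A_i$ ($0<j<\ell_i$) carries zero topological entropy is obtained as in Theorem~\ref{t-spectral-revised1}: a positive-entropy ergodic measure supported there is non-atomic and hyperbolic (Ruelle), is homoclinically related to $\cO_i$ by the Kupka--Smale case of Proposition~\ref{p.measure-related}, hence has period $\ell_i$ by Proposition~\ref{p.period}, which, via the characterization~\eqref{eq-equiv-period} of the cyclically permuted pieces, is incompatible with being carried by a single overlap $f^j(A_i)\cap A_i$. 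For~(4), supposing infinitely many pairwise distinct $\HC(\cO_{i_n})$ with $h_\top(f,\HC(\cO_{i_n}))>\chi>h^*(f)$, the variational principle produces ergodic $\mu_n$ on $\HC(\cO_{i_n})$ with $h(f,\mu_n)>\chi$, which are non-atomic and hyperbolic and, by Proposition~\ref{p.measure-related}, satisfy $\mu_n\hsim\cO_{i_n}$; so they are pairwise non-homoclinically related, and the Kupka--Smale bound in Theorem~\ref{thmEntropyDecay} forces $\limsup_n h(f,\mu_n)\le h^*(f)<\chi$, a contradiction.

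For the uniqueness half of~(5) I would copy the argument of Theorem~\ref{t-spectral-revised1}(5): if a transitive invariant compact set $\Lambda$ contained two distinct infinite classes $\HC(\cO_i),\HC(\cO_j)$ ($i\ne j$), then each, being infinite, has positive topological entropy (as noted before the theorem), hence carries a non-atomic ergodic hyperbolic measure; since $\HC(\cO_i)\subset\Lambda$ we have $\HC(\mu_{\cO_i})\cap\Lambda=\HC(\cO_i)$, so Theorem~\ref{thmTransitivity} (Kupka--Smale branch) applies to $(\Lambda,\mu_{\cO_i},\mu_{\cO_j})$ and, with the roles exchanged, to $(\Lambda,\mu_{\cO_j},\mu_{\cO_i})$, giving $\cO_i\preceq\cO_j$ and $\cO_j\preceq\cO_i$, i.e.\ $\cO_i\hsim\cO_j$, against the choice of the family.

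The hard part will be the remaining claim: when $f|_\Lambda$ is topologically mixing, the unique infinite class $\HC(\cO):=\HC(\cO_i)\subset\Lambda$ has period $\ell:=\ell_i=1$. Here the difficulty is that mixing is a property of the possibly strictly larger set $\Lambda$, whereas the period is intrinsic to $\HC(\cO)$, so there is no direct combinatorial obstruction inside $\HC(\cO)$; the device I would use is to pass to $g:=f^{\ell}$ and invoke the uniqueness statement already proved, now for $g$. First one checks that $g$ is again Kupka--Smale (invariant manifolds and transversality are unchanged under taking powers), that $g|_\Lambda$ is again topologically mixing (so $\Lambda$ is $g$-transitive), and that $\Lambda$ is $g$-invariant and compact. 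Suppose $\ell\ge2$ and fix $p\in\cO$, with $g$-orbits $\cO'_0:=\{p,f^\ell p,f^{2\ell}p,\dots\}$ and $\cO'_1:=\{fp,f^{1+\ell}p,\dots\}$. Since $\HC(\cO)$ is infinite, $p$ admits a transverse homoclinic point $q\ne p$ for $f$; after replacing $q$ by a suitable iterate, Proposition~\ref{p-hc-cyclic}(3) lets us take $q\in W^u(p)\pitchfork W^s(p)$, and then $q$ is also a transverse homoclinic point of $p$ for $g$, so $\HC_g(\cO'_0)$ is infinite; applying $f$, the point $f(q)\in W^u(fp)\pitchfork W^s(fp)$ shows $\HC_g(\cO'_1)$ is infinite as well. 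On the other hand a transverse intersection of $W^u(f^{k\ell}p)$ with $W^s(f^{1+m\ell}p)$ is forbidden by Proposition~\ref{p-hc-cyclic}(3), because the index shift $1+(m-k)\ell$ is $\equiv1\not\equiv0\pmod\ell$, so $\cO'_0$ and $\cO'_1$ are not homoclinically related for $g$; consequently, in a maximal family of $g$-periodic orbits they are related to two distinct members, whose homoclinic classes are $\HC_g(\cO'_0)$ and $\HC_g(\cO'_1)$ — two infinite $g$-homoclinic classes, both contained in $\overline{W^u(\cO)\pitchfork W^s(\cO)}=\HC(\cO)\subset\Lambda$ (as $W^{u/s}$ and $\pitchfork$ do not change under powers, and $\cO'_0,\cO'_1\subset\cO$). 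This contradicts uniqueness for $(g,\Lambda)$, so $\ell=1$. I should emphasize that this bootstrap is not circular: the uniqueness statement was obtained above with no reference to periods, so applying it to $g=f^{\ell}$ is legitimate; the passage to $f^{\ell}$ is exactly the mechanism that converts the (extrinsic) mixing of $\Lambda$ into the (intrinsic) triviality of the period of $\HC(\cO)$.
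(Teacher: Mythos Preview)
Your proof is correct and follows essentially the same approach as the paper, which proves Theorems~\ref{t-spectral-revised1} and~\ref{t-spectral-revised2} simultaneously: items~(1), (2), (4), and the uniqueness half of~(5) are argued exactly as you do, and for the period assertion in~(5) the paper likewise passes to $f^{\ell}$, observes that $A,f(A),\dots,f^{\ell-1}(A)$ are distinct infinite $f^{\ell}$-homoclinic classes inside $\Lambda$, and invokes uniqueness for the transitive $f^{\ell}|_\Lambda$.

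The one place you differ is item~(3). You argue via Proposition~\ref{p.period}: a positive-entropy ergodic $\mu$ on $f^j(A_i)\cap A_i$ would be homoclinically related to $\cO_i$, hence have period $\ell_i$, and you claim this is incompatible with being carried by a single overlap. The paper instead passes to $F:=f^{\ell_i}$, identifies $A_i$ and $f^j(A_i)$ as the $F$-homoclinic classes of the non-$F$-related $F$-periodic orbits $\cO^0,\cO^j$ (via Proposition~\ref{p-hc-cyclic}), and applies Proposition~\ref{c.classes-intersection} directly to $F$. Your route works but leaves implicit the identification of the topological pieces $f^k(A_i)$ with the measurable partition $B_1,\dots,B_{\ell_i}$ of $H_{\cO_i}$; the paper's route sidesteps this by staying at the level of homoclinic classes for $F$, which is slightly cleaner.
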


\begin{proof}[Proof of Theorems~\ref{t-spectral-revised1} and~\ref{t-spectral-revised2}]
Item \eqref{item-union} is a consequence of Katok's horseshoe theorem (Corollary~\ref{Cor-HC}): If $\mu$ is a non-atomic ergodic measure,
there exists $\cO\in\HPO(f)$ such that $\mu\hsim \cO$ and $\mu$ is supported on $\HC(\cO)$.
By definition, $\cO$ is $s$-thick if $\mu$ is. Item \eqref{item-inter} is because of Proposition \ref{c.classes-intersection}.

Now let us consider some $\cO\in \HPO(f)$ and let $\ell=\gcd(\{\operatorname{Card}(\cO'):\;\cO'\hsim\cO\})$ be the period of the homoclinic class of $\cO$. We will work with the diffeomorphism $F:=f^\ell$.

Let $A:=\overline{W^s(p)\pitchfork W^u(p)}$ for some $p\in\cO$.
Proposition~\ref{p-hc-cyclic} gives most of Item (3). What remains to be shown is that $f^j(A)\cap A$ has zero entropy and empty interior relatively to $\HC(\cO)$ for $0<j<\ell$.
We fix $0<j<\ell$ and denote by $\cO^0$ and $\cO^j\subset\cO$ the $F$-orbits of $p$ and $f^j(p)$.
From Proposition~\ref{p-hc-cyclic}, the sets $A$ and $f^j(A)$ are the $F$-homoclinic classes of $\cO^0$ and $\cO^j$.

Since $f$ is Kupka-Smale, $F$ is Kupka-Smale, and if $\cO$ is $s$-thick for $f$, then $\cO^0,\cO^j$ are $s$-thick for $F$. So
if $h_{\top}(F,f^j(A)\cap A)>0$, then Proposition \ref{c.classes-intersection}  implies that $\cO^0,\cO^j$ are $F$-homoclinically related. So $W^s(p)\pitchfork W^u(f^{j+k\ell}p)\ne\emptyset$ for some integer $k$, in contradiction to Proposition~\ref{p-hc-cyclic}.(3).

If $f^j(A)\cap A$ has non-empty interior in $\HC(\cO)$, then Proposition~\ref{p-hc-cyclic} claims that $f^j(A)=A$, whence $h_{\top}(F,f^j(A)\cap A)=h_{\top}(F,A)>0$, and we obtain a contradiction as before. Item \eqref{item-period} follows.
\medskip

Let us pick $\chi$ larger than $\lambdamin(f)/r$, or larger than the tail entropy $h^\ast(f)$ when $f$ is Kupka-Smale.
Let $J\subset I$ be the set of indices $i$ such that $h_\top(f,\HC(\cO_i))\geq \chi$.
For each $i\in J$, there exists an ergodic measure $\mu_i$ supported on $\HC(\cO_i)$ with entropy larger than $\chi$.
Note that $\mu_i$ is $s$-thick when $\chi>\lambdamin(f)/r$.
Hence  either  $\cO_i$ and $\mu_i$ are both $s$-thick, or $f$ is Kupka-Smale. In both cases,  $\cO_i\hsim \mu_i$
by Proposition~\ref{p.measure-related}.
Since $\cO_i$ are pairwise non-homoclinically related,
 $\mu_i$ are pairwise non-homoclinically related ($i\in J$).
Theorem \ref{thmEntropyDecay} therefore implies that $J$ is finite, proving item \eqref{item-fin}.
\medskip

We turn to item \eqref{item-trans}. Assume
$\cO,\cO'\in \HPO(f)$ have {infinite} homoclinic classes included in some transitive compact set $\Lambda$. {Infinite} homoclinic classes contain transverse homoclinic intersections, therefore they have positive topological entropy.
If $f$ is Kupka-Smale, or if both $\cO$ and $\cO'$ are $s$-thick, then we can apply Theorem \ref{thmTransitivity}
and conclude that $\cO\hsim \cO'$.

If $f|_{\Lambda}$ is topologically mixing and has an  {infinite} homoclinic class $\HC(\cO)$,
we decompose it: $\HC(\cO)=A\cup f(A)\cup\dots\cup f^{\ell-1}(A)$ as in item \eqref{item-period}.
Note  that the sets $A,f(A),\dots,f^{\ell-1}(A)$ are $\ell$ distinct homoclinic classes included in $\Lambda$ for $f^\ell$.
Since $f^\ell|_{\Lambda}$ is topologically transitive, we have uniqueness so $\ell=1$, proving the item.
\end{proof}

\section{Proof of the Main Theorems}\label{s-proof-main-theorems}

\subsection{$C^\infty$ diffeomorphisms}

\noindent
{\bf Theorem~\ref{mainthm-Spectral}.}
This theorem follows from Theorem~\ref{t-spectral-revised1} and the simple observation that if $f\in\Diff^\infty(M)$ and $\dim M=2$ then every $\cO\in\HPO(f)$ such that $h_{\top}(f,\HC(\cO))>0$ is thick.

\medskip
\noindent
{\bf Theorem~\ref{mainthm-class}.} Suppose $\HC(\cO)$ has positive topological entropy.

By Corollary~\ref{c.measure-related}, part (1), any hyperbolic non-atomic ergodic measure carried by $\HC(\cO)$ must be homoclinically related to $\cO$.  In dimension two, every  ergodic measure with positive entropy is hyperbolic and non-atomic, so this proves the second part of the Theorem.

 If $f\in \Diff^\infty(M)$, then $\mu\mapsto h(f,\mu)$ is an upper semi-continuous function  with respect to the weak-$*$ topology on the compact space of invariant probability measures carried by $\HC(\cO)$ \cite{Newhouse-Entropy}. Therefore $h(f,\mu)$  attains its maximum at some measure $\mu$ carried by $\HC(\cO)$. By the variational principle,  $h(f,\mu)=h_{\top}(f,\HC(\cO))$.

The entropy of a measure is an average of the entropies of its ergodic components. It follows that a.e. ergodic component of $\mu$ also has entropy $h_{\top}(f,\HC(\cO))$. So  without loss of generality, $\mu$ is ergodic.
Since $\mu$ is ergodic with positive entropy, by the beginning of the proof, $\mu$ is homoclinically related to $\cO$.
Therefore, by Corollary~\ref{c-local-uniqueness}, $\mu$ is the unique m.m.e. for $f|_{\HC(\cO)}$, has full support in $\HC(\cO)$, and is isomorphic to the product of a Bernoulli scheme and the cyclic permutation of order $\ell:=\gcd\{\Card(\cO'):\cO'\hsim\cO\}$.
Since $\mu$ is thick, we can apply Theorem~\ref{t-spectral-revised1} part (5) to see that  if $\HC(\cO)$ is contained in some topologically mixing compact invariant set then $\ell=1$. This finishes the proof of the first item of Theorem~\ref{mainthm-class}.

The third item follows from Theorem~\ref{thm-coding-HC1}.

\medskip
\noindent
{\bf Main Theorem (page \pageref{Page-Main-Theorem}).} By Theorems~\ref{mainthm-Spectral} and~\ref{mainthm-class}, the number of ergodic measures of maximal entropy is less than or equal to the number of homoclinic classes with full topological entropy. By Theorem~\ref{mainthm-Spectral}, this number is finite, and in the topologically transitive case equal to one.

In the topologically mixing case the unique measure of maximal entropy is Bernoulli, because of Theorem~\ref{mainthm-class}, part 1, and Theorem~\ref{mainthm-Spectral}, part 5.

\medskip
\noindent
{\bf Corollary~\ref{Cor-finite-eq-C-infty}.} The proof is the same as the proof of Theorem 2.

\medskip
\noindent
{\bf Corollary~\ref{c.attractor}.} So far all our surfaces were assumed to be  closed surfaces without boundary. But since  all our work took place inside a small neighborhood of a homoclinic class,  our methods also apply to diffeomorphisms on more general surfaces with compact global attractors.

One can reduce to the case of a closed surface in the following way: the attractor $\Lambda$ is contained in
an open set $U$ disjoint from the boundary of the surface, such that $f(\overline U)\subset U$.
Then, using~\cite[Proposition 3.3]{ABCD},
one can identify $U$ with the open set of a closed surface $\widetilde M$ and find a diffeomorphism $\widetilde f$
of $\widetilde M$ which coincides with $f$ on $U$ and such that:
\begin{myitemize}
\item[--] $\widetilde M\setminus U$ has finitely many connected components, each homeomorphic to the $2$-disc,
\item[--] the intersection of the non-wandering set of $\widetilde f$ with any connected component of $\widetilde M\setminus U$
is a repelling periodic point.
\end{myitemize}
Now Theorems~\ref{t.katok} and \ref{thmTransitivity} show that $\Lambda$ contains exactly one {infinite} homoclinic class
$\HC(\cO)$ and that every ergodic measure of maximal entropy is homoclinically related to $\cO$.
By  Corollary~\ref{c-local-uniqueness}, the measure of maximal entropy of $\widetilde f$ is unique.
By definition of $\widetilde f$, the measures of maximal entropy of $f$ and $\widetilde f$ coincide, hence $f$
has a unique measure of maximal entropy.

\subsection{$C^r$ diffeomorphisms}

\noindent
{\bf Main Theorem Revisited (page~\pageref{MainTheoremRevisited})}.
To prove the first part we assume without loss of generality that $\lambda_{\min}(f)=\lambda^s(f)$. Otherwise replace $f$ by $f^{-1}$ noting that the statement we are trying to prove does not change, whereas $\lambda^u(f)=\lambda^s(f^{-1})$. We fix $\chi>\lambda_{\min}(f)/r=\lambda^s(f)/r$ and let $\mathcal E$ be the set of ergodic equilibrium measures $\mu$ with $h(f,\mu)>\chi$. We must check that $\mathcal E$ is finite.

Every  measure $\mu\in\mathcal E$ is $s$-thick, because $\delta^s(\mu)=h(f,\mu)/\lambda^s(\mu)>\chi/\lambda^s(f)>1/r$ by choice of $\chi$.
Let $\cO_i$, $i\in I$, be a maximal collection of pairwise non homoclinically related hyperbolic periodic orbits which are $s$-thick. Let $J :=\{i\in I:h_\top(f,\HC(\cO_i))>\chi\}$. By part (4) of Theorem~\ref{t-spectral-revised1}, the set $J$ is finite, and  by part (1) of Theorem~\ref{t-spectral-revised1} for each $\mu\in\mathcal E$ there is  $i(\mu)\in J$ such that $\mu\hsim\cO_{i(\mu)}$.  The map $\mu\mapsto i(\mu)$  is injective by  Corollary~\ref{c-local-uniqueness}. Hence $\mathcal E$ is finite.

For the second part, note that Proposition~\ref{p-transitive-equ} implies that there is at most one $s$-thick equilibrium measure supported by any given transitive compact set $\Lambda$. Thus there is at most one ergodic equilibrium measure $\mu$ on $\Lambda$ with $\delta^s(\mu)>1/r$ (and at most one ergodic equilibrium measure $\mu$ on $\Lambda$ with $\delta^u(\mu)>1/r$).
}

\medskip
\noindent
{\bf Corollary \ref{Cor-finite-r}.} As in the previous proof, there is no loss of generality in assuming that $\lambda_{\min}(f)=\lambda^s(f)$. If $\mu$ is an ergodic equilibrium measure for $\phi$ then $P_{\top}(\phi)=h(f,\mu)+\int \phi d\mu$, whence by the assumption
$P_{\top}(\phi)>\sup\phi+\frac{\lambda_{\min}(f)}{r}$,
$
h(f,\mu)>\lambda_{\min}(f)/r
$, whence $\delta^s(\mu)>1/r$.
The first part of the Main Theorem Revisited says that there can be at most finitely many ergodic equilibrium measures like that.
The second   part of the Main Theorem Revisited says that if $f$ is topologically transitive, then there can be at most one ergodic equilibrium measure like that.

\medskip
\noindent
{\bf Corollary ~\ref{c.HHTU}.}
As explained in the introduction, an ergodic SRB for a $C^r$ diffeomorphism $f$  $(r>1)$ is a hyperbolic equilibrium measure for the admissible
potential $\phi:=-\log\|Df|_{E^u}\|$. The entropy formula for SRB measures says that $\delta^u=1$ \cite{Ledrappier-Strelcyn}, so SRB measures are  always $u$-thick.
The Main Theorem Revisited then implies that each transitive compact set $\Lambda$ can support at most one such measure and its support is some homoclinic class $\HC(\cO)$.
Since $\mu$ is $u$-thick, Theorem~\ref{t-spectral-revised1} applies. By part (5) of this theorem, if $\HC(\cO)$ is contained in a topologically mixing invariant compact set, then the period $\gcd(\{|\cO'|:\cO'\hsim\cO\})$ is $1$. By Corollary~\ref{c-local-uniqueness}, $\mu$ is Bernoulli.
This proves Corollary~\ref{c.HHTU}.

\subsection{Borel classification (Theorem \ref{thmBorelConj} and Corollary \ref{Corollary-Classification-mixing})}\label{s-Borel}

 A {\em Borel space} is  a pair $(X,\mathcal B)$ where $X$ is a set, and $\mathcal B$ is a $\sigma$-algebra of subsets of $X$. Elements of $\mathcal B$ are called {\em measurable sets}.  A {\em standard Borel space} is a Borel space $(X,\mathcal B)$ such that
{there exists a metric $d$ on $X$ which makes $(X,d)$ a complete separable metric space}, and $\mathcal B$ is the smallest $\sigma$-algebra which contains all open balls in $(X,d)$. In this case elements of $\mathcal B$ are called {\em Borel sets}. See \cite{Kechris-book} for background.

 An {\em isomorphism} of Borel spaces $(X_1,\mathcal B_1)$, $(X_2,\mathcal B_2)$ is a invertible map $\psi:X_1\to X_2$ such that for all $E\in \mathcal B_1$, $\psi(E)\in \mathcal B_2$, and for all $E\in\mathcal B_2$, $\psi^{-1}(E)\in\mathcal B_1$. If $(X_1,\mathcal B_1)=(X_2,\mathcal B_2)$ we call $\psi$ a  {\em Borel automorphism}.

We classify the dynamics of homoclinic classes up to (partial) Borel conjugacies defined as follows (see also \cite{Weiss1984,Weiss1989,Hochman1,Buzzi-Lausanne,Boyle-Buzzi}).

\begin{definition}
Suppose $(X_1,\mathcal B_1), (X_2,\mathcal B_2)$ are Borel spaces, and  $f_1:X_1\to X_1$ and $f_2:X_2\to X_2$ are Borel automorphisms.
\begin{enumerate}[(1)]
\item A subset $Y_i $ of $X_i$ is \emph{almost full} if it is measurable and carries all atomless $f_i$-invariant and ergodic Borel probability measure of $(X_i,\mathcal B_i)$. The complement of an almost full set is called \emph{almost null}.
\item We call $f_1,f_2$  \emph{almost Borel conjugate} if there are $f_i$-invariant measurable $X_i^0\subset X_i$ which carry all atomless $f_i$-invariant ergodic probability measures and  an isomorphism $\psi:(X_1^0,\mathcal B_1\cap X_1^0)\to (X_2^0,\mathcal B_2\cap X_2^0) $  such that $f_2\circ\psi=\psi\circ f_1$. Here $\mathcal B_i\cap X_i^0:=\{E\cap X_i^0: E\in\mathcal B_i\}$.
\item We say that there is an  \emph{almost Borel embedding} of $(X_1,f_1)$ into $(X_2,f_2)$ if there is an $f_2$-invariant measurable
$Z_2\subset X_2$ such that $f_1$, $f_2|_{Z_2}$ are almost Borel conjugate. Here $Z_2$ is viewed as a Borel space with sigma-algebra $\mathcal B_2\cap Z_2$.

\item We call $f_1$,$f_2$ \emph{Borel conjugate modulo zero entropy} if there exist invariant measurable $X_i^0\subset X_i$ which
carry all the positive entropy ergodic invariant Borel probability measures for $f_i$, and an isomorphism $\psi:(X_1^0,\mathcal B_1\cap X_1^0)\to (X_2^0,\mathcal B_2\cap X_2^0)$ such that $f_2\circ\psi=\psi\circ f_1$.

\end{enumerate}
A measurable subset which carries  all atomless invariant and ergodic Borel probability measures as in (1) is called \emph{almost full}. The complement of an almost full set is called \emph{almost null}.
\end{definition}
\medskip

Let $f$ be a $C^r$ diffeomorphism of a closed manifold $M$ $(r>1)$ and let $\cO$ be a hyperbolic periodic orbit of saddle type. Recall the set $Y'$ of regular points introduced in section \ref{s.hyp-measures} and the set
$$
H_{\cO}:=\{x\in Y': W^u(x)\pitchfork W^s(\cO)\neq\varnothing\text{ and }W^s(x)\pitchfork W^u(\cO)\neq\varnothing\}
$$
from
Proposition~\ref{prop-HcO}. We also need the following invariants of $\HC(\cO)$:
\begin{definition}
Let $\cO$ be a hyperbolic periodic orbit of saddle type.
\begin{myitemize}
\item[--] The  \emph{period} {of the homoclinic class of $\cO$} is $\ell(\cO):=\gcd\{\Card(\cO'):\cO'\hsim\cO\}$,
\item[--] The \emph{entropy} {of the homoclinic class of $\cO$} is  $h(\cO):=\sup\{h(f,\mu):\mu\hsim\cO\}$,
\item[--] The \emph{{multiplicity}} {of the homoclinic class of $\cO$} is $m(\cO):=\#\{\mu\hsim\cO: h(f,\mu)=h(\cO)\}$.
\end{myitemize}
\end{definition}

Note that the entropy $h(\cO)$ is zero only when $\cO$ is finite.
Note also (see~\cite{Ruette-Note-2015}) that for any $h(\cO)>0$, $\ell\geq 1$ and $m(\cO)\in \{0,1\}$,
there exists an irreducible countable state Markov shift $\Sigma$  with Gurevi\v{c} entropy (recall \eqref{eq-hG}) equal to $h(\cO)$,
period $\ell(\cO)$, and exactly $m(\cO)$ ergodic m.m.e.'s.
\medskip

From now on,  $M$ is a closed surface. In this case, $m(\cO)\in \{0,1\}$ by Corollary~\ref{c-local-uniqueness}.
Moreover, there exists an almost Borel conjugacy between $H_\cO$ and any an irreducible countable state Markov shift
with the same invariants.
\medskip

\begin{theorem}\label{thm-Borel-HcO}
Let $f$ be a $C^r$ diffeomorphism $(r>1)$ on a closed surface and let $\cO$ be a hyperbolic periodic orbit of saddle type.
Let $\Sigma$ be an irreducible countable state Markov shift with period $\ell(\cO)$, entropy $h(\cO)$ and with $m(\cO)$ ergodic m.m.e.'s.
Then $(f,H_\cO)$ is almost Borel conjugate to $(\sigma,\Sigma)$.
\end{theorem}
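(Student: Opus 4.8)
The plan is to combine the symbolic coding of $H_\cO$ from Section~\ref{Section-Symbolic} with the Borel classification theory for countable state Markov shifts (following \cite{Boyle-Buzzi}, which builds on Hochman's Borel generator theorem \cite{Hochman1}). The key point is that $H_\cO$ is, up to an almost null set, Borel isomorphic to the regular part of a single irreducible Markov shift, and then the invariants entropy/period/multiplicity determine the almost Borel conjugacy class within the family of such shifts.

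First I would reduce to the non-trivial case: if $\cO$ is finite then $H_\cO$ carries no atomless measure and the statement is vacuous, so assume $\HC(\cO)$ has positive topological entropy, hence $h(\cO)>0$. The heart of the argument is to produce a good symbolic model. Fix a sequence $\chi_k\downarrow 0$. For each $k$, Theorem~\ref{Theorem-Symbolic-Dynamics-C^r2} (applied to some hyperbolic $\mu$ with $\mu\hsim\cO$) gives an irreducible locally compact Markov shift $\Sigma_k$ and a H\"older map $\pi_k:\Sigma_k\to M$ with $\pi_k\circ\sigma=f\circ\pi_k$ satisfying (C0)--(C9); by (C1) it is finite-to-one on $\Sigma_k^\#$, and by (C2) together with Proposition~\ref{prop-HcO} and Corollary~\ref{c.measure-related}/Proposition~\ref{p.measure-related} (in the $C^r$ setting one uses that the relevant measures of entropy $>\chi_k$ are $s$- or $u$-thick, hence homoclinically related to $\cO$) the image $\pi_k(\Sigma_k^\#)$ is an $f$-invariant Borel set carrying every atomless ergodic measure supported on $H_\cO$ with entropy $>\chi_k$. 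Since $h(\cO)>0$ and every ergodic measure on a surface with positive entropy is hyperbolic of saddle type by Ruelle's inequality (Theorem~\ref{thm.ruelle}), the union over $k$ of these images is an almost full subset of $H_\cO$. Now I would invoke Hochman's Borel generator theorem exactly as in \cite[Theorem 1.3 and \S6]{Boyle-Buzzi}: the collection $\{\pi_k\}$ of finite-to-one Borel factor maps from irreducible Markov shifts, covering $H_\cO$ modulo an almost null set, can be assembled into a single \emph{Borel} isomorphism between $(f,H_\cO)$ (modulo almost null) and $(\sigma,\Sigma^\#)$ for one irreducible countable state Markov shift $\Sigma$. The irreducibility of $\Sigma$ is inherited because each $\Sigma_k$ is irreducible and, via the compactness/Bowen properties (C5), (C8), (C9) and Corollary~\ref{c-local-uniqueness}, the various pieces are mutually compatible: two codings of the same homoclinic class can be glued respecting transitivity. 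One must also check that the period and the number of m.m.e.'s of this $\Sigma$ equal $\ell(\cO)$ and $m(\cO)$: the period equality is precisely the content of the last Corollary of Section~\ref{equilibrium1} (its hypotheses (C0), (C6), (C9), (C10) being available by Theorem~\ref{thm-ae-inj}); the equality $h(\sigma,\Sigma)=h(\cO)$ holds because $\pi$ preserves entropy on the regular part and $\pi(\Sigma^\#)$ is almost full in $H_\cO$; and $m(\sigma,\Sigma)=m(\cO)$ because ergodic m.m.e.'s of $\Sigma$ of full entropy correspond bijectively, via $\pi$, to ergodic m.m.e.'s of $f$ homoclinically related to $\cO$, of which there is at most one by Corollary~\ref{c-local-uniqueness} (using also that such a measure, being atomless, lifts).

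Having realized $(f,H_\cO)$ as almost Borel conjugate to $(\sigma,\Sigma^\#)$ for one specific irreducible shift $\Sigma$ with invariants $(h(\cO),\ell(\cO),m(\cO))$, the final step is to pass to \emph{any} irreducible countable state Markov shift $\Sigma'$ with the same three invariants. Here I would quote the Borel classification of irreducible countable state Markov shifts by their entropy, period and number of measures of maximal entropy --- this is a theorem of \cite{Boyle-Buzzi} (building on \cite{Hochman1}): any two such shifts are almost Borel conjugate (the almost Borel conjugacy absorbs the periodic orbits and the non-maximal-entropy part into the ``almost null'' slack, matching the Gurevi\v{c} entropy and taking care of the single possible m.m.e.\ by Ornstein-type isomorphism on the top). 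Combining this with the previous paragraph gives $(f,H_\cO)\sim(\sigma,\Sigma^\#)\sim(\sigma,\Sigma')$, which is the assertion since $\Sigma'$ and its regular part $\Sigma'^\#$ differ by an almost null set.

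The main obstacle is the gluing step: showing that the countably many irreducible codings $\pi_k:\Sigma_k\to M$ at different thresholds $\chi_k$ can be merged into a \emph{single irreducible} Markov shift whose regular part still covers $H_\cO$ modulo an almost null set. Monotonicity in $\chi$ is not automatic --- the coding of \cite{Sarig-JAMS} is highly non-canonical --- so one must use the local-finiteness/Bowen machinery (C5), (C8) and the lifting of horseshoes (C9), exactly as in \cite[\S5--6]{Boyle-Buzzi} and \cite{Buzzi-BFCD}, to see that the relevant pieces sit coherently inside one ambient shift; alternatively, one applies Hochman's theorem directly to the Borel system $(f,H_\cO)$ using the family $\{\pi_k\}$ merely as a supply of finite-entropy Borel generators, which sidesteps the explicit gluing but still requires verifying that the resulting Markov shift can be taken irreducible with the correct period. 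I expect the period bookkeeping to be the most delicate routine part, handled by Proposition~\ref{p.period} and the final corollary of Section~\ref{equilibrium1} as indicated above.
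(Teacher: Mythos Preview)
Your approach differs from the paper's, and the step you yourself flag as the ``main obstacle'' is a genuine gap. You want to merge the codings $\pi_k:\Sigma_k\to M$ at thresholds $\chi_k\downarrow 0$ into one irreducible Markov shift, either by explicit gluing via (C5), (C8), (C9) or by invoking Hochman's generator theorem. Neither works as stated: Hochman's theorem gives embeddings \emph{into} mixing SFTs, not a procedure for realizing a given Borel system as a single irreducible Markov shift with a prescribed period; and the Bowen/compactness properties do not tell you how to identify $\Sigma_k$ coherently with a subshift of $\Sigma_{k+1}$ (the construction in \cite{Sarig-JAMS} is non-canonical, and \cite{Boyle-Buzzi,Buzzi-BFCD} do not perform this kind of inductive-limit merging). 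Once you quote the classification of irreducible Markov shifts by $(h,\ell,m)$ at the end, you have reduced the theorem to itself for one particular $\Sigma$, which is exactly the hard part.

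The paper sidesteps gluing entirely by using the $(t,p)$-universality framework and a Cantor--Bernstein argument. One splits both $H_\cO$ and the target $\Sigma$ at level $h(\cO)$: the top parts carry only the m.m.e.'s and match because, by Corollary~\ref{c-local-uniqueness}, the m.m.e.\ of $H_\cO$ (when $m(\cO)=1$) is Bernoulli $\times$ cyclic of order $\ell(\cO)$, as is that of $\Sigma$. For the $h(\cO)$-slices one produces two-way almost Borel embeddings. The shift $\Sigma$ is $(h(\cO),\ell(\cO))$-universal by Lemma~\ref{lem-ex-univ}(i), and every $\mu$ on $H_\cO^0$ has period $\ell(\cO)$ by Proposition~\ref{p.period}, giving $H_\cO^0\hookrightarrow\Sigma^0$. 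Conversely, $H_\cO$ contains (up to almost null sets) horseshoes of period exactly $\ell(\cO)$ with entropy arbitrarily close to $h(\cO)$---built by adjoining to a Katok horseshoe finitely many periodic orbits $\cO_1,\dots,\cO_N\hsim\cO$ realizing the gcd---so $H_\cO$ is $(h(\cO),\ell(\cO))$-universal by Lemma~\ref{lem-ex-univ}(ii), giving $\Sigma^0\hookrightarrow H_\cO^0$. No single symbolic model of $H_\cO$ is ever constructed; the family $\{\Sigma_k\}$ is not used at all.

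A minor correction: your appeal to thickness (Proposition~\ref{p.measure-related}, Corollary~\ref{c.measure-related}) to deduce $\nu\hsim\cO$ is both unnecessary and, for small $\chi_k$ in finite regularity, unavailable. By Proposition~\ref{prop-HcO}, every ergodic measure carried by $H_\cO$ is already hyperbolic and homoclinically related to $\cO$; (C2.a) then applies directly once $\chi_k$ lies below that measure's hyperbolicity constant.
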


The following notion and facts will be useful (see \cite[Sec. 1.3]{Hochman1} and \cite{Buzzi-Lausanne} for more background). Recall the notion of period of an ergodic measure (see Proposition~\ref{p.period}). A Borel automorphism $S:X\to X$ is \emph{$(t,p)$-universal} \cite[p.~2748]{Boyle-Buzzi} for some real number $t>0$ and integer $p\geq1$, if any automorphism $T:Y\to Y$ of a standard Borel space satisfying
 \begin{equation}\label{eq-tp-class}
    \forall \mu\in\Proberg(T),\quad h(f,\mu)<t \text{ and $p$ is a period of }(T,\mu),
  \end{equation}
has an almost Borel embedding into $(S,X)$.
{We say that $(S,X)$ is \emph{strictly $(t,p)$-universal} if it is $(t,p)$-universal
and satisfies~\eqref{eq-tp-class}.}
Given $t>0$, the \emph{$t$-slice $X_t$} of an automorphism $T$ of a standard Borel space $X$ is any invariant measurable $X_t\subset X$ such that:
 $$
   \forall \mu\in\Proberg(T)\quad \mu(X_t)=1 \iff h(T,\mu)<t.
 $$
The $t$-slice always exists and is unique up to an almost null set (see, e.g., \cite[ Prop. 2.9~and~2.12]{Buzzi-Lausanne}).
We will use the following variant of \cite[Sect. 4]{Boyle-Buzzi}.

\begin{lemma}\label{lem-ex-univ}
For any number $t>0$ and integer $p\geq1$, the following are  $(t,p)$-universal systems:
 \begin{enumerate}[(i)]
  \item any irreducible countable state Markov shift of period $p$ and entropy $t$;
  \item any automorphism of a standard Borel space containing almost Borel conjugate copies of horseshoes of period $p$ and entropy $h$ arbitrarily close to $t$.
 \end{enumerate}
\end{lemma}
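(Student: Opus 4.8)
The plan is to reduce both statements to the existence of a single $(t,p)$-universal system provided by the Borel-universality machinery of Hochman and of \cite{Boyle-Buzzi}, and then to transport universality across almost Borel embeddings. Recall that $(t,p)$-universality is a property preserved under almost Borel embeddings in the following sense: if $(S,X)$ is $(t,p)$-universal and admits an almost Borel embedding into $(S',X')$, and if moreover $(S',X')$ itself satisfies the entropy/period constraint~\eqref{eq-tp-class}, then $(S',X')$ is again $(t,p)$-universal --- indeed any $(T,Y)$ satisfying~\eqref{eq-tp-class} embeds into $(S,X)$, hence into $(S',X')$ by composition. Conversely, a system satisfying~\eqref{eq-tp-class} into which some strictly $(t,p)$-universal system embeds is itself strictly $(t,p)$-universal. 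So the two items will follow once we exhibit one strictly $(t,p)$-universal model and show that each of the systems in (i) and (ii) both (a) satisfies the constraint~\eqref{eq-tp-class} and (b) contains an almost Borel copy of that model (or of systems approximating it well enough).

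For item (i), let $\Sigma$ be an irreducible countable state Markov shift of period $p$ and Gurevi\v{c} entropy $t$. First, $\Sigma$ satisfies~\eqref{eq-tp-class}: every ergodic $\bar\nu$ on $\Sigma$ has $h(\sigma,\bar\nu)\le h(\Sigma)=t$, with equality on a null set of measures (and we may pass to the $t$-slice to get the strict inequality), and by irreducibility of period $p$ every ergodic measure has $p$ as a period. For universality, I would invoke the construction of \cite[Sect.~4]{Boyle-Buzzi}: inside such a $\Sigma$ one can build, for each $h<t$, an almost Borel copy of a mixing SFT of period $p$ and entropy $h$ (take a suitable loop system on the graph realizing the right g.c.d. and entropy close to $t$), and the countable union over a sequence $h_n\uparrow t$ of these, suitably disjointified, is strictly $(t,p)$-universal by Hochman's embedding theorem \cite{Hochman1} together with the period-$p$ refinement in \cite{Boyle-Buzzi}. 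Thus $\Sigma$ contains a strictly $(t,p)$-universal subsystem and itself satisfies~\eqref{eq-tp-class}, hence is $(t,p)$-universal.

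For item (ii), let $(T,X)$ be an automorphism of a standard Borel space containing almost Borel conjugate copies of horseshoes of period $p$ and entropy arbitrarily close to $t$ --- here one should read the hypothesis as: for every $h<t$ there is an invariant measurable subset of $X$ almost Borel conjugate to (the restriction of a diffeomorphism to) a horseshoe of period $p$ and topological entropy $\ge h$, and moreover every ergodic measure of $T$ has entropy $<t$ (so that~\eqref{eq-tp-class} holds, possibly after passing to the $t$-slice). A horseshoe of period $p$ and entropy $\ge h$, being a transitive SFT, contains a mixing SFT of period $p$ and entropy arbitrarily close to $h$, hence by \cite{Hochman1,Boyle-Buzzi} is $(h',p)$-universal for every $h'<h$. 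Assembling a countable family of these copies for $h_n\uparrow t$ inside $X$ and disjointifying gives an invariant measurable subset of $X$ into which every $(T',Y')$ satisfying~\eqref{eq-tp-class} embeds (an ergodic measure of entropy $<t$ and period $p$ embeds into the $h_n$-universal piece for any $h_n$ above its entropy, and a general such $T'$ is handled componentwise via its slices). Combined with the constraint~\eqref{eq-tp-class} for $T$ itself, this yields $(t,p)$-universality of $(T,X)$.

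The main obstacle I anticipate is purely bookkeeping: making precise the passage from "contains copies of horseshoes/SFTs of entropy close to $t$" to "contains one fixed strictly $(t,p)$-universal subsystem", i.e.\ the disjointification and the verification that an arbitrary $T'$ obeying~\eqref{eq-tp-class} --- not just its ergodic pieces --- embeds into the countable union. This is exactly the content of the Borel-universality results cited (\cite{Hochman1} for the entropy-only statement, \cite{Boyle-Buzzi} for the refinement tracking the period $p$), so the proof amounts to citing those theorems in the correct form and checking that the hypotheses on ergodic measures translate, via the $t$-slice, into the quantitative entropy condition they require. No genuinely new argument is needed beyond this careful invocation.
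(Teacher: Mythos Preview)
Your approach is essentially the same as the paper's: exhibit strictly $(h,p)$-universal subsystems inside the target (via \cite{Hochman1} and the constructions of \cite{Boyle-Buzzi}) and conclude $(t,p)$-universality. The paper simply packages your ``disjointification and handling of non-ergodic measures'' step as a direct citation of \cite[Lemma~3.2]{Boyle-Buzzi} for item~(ii) and \cite[Prop.~4.2(1)]{Boyle-Buzzi} for item~(i), which is precisely the bookkeeping you identify as the only obstacle.

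One minor confusion worth flagging: you repeatedly impose condition~\eqref{eq-tp-class} on the \emph{target} system, but this is unnecessary. The lemma only claims $(t,p)$-universality, not \emph{strict} $(t,p)$-universality. For plain $(t,p)$-universality you only need that every $(T,Y)$ satisfying~\eqref{eq-tp-class} embeds; there is no requirement that the host system itself satisfy~\eqref{eq-tp-class}. In particular, for item~(ii) you should not add the hypothesis that every ergodic measure of $(T,X)$ has entropy below $t$ --- the lemma as stated does not assume this, and it is not needed. Dropping that side condition, your argument goes through and matches the paper's.
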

\begin{proof}
Item (i) follows from  \cite[Prop.~4.2(1)]{Boyle-Buzzi} since by definition, systems that contain strictly $(t,p)$-universal systems are $(t,p)$-universal.
For item (ii), note using \cite{Hochman1} that a horseshoe of period $p$ and entropy $h$ contains a strictly $(h,p)$-universal system. The same must hold for any almost Borel conjugate system. Now,  \cite[Lemma~3.2]{Boyle-Buzzi} shows that the whole system is $(t,p)$-universal.
\end{proof}

\begin{proof}[Proof of Theorem~\ref{thm-Borel-HcO}]
We assume that $H(\cO)$ is infinite since the claims are otherwise trivial.
Let $H_\cO^0$ and $\Sigma^0$, be the $h(\cO)$-slices of $H_\cO$ and $\Sigma$. Note that
$$h(\cO)=\sup_{\mu\in\Prob(f|_{H_\cO})} h(f,\mu)=\sup_{\nu\in\Prob(\Sigma)} h(\sigma,\nu).$$

We first claim that the systems $(f,H_\cO\setminus H_\cO^0)$ and $(\sigma,\Sigma\setminus\Sigma^0)$ are almost Borel conjugate.
Indeed they  carry the m.m.e.'s of $H_\cO$ and $\Sigma$ and no other invariant probability measures.
When $m(\cO)=0$, there are  no such measures and the claim holds trivially.
When $m(\cO)=1$, Corollary~\ref{c-local-uniqueness} ensures that the m.m.e. of $H_\cO$ is isomorphic to a Bernoulli scheme times the cyclic permutation of order $\ell(\cO)$ and therefore is measure preservingly conjugate to  $\Sigma$ with its m.m.e. This gives an almost Borel conjugacy between $(f,H_\cO\setminus H_\cO^0)$ and $(\sigma,\Sigma\setminus\Sigma^0)$.

It  thus suffices to show that $(f,H_\cO^0)$ and $(\sigma,\Sigma^0)$ are almost Borel conjugate. By Hochman's dynamical Cantor-Bernstein argument \cite{Hochman1}, it is sufficient to show that there are almost Borel embeddings
$(f,H_\cO^0)\hookrightarrow(\sigma,\Sigma^0)$ and $(\sigma,\Sigma^0)\hookrightarrow (f,H_\cO^0)$.

\medbreak

By Proposition~\ref{p.period}, $\ell(\cO)$ is a period of each $\mu\in\Proberg(f|_{H_\cO^0})$. For these measures, we also have: $h(f,\mu)<h(\cO)$. By Lemma~\ref{lem-ex-univ}, part (i), $\Sigma$ is $(h(\cO),\ell(\cO))$-universal, hence there is an almost Borel embedding of $H_\cO^0$  into $\Sigma$.
Since $\Sigma\setminus \Sigma^0$ only carries a measure with entropy $h(\cO)$, its preimage by the embedding carries no invariant measure: we thus get an almost Borel embedding of $H_\cO^0$ into $\Sigma^0$.

\medbreak

We claim that there are horseshoes $\Lambda$ with period $\ell(\cO)$ and entropy arbitrarily close to $h(\cO)$ which are contained in $H_\cO$ up to almost null sets.
To prove the claim, let $\cO_1,\dots,\cO_N$ be hyperbolic periodic orbits homoclinically related to $\cO$ and such that $\gcd\{\Card\cO_i:i=1,\dots,N\}=\ell(\cO)$. For any $h<h(\cO)$, there is $\mu\in\Proberg(f)$ with $\mu\hsim\cO$ and $h(f,\mu)>h$. By Theorem~\ref{t.katok}, there is a horseshoe $\Lambda_0$ homoclinically related to $\cO$ and with $h_\top(f,\Lambda_0)$ close to $h(f,\mu)$.
One then builds a horseshoe $\Lambda\supset\Lambda_0\cup\cO_1\cup\dots\cup\cO_N$
(see Proposition~\ref{prop-approx-HC}).
By construction, $\Lambda$ is homoclinically related to $\cO$.
From the definition of $H_\cO$, the set $\Lambda\setminus H_\cO$ has zero measure for all invariant probability measures supported on $\Lambda$,
so $\Lambda$ is almost Borel embedded in $H_\cO$. Observe that the topological entropy  of $\Lambda$ is bigger than $h$ and its period is $\ell(\cO)$. The claim is proved.

It now follows, by Lemma~\ref{lem-ex-univ}, part (ii), that $H_\cO$ is $(h(\cO),\ell(\cO))$-universal. Hence there is an almost Borel embedding of $\Sigma^0$ into $H_\cO$
(and thus $H_\cO^0$). This concludes the proof.
\end{proof}

We now consider topological homoclinic classes. We further assume $C^\infty$ smoothness
(leaving the cases with finite smoothness and Kupka-Smale property or entropy conditions to the interested reader).
We first deduce Theorem \ref{thmBorelConj} from Theorem \ref{thm-Borel-HcO} by comparing $\HC(\cO)$ to $H_\cO$.

\medskip
\noindent
\noindent {\bf Proof of Theorem~\ref{thmBorelConj}.}
Again we can  assume that $\HC(\cO)$ is infinite.
Since $f$ is $C^\infty$, Corollary~\ref{c.measure-related} implies that the symmetric difference $H_\cO\diffsym\HC(\cO)$ carries only measures with zero entropy. Now Theorem~\ref{thmBorelConj} is an immediate consequence of Theorem~\ref{thm-Borel-HcO}.\qed

\medskip
In the special case of $C^\infty$ surface diffeomorphisms, the above local analysis allows to recover the global classification first obtained in \cite{Boyle-Buzzi} {(which only required a regularity $C^{r}$ regularity for $r>1$)}. More importantly, the present approach
shows that a complete invariant of almost Borel conjugacy can be computed  from the topological entropies and periods of the infinite homoclinic classes.

\begin{corollary}\label{cor-Cinf-Borel}
Let $f$ be a $C^\infty$ diffeomorphism of a closed surface and {consider
a maximal family $\{\cO_i\}_{i\in I}$ of non-homoclinically related hyperbolic periodic orbits
with infinite homoclinic classes.}
The entropy and the periods of the homoclinic classes $(h(\cO_i),\ell(\cO_i))_{i\in I}$ determine the Borel conjugacy class of $f$ modulo zero entropy measures.

More precisely, the following double sequence $(h_k(f),m_k(f))_{k\geq1}$ is a complete invariant:
 $$\begin{aligned}
    &h_k(f):=\sup(\{h(\cO_i):\ell(\cO_i)|k\}\cup\{0\}) \text{ and }\\
    &m_k(f):=\Card(\{i\in I:\ell(\cO_i)=k\text{ and }h(\cO_i)=h_k(f)\}).
 \end{aligned}$$
\end{corollary}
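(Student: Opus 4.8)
The plan is to combine the local Borel classification of Theorem~\ref{thm-Borel-HcO} with the Spectral Decomposition Theorem~\ref{mainthm-Spectral} and Hochman's Borel generator machinery, and then to check that the resulting complete invariant depends only on the numbers $(h(\cO_i),\ell(\cO_i))$, equivalently on the repackaged double sequence $(h_k(f),m_k(f))_{k\geq1}$. First I would recall from Theorem~\ref{mainthm-Spectral}, parts (1)--(2), that for a maximal family $\{\cO_i\}_{i\in I}$ of non-homoclinically related saddle periodic orbits with infinite homoclinic classes, the positive-entropy part of the dynamics of $f$ splits, modulo zero-entropy measures, as the disjoint union of the $\HC(\cO_i)$: covering says $\mu(\bigcup_i\HC(\cO_i))=1$ for every ergodic $\mu$ with $h(f,\mu)>0$, and disjointness says $h_\top(f,\HC(\cO_i)\cap\HC(\cO_j))=0$ for $i\neq j$, so the overlaps are also invisible to positive-entropy measures. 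Hence $(f,M)$ is Borel conjugate modulo zero entropy to the disjoint union $\bigsqcup_{i\in I}(f,\HC(\cO_i))$, and by Theorem~\ref{thmBorelConj} each summand is in turn Borel conjugate modulo zero entropy to an irreducible countable state Markov shift $\Sigma_i$ with entropy $h(\cO_i)$ and period $\ell(\cO_i)$ (and $m(\cO_i)\in\{0,1\}$ ergodic m.m.e.'s by Corollary~\ref{c-local-uniqueness}).

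Next I would reduce the classification of such countable disjoint unions of irreducible Markov shifts, modulo zero-entropy measures, to a statement about the data $(h(\cO_i),\ell(\cO_i))_{i\in I}$ together with the m.m.e.-multiplicities $m(\cO_i)$. The key tool is the $(t,p)$-universality theory (Lemma~\ref{lem-ex-univ}) combined with Hochman's Cantor--Bernstein argument \cite{Hochman1}, exactly as used in the proof of Theorem~\ref{thm-Borel-HcO} and in \cite{Boyle-Buzzi}: once one fixes the slice filtration by entropy and the period constraints, an irreducible Markov shift of period $p$ and entropy $t$ is $(t,p)$-universal, so two Borel systems whose positive-entropy ergodic measures have the same possible (entropy, period) behaviour and the same number of m.m.e.'s in each entropy--period stratum are almost Borel conjugate on their positive-entropy parts. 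Concretely: for each $k\geq1$, the subsystem carrying the ergodic measures whose period divides $k$ has topological entropy (= supremum of entropies) equal to $h_k(f):=\sup(\{h(\cO_i):\ell(\cO_i)\mid k\}\cup\{0\})$, and the number of ergodic m.m.e.'s at that top level which have period exactly $k$ is $m_k(f)=\#\{i:\ell(\cO_i)=k,\ h(\cO_i)=h_k(f)\}$ (using $m(\cO_i)=1$ whenever $\HC(\cO_i)$ is infinite, from Corollary~\ref{c-local-uniqueness}, to know each such class does contribute exactly one m.m.e.). I would then argue that the pair of sequences $(h_k(f),m_k(f))_{k\geq1}$ recovers, and is recovered by, the multiset $\{(h(\cO_i),\ell(\cO_i)):i\in I\}$: one reconstructs, for each prospective value $(h,\ell)$, how many indices $i$ have $\ell(\cO_i)=\ell$ and $h(\cO_i)=h$ by a standard inclusion--exclusion / Möbius-type computation over the divisor lattice applied to the monotone-in-$k$ data $h_k$ and to the counting data $m_k$, starting from the largest entropy value and peeling off strata. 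This is the bookkeeping lemma that makes the ``more precisely'' clause exact.

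Finally I would assemble the equivalence. For the ``these data determine the conjugacy class'' direction: if $f,g$ are two $C^\infty$ surface diffeomorphisms with the same $(h(\cO_i),\ell(\cO_i))$-multiset (equivalently the same $(h_k,m_k)$), then their positive-entropy parts decompose into matching disjoint unions of irreducible Markov shifts with equal entropies, periods, and m.m.e.-multiplicities; pairwise almost Borel conjugacy of the Markov-shift summands (by Theorem~\ref{thm-Borel-HcO}, or directly by the universality argument) assembles — since a countable disjoint union of almost Borel conjugacies is an almost Borel conjugacy — into a Borel conjugacy between $f$ and $g$ modulo zero entropy measures. For the converse direction, a Borel conjugacy modulo zero entropy preserves the entire simplex-valued invariant attached to positive-entropy ergodic measures, in particular the set of achievable (entropy, period) pairs with their m.m.e.-multiplicities, hence $(h_k(f),m_k(f))_{k\geq1}=(h_k(g),m_k(g))_{k\geq1}$; and this sequence is exactly what the multiset $(h(\cO_i),\ell(\cO_i))_{i\in I}$ computes. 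The main obstacle I anticipate is not the dynamics — that is all packaged in Theorems~\ref{mainthm-Spectral}, \ref{thmBorelConj} and Corollary~\ref{c-local-uniqueness} — but the careful handling of the countable, possibly accumulating family of homoclinic classes: one must ensure the zero-entropy ``error sets'' (the pairwise intersections $\HC(\cO_i)\cap\HC(\cO_j)$, the complement $M\setminus\bigcup_i\HC(\cO_i)$, and the non-regular points) can all be swept into a single invariant almost-null Borel set, that the disjoint-union construction and the infinite concatenation of Hochman embeddings stay within standard Borel spaces, and that the divisor-lattice reconstruction of the multiset from $(h_k,m_k)$ is genuinely well-defined when infinitely many classes share the same period (only finitely many exceed any fixed entropy threshold, by Theorem~\ref{mainthm-Spectral}(4), which is what makes the suprema and the peeling argument legitimate).
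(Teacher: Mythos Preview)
Your overall strategy matches the paper's: use Theorem~\ref{mainthm-Spectral} to reduce $f$ (modulo zero entropy) to the disjoint union of the $\HC(\cO_i)$, then use Theorem~\ref{thmBorelConj} to replace each summand by an irreducible Markov shift $\Sigma_i$ with the correct entropy and period, and finally classify such disjoint unions of Markov shifts. The paper does exactly this, and at the final step simply invokes \cite[Theorem~1.5]{Boyle-Buzzi}, which states that $(h_k,m_k)_{k\geq1}$ is a complete invariant for countable disjoint unions of irreducible Markov shifts modulo zero entropy.

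Where your argument breaks is the ``bookkeeping lemma'': you claim that $(h_k,m_k)_{k\geq1}$ and the multiset $\{(h(\cO_i),\ell(\cO_i))\}$ determine one another, and propose to recover the multiset by peeling off entropy strata via M\"obius inversion on the divisor lattice. This is false. Take two scenarios: (A) two classes with $(h,\ell)=(1,1)$ and $(h,\ell)=(\tfrac12,1)$; (B) a single class with $(h,\ell)=(1,1)$. In both cases $h_k=1$ and $m_k=\delta_{k,1}$ for every $k\geq1$: the class of entropy $\tfrac12$ is never detected because $m_k$ only counts classes achieving the supremum $h_k$, and it is always dominated. The multisets differ, yet the systems are genuinely Borel conjugate modulo zero entropy (the entropy-$\tfrac12$ mixing shift embeds into the entropy-$1$ mixing shift by $(1,1)$-universality, and Cantor--Bernstein finishes). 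So the invariant $(h_k,m_k)$ is strictly coarser than the multiset, and your proposed reconstruction cannot work.

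What you actually need for the ``if $(h_k,m_k)$ agree then conjugate'' direction is a direct argument at the level of the Markov-shift union, not a detour through the multiset. This is precisely the content of \cite[Theorem~1.5]{Boyle-Buzzi}; the universality/Cantor--Bernstein ingredients you cite are the right tools, but they must be applied to build embeddings between the two unions $\bigsqcup_i\Sigma_i$ and $\bigsqcup_j\Sigma'_j$ directly, stratum by stratum in $(h_k,m_k)$, rather than by first matching summands. (A minor side point: existence of the m.m.e.\ on each $\HC(\cO_i)$, i.e.\ $m(\cO_i)=1$, uses Newhouse's upper semicontinuity of entropy in $C^\infty$, not Corollary~\ref{c-local-uniqueness}, which gives only uniqueness.)
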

\begin{proof}
For each $i\in I$ with $h(\cO_i)>0$, let $\Sigma_i$ be an irreducible Markov shift with entropy $h(\cO_i)$ and period $\ell(\cO_i)$ with a m.m.e.  (this exists by \cite{Ruette-Note-2015}).
 Since $f$ is $C^\infty$,
the upper semi-continuity~\eqref{Tail-Inequality} implies that $\HC(\cO_i)$ supports an ergodic measure $\mu_i$ with entropy
$h_\top(f,\HC(\cO_i))$.
{Moreover} $\mu_i\hsim\cO_i$ by Corollary~\ref{c.measure-related}, part 1.
The proof of Theorem~\ref{thmBorelConj}
implies that $(\sigma,\Sigma_i)$ and $(f,\HC(\cO_i))$ are Borel conjugate
modulo zero entropy measures. Theorem~\ref{mainthm-Spectral} then shows that $f$ is Borel conjugate modulo zero entropy measures to the disjoint union $\bigsqcup_{i\in I} \HC(\cO_i)$ and therefore to $\bigsqcup_{i\in I}\Sigma_i$.
To conclude, we apply \cite[Theorem~1.5]{Boyle-Buzzi} which classifies Markov shifts modulo zero entropy measures.
\end{proof}

As mentioned in the introduction, we obtain simpler and more powerful results when there is mixing. We prove the following strengthening of Corollary~\ref{Corollary-Classification-mixing}. A \emph{Borel conjugacy modulo periodic orbits} is a Borel conjugacy between the restrictions to the complement sets to the unions of periodic orbits. In general, this is much stronger than just preserving all atomless invariant probability measures,  see~\cite{Weiss1989}.

\begin{corollary}\label{Cor-Classif-mixing2}
Consider the $C^\infty$ diffeomorphisms of a closed surface that are
topologically mixing and with positive topological entropy.
\begin{enumerate}
\item Any such diffeomorphism is  Borel conjugate modulo periodic
orbits to a mixing Markov shift.
\item Any two such diffeomorphisms are Borel conjugate modulo periodic orbits
if and only if they have the same topological entropy.
\end{enumerate}
\end{corollary}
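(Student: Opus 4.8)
The plan is to deduce Corollary~\ref{Cor-Classif-mixing2} from the classification of mixing Markov shifts together with our coding results. Let $f$ be a $C^\infty$ diffeomorphism of a closed surface that is topologically mixing with $h_\top(f)>0$. First I would invoke the Main Theorem (or Theorem~\ref{mainthm-Spectral}, part (5), together with Theorem~\ref{mainthm-class}, part (1)): since $f$ is topologically mixing, there is a unique homoclinic class $\HC(\cO)$ with positive topological entropy, it carries a unique measure of maximal entropy, and its period is $\ell(\cO)=1$. Moreover, by Theorem~\ref{mainthm-Spectral}, part (1), every ergodic measure of positive entropy is carried by $\HC(\cO)$, so the same is true \emph{a fortiori} of every atomless ergodic invariant probability measure of positive entropy; and every ergodic invariant measure with zero entropy and no atoms — by Ruelle's inequality such a measure still could have zero entropy, so one must be slightly careful here.

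The key point to address is that ``modulo periodic orbits'' is much stronger than ``modulo zero entropy measures'': we must handle \emph{all} atomless invariant measures, including the zero-entropy ones, not just the positive-entropy ones. The route is through Hochman's strong generator theorem~\cite{Hochman2}, which applies precisely when the system has a mixing measure of maximal entropy. Concretely, I would argue: by Theorem~\ref{mainthm-class}, part (1), the unique m.m.e. $\mu$ of $f$ (supported on $\HC(\cO)$, with $\ell(\cO)=1$) is isomorphic to a Bernoulli scheme, hence in particular is a mixing measure whose entropy equals $h_\top(f)$. Now consider a mixing Markov shift $\Sigma$ with Gurevi\v{c} entropy equal to $h_\top(f)$ that carries an m.m.e. (such a shift exists by~\cite{Ruette-Note-2015}, taking period $1$). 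Both $(f,M)$ and $(\sigma,\Sigma)$ are Borel systems possessing a mixing m.m.e. of the same (finite, positive) entropy. By the version of Hochman's generator theorem for systems with a mixing m.m.e.~\cite{Hochman2} (as used in~\cite{Boyle-Buzzi}), each embeds into the other modulo periodic orbits — more precisely, the $t$-slice analysis of the proof of Theorem~\ref{thm-Borel-HcO} carries over, with the universality statement of Lemma~\ref{lem-ex-univ} upgraded: one uses that $f$ contains (up to almost null sets) horseshoes of period $1$ and entropy arbitrarily close to $h_\top(f)$ inside $\HC(\cO)$, and that the zero-entropy invariant measures of $f$ are all carried by $\HC(\cO)\cup(\text{lower-dimensional pieces})$ in a way compatible with an almost Borel embedding into $\Sigma$. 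The Cantor--Bernstein argument of~\cite{Hochman1} then yields a Borel conjugacy; the improvement to ``modulo periodic orbits'' (rather than merely modulo a null set for atomless measures) is exactly the content of the mixing-m.m.e.\ version~\cite{Hochman2}.

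For part (2): if $f,g$ are two such diffeomorphisms, then a Borel conjugacy modulo periodic orbits preserves the topological entropy, since topological entropy equals the supremum of the entropies of atomless invariant measures (the variational principle, plus the fact that periodic orbits carry zero entropy), and this supremum is a Borel-conjugacy invariant of the restriction to the complement of the periodic orbits. Conversely, if $h_\top(f)=h_\top(g)$, then by part (1) both are Borel conjugate modulo periodic orbits to a mixing Markov shift of that same entropy; and mixing Markov shifts of equal Gurevi\v{c} entropy are Borel conjugate modulo periodic orbits — this is the classification of mixing countable-state Markov shifts, which follows from~\cite{Hochman2} exactly as in~\cite[Theorem~1.5 and its mixing refinement]{Boyle-Buzzi}. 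Composing the three conjugacies gives the result.

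The main obstacle I anticipate is the careful bookkeeping of the zero-entropy invariant measures and the precise form of the generator theorem needed. Unlike the ``modulo zero entropy'' statements, here one genuinely must account for every atomless measure of $f$, and must check that all of them are captured by a single almost Borel embedding into the Markov shift; this requires knowing that $f$ restricted to the complement of its periodic orbits has no ``exotic'' invariant measures escaping the homoclinic class, which for the zero-entropy part is not covered by Corollary~\ref{c.measure-related} and instead needs the full strength of Hochman's mixing generator theorem applied to the ambient diffeomorphism together with the existence of the Bernoulli m.m.e. Everything else — the uniqueness and period-one property of the maximal-entropy homoclinic class, the existence of the model Markov shift, the horseshoe approximations — is already in hand from the results proved above.
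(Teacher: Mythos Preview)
Your outline is on the right track and you correctly identify the key inputs: the unique period-one homoclinic class carrying all positive-entropy measures, the Bernoulli m.m.e., Hochman's stronger generator theorem \cite{Hochman2}, and the Cantor--Bernstein scheme. But there is a concrete gap in how you invoke \cite{Hochman2}. That theorem does \emph{not} say that two systems with a mixing m.m.e.\ of the same entropy embed into one another modulo periodic orbits; what it says is that a free Borel system carrying no invariant measure of entropy $\geq t$ embeds, modulo periodic orbits, into any mixing SFT of entropy $t$. So it cannot be applied to the full diffeomorphism $f$ or to $\Sigma$, since both carry a measure of top entropy. Your $t$-slice reference does not fix this: the slices from Theorem~\ref{thm-Borel-HcO} give only almost Borel universality, which discards null sets for atomless measures --- precisely the defect you are trying to remove.

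The paper closes this gap with an explicit ``absorption'' construction, following \cite{Boyle-Buzzi-Gomez-2014}. Starting from the Borel conjugacy modulo zero entropy of Theorem~\ref{thmBorelConj} (giving $\Sigma_1\subset\Sigma$ conjugate to $H_1\subset\HC(\cO)$), one chooses a mixing horseshoe $K\subset\HC(\cO)$ with $0<h_\top(K)<h_\top(f)$, a smaller mixing SFT $Y$, and a Borel embedding $\gamma:Y'\times\N\hookrightarrow\HC(\cO)$, where $Y':=Y\setminus\Per(Y)$. Because $Y'\times\N$ and $Y'\times(\N\setminus\{0\})$ are Borel conjugate, one obtains a Borel conjugacy from $\HC(\cO)$ onto $\HC(\cO)\setminus\gamma(Y'\times\{0\})$; composing with the embedding of $\Sigma_1$ sends $\Sigma_1$ into this complement. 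The leftover $\Sigma':=\Sigma\setminus(\Sigma_1\cup\Per(\Sigma))$ carries only zero-entropy measures, so \emph{now} \cite{Hochman2} applies and embeds $\Sigma'$ into $\gamma(Y'\times\{0\})$. This yields the required embedding $\Sigma\setminus\Per(\Sigma)\hookrightarrow\HC(\cO)$; the reverse direction is symmetric, and one passes from $\HC(\cO)$ to all of $M$ by a second application of the same trick to $M\setminus\HC(\cO)$, which also carries only zero-entropy measures. You should make this mechanism explicit; without it, the step ``each embeds into the other modulo periodic orbits'' is an assertion, not an argument.
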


\begin{corollary}\label{Cor-Classif-mixing3}
Consider the topologically mixing topological homoclinic classes for
$C^\infty$ diffeomorphisms of closed surfaces.
\begin{enumerate}
\item Any such homoclinic class is Borel conjugate modulo periodic
orbits to a mixing Markov shift.
\item Any two such homoclinic classes  are Borel conjugate modulo
periodic orbits if and only if they have the same topological entropy.
\end{enumerate}
\end{corollary}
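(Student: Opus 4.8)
The plan is to reproduce the scheme used for Theorem~\ref{thmBorelConj} and Corollary~\ref{cor-Cinf-Borel}, upgrading ``modulo zero entropy measures'' to ``modulo periodic orbits'' by exploiting that the measure of maximal entropy is mixing and invoking the stronger version of Hochman's generator theorem \cite{Hochman2} (as for Corollary~\ref{Corollary-Classification-mixing}). Write $H:=\HC(\cO)$ with $f|_H$ topologically mixing. If $h:=h_\top(f,H)=0$, then $H$ is a single fixed point and both sides are trivial modulo periodic orbits, so assume $h>0$. Since $H$ is contained in a topologically mixing compact invariant set (itself), Theorem~\ref{mainthm-class}(1) gives $\ell(\cO)=1$, so $f|_H$ has a unique measure of maximal entropy $\mu$, which is isomorphic to a Bernoulli scheme and in particular totally ergodic. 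By \cite{Ruette-Note-2015} I would fix a locally compact countable state Markov shift $\Sigma$ of period $1$ (hence topologically mixing), with Gurevi\v{c} entropy $h$, carrying a measure of maximal entropy; this measure is unique by Gurevi\v{c}'s theorem \cite{Gurevich-Measures-Of-Maximal-Entropy} and Bernoulli by \cite{Sarig-Bernoulli-JMD}.

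Next I would prove that $(f,H)$ and $(\sigma,\Sigma)$ are Borel conjugate modulo periodic orbits. First split off the top of the spectrum: let $H^0$ and $\Sigma^0$ be the $h$-slices (invariant Borel sets carrying exactly the ergodic invariant measures of entropy $<h$, unique up to almost-null sets). The complements $H\setminus H^0$ and $\Sigma\setminus\Sigma^0$ contain no periodic orbits and carry only the respective m.m.e.'s, which are Bernoulli of the same entropy $h$ and hence measure-theoretically isomorphic, giving a Borel conjugacy modulo periodic orbits between $f|_{H\setminus H^0}$ and $\sigma|_{\Sigma\setminus\Sigma^0}$. For the slices I would build mutual almost Borel embeddings modulo periodic orbits and apply Hochman's Cantor--Bernstein argument: since $\Sigma$ is a \emph{mixing} Markov shift of entropy $h$ with an m.m.e., \cite{Hochman2} makes $\Sigma^0$ universal modulo periodic orbits for aperiodic Borel systems of entropy $<h$, so $f|_{H^0}$ embeds into $\Sigma^0$; conversely, because $\mu$ is totally ergodic, Theorem~\ref{t.katok}(5) together with Proposition~\ref{prop-approx-HC} produces topologically mixing (period $1$) horseshoes $\Lambda\subset H$ of entropy arbitrarily close to $h$, each containing a strictly universal mixing subsystem modulo periodic orbits, so the Boyle--Buzzi argument \cite{Boyle-Buzzi} (the mixing, period-$1$ analogue of Lemma~\ref{lem-ex-univ}) shows $f|_H$, hence $f|_{H^0}$, is also universal of the required type, and $\Sigma^0$ embeds into $H^0$. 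This proves part~(1). For part~(2): if two such homoclinic classes have the same topological entropy, part~(1) realizes both modulo periodic orbits as a mixing Markov shift of that entropy, and any two mixing countable state Markov shifts with equal Gurevi\v{c} entropy are Borel conjugate modulo periodic orbits by \cite{Hochman2} (see \cite{Boyle-Buzzi}); conversely $h_\top$ equals the supremum of the entropies of invariant probability measures, periodic orbits contribute $0$ to this supremum, and the supremum over the non-periodic ergodic measures is plainly invariant under Borel conjugacy modulo periodic orbits.

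The main obstacle is the passage from ``modulo zero entropy'' to ``modulo periodic orbits'': this needs the full force of Hochman's mixing universality theorem \cite{Hochman2} and, on the dynamical side, the verification that the approximating horseshoes inside $H$ may be taken topologically mixing of period $1$, so that $f|_H$ genuinely contains a strictly universal \emph{mixing} piece carrying periodic points of all large periods matching those of the target Markov shift. Keeping careful track on both sides of exactly which periodic orbits are discarded, and checking that the zero-entropy non-atomic measures supported on $H$ are absorbed by the universal embeddings, are the delicate bookkeeping points; the rest runs parallel to the proof of Theorem~\ref{thm-Borel-HcO}.
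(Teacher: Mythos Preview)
Your approach is correct and uses the same core ingredients as the paper (period $1$ from mixing, Hochman's mixing generator theorem \cite{Hochman2}, a Cantor--Bernstein argument, and mixing horseshoes from Theorem~\ref{t.katok}(5)), but the organization is genuinely different. You split off the m.m.e.\ first and then argue that both $h$-slices are strictly $(h,1)$-universal modulo periodic orbits, needing a \emph{sequence} of mixing horseshoes with entropy approaching $h$; the paper instead starts from the modulo-zero-entropy conjugacy of Theorem~\ref{thmBorelConj} and upgrades it via a single concrete ``space-making'' trick borrowed from \cite{Boyle-Buzzi-Gomez-2014}: one mixing horseshoe $K\subset\HC(\cO)$ is used to embed $Y'\times\{0,1,2,\dots\}$ (countably many copies of a small mixing SFT minus periodic points), the existing conjugacy is shifted to avoid one copy, and only the zero-entropy leftover $\Sigma\setminus(\Sigma_1\cup\Per)$ needs Hochman's theorem. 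The paper's route avoids precisely the bookkeeping you flag at the end---ensuring that the Hochman embeddings land inside the chosen slice representatives and that the m.m.e.\ piece and the slice piece are glued disjointly---because it never decomposes into slices; the price is the somewhat ad hoc $Y'\times\NN$ trick. Your route is cleaner conceptually (it is literally the period-$1$, modulo-periodic-orbits upgrade of the proof of Theorem~\ref{thm-Borel-HcO}) but requires you to check that slices, being defined only up to sets carrying no invariant probability, can be adjusted so that the two mutual embeddings and the Ornstein isomorphism of the m.m.e.'s fit together without overlap.
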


\medskip

\noindent {\bf Proof of Corollaries \ref{Cor-Classif-mixing2} and \ref{Cor-Classif-mixing3}.}
Let $f$ be a $C^\infty$ diffeomorphism of a closed surface. We first consider some topologically mixing
homoclinic class $\HC(\cO)$ and prove {item (1) of Corollary \ref{Cor-Classif-mixing3}}. We can assume that $\HC(\cO)$ has positive topological entropy, since otherwise it is a {fixed point} and there is nothing to show.

By Theorem~\ref{mainthm-Spectral} and topological mixing, the period $\ell(\cO)$ of the class is equal to $1$. By Newhouse's theorem, $f|\HC(\cO)$ has a m.m.e.  Theorem~\ref{thmBorelConj} yields a mixing Markov shift $\Sigma$, invariant Borel subsets $H_1\subset \HC(\cO)$, $\Sigma_1\subset\Sigma$ and a Borel conjugacy $\Sigma_1\to H_1$, such that the only ergodic invariant measures carried by $\HC(\cO)\setminus H_1$ or $\Sigma\setminus \Sigma_1$ are measures with zero entropy.

By a variant of the Cantor-Bernstein theorem of set theory (see, e.g., \cite{Hochman1}), in order to prove that
$f|\HC(\cO)$ and $\sigma|\Sigma$ are Borel conjugate modulo periodic orbits, it suffices to find two Borel embeddings $\Sigma\setminus\Per(\Sigma)\hookrightarrow \HC(\cO)$ and $\HC(\cO)\setminus\Per(f)\hookrightarrow\Sigma$
which
intertwine the actions. To do this, we follow \cite{Boyle-Buzzi-Gomez-2014} and especially the proof of Theorem~1.4 there.

By Katok's horseshoe Theorem \ref{t.katok}, $\HC(\cO)$ contains a topologically mixing horseshoe $K$ with  $0<h_\top(K)<h_\top(f)$. We fix some mixing subshift of finite type $Y$ such that $0<h_\top(Y)<h_\top(K)$
and set $Y':=Y\setminus \Per(Y)$.
Well-known facts from symbolic dynamics yield a Borel embedding $\gamma:Y'\times\{0,1,2,\ldots\}\to \HC(\cO)$.
Since $Y'\times\{0,1,2,\ldots,\}$ and $Y'\times\{1,2,3,\ldots\}$ are Borel conjugate, there is a Borel conjugacy $\HC(\cO)\to \HC(\cO)\setminus \gamma(Y'\times\{0\})$.
Combining with the embedding $\Sigma_1\to \HC(\cO)$ from Theorem~\ref{thmBorelConj}, we get a Borel embedding of $\Sigma_1$ into $\HC(\cO)\setminus \gamma(Y'\times\{0\})$.

As $\Sigma':=\Sigma\setminus (\Sigma_1\cup\Per(\Sigma))$ carries no shift-invariant finite measure with positive entropy, Hochman's generator theorem \cite{Hochman2} gives a Borel embedding $\phi:\Sigma'\to Y'\times\{0\}$.
We have thus built a Borel embedding of $\Sigma\setminus\Per(\Sigma)$ into $\HC(\cO)$ as was needed.

The converse embedding is similarly constructed (except that we do not need Katok's horseshoe theorem). Thus we get the Borel conjugacy modulo periodic orbits and item (1) of Corollary \ref{Cor-Classif-mixing3} follows.
The classification of the topologically mixing homoclinic classes now follows from the  result of \cite{Hochman2} that mixing Markov shifts with a m.m.e. and finite entropy are classified by their entropy up to Borel conjugacy modulo periodic orbits. Item (2) of Corollary \ref{Cor-Classif-mixing3}
is proved.

\medbreak
We turn to Corollary \ref{Cor-Classif-mixing2} and assume $f$ itself to be topologically mixing with $h_\top(f)>0$. Theorem~\ref{mainthm-Spectral} implies that there is exactly one homoclinic class $\HC(\cO)$ with positive topological entropy. Moreover, the topological entropy of this class is equal to $h_\top(f)$ and its period is equal to one.

Corollary \ref{Cor-Classif-mixing3} implies that $f|\HC(\cO)$ is Borel conjugate modulo periodic orbits to a mixing Markov shift with positive entropy. Since $M\setminus\HC(\cO)$ may carry only zero entropy invariant probability measures, the argument above shows that the whole of $f$ is Borel conjugate modulo periodic orbits to a Markov shift. \qed

%%%%%%%%%%%%%%%%%%%%%%%%%%%%%%%%%%%%%%%%%%%%%%%%%%%%%%%%%%%%%%%%%%

\appendix

\section{Lipschitz holonomies}\label{s.holonomy}
The purpose of this section is to prove the following:
\begin{thm}\label{Theorem-Holonomy-is-Lipschitz}
Fix $r>1$ and suppose $f:M\to M$ is a $C^r$ diffeomorphism of a closed surface $M$.
For every basic set $\Lambda$ and $\epsilon>0$ small enough,
the lamination $\mathfs W^s_\epsilon(\Lambda)$ has Lipschitz holonomies.
\end{thm}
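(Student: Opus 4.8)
The plan is to reduce the statement to a uniform bounded-distortion estimate along forward orbits, using in an essential way that $f\in C^r$ with $r>1$ has an $(r-1)$-H\"older derivative, and that forward iteration contracts the leaves of $\mathfs W^s_\epsilon(\Lambda)$ uniformly while curves transverse to them are expanded with controlled distortion. First I would localize: fix $x_0\in\Lambda$, a lamination neighborhood $U_0$, a base transversal $\tau_0\ni x_0$, and a small uniform-$C^1$ neighborhood $\mathcal T$ of $\tau_0$. Shrinking $U_0$ and $\epsilon$ and using \cite{Hirsch-Pugh-Stable-Manifold-Thm}, I may assume there are continuous $Df$-invariant stable and unstable cone fields $C^s,C^u$ on a neighborhood of $\Lambda$, that every leaf of $\mathfs W^s_\epsilon(\Lambda)$ meeting $U_0$ is a $C^r$ curve with tangent in $C^s$, and that every $\tau\in\mathcal T$ is a $C^1$ curve with tangent in $C^u$; uniform transversality of $C^u$ to $C^s$ then makes the holonomy $\Pi_{\tau\to\tau'}\colon\tau\cap V\cap\supp(\mathfs W^s_\epsilon(\Lambda))\to\tau'$ well defined and order-preserving on a small open $V\ni x_0$. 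Since $f$ permutes the leaves, $f^n(\tau)$ is again a curve with tangent in $C^u$, hence transverse to the lamination near $f^n(x_0)$, and one has, on $\tau\cap V\cap\supp(\mathfs W^s_\epsilon(\Lambda))$, the conjugation identity $\Pi_{\tau\to\tau'}=f^{-n}\circ\Pi_{f^n\tau\to f^n\tau'}\circ f^n$ for all $n\ge0$ (both sides send $p$ to the point where the leaf through $f^n(p)$ meets $f^n(\tau')$).

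The core step would be to compute the one-sided ``derivative'' of $\Pi_{\tau\to\tau'}$ along $\supp(\mathfs W^s_\epsilon(\Lambda))$ and bound it uniformly. Fix $p\in\tau\cap V\cap\supp(\mathfs W^s_\epsilon(\Lambda))$ and let $q:=\Pi_{\tau\to\tau'}(p)$; then $p,q$ lie on a common local stable manifold $W^s_\epsilon(x)$, $x\in\Lambda$, so by the local product structure and uniform hyperbolicity $d(f^kp,f^kq)\le C\lambda^k\diam(V)$ for all $k\ge0$ (with $0<\lambda<1$ the hyperbolicity constant of $\Lambda$), while the tangent directions $T_{f^kp}f^k\tau$ and $T_{f^kq}f^k\tau'$ (both in $C^u$, both aligning exponentially fast with $E^u(f^kx)$) satisfy $\measuredangle(T_{f^kp}f^k\tau,T_{f^kq}f^k\tau')\le C\lambda^k$. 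Differentiating the conjugation identity at $p$ and writing each factor as a one-dimensional linear map gives $|D\Pi_{\tau\to\tau'}(p)| = \bigl(\prod_{k=0}^{n-1}\|Df_{f^kp}|_{T_{f^kp}f^k\tau}\|\bigr)\cdot|D\Pi_{f^n\tau\to f^n\tau'}(f^np)|\cdot\bigl(\prod_{k=0}^{n-1}\|Df_{f^kq}|_{T_{f^kq}f^k\tau'}\|\bigr)^{-1}$. The two products run over pointwise expansion factors, and since $v\mapsto\log\|Df_x|_{\mathbb{R}v}\|$ is $(r-1)$-H\"older in $(x,v)$ for $v\in C^u$, one gets $\bigl|\log\prod_{k=0}^{n-1}\frac{\|Df_{f^kp}|_{T_{f^kp}f^k\tau}\|}{\|Df_{f^kq}|_{T_{f^kq}f^k\tau'}\|}\bigr| \le \sum_{k\ge0}C'(d(f^kp,f^kq)+\measuredangle(T_{f^kp}f^k\tau,T_{f^kq}f^k\tau'))^{r-1} \le \sum_{k\ge0}C''\lambda^{(r-1)k}<\infty$, a bound independent of $n$, $p$, $\tau$, $\tau'$. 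Finally $f^n\tau$ and $f^n\tau'$ pass through $f^np,f^nq$ at mutual distance $\le C\lambda^n\diam(V)$, with tangents exponentially close to $E^u$, so the ``short'' holonomy $\Pi_{f^n\tau\to f^n\tau'}$ between them has derivative bounded by a uniform constant; letting $n\to\infty$ yields $|D\Pi_{\tau\to\tau'}(p)|\le L$ with $L$ uniform. Because $\Pi_{\tau\to\tau'}$ is order-preserving on the closed set $\tau\cap V\cap\supp(\mathfs W^s_\epsilon(\Lambda))$, a routine Dini-derivative argument then upgrades this pointwise bound to $L$-Lipschitzness.

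The hard part will be the uniform $C^1$ control of the forward iterates $f^n(\tau)$, $\tau\in\mathcal T$, and the resulting uniform bound on the ``short'' holonomies $\Pi_{f^n\tau\to f^n\tau'}$: one has to show that the curves $f^n(\tau)$ have a modulus of continuity for their tangent fields that is uniform in $n$, so that the tangent comparisons above are legitimate and the holonomy between two $C^1$-close transversals through nearby points of the lamination has uniformly bounded derivative. This is precisely the graph-transform estimate for the action of $Df$ on $C^{1+}$ sections over the unstable cone, and it is here that $r>1$ enters essentially (in the spirit of \cite{Pinto-Rand}); for $r\ge2$ one may instead sidestep it entirely by invoking that $\mathfs W^s_\epsilon(\Lambda)$ extends to a $C^1$ foliation of a neighborhood of $\Lambda$ \cite{Bonatti-Crovisier}, which makes the holonomies $C^1$. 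The remaining ingredients are routine: transversality is an open condition, so the angle between $T\tau$ and the leaves is bounded below uniformly over $\mathcal T$ and $V$; the local product structure and uniform hyperbolicity of the basic set give the exponential contraction $d(f^kp,f^kq)\le C\lambda^k\diam(V)$ on common local stable manifolds and the exponential alignment of $C^u$-tangents with $E^u$ \cite{Katok-Hasselblatt-Book,Hirsch-Pugh-Stable-Manifold-Thm}; and since $\supp(\mathfs W^s_\epsilon(\Lambda))$ is covered by finitely many lamination neighborhoods, the local Lipschitz-holonomy bounds assemble into the theorem.
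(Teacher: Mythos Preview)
Your overall strategy matches the paper's: iterate forward and bound the distortion product $\prod_k\|Df_{f^kp}|_{T_{f^kp}f^k\tau}\|\big/\|Df_{f^kq}|_{T_{f^kq}f^k\tau'}\|$ using that $Df$ is $(r-1)$-H\"older together with the exponential convergence of $f^kp, f^kq$ along the stable leaf and of their tangent directions toward $E^u$ (the paper isolates this last point as a separate ``slope lemma''). That part of your argument is correct. The genuine gap is the bound on the ``short'' holonomy $\Pi_{f^n\tau\to f^n\tau'}$: you assert that since $d(f^np,f^nq)\le C\lambda^n$ and both tangents are close to $E^u$, this holonomy has derivative bounded by a uniform constant. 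But $C^1$-closeness of two transversals does not by itself bound the Lipschitz constant of a lamination holonomy between them. For vertical transversals at $x=0$ and $x=\delta$ and leaves $y=0$, $y=\epsilon+\alpha x$, the holonomy sends $0\mapsto 0$ and $\epsilon\mapsto\epsilon+\alpha\delta$, giving ratio $1+\alpha\delta/\epsilon\to\infty$ as $\epsilon\to 0$; in your setup this is exactly what happens when $p'\to p$ with $n$ fixed, since the stable arcs joining $(f^np,f^nq)$ and $(f^np',f^nq')$ have length $\sim\lambda^n$ while the transversal arcs can be arbitrarily short. The graph-transform estimate you invoke controls the $C^{1+}$ geometry of each curve $f^n\tau$ separately, not this ratio, so you have merely reduced bounding $\Lip(\Pi_{\tau\to\tau'})$ to bounding $\Lip(\Pi_{f^n\tau\to f^n\tau'})$, the same problem. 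Relatedly, your displayed product formula for $|D\Pi_{\tau\to\tau'}(p)|$ is formal: the map is defined only on a Cantor set, and Dini derivatives do not obey the chain rule.

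The paper breaks the circularity by working with an arc $I\subset\tau$ and its image $I'=\Pi_{\tau\to\tau'}(I)$ rather than a single point, and iterating to a \emph{finite} time $N=N(|I|)$, chosen as the first time the image of the quadrilateral $I\cup I'\cup J_1\cup J_2$ satisfies $\min(|f^NI|,|f^NI'|)\ge\alpha^{-1}\max(|f^NJ_1|,|f^NJ_2|)$. At that scale it builds, in a chart, an explicit auxiliary foliation of the image quadrilateral by linearly interpolating between the two stable arcs $\psi^{-1}f^N(J_1)$ and $\psi^{-1}f^N(J_2)$; an elementary computation shows its holonomy between $f^N(I)$ and $f^N(I')$ is $(1+10\alpha)$-Lipschitz. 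This auxiliary holonomy is a genuine map on the whole arc $f^N(I)$, so conjugating back by $f^{-N}$ and applying the distortion bound yields a bi-Lipschitz homeomorphism $I\to I'$ with uniformly bounded constant, whence $|I'|\le L|I|$. No map on a Cantor set is ever differentiated.
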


\begin{proof}
We choose $\alpha>0$ small and define the following cones of $\RR^2$:
 $$\begin{aligned}
&\mathcal C^u:=\{(u,v), |u|\leq \alpha |v|\} \; \subset \; \widehat {\mathcal C}^u:=\{(u,v), \alpha|u|\leq |v|\},\\
&
\mathcal C^s:=\{(u,v), \alpha|u|\geq  |v|\} \; \subset \; \widehat {\mathcal C}^s:=\{(u,v), |u|\geq \alpha|v|\}.
 \end{aligned}$$

Replacing $f$ by an iterate, we may assume that
$\|Df|_{E^s}(x)\|,\|Df^{-1}|_{E^u}(x)\|<\alpha$ for any $x\in \Lambda$.
We choose a family of $C^r$ charts $\psi_x$ from a neighborhood of $0$ in $\RR^2$ to a neighborhood
of $x$ in $M$ such that $D\psi_x{1\choose 0}$ is a unit vector in $E^s(x)$ and $D\psi_x{0\choose 1}$ is a unit vector in $E^u(x)$.
By compactness, one can extract a finite collection of charts $\psi_1,\dots,\psi_\ell$ and a number $\rho>0$ with the following properties:
\begin{myitemize}
\item[--] Each chart $\psi_i$ is a diffeomorphism from a ball $B(0,2\rho)\subset \RR^2$ to an open set $U_i\subset M$.
\item[--] The union $\cup_i \psi_i(B(0,\rho))$ contains $\Lambda$.
\item[--] For $x \in \psi_i^{-1}(U_i\cap f^{-1}(U_j))$, the linear map $A:= D_{f\circ\psi_i(x)}\psi_j^{-1}.D_{\psi_i(x)}f.D_x\psi_i$
sends $\widehat {\mathcal C}^u$ into $\mathcal C^u$ and expands its vectors by a factor larger than $\alpha^{-1}$.
Symmetrically, $A^{-1}$ expands vectors in $\widehat {\mathcal C}^s$ by a factor larger than $\alpha^{-1}$.
\end{myitemize}
By choosing $\varepsilon>0$ small, each point $x$ in the support of the lamination $\mathfs W^s_\epsilon(\Lambda)$
belongs to some image $\psi_i(B(0,\rho))$ and the tangent space to $W^s(x)$ belongs to the image
$D_{\psi_i^{-1}(x)}\psi_i(\mathcal C^s)$.
\medskip

In order to prove the theorem,
we choose a transversal $\tau_0$ to $\mathfs W^s_\epsilon(\Lambda)$ inside a lamination neighborhood $U_0$
and a point $x_0\in \tau_0\cap \mathfs W^s_\epsilon(\Lambda)$.
From the inclination lemma, the iterates $f^n(\tau_0)$ converge to the unstable manifolds of $\Lambda$:
there exist an integer $n_0\geq 1$
and a chart $\psi_i$ such that $f^{n_0}(x_0)\in \psi_i(B(0,\rho))$
and the tangent space to $\psi_i^{-1}\circ f^{n_0}(\tau_0)$ at $\psi_i^{-1}(f^{n_0}(x_0))$
is contained in $\mathcal C^u$.
If $V$ is a small neighborhood of $x_0$ in $M$
and $\mathcal T$ is a small neighborhood of $\tau_0$ for the {uniform} $C^1$-topology (see section~\ref{Sec.Sard}), we have:
\begin{myitemize}
\item[--] for any $\tau\in \mathcal T$, $y\in \tau \cap V$,
the tangent space to $\psi_i^{-1}\circ f^{n_0}(\tau_0)$ at $\psi_i^{-1}(f^{n_0}(x_0))$
is contained in $\mathcal C^u$,
\item[--] for any $\tau,\tau'\in \mathcal T$, the holonomy $\Pi_{\tau\to\tau'} \colon \tau\cap V\cap
W^s_\epsilon(\Lambda)\to \tau'$
is well defined.
\end{myitemize}
We choose any small interval $I\subset \tau\cap V\cap W^s_\epsilon(\Lambda)$. We will show that its image $I':=\Pi_{\tau\to\tau'}(I)$ satisfies:
 \begin{equation}\label{eq-Lip-hol}
   |I'|\leq L |I| \text{ for some positive contant $L=L(f,V,\mathcal T)$, independent of $\tau'\in\mathcal T$}.
 \end{equation}
By symmetry,  $|I|\leq L|I'|$ and Theorem~\ref{Theorem-Holonomy-is-Lipschitz} will immediately follow.
\medskip

The endpoints of $I$ and $I'$ are connected by arcs $J_1,J_2$ contained in leaves of $\mathfs W^s_\epsilon(\Lambda)\cap U_0$.
Since $V$ has been chosen small, the lengths of $I,I',J_1,J_2$ are all small.
Consequently the union $I\cup I'\cup J_1\cup J_2$ is contained in the image $U_i$ of a chart $\psi_i$.
Since the forward iterates of $J_1,J_2$ remain inside leaves of $\mathfs W^s_\epsilon(\Lambda)$,
the forward iterates $f^k(I\cup I'\cup J_1\cup J_2)$ remain in the images of charts
$\psi_{i(k)}$, until
a time $k_{\max}$ such that $f^{k_{\max}}(I)$ or $f^{k_{\max}}(I')$ reach a length comparable to $\rho$ in the charts.
Since we assume $I$ to be small,
we can assume that $k_{\max}\geq n_0$.
The iterates $f^k(J_1)$ and $f^k(J_2)$ for $0\leq k\leq k_{\max}$ seen in the chart
$\psi_{i(k)}$ are tangent to the cone $\mathcal{C}^s$, hence their lengths decrease by a factor smaller than $\alpha$ at each iteration by $f$. The iterates $f^k(I)$ and $f^k(I')$ for $n_0\leq k\leq k_{\max}$
are tangent to $\mathcal C^u$ in the charts and their lengths increase by a factor larger than $\alpha^{-1}$ at each iteration by $f$.
Consequently, there exists a first time $N\leq k_{\max}$ such that (see the Figure~\ref{f.strip}) in the chart $\psi_{i(N)}$,

\begin{equation}\label{e.shrink}
\min\big(|f^N(I)|, |f^N(I')|\big)\geq
\alpha^{-1}\max\big(|(f^N(J_1)|, |f^N(J_2)|\big).
\end{equation}

\begin{figure}[h]
\begin{center}
\includegraphics[scale=1.1]{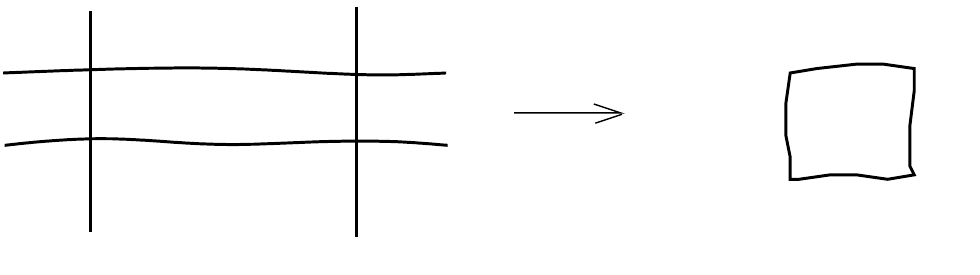}
\end{center}
\begin{picture}(0,0)
\put(105,73){$I$}
\put(200,30){$\tau'$}
\put(202,73){$I'$}
\put(153,92){$J_1$}
\put(152,52){$J_2$}
\put(115,28){$\tau$}
\put(345,39){$f^{N}(J_2)$}
\put(340,95){$f^{N}(J_1)$}
\put(380,68){$f^{N}(I')$}
\put(261,59){$f^{N}$}
\put(306,68){$f^{N}(I)$}
\end{picture}
\caption{Image of a stable strip $I\cup I'\cup J_1\cup J_2$ by the iterate $f^N$.
\label{f.strip}}
\end{figure}

\begin{lemma}\label{l.foliation}
There exists a foliation of a set containing the topological rectangle of $\mathbb R^2$
bounded by $\psi^{-1}_{i(N)}\circ f^N(I\cup I'\cup J_1\cup J_2)$,
whose leaves are tangent to $\mathcal C^s$.
Its holonomy defines a homeomorphism $\Pi$ between the transverse arcs $\psi^{-1}_{i(N)}\circ f^{N}(I)$ and $\psi^{-1}_{i(N)}\circ f^{N}(I')$,
which is Lipschitz with constant $Lip(\Pi)<1+10\alpha$.
\end{lemma}
\begin{proof} The two curves $\psi^{-1}_{i(N)}\circ f^N(J_i)$ are tangent to the horizontal cone $\mathcal C^s$,
hence are contained in graphs of two $\alpha$-Lipschitz functions $\varphi_1,\varphi_2$.
For $u\in [1,2]$, the functions
$$\varphi_u:=(2-u)\varphi_1+(u-1)\varphi_2$$
are $\alpha$-Lipschitz and their graphs define the leaves of the foliation.
We denote by $\Pi$ the holonomy map between $\widetilde I:=\psi^{-1}_{i(N)}\circ f^{N}(I)$ and $\widetilde I':=\psi^{-1}_{i(N)}\circ f^{N}(I')$.

Let $z\in \widetilde I$ and $z'=\Pi(z)$.
Let $V,V'$ be two vertical segments which connect the graphs of $\phi_1$ and $\phi_2$
and which contain $z$ and $z'$ respectively.
The holonomy $\Pi_{V,V'}$ between
$V$ and $V'$ is linear since the leaves $\varphi_t$ have been obtained as barycenters.
As a consequence, $\Pi_{V,V'}$ is a Lipschitz map whose Lipschitz constant equals $\frac{|V'|}{|V|}$.
Using that the curves are tangent to $\mathcal C^s$ or $\mathcal C^u$
and~\eqref{e.shrink}, we get
$$\Lip(\Pi_{V,V'})=\frac{|V'|}{|V|}\in [1-3\alpha, 1+3\alpha].$$

The holonomy map $\Pi$ decomposes into
$$\Pi=\Pi_{V',\widetilde I'}\circ \Pi_{V,V'}\circ \Pi_{\widetilde I,V}.$$
The holonomy map $\Pi_{\widetilde I,V}$ fixes $z$.
Note that the foliation is $C^1$, the slope of the leaves is smaller than $\alpha$
and the (absolute value of the) slope of $\widetilde I$ is larger than $\alpha^{-1}$.
A simple application of the implicit function theorem using  $|(\phi_2(x)-\phi_1(x))/(\phi_2(y)-\phi_1(y))|\leq 2$ implies that the map $\Pi_{\widetilde I,V}$ as well as its inverse is differentiable with Lipschitz constant bounded by $1+\alpha$.
One argues similarly for $\Pi_{V',\widetilde I'}$.
This gives the conclusion of the lemma.
\end{proof}

We thus obtain a bi-Lipschitz homeomorphism $\Pi_{I,I'}$ between
$I$ and $I'$ defined by
$$\Pi_{I,I'}=f^{-N}\circ D\psi_{i(N)}\circ\Pi\circ D\psi_{i(N)}^{-1}\circ f^{N}.$$
Its Lipschitz constant at any point $x\in I$ is bounded by
$$\|Df^{N}|_{I'}(\Pi_{I,I'}(x))\|^{-1}\; \cdot\; \|D\psi_{i(N)}\|\; \cdot\;\operatorname{Lip}(\Pi)\; \cdot\;\|D\psi_{i(N)}^{-1}\|\;\cdot\;\|Df^{N}|_{I}(x)\|.$$
Defining $x':=\Pi_{I,I'}(x)$, we thus have to bound the following quantity:
\begin{equation}
\frac{\|Df^N|_{I}(x)\|}{\|Df^N|_{I'}(x')\|}\;=\;
\prod_{i=0}^{N-1}
\frac{\|Df|_{f^{i}(I)}(f^i(x))\|}{\|Df|_{f^{i}(I')}(f^i(x'))\|}.
\end{equation}

The diffeomorphism $f$ induces a homeomorphism $F$ of the unit tangent bundle $T^1M$
defined by:
$$F(u)=\|Df.u\|^{-1}\;Df.u.$$
We endow $T^1M$ with a distance $d_1$ induced by an arbitrary Riemannian metric.

\begin{lemma}\label{l.slope}
There exist $C>0$ and $\lambda\in (0,1)$ with the following property:
consider a sequence of charts $U_{i(0)},\dots,U_{i(n)}$, two points $x,x'\in M$
and two unit vectors $u\in T_xM$, $v\in T_{x'}M$ such that:
\begin{myitemize}
\item[--] for each $0\leq k\leq n$, both points $f^k(x),f^k(x')$ belong to $U_{i(k)}$
and their images by $\psi_{i(k)}^{-1}$ belong to a curve tangent to $\mathcal C^s$,
\item[--] the preimages by $\psi_{i(0)}^{-1}$ of $u$ and $v$ are tangent to $\mathcal C^u$.
\end{myitemize}
Then $d_1(F^n(u),F^n(v))\leq C\;{\lambda}^n$.
\end{lemma}
\begin{proof}
Using the charts, we denote by $x_k,x'_k$ the images of $f^k(x),f^k(x')$ by the map $\psi^{-1}_{i(k)}$
and $u_k=(u_k^1,u_k^2)$, $v_k=(v_k^1,v_k^2)$ the
vectors $D(\psi_{i(k)}^{-1}\circ f^k).u$ and $D(\psi_{i(k)}^{-1}\circ f^k).v$.
The diffeomorphism $f$ lifts as maps $f_k:=\psi^{-1}_{i(k+1)}\circ f \circ \psi_{i(k)}$.
Since the number of charts is finite, it is enough to estimate the distance between
$x_n,x_n'$ and the angle between the vectors $u_n$ and
$v_n$ in the charts.

The contraction of the vectors in the cone $\mathcal C^s$, the smallness of $\alpha$,
and the fact that $x_n,x'_n$
belong to a curve tangent to $\mathcal C^s$ contained in $B(0,2\rho)$, imply that for any $0\leq k\leq n$,
 $$
     d(x_k,x'_k)\leq 5\rho 2^{-k}.$$

It is thus enough to find $c>0$ and $\lambda\in (0,1)$ and show:
\begin{equation}\label{e.induction}
\left|\frac{u_k^1}{u_k^2}-\frac{v_k^1}{v_k^2}\right|\leq c {\lambda}^{k}.\end{equation}
This is proved inductively. The case $k=0$ holds if $c=2\alpha$ since each quotient is in $[-\alpha,\alpha]$
as $u_0$ and $v_0$ are tangent to $\mathcal C^u$.
Let us denote $w_k:=(w_k^1,w_k^2)$ the image of $v_k=(v_{k-1}^1,v_{k-1}^2)$
by $Df_{k-1}(x_{k-1})$ (rather than by $Df_{k-1}(x'_{k-1})$).
The contraction and expansion in the cones by $Df_{k-1}(x_{k-1})$ gives
$$\left|\frac{u_k^1}{u_k^2}-\frac{w_k^1}{w_k^2}\right|\leq \frac 1 2 \left|\frac{u_{k-1}^1}{u_{k-1}^2}-\frac{v_{k-1}^1}{v_{k-1}^2}\right|.$$

Let $\beta:=r-1$ when $r\in (1,2)$ and $\beta:=\frac{1}{2}$ when $r\geq 2$, then $Df_k$ are uniformly $\beta$-H\"older continuous.
This allows to compare the image of $v_{k-1}$
under $Df_{k-1}(x_{k-1})$ and $Df_{k-1}(x'_{k-1})$: there exists a constant $c'>0$ such that:
 $$
   \left|\frac{v_k^1}{v_k^2}-\frac{w_k^1}{w_k^2}\right|\leq c' d(x_k,x'_k)^\beta\leq 5\rho^{\beta}c' 2^{-\beta k}.
 $$
Assuming that~\eqref{e.induction} holds for $k-1$, one thus gets
 $$
    \left|\frac{v_k^1}{v_k^2}-\frac{u_k^1}{u_k^2}\right|\leq
\frac c 2 \lambda^{k-1} + 5\rho^{\beta}c' 2^{-\beta k}.
 $$
This gives~\eqref{e.induction} for $k$ provided one has chosen
$ 2^{-\beta}<\lambda<1$ and $c\geq5c'\rho^{\beta}/(1-(2\lambda)^{-1})$.
\end{proof}

The points $f^N(x)$, $f^N(x')$ belong to the image by $\psi_{i(k)}$ of a curve tangent to $\mathcal C^s$,
hence the same holds for any iterate $f^k(x),f^k(x')$, $0\leq k\leq N$ by backward invariance of the cone $\mathcal C^s$.
The lemma~\ref{l.slope} can thus be applied to the tangent spaces of
$f^{n_0}(I)$ and $f^{n_0}(I')$ at $f^{n_0}(x)$ and $f^{n_0}(x')$ until the time $N-n_0$.
Since $Df$ is $(r-1)$-H\"older continuous, there exists a constant $C'>0$
such that:
$$\log\frac{\|Df_{|f^{i}(I')}(f^i(x'))\|}{\|Df_{|f^{i}(I)}(f^i(x))\|}\leq
C'\lambda^{i(r-1)} \qquad \text{for }n_0\leq i\leq N.$$
Hence there is a constant $C''>0$ which only depends on
$n_0,C'$ and the norms of $Df,\; Df^{-1}$ such that
\begin{equation}
\frac{\|Df^N|_{I}(x)\|}{\|Df^N|_{I'}(x')\|}\;\leq C'' \exp \sum_{i\geq0} \lambda^{i(r-1)}.
\end{equation}

Combining with Lemma~\ref{l.foliation},  the Lipschitz constant $L$ of $\Pi_{I,I'}$ only depends on $C'',\lambda, r$ and on
the norms of $D\psi_i$ and $D\psi_i^{-1}$. Eq.~\eqref{eq-Lip-hol} and therefore the theorem is proved.
\end{proof}

\small
\bibliographystyle{plain}
\bibliography{finiteMME}{}
\bigskip

\hspace{-2.5cm}
\begin{tabular}{l l l l l}
\emph{J\'er\^ome Buzzi}
& &
\emph{Sylvain Crovisier}
& &
\emph{Omri Sarig}
\\

Laboratoire de Math\'ematiques d'Orsay
&& Laboratoire de Math\'ematiques d'Orsay
&& Faculty of Mathematics\\
CNRS - UMR 8628
&& CNRS - UMR 8628
&&  and Computer Science\\
Universit\'e Paris-Sud 11
&&  Universit\'e Paris-Sud 11
&& The Weizmann Institute of Science\\
Orsay 91405, France
&& Orsay 91405, France
&& Rehovot, 7610001,  Israel
\end{tabular}

\end{document}